\documentclass[11pt]{amsart}

\usepackage{tikz}

\usepackage{mathrsfs}
\usepackage[colorlinks=true,urlcolor=blue, citecolor=red,linkcolor=blue,linktocpage,pdfpagelabels, bookmarksnumbered,bookmarksopen]{hyperref}
\usepackage[hyperpageref]{backref}
\usepackage{cleveref}
\usepackage{xcolor}
\usepackage{amsthm} 
\usepackage{latexsym,amsmath,amssymb}
\usepackage{accents}
\usepackage[colorinlistoftodos,prependcaption,textsize=tiny]{todonotes}
\usepackage{a4wide}
\usepackage{soul}
\usepackage{mathtools} 
\usepackage{xparse} 
\usepackage{enumitem}

\definecolor{indigo}{rgb}{0.29, 0.0, 0.51}
\definecolor{p1}{gray}{0.4}
\definecolor{p2}{gray}{0.6}
\definecolor{p3}{gray}{0.98}
\definecolor{p4}{gray}{0.8}
\definecolor{p5}{gray}{0.9}

plus 6pt minus 12pt
\def\eps{\varepsilon}

\def\Om{\Omega}

\def\vp{\varphi}
\def\id{{\rm Id\, }}
\def \a{\alpha}

\newcommand{\cO}{\mathcal{O}}

\def\B{{B}}
\def\C{{\mathcal C}}

\def\N{{\mathbb N}}

\def\S{{\mathbb S}}
\def\I{{\mathcal I}}

\def\p{{\partial}}
\renewcommand{\O}{{\Omega}}

\def\e{\varepsilon}

\newtheorem{theorem}{Theorem}
\newtheorem{claim}{Claim}
\newtheorem{lemma}[theorem]{Lemma}

\newtheorem{proposition}[theorem]{Proposition}

\newtheorem{remark}[theorem]{Remark}
\newtheorem{definition}[theorem]{Definition}

\newcommand{\loc}{\mathrm{loc}}

\def\dist{{\rm dist\,}}
\def\divv{{\rm div\,}}

\def\loc{{\rm loc}}
\def\tr{{\rm tr}}

\newcommand{\dif}{\,\mathrm{d}}
\newcommand{\dx}{\dif x}
\newcommand{\dy}{\dif y}

\newcommand{\dd}{\dif}

\newcommand{\R}{\mathbb{R}}
\newcommand{\bB}{\mathbb{B}}

\newcommand{\s}{\mathbb{S}}

\newcommand{\pl}{\partial}

\newcommand{\brac}[1]{\left (#1 \right )}

\newcommand{\Ep}{\bigwedge\nolimits}

\newcommand{\Jac}{{\rm Jac}}

\newcommand{\barint}{
\rule[.036in]{.12in}{.009in}\kern-.16in \displaystyle\int }

\newcommand{\barcal}{\mbox{$ \rule[.036in]{.11in}{.007in}\kern-.128in\int $}}

\def\mvint_#1{\mathchoice
          {\mathop{\vrule width 6pt height 3 pt depth -2.5pt
                  \kern -8pt \intop}\nolimits_{\kern -3pt #1}}%
          {\mathop{\vrule width 5pt height 3 pt depth -2.6pt
                  \kern -6pt \intop}\nolimits_{#1}}%
          {\mathop{\vrule width 5pt height 3 pt depth -2.6pt
                  \kern -6pt \intop}\nolimits_{#1}}%
          {\mathop{\vrule width 5pt height 3 pt depth -2.6pt
                  \kern -6pt \intop}\nolimits_{#1}}}

\numberwithin{theorem}{section} \numberwithin{equation}{section}

\newcommand{\lap}{\Delta }
\newcommand{\g}{\nabla }
\newcommand{\aleq}{\precsim}

\newcommand{\ust}{\underset}
\newcommand{\scal}[2]{\left\langle #1,#2 \right\rangle}
\newcommand{\pr}{\mathcal{P}_r}
\newcommand{\pra}{\mathcal{P}_{r,\alpha}}

\newcommand*\oline[1]{%
  \kern0.1em            
  \vbox{%
    \hrule height 0.5pt 
    \kern0.4ex          
    \hbox{%
      \kern-0.1em       
      $#1$
      \kern-0.1em       
    }
  }
  \kern0.1em            
}

\def\XXint#1#2#3{{\setbox0=\hbox{$#1{#2#3}{\int}$}
     \vcenter{\hbox{$#2#3$}}\kern-.5\wd0}}

\let\latexchi\chi
\makeatletter
\renewcommand\chi{\@ifnextchar_\sub@chi\latexchi}
\newcommand{\sub@chi}[2]{
  \@ifnextchar^{\subsup@chi{#2}}{\latexchi^{}_{#2}}%
}
\newcommand{\subsup@chi}[3]{
  \latexchi_{#1}^{#3}%
}
\makeatother

\usepackage{graphicx}
\newcommand\smallO{
  \mathchoice
    {{\scriptstyle\mathcal{O}}}
    {{\scriptstyle\mathcal{O}}}
    {{\scriptscriptstyle\mathcal{O}}}
    {\scalebox{.7}{$\scriptscriptstyle\mathcal{O}$}}
  }
  \DeclareMathOperator \VMO{\text{VMO}}

\title[Min-max free boundary \(n\)-harmonic maps]{Min-max \(n\)-harmonic maps of degree 1 with free-boundary into \(\mathbb{S}^{n-1}\) in almost round balls }

 \author{Dorian Martino}
\address[Dorian Martino]{
Department of Mathematics, ETH Zurich, Rämistrasse 101, 8092 Zurich, Switzerland}
\email{dorian.martino@math.ethz.ch}

 \author{Katarzyna Mazowiecka}
 \address[Katarzyna Mazowiecka]{
 Institute of Mathematics,%
 University of Warsaw,
 Banacha 2,
 02-097 Warszawa, Poland}
 \email{k.mazowiecka@mimuw.edu.pl}

 \author{R\'emy Rodiac}
\address[R. Rodiac]{Laboratoire J.A. Dieudonn\'e, Universit\'e C\^ote d'Azur, CNRS UNMR 7351,06108, Nice, France.}
\email{remy.rodiac@univ-cotedazur.fr}

\begin{document}

\begin{abstract}
Let \(n\geq 3\) and let \(\Omega \subset \R^n\) be a \(\C^1\) bounded domain which is diffeomorphic to a ball. We investigate here the problem of finding critical points of the \(n\)-energy in the space $\I=\{v\in W^{1,n}(\Omega,\R^n) ; \ |\tr_{|\partial \Omega}v|=1\}$. Maps in $\I$ have a well-defined topological degree on $\partial \Omega$ but this degree is not continuous for the weak convergence in \(W^{1,n}\). Hence finding critical points with prescribed degrees results in a problem of lack of compactness. We first prove that minimizers of the $n$-energy exist only when \(\Omega\) is a round ball and when the prescribed degree is \(-1,0\) or \(1\). We then develop a mountain pass approach for the $(n+\alpha)$-energies and study the convergence, when \(\alpha\) goes to zero, of the resulting critical points via a bubbling analysis. We exclude the existence of bubbles in the case where $\Omega$ is close to a ball by proving an energy gap result for free boundary $n$-harmonic maps from $\mathbb{B}^n$ to $\mathbb{B}^n$. We thus obtain the existence of critical points of the \(n\)-energy with prescribed degree \(1\) when \(\Omega\) is close to a ball.
\end{abstract}

 \keywords{\(n\)-harmonic maps, free boundary problem, problems with lack of compactness, bubbling, Minmax principle, Palais-Smale condition,}
\sloppy

\subjclass[2020]{35J92, 58E20, 53C43}
\maketitle

\tableofcontents
\sloppy

\section{Introduction}

\subsection{Motivation}

The Riemann mapping theorem states that any simply connected open set of $\R^2$ which is not \(\R^2\) is conformally equivalent to the unit ball $\bB^2$. In \(\R^2 \simeq \mathbb{C}\), a domain is conformally equivalent to another domain if and only if there exists a biholomorphism (or a bi-antiholomorphism) that maps one into the other.  In higher dimension, the notions of conformal map and holomorphic map differ. The last notion is related to the theory of several complex variables whereas the former one means that the map  infinitesimally preserves angles. The Riemann mapping theorem does not hold in higher dimensions, neither with  the  notion of biholomorphic maps nor with the notion of conformal maps. Indeed, one can 
see that \(\mathbb{B}^4\) is not biholomorphic to \(\mathbb{B}^2\times \mathbb{B}^2\), \cite[Exercise 1.1.16]{H2005}, and thus there exist simply connected domains which are not biholomorphic. Moreover, if we are interested in conformality, Liouville's rigidity theorem shows that conformal maps in higher dimensions are precisely products of translations, dilations and inversions, see Theorem \ref{th:Liouville} below, hence the conformal version of the Riemann mapping theorem does not hold either in higher dimensions. 
The search for a suitable generalization of the Riemann mapping theorem to higher dimensions stimulated research in different areas of mathematics \cite{Chern_Ji_1996} and in particular in geometric function theory \cite{Iwaniec_Martin_2001}.
 In dimension 2, the theory of conformal mappings is related to the theory of harmonic functions and to the study of the Dirichlet energy. Riemann himself used the solvability of the Dirichlet problem for harmonic functions in the original proof of his uniformization theorem, see e.g.\ \cite[Chapter 8, p.217]{Stein_Shakarchi_2003}. The role of the Dirichlet energy, which is conformally invariant in dimension 2, is played by the \(n\)-energy, see Equation \eqref{eq:n-energy}, in dimension \(n\). Indeed the \(n\)-energy is also conformally invariant in dimension \(n\). In this paper we first prove that orientation-preserving conformal maps form \(\mathbb{B}^n\) to \(\mathbb{B}^n\) can be characterized as minimizers of the \(n\)-energy among maps with traces taking values in \(\mathbb{S}^{n-1}\) and with prescribed degree equal to \(1\) on \(\p \mathbb{B}^n=\mathbb{S}^{n-1}\). 
  We also prove that the \(n\)-energy does not admit any minimizer with prescribed degrees different from zero if the source domain \(\Omega\) is not a round ball. It is then natural to look for non-minimizing critical points of the \(n\)-energy with prescribed degree one when \(\Omega\) is not a ball. Our main result roughly states  that if \(\Omega\) is diffeomorphic to \(\mathbb{B}^n\) and is close enough to \(\mathbb{B}^n\), in a sense to be specified later,  then there exists a critical point of the \(n\)-energy with prescribed degree one on \(\p \Omega\). It is then tempting to consider this critical point as a generalization of a conformal map for domains which are not balls. However we do not prove that the critical point we have obtained is injective nor show any property of this critical point, except for its regularity \(\C^{1,\gamma}(\overline{\Omega},\R^n)\) for some \(0<\gamma<1\) obtained by an application of \cite[Proposition 3.1]{MazowieckaRodiacSchikorra} and \cite[Proposition 2.4]{MartinoMazowieckaRodiac2025}.

\subsection{Statement of the results}\label{sec:statement}
Let us now introduce the necessary definitions in order to state our main theorem. Let \(n \geq 3\) and let \(\Omega \subset \R^n\) be  a \(\C^1\) bounded domain. Throughout the paper we will always assume that \(\Omega\) is \(\C^1\)-diffeomorphic to the unit ball \(\mathbb{B}^n\). We consider the function space
\begin{equation}\label{eq:defI}
\I \coloneqq \big\{v\in W^{1,n}(\O,\R^n) :  |\tr_{|\p \Om} u|=1 \text{ a.e.}\big\},
\end{equation}
and for \(u\) in \(\I\) we define the \(n\)-energy by
\begin{equation}\label{eq:n-energy}
	E _n(u) = \int_{\Omega} |\dd u|^n.
\end{equation}
Minimizing \(E_n\) in the entire space \(\I\) produces only constant map. In order to produce non-trivial critical point we can try to look for local minimizers or non-minimizing critical points. To do so, we can think of using the topology of \(\I\) and the topological degree. It is now well-established that  traces of maps in \(\I\) have a well-defined topological degree since the trace space \(W^{1-\frac{1}{n},n}(\p \Omega,\mathbb{S}^{n-1})\) embeds into \(\VMO(\p \Omega, \mathbb{S}^{n-1})\)\, cf.\ \cite{Brezis_Nirenberg_1995_I}.
Hence we can define the classes
\begin{equation}\label{eq:Idclass}
	\I_k\coloneqq \big\{v\in W^{1,n}(\Omega,\R^n)\colon |\tr_{| \p \Om }v|=1  \text{ a.e.\ and } \deg(v,\partial \Omega)=k \big\}.
\end{equation}
The images of the sets \(\I_k\) by the trace operator are the homotopy classes in \(\VMO \cap L^1\) of \(W^{1-\frac{1}{n},n}(\p \Omega,\mathbb{S}^{n-1})\). We can also show that the sets \(\I_k\) are the connected components of \(\I\) and they are open and close for the (strong) topology of \(W^{1,n}(\Omega,\R^n)\). From the openness condition we see that a minimizer of \(E_n\) in any of \(\I_k\) would be a local minimizer of \(E_n\) in \(\I\).  However, the direct method of calculus of variations does not lead to the existence of minimizers of \(E_n\) in every \(\I_k\) because these classes are not closed for the weak topology of \(W^{1,n}(\Omega,\R^n)\). Indeed, the degree is not continuous for the weak convergence and thus we face a problem with lack of compactness see  e.g.\ \cite[Example 2]{Berlyand_Mironescu_2006}.  Our first result is the following.

\begin{theorem}\label{th:main_1}
For \(k\in \mathbb{Z}\) and \(\Omega \subset \R^n\) we define 
\begin{equation}\label{eq:mkOmega}
m(k,\O)\coloneqq \inf \{ E_n(v) : v \in \I_k \}. 
\end{equation}
For \(k=0\), we have \(m(0,\Omega)=0\) and any constant in \(\mathbb{S}^{n-1}\) minimizes \(E_n\) in \(\I_0\). \\
For \(k \in \mathbb{Z}\setminus \{0\}\), if  we assume that \(\Omega \subset \R^n\) is a \(\C^1\) bounded domain so that \(\p \Omega\) is  a connected compact smooth submanifold of dimension \((n-1)\) of \(\R^n\), then
\begin{equation}
m(k,\Omega) = |k|\, n^{\frac{n}{2}}\, |\bB^n|,
\end{equation}
and \(m(k,\Omega)\) is attained if and only if \(\Omega\) is a Euclidean ball and \(|k|=1\). In this case, minimizers are exactly the conformal transformations between this ball and \(\mathbb{B}^n\).
\end{theorem}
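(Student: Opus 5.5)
The plan rests on the pointwise inequality between the \(n\)-energy density and the Jacobian. By Hadamard's inequality combined with the AM--GM inequality applied to the singular values of \(\dd v\), we have
\[
|\det \dd v| \le n^{-n/2}|\dd v|^n,
\]
where \(|\cdot|\) is the Hilbert--Schmidt norm. Equality holds if and only if \(\dd v\) is a multiple of an orthogonal matrix.

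For \(k=0\), constants lie in \(\I_0\) and have zero energy, so nothing more is needed.

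\textbf{Lower bound for \(k\neq 0\).} Take \(v\in\I_k\). By density, which is valid in \(W^{1,n}\) with degree preserved (VMO continuity of the degree, \cite{Brezis_Nirenberg_1995_I}), we may assume \(v\) is smooth up to the boundary. The Jacobian is a null Lagrangian, so \(\int_\Omega \det \dd v\) depends only on the trace. To evaluate it, write \(\det\dd v\,\dx = v^*(\dd y_1\wedge\cdots\wedge\dd y_n)\) and \(\dd y_1\wedge\cdots\wedge \dd y_n=\dd\omega\), where \(\omega=\frac1n\sum_i (-1)^{i-1}y_i\,\dd y_1\wedge\cdots\widehat{\dd y_i}\cdots\wedge\dd y_n\). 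Stokes' theorem then gives
\[
\int_\Omega\det\dd v=\int_{\p\Omega}v^*\omega=k\int_{\S^{n-1}}\omega=k|\bB^n|,
\]
using \(|v|=1\) on \(\p\Omega\). The identity extends to all of \(\I_k\) by continuity of both sides in \(W^{1,n}\). Combining with the pointwise inequality,
\[
E_n(v)\ge n^{n/2}\int_\Omega|\det \dd v|\ge n^{n/2}|k||\bB^n|.
\]

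\textbf{Upper bound.} We construct competitors by bubbling. Start from a fixed map \(w_0\in\I_{\mathrm{sign}(k)}\) (using that \(\Omega\) is diffeomorphic to a ball). Near \(|k|\) distinct boundary points \(x_1,\dots,x_{|k|}\), insert concentrated conformal maps: in the half-space picture, these are Möbius maps of the half-space onto the ball, composed with a local flattening of \(\p\Omega\). Each such piece has energy close to \(n^{n/2}|\bB^n|\) and degree \(\pm1\); a conformal map onto \(\bB^n\) has energy exactly \(n^{n/2}|\bB^n|\). We then let \(w_0\) itself concentrate, or replace it by a constant, glued with small-energy interpolations in annuli. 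The key point is that the \(n\)-energy of cutoffs on annuli \(B_{r}\setminus B_{r^2}\) can be made small, as with logarithmic cutoffs in the conformal dimension, and that \(\C^1\) flattening distorts the energy by a factor \(1+o(1)\) at small scales. Degrees add, so the infimum equals \(|k|n^{n/2}|\bB^n|\). I expect this gluing, namely controlling the energy errors of flattening and interpolation while keeping \(|\tr v|=1\), to be the main technical step.

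\textbf{Equality case.} If \(v\) attains the infimum, both inequalities are equalities. Hence \(\det\dd v\) has constant sign with \(\int\det \dd v=k|\bB^n|\), and \(\dd v\) is pointwise conformal, i.e.\ \(\dd v\) is a multiple of an orthogonal matrix with \(\det \dd v\) of fixed sign. So \(v\) (or \(v\) composed with a reflection) is a \(W^{1,n}\) weakly conformal map. By the Sobolev version of Liouville's Theorem \ref{th:Liouville} (Reshetnyak), \(v\) is either constant, which is excluded since \(k\neq0\), or the restriction of a Möbius transformation. Then \(v\) is injective, so \(\int_\Omega|\det\dd v|=|v(\Omega)|\le|\bB^n|\); here \(v(\Omega)\subset\bB^n\) follows from the maximum principle for \(|v|^2\), or from the degree count. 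This forces \(|k|=1\). Moreover \(v(\Omega)\) has full measure in \(\bB^n\) and \(v(\p\Omega)\subset\S^{n-1}\), so \(v\) maps \(\Omega\) onto \(\bB^n\). Thus \(\Omega=v^{-1}(\bB^n)\) is the Möbius preimage of a ball, which is a ball or the complement of a ball; boundedness gives a Euclidean ball. Conversely, conformal maps from a ball onto \(\bB^n\) achieve equality, which yields the stated characterization of minimizers.
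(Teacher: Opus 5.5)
Your proposal is correct and follows essentially the paper's route. The lower bound comes from Hadamard/AM--GM together with the Jacobian formula \eqref{eq:formula_degree_ext} (Lemma \ref{lowbound}). The upper bound comes from gluing $|k|$ concentrated half-space-to-ball conformal maps, placed at boundary points, into a constant map; this is the paper's Lemma \ref{testfunc}, which uses a thin-annulus interpolation of modulus and direction instead of log cutoffs. Rigidity comes from Liouville's theorem, and your injectivity argument ($|k|\,|\bB^n|=|v(\Omega)|\le|\bB^n|$, then $v(\Omega)=\bB^n$) supplies details the paper leaves implicit.

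The one thing to discard is the detour through $w_0\in\I_{\mathrm{sign}(k)}$. Adding $|k|$ bubbles to it gives degree $\pm(|k|+1)$. Making a non-conformal $w_0$ concentrate does not in general bring its energy down to $n^{\frac n2}|\bB^n|$. The constant base, i.e.\ starting from $m(0,\Omega)=0$ as the paper does, is the one that works.
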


Thus a natural question is to look for non-minimizing critical points of \(E_n\) in \(\I_1\) when \(\Omega\) is not a ball. We first explain what we mean exactly by critical points.

\begin{definition}\label{def:critical}
We say that \(u\in \I\) is a critical point of \(E_n\) in \(\I\) if, for all \(\varphi \in W^{1,n}(\Omega,\R^n)\) such that \(\tr_{|\p \O} \varphi(x) \in T_{u(x)} \mathbb{S}^{n-1}\) for \(\mathcal{H}^{n-1}\)-a.e.\ \(x\in \p \O\), we have
\begin{equation*}
\int_{\Om} |\dd u|^{n-2}\dd u:\dd \varphi =0.
\end{equation*} 
\end{definition}

 We have denoted by \( A:B:=\tr (A^TB)\) the inner product between two matrices \(A\) and \(B\) in \(M_n(\R)\). The preceding definition is motivated by the fact that, for \(\varphi\) as in Definition \ref{def:critical}, \(\tr_{| \p \Omega} (u+t\varphi)\) belongs to \(\mathbb{S}^{n-1}\) at first order, i.e., \(|\tr_{| \p \Omega} (u+t\varphi)|=1+O(t^2)\) a.e.\  Our main result in this paper shows the existence of a critical point of \(E_n\) in \(\I\) with prescribed degree \(1\) for domains which are close to a ball.
\begin{theorem}\label{th:main_2}
	Let $n\geq 3$. There exists \(L>0\) such that, if \(\Omega\) is a \(\C^1\)-bounded domain verifying that there exists a \(\C^1\)-diffeomorphism \(\Phi\colon \overline{\Om}\rightarrow \overline{\mathbb{B}^n}\) with \(\Phi(\p \Om)=\mathbb{S}^{n-1}\) and with \(\|\Phi-\id\|_{\C^1(\Omega)}<L\), then there exists a critical point of \(E_n\) in \(\mathcal{I}_1\).
\end{theorem}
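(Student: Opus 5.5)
The strategy is the one announced in the abstract: a min-max construction for perturbed energies, followed by a compactness argument. For \(\alpha>0\) small, consider \(E_{n+\alpha}(u)=\int_\Omega |\dd u|^{n+\alpha}\) on \(\I^{\alpha}_1=\{v\in W^{1,n+\alpha}(\Omega,\R^n): |\tr v|=1,\ \deg(v,\p\Omega)=1\}\). In \(W^{1,n+\alpha}\) the trace is Hölder continuous, so the degree is continuous under weak convergence and \(E_{n+\alpha}\) satisfies the Palais--Smale condition on the Banach manifold \(\I^\alpha\). By Theorem \ref{th:main_1}, when \(\Omega\) is not a ball the infimum \(m(1,\Omega)=n^{n/2}|\bB^n|\) is not attained, so we cannot minimize. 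Instead we build a mountain pass geometry.

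\emph{Step 1: the min-max class.} Pull back conformal maps of the ball via \(\Phi\). For \(a\in \mathbb{B}^n\), let \(\varphi_a\) be the Möbius automorphism of \(\mathbb{B}^n\) sending \(0\) to \(a\), and set \(u_a=\varphi_a\circ\Phi\in\I_1\). As \(|a|\to1\), \(u_a\) concentrates, and \(E_n(u_a)\to n^{n/2}|\bB^n|\) up to an error of order \(\|\Phi-\id\|_{\C^1}\). Fix \(r\) close to \(1\). The relevant class is
\[
\Gamma=\{\gamma\in C(\overline{\mathbb{B}^n_r},\I_1)\colon \gamma(a)=u_a \text{ for } |a|=r\}.
\]
Define \(c_\alpha=\inf_{\gamma\in\Gamma}\max_a E_{n+\alpha}(\gamma(a))\). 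The linking property is topological: the barycenter-type map \(v\mapsto \int_{\p\Omega}\tr v\) restricted to \(\{|a|=r\}\) has degree \(1\) around \(0\). Hence every \(\gamma\in\Gamma\) passes through a map with vanishing boundary barycenter. A rigidity/quantitative version of Theorem \ref{th:main_1} should show that such maps have energy at least \(n^{n/2}|\bB^n|+\delta\) with \(\delta>0\) independent of \(\Omega\) close to the ball, whereas the boundary values have energy \(\le n^{n/2}|\bB^n|+CL\). With \(L\) small this gives \(c_\alpha\ge \max_{\p}+\delta/2\). Also \(c_\alpha\le C\), uniformly in \(\alpha\), using the path \(a\mapsto u_a\).

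\emph{Step 2: existence at level \(\alpha\) and uniform bounds.} The mountain pass theorem on the manifold \(\I^\alpha_1\) gives critical points \(u_\alpha\) of \(E_{n+\alpha}\) with \(E_{n+\alpha}(u_\alpha)=c_\alpha\). Using Struwe's monotonicity trick, or choosing \(\alpha\) along a good sequence, we additionally arrange \(\alpha\int|\dd u_\alpha|^{n+\alpha}\log|\dd u_\alpha|\to0\)-type entropy bounds. These are needed for the energy identity.

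\emph{Step 3: bubbling analysis, the main obstacle.} Extract \(u_\alpha\rightharpoonup u\) weakly in \(W^{1,n}\). Using \(\varepsilon\)-regularity for free boundary \(p\)-harmonic maps (\cite{MazowieckaRodiacSchikorra,MartinoMazowieckaRodiac2025}), convergence is strong away from finitely many points, and \(u\) is a critical point of \(E_n\) in \(\I\). At concentration points, after rescaling we obtain interior \(n\)-harmonic bubbles \(\R^n\to\mathbb{S}^{n-1}\)-free objects, or boundary bubbles. The latter are free boundary \(n\)-harmonic maps \(\mathbb{B}^n\to\mathbb{B}^n\) with nontrivial degree. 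We need a no-neck energy identity giving \(c=E_n(u)+\sum E_n(\text{bubbles})\) and \(\deg u+\sum\deg(\text{bubbles})=1\). This is the technical heart: the neck analysis for degenerate \(n\)-Laplacian systems with free boundary.

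\emph{Step 4: excluding bubbles.} We use the energy gap result announced in the abstract: any non-constant free boundary \(n\)-harmonic map \(\mathbb{B}^n\to\mathbb{B}^n\) either is conformal, with energy \(n^{n/2}|\bB^n|\) and degree \(\pm1\), or has energy at least \(n^{n/2}|\bB^n|+\eta\) with \(\eta\) universal. Interior bubbles cost at least that much energy as well. Since \(c\le n^{n/2}|\bB^n|+\delta'\) with \(\delta'<\eta\) for \(L\) small, at most one bubble can occur, and it must be conformal of degree \(\pm1\).

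It then remains to rule out the case of a degree-one bubble with \(u\) constant. This is exactly the degenerate picture near \(\{|a|=r\}\). We exclude it via the mountain pass level gap \(c\ge n^{n/2}|\bB^n|+\delta/2\): a single conformal bubble plus a constant limit carries energy exactly \(n^{n/2}|\bB^n|\), which contradicts this lower bound. Choosing \(\delta'\) and \(L\) compatible with \(\delta,\eta\) is the delicate quantitative point. With no bubbles, the convergence is strong, the degree passes to the limit, and \(u\in\I_1\) is the desired critical point.
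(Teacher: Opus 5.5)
\textbf{Verdict: genuine gap in Step~1.} Your overall architecture matches the paper: Sacks--Uhlenbeck perturbation, mountain pass in $\mathcal{I}_{1,\alpha}$, bubbling, a gap theorem for degree-one free boundary $n$-harmonic maps, and the min-max level used to exclude a lone conformal bubble. The barycenter linking is also fine. But Step~1 does not establish the mountain pass inequality, and with your boundary data it cannot.

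\textbf{Why Step~1 fails.} The bound you invoke is false: $E_n\ge n^{n/2}|\mathbb{B}^n|+\delta$ on zero-barycenter maps of $\mathcal{I}_1$, with $\delta$ independent of $\Omega$ near the ball. Your own linking argument, applied to the path $a\mapsto u_a=\varphi_a\circ\Phi$ with $|a|\le r$, produces a zero-barycenter map of energy at most $n^{n/2}|\mathbb{B}^n|+CL$. On the ball itself, the identity has zero barycenter and energy exactly $n^{n/2}|\mathbb{B}^n|$. So $\delta\le CL$, and comparing $\delta$ with the $O(L)$ excess of your boundary maps yields nothing.

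Any genuine gap, such as $c_2(\Omega)-n^{n/2}|\mathbb{B}^n|>0$, depends on $\Omega$ and tends to $0$ as $\Omega$ approaches a ball. It is obtained only by compactness, via the Price lemma and Liouville and using that $\Omega$ is not a ball, as in Proposition~\ref{pr:lowerc}.

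The $O(L)$ excess is really present for a fixed $\Phi$. As $\varphi_a\circ\Phi$ concentrates at $x_0\in\partial\Omega$, its energy tends to $|\mathbb{B}^n|\,|\mathrm{d}\Phi(x_0)|^n/\det\mathrm{d}\Phi(x_0)$. This exceeds $n^{n/2}|\mathbb{B}^n|$ unless $\mathrm{d}\Phi(x_0)$ is conformal, and in general no single diffeomorphism onto the ball is conformal at every boundary point.

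\textbf{The missing idea.} You need boundary data whose energy converges to \emph{exactly} $n^{n/2}|\mathbb{B}^n|$ as the concentration parameter goes to zero. Then any $\Omega$-dependent gap suffices, once $r$ is chosen depending on $\Omega$. The paper does this with the almost Möbius maps $M_{(1-r)\Phi(a)}\circ\Phi_a$, using a different diffeomorphism $\Phi_a$ for each $a$. Each $\Phi_a$ is $C^1$-close to the identity, depends continuously on $a$, and satisfies $\mathrm{d}\Phi_a(a)=\beta_aR_a$ with $R_a\in O_n(\mathbb{R})$. The construction (Proposition~\ref{continuityscheme1_v2}) straightens the boundary and takes the time-one flow of a vector field tangent to $\{x_n=0\}$. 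Lemma~\ref{lm:energy_almostMobius} then gives $\lim_{r\to0}\lim_{\alpha\to0}c_1(\alpha,r)=n^{n/2}|\mathbb{B}^n|<c_2\le c(r)$.

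In this scheme the smallness of $L$ is used only for the upper bound $c(\alpha_k,r)\le n^{n/2}|\mathbb{B}^n|+\varepsilon_0$ against the universal constants $\varepsilon_M,\varepsilon_c$. It is never used for the mountain pass inequality. This resolves the quantifier tension you flagged at the end.

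\textbf{What else Step~4 needs.} With Step~1 repaired, Step~4 runs as in the paper, but you still need $\lambda^*=1$ for the single bubble in the weak energy identity of \cite{MartinoMazowieckaRodiac2025}. The paper gets this from the Pohozaev identity (Proposition~\ref{prop:Pohozaev}) together with the no-neck property. Your entropy bound via a monotonicity trick would be an alternative route, but it remains to be proved. Finally, only the degree-one gap (Theorem~\ref{th:Energy_gap_Mobius}) and the small-energy gap (Lemma~\ref{le:3.1inMMR}) are needed. Your stronger dichotomy for all non-constant free boundary maps is neither established nor required.
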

We note that in dimension \(2\) critical points of the Dirichlet energy with prescribed degrees can be described exactly. They are given by Blaschke products of Möbius maps and they minimize the Dirichlet energy in their homotopy classes \(\I_k\), for \(k\in \mathbb{Z}\); see for instance \cite[Lemma 3.5]{Berlyand_Mironescu_Rybalko_Sandier_2014} or \cite[Theorem 4.25]{Millot_Sire_2015}.

As a side remark we claim that, with the same techniques, we can also obtain an analogous result for the \(n\)-Ginzburg--Landau (GL) energy. Since this is not the main focus of this paper we will not provide a proof of the following statement.
\begin{theorem}\label{th:main_3}
There exist \( L>0\) and \(\kappa_0>0\) such that, if \(\Omega\) is a \(\C^1\)-bounded domain verifying that there exists a \(\C^1\)-diffeomorphism \(\Phi\colon \overline{\Om}\rightarrow \overline{\mathbb{B}^n}\) with \(\Phi(\p \Om)=\mathbb{S}^{n-1}\) and with \(\|\Phi-\id\|_{\C^1(\Omega)}<L\) and if \(\kappa<\kappa_0\) then there exists a critical point of \(E_{n,\kappa}\) in \(\mathcal{I}_1\) where 
 \begin{equation}\label{eq:GL_energy}
 E_{n,\kappa}(u)=\frac{1}{n}\int_{\Omega}| \dd u|^n +\kappa^n \int_{\Omega} (1-|u|^2)^2.
 \end{equation}
 \end{theorem}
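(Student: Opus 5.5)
The statement immediately above is Theorem \ref{th:main_3}, which the authors explicitly do not prove. I would follow the strategy announced in the abstract for Theorem \ref{th:main_2}, adapted to the Ginzburg--Landau energy. The key observation is that the potential term $\kappa^n\int_\Omega(1-|u|^2)^2$ is a compact perturbation on $W^{1,n}$: it is continuous for weak convergence, since $W^{1,n}$ embeds compactly in every $L^p$. It also vanishes on the maps relevant for bubbling, which take values in $\mathbb{S}^{n-1}$. So the plan has four steps: regularize, run a mountain pass, pass to the limit with a bubbling analysis, and exclude bubbles by an energy gap.

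\emph{Step 1 (regularized min-max).} Consider $E_{n+\alpha,\kappa}(u)=\frac{1}{n+\alpha}\int_\Omega|\dd u|^{n+\alpha}+\kappa^n\int_\Omega(1-|u|^2)^2$ on $\I_1^\alpha=\{u\in W^{1,n+\alpha}: |u|=1 \text{ on } \p\Omega,\ \deg=1\}$. For $\alpha>0$ the trace is continuous, and $W^{1,n+\alpha}$ embeds compactly in $\C^0$, so the degree passes to weak limits. Consequently the Palais--Smale condition holds and minimization in $\I_1^\alpha$ is available. I would nevertheless follow the paper and build a mountain-pass level. Pull back conformal maps of the ball by $\Phi$ to obtain a one-parameter family of competitors $\Phi^{-1}$-composed Möbius maps concentrating at a boundary point. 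The energy along this family approaches $m(1,\Omega)$ plus errors of order $O(L)$ and $O(\kappa^n)$. Define $c_\alpha$ as the min-max value over paths connecting such degenerating configurations. By construction $c_\alpha\le n^{n/2}|\bB^n|/n+C(L+\kappa^n)+o_\alpha(1)$, and a deformation argument gives a critical point $u_\alpha$ with $E_{n+\alpha,\kappa}(u_\alpha)=c_\alpha$. I also need a lower bound $c_\alpha\ge m(1,\Omega)/n+\delta$ to separate the min-max level from minimizing sequences, which by Theorem \ref{th:main_1} are not attained. For small $L$ and $\kappa$ this should follow from the strictness in Theorem \ref{th:main_1} via a quantitative stability argument; I would take it from the corresponding step for Theorem \ref{th:main_2}.

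\emph{Step 2 (bubbling as $\alpha\to0$).} The maps $u_\alpha$ are bounded in $W^{1,n}$, and a Sacks--Uhlenbeck-type bound $\int|\dd u_\alpha|^{n+\alpha}\le C$ holds. Uniform $\varepsilon$-regularity holds at interior and free-boundary points. The potential contributes only a lower-order term to the equation, $|\dd u|^{n-2}\dd u$-divergence $=-4\kappa^n(1-|u|^2)u$, which is bounded, so the $\varepsilon$-regularity of \cite{MazowieckaRodiacSchikorra,MartinoMazowieckaRodiac2025} applies unchanged. Away from finitely many concentration points, $u_\alpha\to u$ in $\C^1_{loc}$, and $u$ is a critical point of $E_{n,\kappa}$ in $\I$. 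Blowing up at a concentration point kills the potential, since $\kappa^n r^n\to0$. Interior bubbles are therefore $n$-harmonic maps $\R^n\to\R^n$ of finite energy, which are constant by Liouville. Boundary bubbles are free-boundary $n$-harmonic maps $\mathbb{B}^n\to\mathbb{B}^n$ with trace in $\mathbb{S}^{n-1}$ (after the conformal map from the half-space). Energy identity and no-neck results give $\lim c_\alpha = E_{n,\kappa}(u)+\sum_i E_n(\omega_i)/n$.

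\emph{Step 3 (excluding bubbles; the main obstacle).} I use the energy gap for nonconstant free-boundary $n$-harmonic maps $\bB^n\to\bB^n$. Such a bubble has energy at least $n^{n/2}|\bB^n|$ if its degree is nonzero, by Theorem \ref{th:main_1}. If its degree is zero and it is nonconstant, the gap theorem should give a strictly larger threshold $\ge(1+\eta)\,n^{n/2}|\bB^n|$ or so. Degree is additive: $\deg u+\sum\deg\omega_i=1$. If $\deg u=1$ I am done, because any bubble would push the energy above the upper bound once $L$ and $\kappa$ are small. If $\deg u\ne1$, at least one bubble with nonzero degree appears. A degree-zero limit $u$ has nonnegative energy, so the total energy exceeds $n^{n/2}|\bB^n|/n$. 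The delicate point is ruling out the case where $u$ is constant and there is a single degree-one bubble. This is exactly where the strict lower bound $c_\alpha\ge m(1,\Omega)/n+\delta$ from Step 1 is needed, with $\delta$ beating the error terms $C(L+\kappa^n)$. Getting this quantitative gap uniform in $\kappa<\kappa_0$ is where I expect the real work to lie. Once it holds, the limit $u\in\I_1$ is the desired critical point of $E_{n,\kappa}$.
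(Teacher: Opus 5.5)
Your architecture (Sacks--Uhlenbeck regularization, mountain pass with almost-Möbius boundary data, bubbling as $\alpha\to0$, then exclusion of bubbles) is the one the paper uses for Theorem~\ref{th:main_2} and says carries over to Theorem~\ref{th:main_3}. However, your Step~3 cannot close as set up. You try to exclude the configuration ``constant body plus one degree-one bubble'' by requiring the min-max gap $\delta$ to beat the errors $C(L+\kappa^n)$. Your own Step~1 gives $m(1,\Omega)/n+\delta\le c_\alpha\le n^{\frac n2}|\bB^n|/n+C(L+\kappa^n)+o_\alpha(1)$. Since $m(1,\Omega)=n^{\frac n2}|\bB^n|$, this forces $\delta\le C(L+\kappa^n)$ always. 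The gap coming from $\Omega$ not being a ball degenerates as $\Omega$ approaches a ball; in the paper the constant $c_2$ of Proposition~\ref{pr:lowerc} comes from a non-quantitative compactness argument and depends on $\Omega$. No stability argument can reverse this.

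The missing idea is rigidity of the bubble itself, Theorem~\ref{th:Energy_gap_Mobius}: a degree-one free-boundary $n$-harmonic map $\bB^n\to\bB^n$ with $E_n\le n^{\frac n2}|\bB^n|+\eps_M$ is a Möbius map, hence has energy exactly $n^{\frac n2}|\bB^n|$. With this, you only choose $L$ and $\kappa_0$ so that the excess in the upper bound is below $\min\{\eps_M,\eps_c\}$, a threshold independent of the size of the min-max gap. In the bad case the bubble is then Möbius. The leftover energy is below $\eps_c$, so the body is constant and there are no other bubbles; for this you need Lemma~\ref{le:3.1inMMR} adapted to critical points of $E_{n,\kappa}$. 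The limit level is then exactly $n^{\frac n2}|\bB^n|/n$, which contradicts $c\ge c_2/n$ for any $c_2>n^{\frac n2}|\bB^n|$, however small the gap.

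There is a second gap in Step~2: you assert an exact energy identity $\lim c_\alpha=E_{n,\kappa}(u)+\sum_iE_n(\omega_i)/n$. For $(n+\alpha)$-approximations this is not available. The bubbling theorem of \cite{MartinoMazowieckaRodiac2025} only gives a weak identity with coefficients $\lambda_i^*=\liminf\lambda_{i,\alpha}^{-\alpha}\in[1,\infty)$. Without $\lambda_1^*=1$, the total energy $\lambda_1^*n^{\frac n2}|\bB^n|/n$ need not contradict the lower bound. The paper proves $\lambda_1^*=1$ only in the relevant configuration (constant body, single bubble), by integrating the Pohozaev identity of Proposition~\ref{prop:Pohozaev} across the neck and using the no-neck property. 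For $E_{n,\kappa}$ you must redo that identity with the potential term, which is harmless at small scales.

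Several points are misplaced in your outline:
\begin{itemize}
\item The ``real work'' is not uniformity in $\kappa$. The lower bound transfers for free because the potential is nonnegative, so $c_\kappa(r)\ge c(r)/n$. The potential also drops out of the boundary level $c_1$ as $r\to0$, since $1-|v|^2\le2|v-b|$ for $|b|=1$, $|v|\le1$ and the almost-Möbius maps converge in $L^1$ to points of $\mathbb{S}^{n-1}$. The only role of $\kappa_0$ is to keep $\kappa^n|\Omega|$ below the thresholds.
\item The lower bound needs the $(n-1)$-parameter class $F\colon\overline\Omega\to\I_1$ with prescribed values on $\partial\Omega$, together with the barycenter argument of Lemma~\ref{lm:topology}. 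A one-parameter family of paths is not enough.
\item The degree-zero threshold $(1+\eta)n^{\frac n2}|\bB^n|$ you invoke is not available. It is also unnecessary: the small-energy gap $\eps_c$ suffices.
\end{itemize}
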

Theorem \ref{th:main_2} can be seen as a perturbation result with respect to perturbation of the domain and Theorem \ref{th:main_3} can be seen as a perturbation result for perturbations of the domain and of the energy.

\subsection{Earlier works on variational problems with prescribed degrees}

Before explaining the strategy of the proof of Theorem \ref{th:main_2} we would like to mention some earlier results on variational problems with prescribed topological degrees which are a particular case of problems with free boundaries. Such problems were introduced for instance in the context of the GL energy \eqref{eq:GL_energy} in dimension \(n=2\), see \cite{Berlyand_Voss_2001,Berlyand_Mironescu_2003}, as an intermediate problem between the minimization of GL energy without magnetic field but with a prescribed boundary data with non-zero degree, see \cite{Bethuel_Brezis_Helein_1994}, and the minimization of the full GL energy with magnetic field (and with Neumann boundary conditions), see \cite{Sandier_Serfaty_2007}. Whereas the main focus in the study of the GL energy with a prescribed Dirichlet condition or in the study of the GL energy with magnetic field is the asymptotic behaviour as \(\e \to 0\) of a family of minimizers, the difficulty in the model with prescribed degrees is to prove the mere existence of minimizers because of the lack of compactness of the problem. The literature on prescribed degree problems for the GL energy is now abundant and we refer to \cite{Golovaty_Berlyand_2002,Berlyand_Mironescu_2006,Berlyand_Golovaty_Rybalko_2006,Berlyand_Mironescu_2008,Dos_Santos_2009,Berlyand_Rybalko_2010,Berlyand_Misiats_Rybalko_2010,Berlyand_Misiats_2011,Farina_Mironescu_2013,Berlyand_Mironescu_Rybalko_Sandier_2014,Misiats_2014,Lamy_Mironescu_2014,Mironescu_2014,Rodiac_Sandier_2014,DosSantos_Rodiac_2016,Rodiac_2019} and references therein. Since the difficulty is the lack of compactness of the problem and it is already present when studying only the Dirichlet energy with prescribed degrees, the issue of the construction of harmonic maps with prescribed degrees on the boundary was also considered in \cite{Berlyand_Mironescu_Rybalko_Sandier_2014}, for simply connected domains, and in \cite{Hauswirth_Rodiac_2016}, for annular domains. We note that harmonic maps with prescribed degrees are directly related to the notion of half-harmonic maps introduced by Da-Lio--Rivi\`ere, see \cite{DaLio_Riviere_2011,DaLioRiviere_2011b,Moser_2011}. Besides its possible interest to find a generalization of conformal maps between domains of \(\R^n\), the study of \(n\)-harmonic maps with prescribed degrees or with free boundary into \(\mathbb{S}^{n-1}\) can help gaining a new insight into problems which are critical for Sobolev injections and problems with lack of compactness and developing tools which are more robust than the ones used in dimension 2. We observe that variational problems with lack of compactness appear often in geometry and physics and include for example, semi-linear elliptic problems with critical exponents (e.g.\ the Yamabe problem), harmonic maps in dimension 2, minimal surfaces, surfaces with constant mean curvature, Willmore surfaces, Yang--Mills connections..., see e.g.\  \cite[Chapter II]{Struwe_2008} or the surveys \cite{Brezis_1988,Struwe_2023,R2020,LMR2026}.

\subsection{Strategy of the proof}
We now explain the strategy of the proof of our main result Theorem \ref{th:main_2}. The idea is inspired by \cite{Berlyand_Mironescu_Rybalko_Sandier_2014} and is the following. We want to apply the mountain pass theorem of Ambrosetti--Rabinowitz in the form recalled in Theorem \ref{th:Mountain Pass} below. In order to apply Theorem \ref{th:Mountain Pass}, with the notation of this theorem, we would like to take
\[ K_0=\p \Omega, \quad K=\overline{\Omega}, \quad X=\I_1, \quad J=E_n\]
and we need to find a continuous path \(\chi\colon \p \Omega \rightarrow \I_1\) verifying some properties. Let us assume that we have a family of continuous paths indexed by \( 0\leq r <1\) and denoted by \(\chi_r\colon \p \Omega \rightarrow \I_1\) that satisfy:
\begin{equation}\label{eq:minimal_energy}
\forall a\in \p\O,\qquad E_n( \chi_r(a)) \xrightarrow[r \to 0]{} n^{\frac{n}{2}} |\mathbb{B}^n|,
\end{equation}
\begin{equation}\label{eq:topo_condition}
\forall a\in \p\O,\qquad \barint_{\Omega} |\chi_r(a) -a|  \dd x \xrightarrow[r \to 0]{} 0.
\end{equation}
Condition \eqref{eq:topo_condition} means that the map \(a\in \p \O \mapsto \barint_{\Omega} \chi_r(a) \dd x\) is close to the identity in \(L^\infty\) norm for \(r\) sufficiently small. Thus, thanks to a topological argument similar to Brouwer fixed point theorem, for each \(F\in \C^0(\overline{\Omega}, \mathcal{I}_1)\) such that \( F=\chi_r\) on \(\p \Omega\), we can find \(a_F\in \Omega\) such that \(\barint_\Omega F(a_F)=0\). This property along with Condition \eqref{eq:minimal_energy} and the fact that \(\Omega\) is not a round ball can be used to proved that we are in presence of a mountain pass geometry, that is
\begin{equation*}
c_1(r)\coloneqq\max_{\p \Omega} E_n(\chi_r(a)) < \inf_{\{F\in \C^0(\overline{\Omega}, \mathcal{I}_1); F=\chi_r \text{ on } \p \Omega\}} \max_{a\in \overline{\Omega}} E_n(F(a)) \eqqcolon c(r).
\end{equation*}
The paths \(\chi_r\) are taken to be M\"obius maps centred at \( (1-r)a\) in \cite{Berlyand_Mironescu_Rybalko_Sandier_2014} where the authors prove the existence of critical points  in \(\I_1\) of a Ginzburg--Landau energy like \eqref{eq:GL_energy} for \(\kappa\) small with \(\Omega\) a smooth bounded simply connected domain. Actually, they use the Riemann mapping theorem and the conformal invariance of the Dirichlet energy to work on a disk with a modified Ginzburg--Landau energy. Since the Riemann mapping theorem does not hold in higher dimension, the construction of paths \(\chi_r\) satisfying \eqref{eq:minimal_energy} and \eqref{eq:topo_condition} is one of the main novelty of this paper to which Section \ref{sec:AlmostMobius} is devoted. 

Proving the presence of a mountain pass geometry is not the only difficulty here. Indeed, first we cannot apply directly Theorem \ref{th:Mountain Pass} because \(\I\) and \(\I_1\) are not known to be \(\C^1\) Banach manifolds. Second, and more importantly, since the problem is critical for the Sobolev injections, Theorem \ref{th:Mountain Pass} provides only a Palais--Smale sequence and nothing guarantees that it converges to an actual critical point of \(E_n\) in \(\I_1\). The strategy to overcome these two difficulties is well-known by now. We consider subcritical problems by introducing a parameter \(\a>0\) and by looking for critical points of 
\begin{equation}\label{eq:SacksUhlenbeckfunctional}
	E_{n+\alpha}(u) \coloneqq \int_{\Omega} |\dd u|^{n+\alpha} \dx,
\end{equation}
in the class
\begin{equation}\label{eq:Idalphaclass}
	\I_{1,\alpha}\coloneqq \{v\in W^{1,n+\alpha}(\Omega,\R^n)\colon |v|=1 \text{ on } \partial \Omega \text{ and } \deg(v,\partial \Omega)=1 \}.
\end{equation}
This is reminiscent of the pioneer work of Sacks--Uhlenbeck \cite{sacks1981}. In this situation we can prove that \(\I_{1,\alpha}\) is a \(\C^1\) Banach manifold, that \(E_{n+\alpha}\) is a \(\C^1\) functional which satisfies the Palais--Smale condition and that the mountain pass geometry persists for \(\alpha>0\) small enough. Hence for each \(\alpha>0\) we obtain, thanks to Theorem \ref{th:Mountain Pass}, a critical point \(u_{\alpha}\) of \(E_{n+\alpha}\) in \(\I_{1,\alpha}\). We then study the convergence of \( (u_{\alpha})_{\alpha>0}\), the main ingredient for that being the bubbling theorem obtained in \cite[Theorem 1.3]{MartinoMazowieckaRodiac2025}. However, contrarily to the case when \(n=2\), see \cite[Theorem 8.13]{Berlyand_Mironescu_Rybalko_Sandier_2014}, the precise energy of the bubbles is not known in higher dimension. Thus, in order to conclude from a contradiction argument using the bubbling decomposition that \(u_\alpha\) converges weakly in \(W^{1,n}\) towards a map in \(\I_1\), we need to show that no bubble of degree one has an energy arbitrarily close to the energy level \(n^{\frac{n}{2}}|\bB^n|\). This energy gap result is the second main novelty of this work and is shown in Section \ref{sec:gapMobius} by using the stability of the identity map for the free-boundary problem.

\subsection{Organization of the paper}The paper is organized as follows. In Section \ref{sec:Preliminaries} we recall some properties of the topological degree and give a result estimating the energetic cost for a sequence of maps to have a different degree in the weak limit. This is a first study of the lack of compactness of the problem. We also prove two preliminary technical lemmas, the first one estimating the difference in the \(L^1\) norm of two \(n\)-harmonic extensions in term of the \(L^1\) norm of their traces  and the second one being a gap theorem showing that there exist no free boundary \(n\)-harmonic maps with arbitrarily small energy. Section \ref{sec:Minimization} is devoted to the proof of Theorem \ref{th:main_1}, the main ingredients are the area-energy inequality recalled in Section \ref{sec:Preliminaries} and a construction of test functions that are almost conformal. Section \ref{sec:AlmostMobius} is devoted to the construction of continuous paths \(\chi_r\colon \p \Omega \rightarrow \I_1\) for \(r\) small satisfying \eqref{eq:minimal_energy} and \eqref{eq:topo_condition}. The maps \(\chi_r(a)\) play a similar role as M\"obius maps in the unit ball \(\mathbb{B}^n\) and are thus called ``almost M\" obius'' maps. With the help of these paths, we can define our min-max scheme, for the approximate functional \eqref{eq:SacksUhlenbeckfunctional}, in Section \ref{sec:MountainPass}. In this section we first explain that \(\I_{1,\alpha}\) in \eqref{eq:Idalphaclass} is a \(\C^1\) Banach manifold, that \(E_{n+\alpha}\) is \(\C^1\) on that space and that it satisfies the Palais--Smale condition. Then we prove that our min-max scheme leads to a genuine min-max geometry, hence obtaining \((n+\alpha)\)-harmonic maps with free boundary into \(\mathbb{S}^{n-1}\). Before analysing the asymptotic behaviour of these critical points when \(\alpha\) goes to zero, we prove in Section \ref{sec:gapMobius} a second gap theorem showing that there is no \(n\)-harmonc maps with free boundary into \(\mathbb{S}^{n-1}\) with \(n\)-energy arbitrarily close to \( E_n(\id)=n^{\frac{n}{2}}|\mathbb{B}^n|\). This result is instrumental in the proof by contradiction of Theorem \ref{th:main_2} that we complete in Section \ref{sec:ProofMain}.

\subsection*{Open problems} We conclude this introduction by mentioning some open problems.

\begin{enumerate}

	\item In the general case where $\Omega$ is not close to a ball, the min-max procedure produce a bubbling tree of free-boundary $n$-harmonic maps. Can we estimate the total energy? Is it possible to exclude the existence of bubbles? If not, is it possible to prove that there is no free-boundary $n$-harmonic maps with degree 1?

	\item Can we show that the $n$-harmonic map from Theorem \ref{th:main_2} is close to a Möbius map in some sense? If yes, is it a diffeomorphism?

	\item Can we obtain a sharp estimate of the energy gap for Möbius maps (i.e. the number $\eps_M$ in \Cref{th:Energy_gap_Mobius})? What would be the first free boundary $n$-harmonic maps with energy strictly larger than $n^{\frac{n}{2}}\, |\bB^n|$? Is there any?

\end{enumerate}

\subsection*{Notation} 

For $p>1$ we use the symbol $\Delta_p$ to denote the $p$-Laplacian, i.e., for a function $u$ we write $\Delta_p u\coloneqq \text{div} (|\dd u|^{p-2}\dd u)$.

\subsection*{Acknowledgement}

The project is co-financed by:
\begin{itemize}
 \item (D.M.) Swiss National Science Foundation, project SNF $200020\text{\textunderscore}219429$;
 \item (K.M.) the Polish National Agency for Academic Exchange within Polish Returns Programme - BPN/PPO/2021/1/00019/U/00001;
 \item (K.M.) the National Science Centre, Poland grant No. 2023/51/D/ST1/02907;
 \item  (R.R.) has been supported by the French government, through the \(\textbf{UCA}^{JEDI}\) Investments in the Future project managed by the National Research Agency (ANR) with the reference number ANR-15-IDEX-01.
\end{itemize}
Part of this work was realized when D.M. and R.R. were visiting the University of Warsaw, they would like to thank the Mathematics Department for its hospitality and excellent working conditions.

\section{Preliminaries}\label{sec:Preliminaries}

\subsection{Properties of the degree}
We start by recalling the definition and some properties of the topological degree. We refer to \cite{Outerelo_Ruiz_2009} for the classical theory of the topological degree for \(\C^1\) and \(\C^0\) maps and to \cite{Brezis_Nirenberg_1995_I,Brezis_Nirenberg_1995_II,Brezis_1997_degree,Brezis_2006_degree,Mironescu_2007,Mironescu_2011} for the more advanced theory of the topological degree in Sobolev and \(\VMO\) spaces.

Let \(X\) be an $(n-1)$-dimensional \(\C^1\) compact connected manifold without boundary. Let \(\sigma\) be an \((n-1)\)-differential form on \(\mathbb{S}^{n-1}\). For maps \(u\in \C^1(X,\mathbb{S}^1)\) we can define their topological degree by the formula
\begin{equation}\label{degree1}
\deg(u,X)= \frac{1}{\int_Y\sigma}\int_X u^*\sigma,
\end{equation}
where $u^*\sigma$ denotes the pull-back of $\sigma$ by $u$. It can be proved that the degree is an integer, that this quantity is independent of the chosen differential form and that it counts the number of times $\mathbb{S}^{n-1}$ is covered by $u(X)$ taking into account algebraic multiplicity. Moreover, an important property is that the degree is invariant by homotopy. Hence we can extend its definition to maps in \(\C^0(X,\mathbb{S}^{n-1})\). As explained in the introduction, the definition of the topological degree can be furthermore extended to maps in \(\VMO(X,\mathbb{S}^{n-1})\), cf.\ \cite{Brezis_Nirenberg_1995_I} and thus to Sobolev maps in the critical space \(W^{1-\frac{1}{n},n}(X,\mathbb{S}^{n-1})\). The definition of topological degree in some Sobolev spaces was observed earlier in \cite{Brezis_Coron_1983} and \cite[Appendix by L. Boutet de Monvel and O. Gabber]{Boutet_Georgescu_Purice_1991}. 

We now give a formula that will be useful to find a lower-bound on the \(n\)-energy in terms of the degree. In order to write this formula, we take \(\omega\) the volume form on \(\R^n\) and \(\sigma\) the classical volume form on \(\mathbb{S}^{n-1}\) given by

\begin{equation*}
\omega=\dd x_1\wedge \ldots \wedge \dd x_n, \quad \sigma =\sum_{j=1}^n (-1)^{j-1}x_j\, \dd x_1\wedge\ldots \wedge \widehat{\dd x_j} \wedge \ldots \wedge \dd x_n,
\end{equation*}
where $\widehat{\dd x_j}$ means that we omit the term $\dd x_j$ in the wedge product. Now, suppose that $X= \partial \Omega$, with $\Omega$ a \(\C^1\) bounded domain of $\R^n$. Let $\tilde{u}$ be any $\mathcal{C}^1$ extension to $\Omega$ of $u$. Using Stokes' Theorem we obtain
\begin{align*}
\deg(u,X) = \frac{1}{|\mathbb{S}^{n-1}|}\int_{\p \Omega} u^*\sigma = \frac{1}{|\mathbb{S}^{n-1}|}\int_\Omega \dd[\tilde{u}^*\sigma] =\frac{1}{|\mathbb{S}^{n-1}|}\int_\Omega\tilde{u}^*\dd\sigma = \frac{n}{|\mathbb{S}^{n-1}|}\int_\Omega\tilde{u}^*\omega. 
\end{align*}
Since $|\bB^n|=\frac{|\mathbb{S}^{n-1}|}{n}$, we obtain that
\begin{equation}\label{eq:formula_degree_ext}
\deg(u, \p \Omega)= \frac{1}{|\mathbb{B}^n|}\int_{\Omega}\Jac\, \tilde{u}(x)\, \dd x=\frac{1}{|\bB^n|}\int_\Omega \pl_{x_1} \tilde{u} \wedge \ldots \wedge \pl_{x_n}\tilde{u},
\end{equation}
where $\Jac\, \tilde{u}(x)= \det \left(\dd \tilde{u}(x)\right)$ and \(\tilde{u}\) denotes any \(\C^1\) extension of \(u\) to \(\Omega\). This formula remains true for maps \(u\in W^{1-\frac{1}{n},n}(\p \Omega, \mathbb{S}^{n-1})\) and their extensions \(\tilde{u}\) in \(W^{1,n}(\Omega,\R^n)\) by using that the right-hand side \eqref{eq:formula_degree_ext} is continuous with respect to the strong convergence in \(W^{1,n}\) and by the density of smooth maps in \(u\in W^{1-\frac{1}{n},n}(\p \Omega, \mathbb{S}^{n-1})\) see e.g.\ \cite[Theorem 2.1]{Mironescu_2007}. 
With the help of the Hadamard inequality and the arithmetico-geometric inequality, Formula \eqref{eq:formula_degree_ext} leads to the following result.

\begin{lemma}\label{lowbound}
Let $\O\subset \R^n$ be a \(\C^1\) bounded open set such that \(\p \Omega\) is a \(\C^1\) compact connected \((n-1)\)-manifold. Let $u\in W^{1,n}(\O,\R^n)$ then
\begin{equation}\label{lowerbound1}
\int_\O |\dd u|^n \geq n^{\frac{n}{2}} \left| \int_\O \Jac\, u \right|.
\end{equation}
Hence, if $u\in \I_k$, we see from Formula \eqref{eq:formula_degree_ext} that
\begin{equation}\label{lowerbound2}
E_n (u)= \int_\O |\dd u|^n \geq  n^{\frac{n}{2}}\, |\bB^n|\,| k|.
\end{equation}
Moreover, equality holds in the previous inequality if and only if $u$ is conformal, i.e.\ \( (\p_{x_1}u,\dots,\p_{x_n}u)\) is an orthogonal system and  \(|\p_{x_1} u|=\cdots=|\p_{x_n} u|\) for a.e.\ \(x\) in \(\Omega\) and \(\Jac (u) \geq 0\) a.e.\ or \(\Jac (u) \leq 0\) a.e.\ depending on the sign of the degree.
\end{lemma}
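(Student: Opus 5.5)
The plan is to prove the pointwise inequality $|\dd u(x)|^n \geq n^{\frac n2}|\Jac\, u(x)|$ for a.e.\ $x\in\Omega$, integrate it, and then characterize equality; \eqref{lowerbound2} will follow directly from \eqref{eq:formula_degree_ext}.

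\textbf{Pointwise bound.} Fix a point $x$ where $u$ is differentiable in the $W^{1,n}$ sense, and write $A=\dd u(x)\in M_n(\R)$ with columns $a_j=\p_{x_j}u(x)$. Here $|A|^2=\sum_j|a_j|^2$ is the Frobenius norm. Hadamard's inequality gives $|\det A|\leq \prod_j |a_j|$. The arithmetic--geometric mean inequality applied to $|a_1|^2,\dots,|a_n|^2$ gives
\begin{equation*}
\prod_j|a_j|^2\leq \Big(\tfrac1n\sum_j|a_j|^2\Big)^n=n^{-n}|A|^{2n}.
\end{equation*}
Taking square roots, $|\det A|\leq n^{-\frac n2}|A|^n$. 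Integrating over $\Omega$ and using $|\int_\Omega \Jac\, u|\leq \int_\Omega|\Jac\, u|$ yields \eqref{lowerbound1}; the integral is finite because $u\in W^{1,n}$. For $u\in\I_k$, take $\tilde u=u$ itself in \eqref{eq:formula_degree_ext}. This formula holds for $W^{1,n}$ extensions, as recalled above via density and strong continuity of the Jacobian integral. It gives $\int_\Omega\Jac\, u=k|\bB^n|$, hence \eqref{lowerbound2}.

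\textbf{Equality.} The chain has three inequalities, and equality in the integrated statement forces equality in each of them for a.e.\ $x$, since the integrand difference $|\dd u|^n-n^{\frac n2}|\Jac\, u|$ is nonnegative.
\begin{itemize}
\item Equality in $|\int\Jac\, u|\leq\int|\Jac\, u|$ means $\Jac\, u$ has a constant sign a.e. This sign is that of $k$ when $k\neq 0$; when $k=0$ everything vanishes.
\item Equality in AM--GM forces all $|a_j|$ to be equal.
\item Equality in Hadamard forces the columns to be pairwise orthogonal, or some column to vanish. If a column vanishes, the equal norms force $A=0$, which is still conformal (with scaling factor $0$).
\end{itemize}
Conversely, if the columns are orthogonal with equal length $\lambda$, then $|A|^n=(n\lambda^2)^{\frac n2}$ and $|\det A|=\lambda^n$, so equality holds pointwise. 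If moreover the Jacobian has constant sign, the integrated inequality is an equality. This gives the stated equivalence with conformality plus a sign condition on $\Jac\, u$.

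\textbf{Main obstacle.} The only delicate step is the Hadamard equality case when $\det A=0$. It is handled by the observation above: equal column norms combined with a vanishing column force $A=0$. Everything else is a routine pointwise computation.
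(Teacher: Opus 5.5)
Your proof is correct and follows the route the paper indicates. The paper omits the details and only points to Hadamard's inequality combined with the arithmetic--geometric mean inequality, to Formula \eqref{eq:formula_degree_ext}, and to \cite[Lemma 5.8]{Rodiac}; your pointwise bound $|\dd u|^n\geq n^{\frac n2}|\Jac\, u|$, the integration step, and the equality analysis through the equality cases of those inequalities (including the vanishing-column case) fill in exactly that argument.
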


The details of the proof of this lemma are omitted, see e.g.\ \cite[Lemma 5.8]{Rodiac}. We note that a similar result was obtained for maps from \(\mathbb{S}^n\) to \(\mathbb{S}^n\) in \cite{Xu_Yang_1992}. With the help of Lemma \ref{lowbound}, to prove Theorem \ref{th:main_1} it suffices to construct a matching upper-bound and this is what we will do in Section \ref{sec:Minimization}.

\subsection{Bubbling analysis}

We pursue these preliminaries with a first study of the lack of compactness of the problem. The following result is the analogue of \cite[Lemma 1]{Berlyand_Mironescu_2006} where it is called ``Price Lemma'' since it quantifies the energetic cost for the limit of a weakly convergent sequence in \(W^{1,n}\) to have a different degree from the sequence.

\begin{proposition}\label{prop:Price_lemma}
	Let \(\Om\subset \R^n\) be a \(\C^1\) bounded domain such that \(\p \Omega\) is a \(\C^1\) compact connected \( (n-1)\)-manifold. Let \(\ell\in \mathbb{Z}\) and let \( (u_k)_k\subset \mathcal{I}_{\ell}\) be such that \(u_k \rightharpoonup u \) in \(W^{1,n}(\Om,\R^n)\) and \(\dd u_k\rightarrow \dd u\) a.e.\ in \(\Om\). Then
	\begin{equation}\label{eq:Price_lemma}
		\liminf_{k\to +\infty} \int_\Om |\dd u_k|^n \geq \int_\Om |\dd u|^n +n^{\frac{n}{2}}\, |\mathbb{B}^n|\, |\deg (u,\partial \Om)-\ell|.
	\end{equation}
\end{proposition}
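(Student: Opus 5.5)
The plan is to combine a Brezis--Lieb type splitting of the energy with the lower bound of Lemma \ref{lowbound}, applied to the difference \(v_k:=u_k-u\).

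\textbf{Energy splitting.} Since \(\dd u_k\to \dd u\) a.e.\ and \((\dd u_k)\) is bounded in \(L^n\), the Brezis--Lieb lemma gives
\[
\int_\Om |\dd u_k|^n=\int_\Om |\dd u|^n+\int_\Om |\dd v_k|^n+o(1).
\]
The same argument applies to the Jacobian. \(\Jac\) is a homogeneous polynomial of degree \(n\) in the entries of the gradient, and it is multilinear in the columns. So \(\Jac(u+v_k)-\Jac u-\Jac v_k\) is a sum of mixed terms, each containing at least one factor \(\dd u\) and at least one factor \(\dd v_k\). Since \(\dd v_k\to 0\) a.e.\ and \(\dd v_k\) is bounded in \(L^n\), each mixed term is bounded by \(C|\dd u|^j|\dd v_k|^{n-j}\) with \(1\le j\le n-1\). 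These terms tend to \(0\) in \(L^1\): use Egorov's theorem together with the uniform integrability coming from the fixed function \(|\dd u|^j\in L^{n/j}\), or equivalently the standard Brezis--Lieb \(\eps\)-argument. Hence
\[
\int_\Om \Jac u_k=\int_\Om \Jac u+\int_\Om \Jac v_k+o(1).
\]

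\textbf{Degree identification.} The trace of \(u\) is a strong \(L^n\) limit of the traces of \(u_k\), so \(|\tr u|=1\) a.e.\ and \(u\in\I_d\) for some \(d\). Formula \eqref{eq:formula_degree_ext} then gives \(\int_\Om\Jac u_k=\ell|\bB^n|\) and \(\int_\Om \Jac u=d|\bB^n|\). Therefore \(\int_\Om \Jac v_k\to (\ell-d)|\bB^n|\). Applying \eqref{lowerbound1} to \(v_k\in W^{1,n}(\Om,\R^n)\), which needs no constraint on the trace, yields
\[
\int_\Om|\dd v_k|^n\ge n^{\frac n2}\left|\int_\Om \Jac v_k\right|\to n^{\frac n2}|\bB^n||\ell-d|.
\]
Taking \(\liminf\) in the energy splitting gives \eqref{eq:Price_lemma}.

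\textbf{Main obstacle.} The only delicate point is the vanishing of the cross terms in the Jacobian splitting, because a.e.\ convergence is only available for \(\dd v_k\) and no strong convergence holds. Each cross term, however, carries a factor of the fixed \(L^n\) function \(\dd u\). The estimate \(|a|^j|b|^{n-j}\le \eps|b|^n+C_\eps|a|^n\) then reduces the problem to the dominated convergence theorem applied to \(\bigl(|\text{mixed}|-\eps|\dd v_k|^n\bigr)^+\le C_\eps|\dd u|^n\). The same trick gives the energy Brezis--Lieb identity. A minor additional point is checking that \eqref{eq:formula_degree_ext} is valid for \(W^{1,n}\) extensions, but the paper has already stated this.
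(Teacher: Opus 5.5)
Your proof is correct, and its overall structure matches the paper's: Brezis--Lieb for the energy, then the identity \(\int_\Om \Jac(u_k-u)=\int_\Om\Jac u_k-\int_\Om \Jac u+o(1)\), then \eqref{lowerbound1} applied to \(v_k\) together with the degree formula. The difference is in how you obtain the Jacobian splitting.

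The paper proves it in Lemma \ref{le:jacobianprice} using the weak continuity of minors. It expands \(\Jac(u_k-u)\) multilinearly. By induction and integration by parts, against \(u_k^{\ell+1}\), which converges strongly by Rellich--Kondrachov, wedge products of \(\ell\le n-1\) of the differentials \(\dd u_k^i\) converge weakly in \(L^{n/\ell}\). Hence each mixed term converges to the same term with \(u_k\) replaced by \(u\), and a binomial identity collects the coefficients to \(-1\).

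You instead use the a.e.\ convergence \(\dd u_k\to\dd u\). This is already a hypothesis, and it is needed anyway for Brezis--Lieb on the energy. You then kill each mixed term \(|\dd u|^j|\dd v_k|^{n-j}\) by Young's inequality plus dominated convergence. Equivalently: \(|\dd v_k|^{n-j}\rightharpoonup 0\) weakly in \(L^{n/(n-j)}\), tested against the fixed function \(|\dd u|^j\in L^{n/j}\).

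Your route is more elementary and self-contained: no null-Lagrangian structure, no integration by parts, no combinatorics. The paper's lemma is stronger, since it holds under mere weak convergence in \(W^{1,n}(\Om,\R^n)\). That is a reusable fact, but not needed for this proposition.

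You also make explicit, via compactness of the trace map into \(L^n(\p\Om)\), that \(|\tr u|=1\) a.e., so \(u\in\I\) and \(\deg(u,\p\Om)\) is defined. The paper uses this tacitly.
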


\begin{proof}
	By applying Brezis--Lieb's lemma \cite{BrezisLieb83}
	we find that, if \(v_k\coloneqq u_k-u\) then
	\begin{align*}
		\int_\Om |\dd u_k|^n=\int_\Om |\dd u|^n+\int_\Om |\dd v_k|^n+\smallO_k(1).
	\end{align*}
	We can then use \Cref{lowbound} to obtain \(\int_\Om |\dd v_k|^n \geq n^{\frac{n}{2}} \left| \int_{\Omega} \Jac (v_k) \right|\). But from \Cref{le:jacobianprice} below we have that
	\(\int_\Om \Jac(v_k) =\int_\Om \Jac (u_k)-\int_\Om \Jac (u)+\smallO_k(1).\)
	Hence, with the help of Lemma \ref{lowbound} we find that 
	\begin{align*}
		 \int_\Om |\dd u_k|^n
		 &\ge \int_\Om |\dd u|^n +n^{\frac{n}{2}} \left| \int_\Om \Jac (u_k)-\int_\Om \Jac (u)\right|+\smallO_k(1) \\
		& =\int_\Om |\dd u|^n +n^{\frac{n}{2}}\, | \mathbb{B}^n|\, |\deg(u_k,\partial \Om)-\deg(u,\partial \Om)|+\smallO_k(1).
	\end{align*}
\end{proof}
In the proof of Proposition \ref{prop:Price_lemma} we have used the following lemma.

\begin{lemma}\label{le:jacobianprice}
	Let \((u_k)_k\subset W^{1,n}(\Om,\R^n)\) be such that \(u_k \rightharpoonup u\) in \(W^{1,n}(\Om,\R^n)\), then
	\begin{equation*}
		\int_\Om \Jac (u_k-u) =\int_\Om \Jac (u_k)-\int_\Om \Jac(u) +\smallO_k(1).
	\end{equation*}
\end{lemma}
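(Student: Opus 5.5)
The plan is to expand the Jacobian multilinearly and show that all mixed terms vanish in the limit by weak continuity of minors of order \(j<n\). Set \(v_k\coloneqq u_k-u\), so that \(v_k\rightharpoonup 0\) in \(W^{1,n}(\Om,\R^n)\). Since \(\Jac\) is multilinear in the columns \(\p_{x_1}(\cdot),\dots,\p_{x_n}(\cdot)\) (equivalently, \(\Jac(w)\,\dx=\dd w^1\wedge\cdots\wedge \dd w^n\)), expanding \(\Jac(u_k)=\Jac(v_k+u)\) gives
\[
\Jac(u_k)=\Jac(v_k)+\Jac(u)+\sum_{j=1}^{n-1} R_k^{j},
\]
where each \(R_k^j\) is a finite sum of terms of the form \(\pm\, M_j(\dd v_k)\, M'_{n-j}(\dd u)\). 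Here \(M_j\) is a \(j\times j\) minor of \(\dd v_k\) and \(M'_{n-j}\) is the complementary \((n-j)\times(n-j)\) minor of \(\dd u\), as in the Laplace expansion. It therefore suffices to prove \(\int_\Om M_j(\dd v_k)\,M'_{n-j}(\dd u)\to 0\) for every \(1\le j\le n-1\).

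By Hölder's inequality, \(M_j(\dd v_k)\) is bounded in \(L^{n/j}(\Om)\) uniformly in \(k\), and \(M'_{n-j}(\dd u)\in L^{n/(n-j)}(\Om)\). These exponents are conjugate, and both lie in \((1,\infty)\) precisely because \(1\le j\le n-1\). Hence it is enough to show that \(M_j(\dd v_k)\rightharpoonup 0\) weakly in \(L^{n/j}(\Om)\). By reflexivity and the uniform bound, every subsequence has a further subsequence converging weakly in \(L^{n/j}\). So it suffices to identify the limit as \(0\) in the sense of distributions, i.e.\ to show that \(\int_\Om M_j(\dd v_k)\,\psi\to0\) for all \(\psi\in \C^\infty_c(\Om)\).

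This identification is the main step, which I would prove by induction on \(j\) using the divergence structure of minors. For \(j=1\) it is immediate from \(\dd v_k\rightharpoonup 0\) in \(L^n\). For \(2\le j\le n-1\), a \(j\)-minor is a coefficient of \(\dd v^{i_1}\wedge\cdots\wedge \dd v^{i_j}=\dd\big(v^{i_1}\,\dd v^{i_2}\wedge\cdots\wedge \dd v^{i_j}\big)\), valid first for smooth maps and then for \(W^{1,n}\) maps by density. Integrating by parts against \(\psi\) gives
\[
\int_\Om M_j(\dd v_k)\,\psi=\pm\int_\Om v_k^{i_1}\, M_{j-1}(\dd v_k)\,\p_{x_l}\psi ,
\]
summed over suitable indices. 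By Rellich's theorem on the bounded \(\C^1\) domain \(\Om\), \(v_k\to0\) strongly in \(L^q(\Om)\) for every \(q<\infty\), in particular for \(q=n\). Meanwhile \(M_{j-1}(\dd v_k)\) is bounded in \(L^{n/(j-1)}\), and \(\frac1n+\frac{j-1}{n}=\frac jn\le 1\). Hence the right-hand side tends to \(0\); no inductive hypothesis is even needed beyond this bound. This yields \(M_j(\dd v_k)\rightharpoonup0\) in \(L^{n/j}\) for the full sequence.

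Pairing this weak convergence with \(M'_{n-j}(\dd u)\in L^{n/(n-j)}\) gives \(\int_\Om R_k^j\to 0\) for each \(j\), and integrating the expansion over \(\Om\) yields the lemma. The only delicate points are the justification of the divergence identity for Sobolev maps, handled by approximating with smooth maps in \(W^{1,n}\) for fixed \(k\), and the restriction \(j\le n-1\). The full determinant \(j=n\) is exactly \(\Jac(v_k)\), which need not converge weakly in \(L^1\), and it is precisely this term that is kept explicitly in the statement.
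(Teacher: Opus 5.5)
Your proof is correct, and it uses the same mechanism as the paper: a multilinear expansion of the Jacobian, plus weak convergence of minors of order $j<n$. That convergence comes from their closed-form structure, integration by parts against a compactly supported test form, and Rellich--Kondrachov.

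The difference is where you center the expansion. The paper expands $\Jac(u_k-u)$ directly in $\dd u_k$ and $\dd u$, so the mixed terms have nonzero limits. It therefore has to prove by induction on $\ell$ that $\dd u_k^1\wedge\cdots\wedge\dd u_k^\ell\rightharpoonup \dd u^1\wedge\cdots\wedge\dd u^\ell$ in $L^{n/\ell}$. The inductive hypothesis is genuinely needed there, to pass to the limit in $\int u_k^{\ell+1}\,\dd u_k^1\wedge\cdots\wedge\dd u_k^\ell\wedge\dd\varphi$. The paper then collects the coefficients of $\int\Jac(u)$ using $(-1)^n+\sum_{j=1}^{n-1}(-1)^j\binom{n}{j}=-1$.

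You instead write $\Jac(u_k)=\Jac(v_k+u)$ with $v_k\rightharpoonup 0$, so you only need the mixed terms to vanish. The uniform $L^{n/(j-1)}$ bound on $(j-1)$-minors together with $v_k\to 0$ in $L^n$ is then enough, with no induction and no binomial bookkeeping.

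The paper's route gives the slightly more informative statement, namely weak continuity of all minors of order $<n$ with nonzero limits. For this lemma, though, your organization is the more economical one.
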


\begin{proof}
	We start with the following algebraic identity which can be proved by using the multi-linearity of the determinant and an induction argument:
	\begin{multline}\label{eq:algebraic_jacobian}
		\Jac( u_k-u)= \Jac (u_k)+(-1)^n \Jac(u) -\sum_{j=1}^n \dd u_k^1\wedge \dd u_k^2 \wedge \widehat{ \dd u^j} \wedge \ldots \wedge \dd u_k^n \\
		+\sum_{i,j=1}^n \dd u_k^1\wedge \dd u_k^2 \wedge \widehat{\dd u^j} \wedge \ldots \wedge \widehat{\dd u^i}\wedge \ldots \dd u_k^n +\ldots \\
		+\sum_{i_1,\dots,i_{n-1}=1}^n \dd u_k^1 \wedge \widehat{\dd u^{i_1}}\wedge \widehat{\dd u^{i_2}}\wedge \ldots \wedge \widehat{\dd u^{i_{n-1}}}.
	\end{multline}
	Here the notation with the sum and \(\widehat{\dd u^j}\) means that  we sum over the possible positions of \( \dd u^j\). 
	Next we prove by induction on \(\ell\) that for every \(1\leq \ell \leq n\) 
	\begin{equation}\label{eq:weak_convergence_det}
		\int_\Om \dd u_k^1 \wedge \ldots \wedge \dd u_k^\ell \wedge \varphi \xrightarrow[k \to +\infty]{} \int_\Om \dd u^1 \wedge \ldots \wedge \dd u^\ell \wedge \varphi
	\end{equation}
	for every \(\varphi \in L^{\frac{n}{n-\ell}}(\Om, \Ep^{n-\ell}(\R^n))\). The proof follows \cite[Th\'eor\`eme 3.1]{Mironescu_2011}. The case \(\ell=1\) follows from the weak convergence of \(\dd u_k\) in \(L^n(\Om,\R^{n\times n})\). Let us assume that the result holds for some \( 1\leq \ell < n\). An integration by parts shows that, for \(\varphi \in \C^\infty_c(\Om, \Ep^{n-\ell-1}(\R^n))\),
	\begin{equation}\label{eq:ell+1}
		\int_\Om \dd u_k^1 \wedge \ldots \dd u_k^{\ell+1} \wedge \varphi =-\int_\Om \dd u_k^1 \wedge \ldots \dd u_k^{\ell} \wedge (u_k^{\ell+1} \dd\varphi).
	\end{equation}
	By the Rellich--Kondrachov embedding we know that \( u_k^{\ell+1} \rightarrow u^{\ell+1} \) strongly in \(L^q(\Om)\) for any \(1\leq q<+\infty\). By the induction hypothesis, \(\dd u_k^1 \wedge \ldots\wedge \dd u_k^{\ell} \) converges weakly to \( \dd u^1 \wedge \ldots\wedge \dd u^{\ell}\) in \(L^{\frac{n}{\ell}}(\Om, \Ep^{n-\ell}(\R^n))\). Hence we obtain that \eqref{eq:weak_convergence_det} holds for any \(\varphi\in \C^\infty_c(\Om, \Ep^{n-\ell-1}(\R^n))\). Since \(\C^\infty_c(\Om, \Ep^{n-\ell-1}(\R^n))\) is dense in \( L^{\frac{n}{n-\ell}}(\Om, \Ep^{n-\ell}(\R^n))\), Equation \eqref{eq:weak_convergence_det} also holds for \(\varphi\) in this space  and this concludes the induction argument.
	
	Now the weak convergence result \eqref{eq:weak_convergence_det} and the identity \eqref{eq:algebraic_jacobian} show that 
	\begin{align*}
		\int_\Om \Jac (u_k-u) &=\int_\Om \Jac (u_k) +\left[(-1)^n+\sum_{k=1}^{n-1}(-1)^k \binom{n}{k}\right] \int_\Om \Jac (u)+\smallO_k(1) \\
		&=\int_\Om \Jac (u_k)-\int_\Om \Jac (u) +\smallO_k(1),
	\end{align*}
	where we used the Newton binomial formula to obtain \((-1)^n+\sum_{k=1}^{n-1}(-1)^k \binom{n}{k}=-1\).
\end{proof}

\subsection{Regularity of $n$-harmonic extension}

In this last part of the preliminaries section we give two results related to the regularity of \(n\)-harmonic maps. The first result can be seen as a result of uniform continuity of the \(n\)-harmonic operator for the \(L^1\) norm. The second result is an energy gap theorem stating that \(n\)-harmonic maps with free boundary into \(\mathbb{S}^{n-1}\) cannot have their energy arbitrarily close to zero. Such a result is already contained in \cite[Lemma 3.1]{MartinoMazowieckaRodiac2025} but we give a reformulation that makes clear that the gap is independent of the domain inside an appropriate class of the domains since it will be important later.
\begin{lemma}[Uniform continuity of the $n$-harmonic extension in $L^1$]\label{lm:uniform_nharm_extension}
Let $\Omega\subset\R^n$ be a bounded Lipschitz domain and let $N\in\N$. For any $\Lambda>0$ and $\eps>0$, there exists $\eta=\eta(\eps,\Lambda,n)>0$ such that the following holds.
Let $\gamma_1,\gamma_2\in W^{1-\frac{1}{n},n}(\partial\Omega,\R^N)$ such that 
\[
\|\gamma_1\|_{W^{1-\frac{1}{n},n}(\partial\Omega)}
+
\|\gamma_2\|_{W^{1-\frac{1}{n},n}(\partial\Omega)}
\le \Lambda,
\qquad \text{ and } \qquad
\|\gamma_1-\gamma_2\|_{L^1(\partial\Omega)}
\le \eta.
\]
Let $u_1,u_2\in W^{1,n}(\Omega,\R^N)$ denote their $n$-harmonic extensions. Then
\[
\|u_1-u_2\|_{L^1(\Omega)} \le \eps.
\]
\end{lemma}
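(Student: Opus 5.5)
Since $\Omega$ and $N$ are fixed, I will argue by contradiction and use compactness. Suppose the statement fails for some $\eps>0$ and $\Lambda>0$. Then there are pairs $\gamma_1^k,\gamma_2^k$ with
\[
\|\gamma_1^k\|_{W^{1-\frac1n,n}(\partial\Omega)}+\|\gamma_2^k\|_{W^{1-\frac1n,n}(\partial\Omega)}\le\Lambda
\quad\text{and}\quad
\|\gamma_1^k-\gamma_2^k\|_{L^1(\partial\Omega)}\to 0,
\]
whose $n$-harmonic extensions satisfy $\|u_1^k-u_2^k\|_{L^1(\Omega)}\ge \eps$.

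\textbf{Step 1: uniform energy bound.} I first bound the extensions uniformly. The $n$-harmonic extension $u_i^k$ minimizes $\int_\Omega|\dd v|^n$ among maps with trace $\gamma_i^k$. Let $R$ be a bounded linear right inverse of the trace operator from $W^{1-\frac1n,n}(\partial\Omega)$ to $W^{1,n}(\Omega)$; it exists since $\Omega$ is Lipschitz. Comparing with $R\gamma_i^k$ gives $\|\dd u_i^k\|_{L^n}\le C\Lambda$. Together with a Poincaré-type inequality with boundary term,
\[
\|v\|_{L^n(\Omega)}\le C\big(\|\dd v\|_{L^n(\Omega)}+\|\tr v\|_{L^1(\partial\Omega)}\big),
\]
this yields $\|u_i^k\|_{W^{1,n}}\le C(\Lambda,\Omega)$.

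\textbf{Step 2: extract limits.} By Rellich--Kondrachov and compactness of the trace embedding $W^{1-\frac1n,n}(\partial\Omega)\hookrightarrow L^1(\partial\Omega)$, after passing to a subsequence we have $\gamma_i^k\to\gamma_i$ in $L^1(\partial\Omega)$ and weakly in $W^{1-\frac1n,n}$. Since $\|\gamma_1^k-\gamma_2^k\|_{L^1}\to 0$, the two limits coincide: $\gamma_1=\gamma_2=:\gamma$. Likewise $u_i^k\rightharpoonup u_i$ weakly in $W^{1,n}$ and strongly in $L^1(\Omega)$. By continuity of the trace under weak convergence, $\tr u_i=\gamma$ for $i=1,2$.

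\textbf{Step 3: identify the limits (main obstacle).} I must show that $u_1=u_2$, namely that both equal the unique $n$-harmonic extension $u$ of $\gamma$; uniqueness holds by strict convexity. This is the delicate step, because $u_i$ is only a weak limit of minimizers with varying boundary data.

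The first thing I would try is to show that $u_i$ minimizes for its own trace. Take any competitor $w$ with $\tr w=\gamma$ and form $w_k=w+R(\gamma_i^k-\gamma)$, which has trace $\gamma_i^k$. Minimality of $u_i^k$ and weak lower semicontinuity give
\[
\int|\dd u_i|^n\le\liminf_k\int|\dd u_i^k|^n\le\liminf_k\int|\dd w_k|^n .
\]
This closes only if $R(\gamma_i^k-\gamma)\to0$ strongly in $W^{1,n}$, which weak convergence of the traces does not give.

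So I would instead use monotonicity of the $n$-Laplacian directly. First, $u_i^k-u_i$ tends to zero weakly. Testing the equation of $u_i^k$ against $u_i^k-\tilde u$, where $\tilde u$ is an extension with trace $\gamma_i^k$, shows that $\dd u_i^k$ converges a.e. (Minty/Browder-type argument), and then one can pass to the limit in the weak equation $\int|\dd u_i^k|^{n-2}\dd u_i^k:\dd\varphi=0$ for $\varphi\in C_c^\infty(\Omega)$. Hence $u_i$ is $n$-harmonic with trace $\gamma$, so $u_i=u$.

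An alternative for this step is to choose a good representative $\gamma$ and handle the difference by density: approximate $\gamma$ strongly in $W^{1-\frac1n,n}$ by smooth maps, and use Hölder continuity of the extension operator on bounded sets. The first approach, via a.e. convergence of gradients, seems cleaner.

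\textbf{Step 4: conclusion.} With $u_1=u_2=u$, strong $L^1$ convergence gives $\|u_1^k-u_2^k\|_{L^1(\Omega)}\to0$, contradicting $\|u_1^k-u_2^k\|_{L^1}\ge\eps$. The resulting $\eta$ depends on $\Omega$ as well as on $\eps,\Lambda,n$; this is harmless because $\Omega$ is fixed.
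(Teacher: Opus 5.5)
Your proof is correct. Its skeleton is the same as the paper's: contradiction, a uniform $W^{1,n}$ bound, extraction, identification of the weak limit as the $n$-harmonic extension of the common limit trace, then uniqueness.

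The two arguments differ only in Step 3. The paper invokes Uhlenbeck's interior $\C^{1,\beta}$ estimates. These give $u_k\to u_*$ in $\C^1_{\mathrm{loc}}$, so one can pass to the limit in $\Delta_n u_k=0$ directly. You instead use monotonicity of the $n$-Laplacian. Your route is more elementary: it needs only Rellich--Kondrachov and the strong monotonicity inequality $(|A|^{n-2}A-|B|^{n-2}B):(A-B)\ge c\,|A-B|^n$ on $\R^{N\times n}$ (Lemma~\ref{inequality}). It would also work for any monotone $p$-Laplace-type operator. The paper's route gets the stronger $\C^1_{\mathrm{loc}}$ convergence, at the cost of nonlinear regularity theory.

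One point needs fixing: the test function must be localized. Testing globally against $u_i^k-\tilde u$, with $\tilde u$ of trace $\gamma_i^k$, hits the same obstruction you found in your first attempt. There $\dd\tilde u$ does not converge strongly, and the energy of the extensions may concentrate at $\partial\Omega$. Instead take $\zeta\in\C^\infty_c(\Omega)$ and test with $\zeta(u_i^k-u_i)\in W^{1,n}_0$. Then:
\begin{itemize}
\item the term $\int|\dd u_i^k|^{n-2}\dd u_i^k:(u_i^k-u_i)\otimes\nabla\zeta$ tends to $0$, since $u_i^k\to u_i$ strongly in $L^n$;
\item the term $\int\zeta|\dd u_i|^{n-2}\dd u_i:(\dd u_i^k-\dd u_i)$ tends to $0$ by weak convergence;
\item the monotonicity inequality then gives $\int\zeta|\dd u_i^k-\dd u_i|^n\to0$.
\end{itemize}
This $L^n_{\mathrm{loc}}$ convergence of gradients is enough to pass to the limit against $\varphi\in\C^\infty_c(\Omega)$.

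Your version is also more careful than the paper's in three respects. You let both boundary data vary, which is the true negation of the statement. You supply the $L^n$ bound on $u_i^k$ that the paper leaves implicit. You note that $\eta$ also depends on $\Omega$.
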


\begin{proof}
	Assume on the contrary that there exist $\Lambda>0$ and $\eps>0$ such that the following holds.
	
	There exists $\gamma\in W^{1-\frac{1}{n},n}(\pl \Omega,\R^N)$ and a sequence $(\gamma_k)_{k\in\N}\subset W^{1-\frac{1}{n},n}(\pl \Omega,\R^N)$ such that
	\begin{equation*}
	 \begin{split}
	  & \gamma_k \rightharpoonup {\gamma} \quad \text{ weakly in }W^{1-\frac{1}{n},n}(\pl \Omega),\\
	 & \gamma_k \to {\gamma} \quad \text{ strongly in } L^1(\pl\Omega),\\
			&\|\gamma_k\|_{W^{1-\frac{1}{n},n}(\pl \Omega)} + \|\gamma\|_{W^{1-\frac{1}{n},n}(\pl \Omega)} \leq \Lambda,
	 \end{split}
	\end{equation*}
	and their $n$-harmonic extensions satisfy
	\begin{align}\label{eq:contradiction_hypothesis}
		\|u_k-u\|_{L^1(\Omega)} \geq \eps.
	\end{align}
	Since each $u_k$ is energy minimizing, by Gagliargo's trace theorem, there exists $C=C(\Omega)>0$ such that
	\begin{align*}
		\forall k\in\N,\ \ \ \|\dd u_k\|_{L^n(\Omega)} \leq C \| \gamma_k \|_{W^{1-\frac{1}{n},n}(\pl\Omega)} \le C \Lambda.
	\end{align*}
	Thus $(u_k)_{k\in\N}$ is bounded in $W^{1,n}(\Omega,\R^N)$ and we can extract a weakly converging subsequence. We denote its  limit by $u_*\in W^{1,n}(\Omega,\R^N)$. From Rellich--Kondrachov theorem, we can assume that the limit is also strong in $L^1(\Omega,\R^N)$.

	By regularity theory, every solution to $\Delta_{n} u_k =0$ is of class $\C^{1,\beta}(\Omega,\R^n)$ for some $0<\beta<1$, see \cite{Uhlenbeck_1977}. Hence, $u_k$ is also bounded in $\C^{1,\beta}_{\loc}$. Thus, up to a subsequence, $u_k \to u_*$ strongly in $\C^1_{\loc}$ and we can pass to the limit in the system $\lap_n u_k = 0$. Hence, $u_*$ satisfy $\lap_n u_*=0$.

	By continuity of the trace, we also have in $L^1(\pl\Omega,\R^N)$ :
	\begin{align*}
		\tr_{|\pl \Omega}(u_*) = \lim_{k\to \infty} \tr_{| \pl\Omega}(u_k) = \lim_{k\to\infty}\gamma_k = \gamma.
	\end{align*}
	By uniqueness of the solution to the system
	\begin{align*}
		\left\{ 
		\begin{array}{rcll}
			\lap_n u & =& 0 & \text{in }\Omega,\\
			u &=&  \gamma & \text{on }\pl \Omega,
		\end{array}
		\right.
	\end{align*}
	we conclude that $u=u_*$ in $W^{1,n}$. This is a contradiction to \eqref{eq:contradiction_hypothesis}.
\end{proof}

The following lemma is an immediate consequence of the proof of \cite[Lemma 3.1]{MartinoMazowieckaRodiac2025}. We record it separately since we need the explicit dependence of $\eps_c$ on the domain.
\begin{lemma}[cf. {\cite[Lemma 3.1]{MartinoMazowieckaRodiac2025}}]\label{le:3.1inMMR}
	Let $L>0$. There exists $\eps_c=\eps_c(n,L)>0$ satisfying the following. Assume that $\O\subset \R^n$ is an open set with a diffeormorphism $\Phi\colon \overline{\Omega}\to \overline{\bB^n}$ such that
	\[
		\|\Phi\|_{\C^1(\Omega)} + \|\Phi^{-1}\|_{\C^1(\bB^n)} \leq L.
	\]
	Let $u\in W^{1,n}(\Omega,\mathbb{B}^n)$ be a free boundary $n$-harmonic map in the sense of \Cref{def:critical}. If $\|\dd u\|_{L^n(\Omega)} < \eps_c$,
	then $u$ is constant.
\end{lemma}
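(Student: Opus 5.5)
The plan is to transport the problem to the unit ball and run a standard $\varepsilon$-regularity/energy-gap argument, keeping track of the fact that every constant depends on $\Omega$ only through $L$.

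\textbf{Step 1 (pull back to the ball).} Set $v\coloneqq u\circ\Phi^{-1}\in W^{1,n}(\bB^n,\R^n)$. Then $|v|=1$ on $\mathbb{S}^{n-1}$. Since $\|\Phi\|_{\C^1}+\|\Phi^{-1}\|_{\C^1}\le L$, we have $\|\dd v\|_{L^n(\bB^n)}\le C(n,L)\|\dd u\|_{L^n(\Omega)}$.

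Changing variables in Definition \ref{def:critical} shows that $v$ is a critical point of the anisotropic functional
\[
\int_{\bB^n}\bigl|\dd v\, A(y)\bigr|^n J(y)\,\dd y,
\qquad A=\dd\Phi\circ\Phi^{-1},\quad J=|\det \dd\Phi^{-1}|.
\]
Its coefficients are continuous and uniformly elliptic, with bounds depending only on $(n,L)$. The free boundary condition becomes a conormal condition relative to this metric: the conormal derivative is parallel to $v$ on $\mathbb{S}^{n-1}$.

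\textbf{Step 2 (reflection and $\varepsilon$-regularity).} Next I would follow the proof of \cite[Lemma 3.1]{MartinoMazowieckaRodiac2025}. Near the boundary, extend $v$ by the inversion $v(x)\mapsto v(x/|x|^2)/|v(x/|x|^2)|^2$ type reflection, or otherwise use the boundary regularity of \cite[Proposition 2.4]{MartinoMazowieckaRodiac2025}. This gives a reverse Hölder / Caccioppoli inequality on half balls,
\[
\|\dd v\|_{L^\infty(\bB^n\cap B_{1/2}(x_0))}\le C\,\|\dd v\|_{L^n(\bB^n\cap B_1(x_0))},
\]
valid whenever the right-hand side is below a threshold $\varepsilon_1$. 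The key point is that the constants in these estimates depend only on the ellipticity and modulus data of the coefficients, hence only on $(n,L)$. For the modulus of continuity of $A$ I would either require equicontinuity or rely on the fact that only $L^\infty$ bounds enter the small-energy step.

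\textbf{Step 3 (conclude $u$ is constant).} If $\|\dd u\|_{L^n}<\varepsilon_c$, Step 2 gives $|v-\bar v|\le C\varepsilon_c$ on $\overline{\bB^n}$. Since $|v|=1$ on the boundary, we get $|\bar v|\ge 1-C\varepsilon_c>1/2$.

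Now test the Euler--Lagrange equation with $\varphi=v-\frac{\bar v}{|\bar v|}\langle v,\frac{\bar v}{|\bar v|}\rangle$, suitably corrected so that it is tangent on the boundary. A cleaner choice is $\varphi=(v-p)-\langle v-p,v\rangle v$ with $p=\bar v/|\bar v|$ (after pulling back). This $\varphi$ is an admissible tangent variation. Expanding gives
\[
\int|\dd v|^n \le C\|v-p\|_{L^\infty}\int|\dd v|^n\le C'\varepsilon_c\int |\dd v|^n .
\]
For $\varepsilon_c$ small this forces $\dd v=0$, so $u$ is constant.

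\textbf{Main obstacle.} The main difficulty is the uniformity in Step 2, namely that the $\varepsilon$-regularity threshold and constants depend only on $L$. I would handle it by checking that every estimate in \cite[Lemma 3.1]{MartinoMazowieckaRodiac2025} uses only the $\C^1$ bounds of the flattening chart. Alternatively, I would argue by contradiction: take a sequence $\Omega_k$, $u_k$ with energy tending to zero, use Arzelà--Ascoli to pass to $\Phi_k\to\Phi$ in $\C^0$, and rerun the test-function argument directly. That test-function argument only needs $L^\infty$ closeness of $v$ to a point, which itself follows from a Morrey/VMO-type estimate with constants depending only on $L$.
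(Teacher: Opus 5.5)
The paper gives no separate argument for this lemma. It asserts that the lemma follows from the proof of \cite[Lemma 3.1]{MartinoMazowieckaRodiac2025}, because all constants there depend only on $L$. Your outline (pull back, small-energy regularity, tangential test function) is the natural route, but as written it has two genuine gaps.

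\textbf{First gap: the uniform oscillation bound.} The crux of the lemma is a small-oscillation estimate up to $\mathbb{S}^{n-1}$ with constants depending only on $(n,L)$, and your proposal does not establish it.
\begin{itemize}
\item The $L^\infty$ gradient bound in Step 2 cannot be expected with $C=C(n,L)$. The coefficients $A=\dd\Phi\circ\Phi^{-1}$ are merely continuous, and $L$ gives no uniform (Dini-type) modulus of continuity. Such a modulus is needed for gradient bounds even for linear equations: harmonic functions in $\C^1$ domains need not be Lipschitz up to the boundary.
\item The target inversion $w\mapsto w/|w|^2$ is singular at $0$. Your reflection therefore presupposes that $|v|$ stays away from $0$ near $\mathbb{S}^{n-1}$, which is part of what has to be proved.
\item The fallback via a ``Morrey/VMO-type estimate'' does not give $L^\infty$ closeness to a point. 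Since $W^{1,n}\not\hookrightarrow L^\infty$, small $\|\dd v\|_{L^n}$ only makes the BMO norm small. What is required is a Dirichlet-growth bound $\int_{B_\rho(x)\cap\mathbb{B}^n}|\dd v|^n\le C\rho^\beta$ at all points, in particular at the free boundary. It must be derived from the equation and the free boundary condition, with constants depending only on $(n,L)$, and nothing in the proposal provides it.
\item The compactness variant does not help. Arzel\`a--Ascoli gives only $\Phi_k\to\Phi$ in $\C^0$, so the coefficients converge only weakly-$*$, and you cannot pass to the limit in the nonlinear system.
\end{itemize}

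\textbf{Second gap: the computation in Step 3.} With $\varphi=(v-p)-\langle v-p,v\rangle v$ on all of $\mathbb{B}^n$, $\dd\varphi$ contains $v\otimes\big((2v-p)^T\dd v\big)$. This term is of size $|\dd v|$, not $\|v-p\|_\infty|\dd v|$. Written for $A=\mathrm{I}$, $J=1$ (the general case is identical), the equation gives
\[
\int_{\mathbb{B}^n}|\dd v|^n\big(1-\langle v-p,v\rangle\big)=\int_{\mathbb{B}^n}|\dd v|^{n-2}\sum_\alpha (v\cdot\partial_\alpha v)\big((2v-p)\cdot\partial_\alpha v\big).
\]
The right-hand side equals $\int|\dd v|^{n-2}|p^T\dd v|^2+O\big(\|v-p\|_\infty\int|\dd v|^n\big)$. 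You therefore control only the components of $\dd v$ orthogonal to $p$, not $\dd(p\cdot v)$. There are two ways to repair this.
\begin{itemize}
\item Correct only the trace, as in \eqref{eq:def_proj}: take $\varphi=(v-p)-\eta\big([\langle v,v-p\rangle v]_{|\mathbb{S}^{n-1}}\big)$. Use $\langle v,v-p\rangle=\frac12|v-p|^2$ on $\mathbb{S}^{n-1}$ together with Lemma~\ref{outil}. This bounds the correction by $C\|v-p\|_{L^\infty(\mathbb{S}^{n-1})}\|\dd v\|_{L^n}$.
\item Keep your test function and add a separate estimate for $p\cdot v$. It solves the weighted linear equation $\divv\big(|\dd v|^{n-2}\nabla(p\cdot v)\big)=0$. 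Hence it minimizes the weighted Dirichlet energy among functions with trace $1-\frac12|v-p|^2$, which gives $\int|\dd v|^{n-2}|\dd(p\cdot v)|^2\le\|v-p\|_\infty^2\int|\dd v|^n$.
\end{itemize}
Either repair still rests on the missing uniform oscillation bound from the first gap.
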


\section{Minimization with prescribed degrees}\label{sec:Minimization}

In this section, we prove Theorem \ref{th:main_1}. The two main ingredients for the proof of this result are the lower bound on the \(n\)-energy in terms of the degree provided in Lemma \ref{lowbound} and a matching upper bound given in Lemma 3.1 below. We use also the rigidity theorem of Liouville which says that the only conformal transformations of the Euclidean space in dimension \(n \geq 3\) are a composite of isometries, dilations and inversions. More precisely we will use the version of the Liouville's theorem in \cite[Theorem 5.1.1]{Iwaniec_Martin_2001} that we recall here for the comfort of the reader

\begin{theorem}\label{th:Liouville}(Liouville)
Let \(\Omega\) be a domain in \(\R^n\) with \(n \geq 3\). Let \(u\in W^{1,n}_{\text{loc}}(\Omega,\R^n)\) be such that 
\begin{equation}
\Jac \, u \geq 0 \text{ a.e.\ in } \Omega \text{ or } \Jac \, u \leq 0 \text{ a.e.\ in } \Omega,
\end{equation}
\begin{equation}
\dd u^T \dd u=|\Jac\,  u|\, \id \text{ a.e.\ in } \Omega.
\end{equation}
Then \(u\) is either constant or the restriction to \(\Omega\) of a M\"obius transformation of \(\R^n\). More precisely 
\begin{equation}
u(x) = b+\frac{\alpha A (x-a)}{|x-a|^\epsilon}
\end{equation}
for some \( a \in \R^n\setminus \Omega\), some \(b\in \R^n\) some \(\alpha \in \R\) some \(A\in O_n(\R)\) and some \(\epsilon \in \{0,2\}\).
\end{theorem}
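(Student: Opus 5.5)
The statement is Liouville's theorem, quoted from \cite[Theorem 5.1.1]{Iwaniec_Martin_2001}, and in the paper it is only used as a black box. Still, if I had to prove it, I would split it into a \emph{regularity} part, reducing to smooth maps, and a \emph{classical smooth rigidity} part. By composing with a reflection I may assume $\Jac\, u \ge 0$. The hypothesis $\dd u^T \dd u = \Jac(u)^{2/n}\id$ then says that $u$ is a $1$-quasiregular map.

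\textbf{Step 1: an elliptic system.} For a conformal matrix $A$ with $\det A\ge 0$ one has $|A|^{n-2}A = n^{\frac{n-2}{2}}\operatorname{cof}(A)$. The cofactor matrix of a $W^{1,n}$ map is row-wise divergence free, by the Piola identity, which holds in the distributional sense via the same integration-by-parts argument as in Lemma \ref{le:jacobianprice}. Hence every component satisfies $\Delta_n u^i=0$, and Uhlenbeck's theorem \cite{Uhlenbeck_1977} gives $u\in \C^{1,\beta}_{\loc}$.

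\textbf{Step 2: nonvanishing Jacobian.} Assuming $u$ is nonconstant, I would show $\Jac\, u>0$ everywhere, which makes $u$ a local $\C^1$-diffeomorphism. Reshetnyak's theorem says nonconstant quasiregular maps are open and discrete. For $K=1$ I would try to conclude directly: the inverse of a local branch is again $1$-quasiregular, hence $\C^{1,\beta}$, so $u$ cannot branch. Once $\Jac>0$, the system in Step 1 is uniformly elliptic in the variable $\dd u$, and a difference-quotient/Schauder bootstrap gives $u\in \C^\infty$.

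\textbf{Step 3: smooth rigidity.} Write $\dd u^T\dd u=\lambda^2\id$ with $\lambda>0$ smooth, and set $\rho=\lambda^{-1}$. Differentiate the relations $\langle \p_i u,\p_j u\rangle=\lambda^2\delta_{ij}$ and use the symmetry of $\p_i\p_j u$; this expresses $\p_i\p_j u$ in terms of $\nabla\log\lambda$ and $\dd u$. A second differentiation, combined with the commutation of third derivatives, yields (this is exactly where $n\ge3$ enters) the Hessian identity
\begin{equation*}
D^2\rho=\frac{|\nabla\rho|^2}{2\rho}\,\id .
\end{equation*}
It follows that $\partial_i\rho$ depends only on $x_i$ and $D^2\rho=c\,\id$ with $c$ constant. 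Hence $\rho=c|x-a|^2+d$ or $\rho$ is affine, and the Hessian identity forces $\rho=c|x-a|^2$ (case $\epsilon=2$) or $\rho$ constant (case $\epsilon=0$).

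In the case $\epsilon=2$, composing $u$ with the inversion $x\mapsto (x-a)/|x-a|^2$ produces a conformal map with constant $\lambda$. Such a map has vanishing second derivatives, so it is an affine similarity $x\mapsto b+\alpha Ax$. This gives the stated form, and $a\notin\Omega$ because $u$ is defined on $\Omega$. Connectedness of $\Omega$ propagates the local formula, by analytic continuation of the Möbius expression.

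\textbf{Main obstacle.} I expect the hard part to be Step 2: ruling out branch points and zeros of the Jacobian for a mere $W^{1,n}_{\loc}$ solution, where Step 1 alone gives only $\C^{1,\beta}$ regularity. If the inverse-branch argument does not close, my fallback would be the Bojarski--Iwaniec approach. There one proves directly, in the weak sense, that $\Jac(u)^{\frac{2-n}{2n}}$ satisfies the Hessian identity above, using the Sobolev-level divergence identities for cofactors. This yields Step 3 without first establishing smoothness.
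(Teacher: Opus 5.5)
The paper gives no proof of this statement; it only quotes it from Iwaniec--Martin. Your overall architecture is sound: $1$-quasiregular $\Rightarrow$ $n$-harmonic via $|A|^{n-2}A=n^{\frac{n-2}{2}}\operatorname{cof}A$ and the Piola identity, then Uhlenbeck's $\C^{1,\beta}$ regularity, then smooth rigidity through the Hessian identity for $\rho=\lambda^{-1}$. You were also right to read the hypothesis as $\dd u^T\dd u=|\Jac u|^{2/n}\id$; as printed, it would force $|\Jac u|\in\{0,1\}$.

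\textbf{The gap is Step 2.} At a branch point $x_0$, $u$ is not injective on any neighbourhood of $x_0$, so there is no local inverse near $u(x_0)$ to which Uhlenbeck's theorem can be applied. Inverse branches exist only on subdomains avoiding $u(B_u)$, and these have $u(x_0)$ on their boundary. Interior $\C^{1,\beta}$ estimates on such domains say nothing at $u(x_0)$. There the branches' derivatives must in fact blow up, since $\dd u(x_0)=0$. Your argument never uses $n\ge 3$, and in dimension $2$ it would ``prove'' that $z\mapsto z^2$ does not branch: that map is $1$-quasiregular and smooth, and its inverse branches are conformal and smooth away from $0$. So regularity of inverse branches cannot exclude branching; conformality in dimension $n\ge 3$ has to be used again. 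Separately, in your fallback, the Hessian identity you wrote holds for $\rho=\Jac(u)^{-1/n}$. It does not hold for $\Jac(u)^{\frac{2-n}{2n}}=\rho^{\frac{n-2}{2}}$ unless $n=4$.

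\textbf{The gap closes using your own Steps 1 and 3, with no need for Reshetnyak's theorem.}
\begin{itemize}
\item Since $u\in\C^{1,\beta}_{\loc}$, $\lambda=|\dd u|/\sqrt n$ is continuous on $\Omega$, and $Z=\{\lambda=0\}$ is relatively closed in $\Omega$.
\item Let $U$ be a connected component of $\Omega\setminus Z$. On $U$, $u$ is a local diffeomorphism and your bootstrap makes it $\C^\infty$. Step 3 then shows that on every ball in $U$, $\rho=1/\lambda$ equals $c|x-a|^2$ or a positive constant. Polynomials that agree on an open set coincide, so $\rho$ is a single polynomial on $U$. In particular it is bounded near every point of $\overline U\cap\Omega$.
\item Suppose $u$ is nonconstant and $Z\neq\emptyset$. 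Any component $U$ is then a nonempty, open, proper subset of the connected set $\Omega$, so there is a point $z\in(\overline U\cap\Omega)\setminus U$.
\item This $z$ lies in $Z$: a point of $\Omega\setminus Z$ belongs to an open component, which would then meet $U$ and hence equal it.
\item As $x\to z$ inside $U$, continuity gives $\lambda(x)\to\lambda(z)=0$, so $\rho(x)\to+\infty$. This contradicts the boundedness of the polynomial $\rho$ near $z$.
\end{itemize}
Hence $Z=\emptyset$. Step 3, together with your continuation argument, then yields the M\"obius form on all of $\Omega$. Finally, $a\notin\Omega$ because $\rho=1/\lambda>0$ on $\Omega$.
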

We recall that the quantity \(m(k,\Omega)\) is defined in \eqref{eq:mkOmega}. We observe that since $\I_0$ contains the constant maps, we have $m_\kappa(0,\O)=0$. 
\begin{lemma}\label{testfunc}
Let $\delta >0$, let \(k,k'\in \mathbb{Z}\) and let $u\in \I_k$. Then there exists $v\in \I_{k'}$ such that 
\begin{equation*}
E_n(v)\leq E_n(u) +n^{\frac{n}{2}-1}\, |\bB^n| \, |k-k'|+\delta.
\end{equation*}
In particular, for any $(k,k')\in \mathbb{Z}$, it holds that 
\begin{equation*}
m(k',\Omega) \leq m(k,\Omega) +n^{\frac{n}{2}-1}\, |\bB^n|\, |k-k'|.
\end{equation*}
\end{lemma}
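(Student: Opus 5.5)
The plan is to insert $|k-k'|$ concentrated boundary bubbles into $u$ by a gluing construction. Each bubble changes the degree by $\pm 1$. The construction is arranged so that each bubble costs, up to an arbitrarily small error, the energy of a degree-$\pm1$ conformal piece. By iterating, it suffices to treat $k'=k\pm1$ with error $\delta/|k-k'|$ at each step. By composing with the reflection $(y_1,\dots,y_n)\mapsto(-y_1,y_2,\dots,y_n)$ on the target, which preserves $E_n$ and the constraint $|\tr u|=1$ but reverses degrees, the two signs are handled the same way. The second assertion follows from the first by taking $u$ almost minimizing in $\I_k$ and letting $\delta\to0$.

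\textbf{Step 1: reduction to a map constant near a boundary point.} Fix $x_0\in\p\Omega$. Since $\C^\infty(\overline\Omega)$ maps with $\mathbb{S}^{n-1}$-valued trace are dense in $\I_k$ (by density of smooth maps in $W^{1-\frac1n,n}(\p\Omega,\mathbb{S}^{n-1})$ and continuity of $\int_\Omega\Jac$), we may assume $u$ is smooth at the cost of $\delta/4$. We then make $u$ constant, equal to $u(x_0)$, on $\overline\Omega\cap B_\rho(x_0)$. For this we use the standard log-cutoff in $W^{1,n}$: a radial interpolation between $u$ and $u(x_0)$ on the annulus $B_{\sqrt\rho}\setminus B_\rho$, followed by projection of the trace onto $\mathbb{S}^{n-1}$, which is harmless since $u$ is nearly constant there. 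Because the capacity of a point vanishes in $W^{1,n}$, this costs $o(1)$ energy as $\rho\to0$ and does not change the degree.

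\textbf{Step 2: gluing a bubble.} After a rotation of the target, assume $u(x_0)=-e_n$. Flatten $\p\Omega$ near $x_0$ by a $\C^1$ chart $\psi$ with $d\psi(x_0)=\id$. In the half-ball $\psi(B_\rho(x_0)\cap\Omega)$, consider the Möbius map $M_\varepsilon$ of the upper half-space onto $\bB^n$ that sends $\infty$ to $-e_n$ and has concentration scale $\varepsilon\ll\rho$. Outside $B_{\sqrt\varepsilon}$, $M_\varepsilon$ is within $O(\sqrt\varepsilon)$ of $-e_n$ in $W^{1,n}$. Glue it to the constant $-e_n$ on the annulus $B_{\rho}\setminus B_{\sqrt\varepsilon}$ by the same log-cutoff, with $o(1)$ cost. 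Define $v$ as this glued map in the chart and as $u$ elsewhere. Its trace is $\mathbb{S}^{n-1}$-valued and, by \eqref{eq:formula_degree_ext} and additivity of $\int\Jac$ over the disjoint supports of the modifications, its degree is $k+1$.

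\textbf{Step 3: energy count.} We have $E_n(v)\le E_n(u)+E(\text{bubble})+\delta$. The bubble energy is computed by conformal invariance and by the fact that $d\psi\to\id$ uniformly at small scales. This is the main obstacle: one must compute the energy of the conformal piece exactly in the normalization used for $E_n$ and check that it equals the constant $n^{\frac n2-1}|\bB^n|$ appearing in the statement. For this I would use the equality case of Lemma~\ref{lowbound}: for a conformal map $|\dd M|^n=n^{\frac n2}\Jac M$, so the energy equals $n^{\frac n2}$ times the covered volume. I would then track carefully any normalization factor (such as $\frac1n$) attached to the energy in this comparison. The small-scale distortion of the chart $\psi$ enters only through $\|d\psi-\id\|_{L^\infty(B_\rho)}\to0$, which contributes a factor $1+o(1)$. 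All other errors are absorbed into $\delta$ by choosing $\rho$ and then $\varepsilon$ small.
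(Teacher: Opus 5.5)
Your construction follows the paper's strategy. You flatten $\p\Om$ at a boundary point, insert an almost-conformal half-space-to-ball bubble to change the degree by one, iterate, and reverse orientation for the other sign. Step~3, however, cannot be closed as you set it up, because the constant you are trying to reach cannot be attained.

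With $E_n(u)=\int_\Om|\dd u|^n$ as in \eqref{eq:n-energy} there is no factor $\frac1n$ to track. The identity $|\dd M|^n=n^{\frac n2}\Jac M$ for a conformal $M$ therefore makes your bubble cost $n^{\frac n2}|\bB^n|(1+o(1))$. Lemma~\ref{lowbound} shows that nothing cheaper exists. Take $k=0$, $u\equiv e_n\in\I_0$, $k'=1$ and $\delta<(n-1)n^{\frac n2-1}|\bB^n|$. By \eqref{lowerbound2}, every $v\in\I_1$ satisfies $E_n(v)\ge n^{\frac n2}|\bB^n|>n^{\frac n2-1}|\bB^n|+\delta$. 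So the displayed inequality is false as printed. Its ``in particular'' part would also give $m(1,\Om)\le n^{\frac n2-1}|\bB^n|$, contradicting Theorem~\ref{th:main_1}.

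The factor $n^{\frac n2-1}$ appears to be a leftover from a $\frac1n$-normalized energy; it shows up again in \eqref{eq:est_pi}. The paper's own Step~2 actually concludes $E_n(v_\sigma)\le E_n(u)+n^{\frac n2}|\bB^n|+\cO_u(\sigma)$. The upper bound $m(k,\Om)\le|k|\,n^{\frac n2}|\bB^n|$ in the proof of Theorem~\ref{th:main_1} needs exactly this version. So rather than searching for a normalization factor, state the lemma with $n^{\frac n2}|\bB^n|$ in place of $n^{\frac n2-1}|\bB^n|$; your argument proves that version.

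Once the constant is corrected, the only real difference from the paper is how you glue. The paper attaches $\pi_\sigma=P\circ I_\sigma$ directly to the smooth $u$ across the thin annulus $B_\tau\setminus B_\sigma$, with $\tau=\sigma+\sigma^{1+\frac{1}{2(n-1)}}$. It uses that both maps are $\cO(\sigma)$-close to $u(x_0)$ there. You instead first freeze $u$ to the constant $u(x_0)$ near $x_0$, then glue a concentrated M\"obius map to that constant. This makes the degree count transparent, since the Jacobian vanishes on the frozen region.

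Three details to fix when you write it out:
\begin{itemize}
\item The ``projection onto $\mathbb{S}^{n-1}$'' must be applied to the whole interpolated map, with its modulus interpolated as in \eqref{eq:defLsigma}. Normalizing $\eta u+(1-\eta)u(x_0)$ to unit length would create a jump on the outer sphere, where $|u|\neq1$ inside $\Om$.
\item The half-space M\"obius map must be chosen orientation-preserving so that the degree goes up.
\item Since $u$ is smooth, $|u-u(x_0)|\le C|x-x_0|$. A linear cutoff on $B_{2\rho}\setminus B_\rho$ then already costs $\cO(\rho^n)$, so the logarithmic cutoff is unnecessary.
\end{itemize}
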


\begin{proof}[Proof of Lemma \ref{testfunc}.]
 Let $x_0 \in \partial \Omega$. The idea is to replace $u$ in a small neighbourhood of $x_0$ by a map which is almost conformal. This map must have a very concentrated energy and its image must be almost all $\mathbb{B}^n$. By using such a replacement we first show that there exists $v\in \I_{k+1}$ such that $E_n(v) \leq E_n(u) + n^{\frac{n}{2}-1}|\bB^n|+ \delta$, for $\delta$ small. We then explain how to iterate the construction and how to change the degree negatively.
  Without loss of generality we can assume that $x_0=0$ and $\nu(x_0)=e_n\eqqcolon (0,\dots,0,1)$ where $\nu$ denotes the unit outer normal to $\partial \O$. This is always possible, up to translating and rotating $\O$. By density\footnote{This follows from the density of \(\C^\infty(\p \Omega,\mathbb{S}^{n-1})\) into \( W^{1-\frac{1}{n},n}(\p \Omega,\mathbb{S}^{n-1})\), cf.\ \cite[Theorem 2.1]{Mironescu_2007} }of \(\C^\infty(\overline{\Omega},\R^n)\cap \I\) we can also suppose that $u$ is smooth and, up to multiply $u$ by a suitable rotation, we can suppose that $u(x_0)=e_n$.
 
\noindent \textbf{Step1: The case where $\partial \Omega$ is flat near $x_0$} 

\noindent We first assume that there exists $\sigma$ small enough such that $B(x_0,2\sigma) \cap \partial \Omega =\{x\in \R^n ; x_n =0\} \cap B(x_0,2\sigma)$ and that $\Omega \subset \{x \in \R^n ; x_n >0\}$. In the sequel we use the notation $B(x_0,r)=B_r$ for $r>0$. We consider the following conformal maps
\begin{equation*}
\begin{array}{rcll}
I_\sigma :\R^n \setminus \{0\} & \rightarrow& \R^n \\
x & \mapsto & \sigma^2\, \frac{x}{ |x|^2},
\end{array}
\end{equation*}
and
\begin{equation*}
\begin{array}{rcll}
P:\R^n \setminus \{-e_n\} & \rightarrow & \R^n \\
x & \mapsto & \left( \frac{-2x_1}{x_1^2+\dots+x_{n-1}^2+(1+x_n)^2},\dots, \frac{-2x_{n-1}}{x_1^2+\dots+x_{n-1}^2+(1+x_n)^2},\frac{1-x_1^2-\dots-x_{n-1}^2}{x_1^2+\dots+x_{n-1}^2+(1+x_n)^2}  \right).
\end{array}
\end{equation*}
We can see that \(P\) and \(I_\sigma\) are conformal maps, \( P\) sends the upper-half plane into \(\mathbb{B}^n\) and \(P(0)=e_n\). Furthermore, since \(I_\sigma\) and \(P\) reverse the orientation, the map 
\[\pi_\sigma \coloneqq P\circ I_\sigma\colon \R^n\setminus \{0\}\to \bB^n\] preserves the orientation.

Now we let \(\tau=\sigma +\sigma^{\left( 1+\frac{1}{2(n-1)}\right)}\) and we define $L_{\sigma}$ using the variables $r=|x|$ and $\theta=\frac{x}{|x|}$ by 
\begin{equation}\label{eq:defLsigma}
L_\sigma(x)= \left( \frac{r-\sigma}{\tau-\sigma}|u(\tau\theta)|+\frac{\tau-r}{\tau-\sigma}|\pi_\sigma (\sigma \theta)| \right) \frac{\frac{r-\sigma}{\tau-\sigma}\, u(\tau\theta)+\frac{\tau-r}{\tau-\sigma}\, \pi_\sigma (\sigma \theta)}{|\frac{r-\sigma}{\tau-\sigma}\, u(\tau\theta)+\frac{\tau-r}{\tau-\sigma}\, \pi_\sigma (\sigma \theta)|}.
\end{equation}
This map is well-defined for $\sigma$ small enough. Indeed, on one hand, by using Taylor's expansion Theorem, for all \(\theta\in \mathbb{S}^{n-1}\) we have
\begin{equation}\label{eq:Taylor_exp}
u(\tau \theta) = u(x_0)+\cO(\tau)=e_n +\cO(\sigma).
\end{equation}
On the other hand, from the definition of \(I_\sigma\) we see that \(I_\sigma \left(\p B_\sigma \cap \Omega\right)=\p B_\sigma \cap \Omega\). Since \(P\) is smooth near \(\{x\in \R^n : x_n\geq 0\}\) we see that 
\begin{equation}\label{eq:image_of_bubble}
 P\left(I_\sigma \left(\p B_\sigma \cap \Omega\right) \right)\subset \mathbb{B}^n\setminus B_{C\sigma}
 \end{equation}
  for some constant \(C>0\). Hence, for any \(\theta\in \mathbb{S}^{n-1}\)
\begin{equation*}
\pi_\sigma(\sigma\theta) = \pi_\sigma(x_0)+ \cO(\sigma)= e_n+\cO(\sigma).
\end{equation*}
This proves that, for $\sigma$ small enough
\begin{equation}\label{eq:denominator}
\frac{r-\sigma}{\tau-\sigma}\, u(\tau\theta)+\frac{\tau-r}{\tau-\sigma}\, \pi_\sigma (\sigma \theta)=u(0)+\cO(\sigma)=e_n+\cO(\sigma) \neq 0 .
\end{equation}
Consequently, Definition \eqref{eq:defLsigma} is well-posed for $\sigma$ small enough. Moreover, we have $L_\sigma (x)= \pi_\sigma (x)$, for all $x \in \partial B_\sigma \cap \Omega$, $L_\sigma (x)=u(x)$ for all $x\in \partial B_\tau \cap \Omega$ and $|L_\sigma (x)|=1$ for $x \in \partial \Omega \cap \left( B_\tau \setminus B_\sigma \right)$. Note that this last property would have been lost if we would have used a linear interpolation between \(u\) and \(\pi_\sigma\).

\noindent  We define
\begin{equation}
v_\sigma =\begin{cases} 
\pi_\sigma (x)  & \text{ if } x \in B_\sigma \cap \Omega, \\[2mm]
L_\sigma (x) & \text{ if } x \in \left( B_\tau \setminus B_\sigma \right) \cap \Omega, \\[2mm]
u(x) & \text{ if } x \in \Omega \setminus B_\tau.
\end{cases}
\end{equation} 
By construction, it holds $|v_{\sigma}|=1$ on $\pl\Omega$ which means that \(v_\sigma \in \I\).

\textbf{Computation of the energy of $v_\sigma$.} We have that 
\begin{equation}
E_n(v_\sigma)=E_n(u)-E_n(u,B_\tau\cap \Omega)+E_n\left(L_\sigma, \left( B_\tau \setminus B_\sigma \right)\cap \Omega \right)+ E_n(\pi_\sigma, B_\sigma \cap \Omega).
\end{equation}

\textit{Estimate of $E_{n}(\pi_\sigma, B_\sigma \cap \Omega)$}\\
Since $\pi_\sigma$ is conformal, up to a dimensional constant, its $n$-energy is equal to the area of its image counted with multiplicity, cf.\ Lemma \ref{lowbound}. Thus, by using \eqref{eq:image_of_bubble} we obtain that for some \(C>0\),

\begin{equation}\label{eq:est_pi}
E_n(\pi_\sigma,B_\sigma \cap \Omega)= n^{\frac{n}{2}-1}|\bB^n|-C\, \sigma^n+\cO(\sigma^{n+1}).
\end{equation}

\textit{Estimate of $E_n\left(L_\sigma, \left( B_\tau \setminus B_\sigma \right)\cap\Omega \right)$}\\
In order to shorten the notations, we denote
\[ V_\sigma \coloneqq \frac{r-\sigma}{\tau-\sigma}\, u(\tau\theta)+\frac{\tau-r}{\tau-\sigma}\, \pi_\sigma (\sigma \theta), \text{ and } \quad 
R_\sigma \coloneqq \frac{\frac{r-\sigma}{\tau-\sigma}\, u(\tau\theta)+\frac{\tau-r}{\tau-\sigma}\, \pi_\sigma (\sigma \theta)}{|\frac{r-\sigma}{\tau-\sigma}\, u(\tau\theta)+\frac{\tau-r}{\tau-\sigma}\, \pi_\sigma (\sigma \theta)|}.
\]
We compute the radial and the tangential parts of the differential of $L_\sigma$. It holds that 
\[ 
\partial_r L_\sigma =\frac{|u(\tau \theta)|-|\pi_\sigma(\sigma \theta)| }{\tau-\sigma}\, R_\sigma + \left( \frac{r-\sigma}{\tau-\sigma}|u(\tau\theta)|+\frac{\tau-r}{\tau-\sigma}|\pi_\sigma (\sigma \theta)| \right)\partial_r R_\sigma .\]
Using the Taylor's expansion \eqref{eq:Taylor_exp} and \eqref{eq:image_of_bubble} we obtain that $|u(\tau \theta)|-|\pi_\sigma(\sigma \theta)|=\cO(\sigma)$. We also have 
\[ 
\left| \partial_r R_{\sigma} \right|\leq 2\, \frac{\left|\partial_r \left( \frac{r-\sigma}{\tau-\sigma}\, u(\tau\theta)+\frac{\tau-r}{\tau-\sigma}\, \pi_\sigma (\sigma \theta) \right)  \right|}{\left| \frac{r-\sigma}{\tau-\sigma}\, u(\tau\theta)+\frac{\tau-r}{\tau-\sigma}\, \pi_\sigma (\sigma \theta) \right|}.
\]
By \eqref{eq:denominator}, we obtain that for $\sigma$ small enough, 
we have $|\frac{r-\sigma}{\tau-\sigma}u(\tau\theta)+\frac{\tau-r}{\tau-\sigma}\pi_\sigma (\sigma \theta)|\geq \frac{1}{2} $. Hence, 
\[
|\partial_rR_\sigma | \leq \frac{C\, \sigma}{\tau-\sigma}.
\]
We can conclude that $|\partial_rL_\sigma| \leq \frac{C\, \sigma}{\tau-\sigma}$ and 
\begin{align*}
\int_{(B_\tau \setminus B_\sigma)\cap \Omega} |\partial_r L_\sigma |^n &\leq  \frac{C\,  \sigma^n}{ \sigma^{n+\frac{n}{2(n-1)}} }(\tau^n-\sigma^n) \leq  \frac{C\, \sigma^{1+\frac{1}{2(n-1)}} \sigma^{n-1}}{\sigma^{\frac{n}{2(n-1)}}}\leq  C\, \sigma^{n-\frac{1}{2}}. \nonumber
\end{align*}

In order to estimate the tangential derivatives of \(L_\sigma\) we first observe that, if we denote by \(\nabla_{\tan}\) the tangential part of the differential, we have
\begin{equation}
\nabla_{\tan} V_\sigma =\tau \frac{r-\sigma}{\tau-\sigma}\nabla_{\tan}u (\tau \theta)+ \sigma \frac{\tau-r}{\tau-\sigma} \nabla_{\tan} \pi_{\sigma}(\sigma \theta).
\end{equation}

We notice that , for any \(x \in \R^n \setminus \{0\}\),
\[ \dd I_\sigma (x).h =\sigma^2 \left(\frac{h}{|X|^2}-2 \frac{X\cdot h}{|X|^4}X \right).\]
Hence, for all \(\theta \in \mathbb{S}^{n-1}\) we have that \(|\nabla_{\tan} I_\sigma (\sigma \theta)| \leq C\) for some \(C>0\) and, since \(P\) is smooth near \(\Omega\cap B_\tau\), we find that \(|\nabla_{\tan} \pi_\sigma| \leq C\) for some \(C>0\).
This implies that \(|\nabla_{\tan} V_\sigma| \leq C\sigma/(\tau-\sigma) \) and thus
\begin{equation}
|\nabla_{\tan} L_\sigma| \leq \frac{C \sigma}{\tau-\sigma}.
\end{equation}
Hence, as in the case of the radial derivative, we obtain  \(\int_{(B_\tau \setminus B_{\sigma})\cap \Omega}|\nabla_{\tan} L_\sigma|^n \leq C \sigma^{n-\frac{1}{2}}\) and finally
\begin{align} \label{eq:Energy_Lsigma}
	\int_{(B_\tau \setminus B_{\sigma})\cap \Omega} |\dd L_\sigma  |^n \leq C\, \sigma^{n-\frac{1}{2}}.
\end{align}

\textbf{Computation of the degree of $v_\sigma$.} Now we show that $\deg(v_\sigma, \partial \Omega)= \deg(u,\partial \Omega)+1$ for $\sigma$ small enough. Thanks to \eqref{eq:Energy_Lsigma}, we have 
\begin{eqnarray*}
\deg(v_\sigma, \partial \Omega) &=& \frac{1}{|\bB|^n}\int_\Omega \Jac (v_\sigma)  \\[2mm]
& =& \frac{1}{|\bB^n|}\left[\int_{\Omega \setminus B_\tau} \Jac (u) +\int_{B_\sigma \cap \Omega} \Jac (\pi_\sigma) + \int_{B_\tau\setminus B_\sigma} \Jac (L_\sigma)  \right]  \\[2mm]
&=& \deg(u,\partial \Omega) -\smallO_\sigma(1)+ \frac{1}{|\bB^n|}\big( |\bB^n|-C\, |\bB^n|\, \sigma^n \big) +\smallO_\sigma(1)  \\[2mm]
&=& \deg(u,\partial \Omega)+1+\smallO_\sigma(1). 
\end{eqnarray*}
To estimate the Jacobian of $\pi_{\sigma}$, we proceeded as in \eqref{eq:est_pi}.
Since the degree is an integer, for $\sigma$ small enough we obtain the result. 

\textbf{Step 2: The case where $\partial \Omega$ is not flat near $x_0$}

We still assume that $x_0=0$, $\nu(x_0)=e_n$ and that the tangent space to $\partial \Omega$ at $x_0$ is then given by $T_{x_0} \partial \Omega =\{x \in \R^n ; x_n =0 \}$. We set \(H\eqqcolon \{x\in \R^n; x_n>0\}\). Since we assumed that $\partial \Omega$ is \(\C^1\), there exist $\tau_0>0$ small and a $\mathcal{C}^1$-diffeomorphism on its image $f\colon B_{\tau_0}\cap \overline{H} \rightarrow \Omega$ such that 
\begin{align*}
	f(0)=e_n, \qquad \text{ and }\qquad \dd f(0)=\id.
\end{align*}
We let $B_\tau^-\coloneqq B_{\tau}\cap \overline{H}$ and $V_\tau \coloneqq \Phi(B_\tau^-)$, for $0<\tau <\tau_0$. There exists a constant $C>0$ such that if $\tau < \frac{\tau_0}{2}$ then 
\begin{equation}\label{estimeediffeo}
	\|f-\id\|_{\mathcal{C}^1(B_\tau^-)} \leq  C\, \tau,  \quad \text{ and } \| f^{-1} -\id\|_{\mathcal{C}^1(V_\tau)} \leq C\, \tau.
\end{equation} 
Let $v\in \C^1\left(  V_\tau,\R^n\right)$. We set $\tilde{v}=v\circ \Phi:B_\tau^- \rightarrow \R^n$. We then have, by using \eqref{estimeediffeo} and the following change of variable $y=f(x)$:
\begin{align*}
& \left| E_n (v,V_\tau) - \frac{1}{n} \int_{B^-_{\tau}} |\dd \tilde{v}|^n \right|  \\[2mm]
& =\left| \int_{V_\tau} |\dd(\tilde{v}\circ f^{-1})|^n - \int_{B^-_\tau}|\dd \tilde{v}|^n \right| \\[2mm]
 & =  \left| \int_{V_\tau}  |\dd\tilde{v}(f^{-1}(x)) \circ\dd (f^{-1})(x)|^n \dx - \int_{B^-_\tau}|\dd\tilde{v}|^n \right|  \\[2mm]
 & = \left| \int_{B^-_\tau}  |\dd\tilde{v}(y) \circ\dd (f^{-1})(f(y))|^n\, |\det \dd f (y)|\dy - \int_{B^-_\tau}|\dd\tilde{v}|^n \right|  \\[2mm]
& \leq C\, \tau\, \int_{B_{\tau}^-}|\dd \tilde{v}|^n \leq C\, \tau^{n+1}\, \|v\|_{\C^1(V_{\tau})}^n. 
\end{align*}
 We then obtain:
\begin{eqnarray}
E_n(v,V_\tau)=\int_{B_\tau^-} |\dd \tilde{v}(y)|^n\, \dd y + \cO(\tau^{n+1}). \nonumber
\end{eqnarray}
Since we assumed that \(u\in \C^\infty(\overline{\Omega},\R^n)\cap \I\) we can say that
\begin{eqnarray}
E_n(u)&=&E_n(u,V_\tau)+E_n(u,\Omega \setminus V_\tau) \nonumber \\[2mm]
&=& E_n(\tilde{u},B_\tau^-)+E_n(u,\Omega\setminus V_\tau)+ \cO_u(\tau^{n+1}), \nonumber
\end{eqnarray}
where \(\tilde{u}=u\circ f\) and \(\cO_u(\tau^{n+1})\) denotes a function bounded by \(\tau^{n+1}\) but with a constant possibly depending on \(u\).
But, using Step 1, for 
$\tau=\sigma+\sigma^{1+\frac{1}{2(n-1)}}$ and $\sigma$ small, we can find $\tilde{\chi}_\tau\colon B_\tau^- \rightarrow \R^n$ such that \(\tilde{\chi}_\tau\) agrees with \(\tilde{u}\) on \(\p B_\tau \cap \overline{H}\) and 
\[
E_n(\tilde{\chi}_\tau,B_\tau^-)= n^{\frac{n}{2}}\, |\bB^n| +\cO(\tau).
\]
 We then set $\chi_\tau =\tilde{\chi}_\tau\circ f^{-1}$ and 
\[v_\sigma =\begin{cases}
\chi_\tau & \text{if } x\in \Omega \cap V_\tau, \\
u & \text{if } x\in \Omega \setminus V_\tau.
\end{cases} \]
We thus have 
\[
	E_n(v_\sigma)\leq E_n(u)+n^{\frac{n}{2}}|\bB^n|+\cO_u(\sigma),  
\]
and
\[
	\deg(v_\sigma,\partial \Omega) \ust{\sigma\to 0}{=} \deg(u,\partial \Omega)+1 +\cO_{u}(1).
\]

Thus, for $\sigma$ small enough, $v_\sigma$ satisfies the desired properties. 

Since this is a local construction we can repeat it near several different points in order to increase the degree by several units. We can also decrease the degree by using maps which reverse the orientation.
\end{proof}

We can now conclude the proof of Theorem \ref{th:main_1}.

\begin{proof}[Proof of Theorem \ref{th:main_1}.]
Thanks to Lemma \ref{lowbound} we have 
\[
 m(k,\O) \geq |k|\, n^{\frac{n}{2}}\, |\bB^n|.
\]
However, by using the fact that $m(0,\O)=0$ and by using Lemma \ref{testfunc}, we obtain that 
\[
	m(k,\Omega)\leq  |k|\, n^{\frac{n}{2}}\, |\bB^n|. 
\] Hence it holds 
\[
m(k,\Omega)=  |k|\, n^{\frac{n}{2}}\, |\bB^n|.
\]
Now let us assume that there exists $u \in \I_k$ such that $E_n(u)=m(k,\O)$. Using Lemma \ref{lowbound} again, we obtain that $u$ is conformal, in the sense that \(\dd u^T \dd u= | \Jac \, u| \id \) a.e.\ in \(\Omega\) and \(\Jac \, u\geq 0\) a.e.\ or \(\Jac \, u \leq 0\) a.e.\ in \(\Omega\). We can thus apply the Liouville Theorem \ref{th:Liouville} to deduce that  $\O=\bB^n$. Since all the non-constant conformal maps given by the Liouville Theorem are bijective, they have a degree equal to $\pm 1$ (the sign depending if the map preserves or reverses the orientation). 
\end{proof}

\section{Almost Möbius transforms}\label{sec:AlmostMobius}

The goal of this section is to construct a family of paths \(\chi_{r}\colon \p \Omega \rightarrow \I_1\), for \(r>0\) small, which are continuous and which satisfy \eqref{eq:minimal_energy} and \eqref{eq:topo_condition}. These are the crucial tools for showing that our minmax scheme has the mountain pass geometry and does produce critical points for the subcritical energies \(E_{n+\alpha}\) for \(\alpha>0\). If \(\Omega=\mathbb{B}^n\), a natural choice for \(\chi_r\) is \(\chi_r(a)= M_{(1-r)a}\) for \(0<r<1\) and \(a\in \mathbb{B}^n\), where \(M_{(1-r)a}\) is defined in Definition \ref{def:Mobius} below. We thus want to pull-back these M\"obius maps by a diffeomorphism  from \(\overline{\Omega}\) to \(\overline{\mathbb{B}^n}\). However, since there is no conformal diffeomorphism from \(\overline{\Omega}\) to \(\overline{\mathbb{B}^n}\) if \(\Omega\) is not a round ball, we cannot expect \eqref{eq:minimal_energy} to hold for a fixed diffeomorphism and for every \(a\in \p \Omega\). Our solution will be to use a different diffeomorphism for every \(a\in \overline{\Omega}\) that we denote by \(\Phi_a\) and which is almost conformal near \(a\in \overline{\Omega}\). We point out that we need to construct these diffeomorphisms for all \(a\in \overline{\Omega}\) and not only for \(a\in \p \Omega\) because this will be useful to estimate the critical value \(c(\alpha,r)\) cf.\ Proposition \ref{prop:prochegroundstate}. Once we have these diffeomorphisms, denoted by \(\Phi_a\),  we can show that \(\chi_r(a)= M_{(1-r)\Phi_a(a)}\circ \Phi_a\) satisfies the desired properties.

\subsection{M\"obius maps from the ball to the ball}

In this section we define M\"obius maps and examine some of their properties, for which we also refer to \cite{Stoll_2016} .

\begin{definition}\label{def:Mobius}
	Given $a\in \bB^n$, we define, for any $x\in \R^n\setminus \{a\}$,
	\begin{align}\label{eq:def_psia}
		\psi_a(x)\coloneqq  a + (1-|a|^2)\, \frac{a-x}{|a-x|^2}.
	\end{align}
	The map $M_a \coloneqq \frac{\psi_a}{|\psi_a|^2}$ is called a Möbius transform. More generally, we call M\"obius transform of \(\mathbb{B}^n\) any map of the form \(R \circ M_a\) for some \(R\in O_n(\R)\) and \(a\in \mathbb{B}^n\).
\end{definition}

\begin{remark}
In dimension 2,  for \(a \in \bB^n\) and \(x\in \R^n \setminus \{a\}\) we have
\begin{align}
a+ (1-|a|^2)\frac{a-x}{|a-x|^2}&=a+(1-|a|^2)\frac{1}{\overline{a-x}} \\
& =\frac{(\overline{a-x})a+1-|a|^2 }{\overline{a-x}}= \frac{|a|^2-a\overline{x}+1-|a|^2}{\overline{a-x}}.
\end{align}
Hence, \(M_a(x)=-\overline{\left(\frac{x-a}{1-\overline{a}x} \right)}\) and we recognize, up to complex conjugation, the complex M\"obius transform.
\end{remark}

We have the following description of $M_a$.

\begin{lemma}\label{lem:conformality_Mobius}
	For any $a\in\bB^n$, the map $M_a\colon  \overline{\bB}^n \to \overline{\bB}^n$ is a bijective conformal map sending $\s^{n-1}$ onto $\s^{n-1}$ and the point $a$ to the origin. For \(x\in \mathbb{B}^n\) we have the explicit formula
	\begin{equation}\label{eq:formule_Mobius}
	 M_a(x)=\frac{|a-x|^2a+(1-|a|^2)(a-x)}{|a|^2|a-x|^2+(1-|a|^2)^2+2a\cdot (a-x)(1-|a|^2)}.
	\end{equation}
\end{lemma}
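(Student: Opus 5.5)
Write $M_a = \iota\circ\psi_a$, where $\iota(y)=y/|y|^2$ is the inversion in $\mathbb{S}^{n-1}$. Every claim then follows from properties of $\psi_a$ and $\iota$.

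\textbf{Conformality.} The map $x\mapsto (a-x)/|a-x|^2$ is an inversion centred at $a$ composed with a reflection. Multiplying by $(1-|a|^2)>0$ and translating by $a$ preserves conformality. Hence $\psi_a$ is a conformal diffeomorphism of $\R^n\setminus\{a\}$ onto $\R^n\setminus\{a\}$. Since $\iota$ is also conformal, $M_a$ is conformal wherever $\psi_a\neq 0$.

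\textbf{Boundary, bijectivity and $M_a(a)=0$.} The key computation is $|\psi_a(x)|^2$. Put $d=|a-x|^2$ and expand:
\[
|\psi_a(x)|^2 = |a|^2+2(1-|a|^2)\frac{a\cdot(a-x)}{d}+\frac{(1-|a|^2)^2}{d}.
\]
I would check the identity
\[
|\psi_a(x)|^2-1=\frac{(1-|a|^2)(1-|x|^2)}{|a-x|^2}.
\]
This follows by multiplying through by $d$ and using $2a\cdot(a-x)=|a|^2-|x|^2+d$; the computation is routine but is the one that needs care. It gives $|\psi_a|>1$ on $\mathbb{B}^n\setminus\{a\}$ and $|\psi_a|=1$ on $\mathbb{S}^{n-1}$, so $M_a$ sends $\mathbb{B}^n\setminus\{a\}$ into $\mathbb{B}^n\setminus\{0\}$ and $\mathbb{S}^{n-1}$ into $\mathbb{S}^{n-1}$. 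As $x\to a$ we have $|\psi_a(x)|\to\infty$, so $M_a$ extends continuously (indeed smoothly, as a composition of two inversions) with $M_a(a)=0$.

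For bijectivity I would exhibit the inverse. Solving $\psi_a(x)=y$ gives $x=\psi_a(y)$, because $\psi_a$ is an involution: $\psi_a(x)-a=(1-|a|^2)\,\iota(a-x)$ inverts exactly. Therefore $M_a^{-1}=\psi_a\circ\iota$, which is well defined on $\overline{\bB}^n\setminus\{0\}$ and extends by $0\mapsto a$. Alternatively, $M_a$ restricted to $\mathbb{S}^{n-1}$ is injective and preserves $\mathbb{S}^{n-1}$, and the image of $\overline{\bB}^n$ is connected and contains $0$, which gives surjectivity. The involution argument is cleaner.

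\textbf{Explicit formula.} Compute $M_a=\psi_a/|\psi_a|^2$ and multiply numerator and denominator by $d=|a-x|^2$. The numerator becomes $d\,a+(1-|a|^2)(a-x)$. The denominator is $d\,|\psi_a|^2$, which by the expansion above equals $|a|^2d+2(1-|a|^2)a\cdot(a-x)+(1-|a|^2)^2$. This is exactly \eqref{eq:formule_Mobius}, and the formula makes sense at $x=a$ as well.
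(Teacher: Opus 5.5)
Your proof is correct, but it takes a different route from the paper's.

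\textbf{The paper's route.} It argues geometrically. It writes $\psi_a(x)=-\iota_a(-x)$ with $\iota_a(y)=-a-(1-|a|^2)\frac{a+y}{|a+y|^2}$, attached to the sphere $S_a$ of centre $-a$ and radius $\sqrt{1-|a|^2}$, which meets $\mathbb{S}^{n-1}$ along an equator of $S_a$. It then uses that M\"obius maps send spheres to spheres and preserve angles to get $\iota_a(\mathbb{S}^{n-1})=\mathbb{S}^{n-1}$ and that $\iota_a$ exchanges the inside and outside of the ball. The formula \eqref{eq:formule_Mobius} is only stated, not derived.

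\textbf{Your route.} You replace the geometry by the identity $|\psi_a(x)|^2-1=\frac{(1-|a|^2)(1-|x|^2)}{|a-x|^2}$, which checks out. It is the classical $1-|M_a(x)|^2=\frac{(1-|a|^2)(1-|x|^2)}{1-2a\cdot x+|a|^2|x|^2}$ in another form. You combine it with the involution property of $\psi_a$ on $\R^n\setminus\{a\}$.

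\textbf{What each buys.} Your argument is fully checkable by hand. The inverse $\psi_a\circ\iota$ is explicit, and \eqref{eq:formule_Mobius} is actually derived. Its denominator is seen to equal $|a-x|^2+(1-|a|^2)(1-|x|^2)>0$ on $\overline{\mathbb{B}}^n$.

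You also avoid the delicate step of the geometric route. For $a\neq 0$, $S_a$ is not orthogonal to $\mathbb{S}^{n-1}$. Moreover $\iota_a$ is the inversion in $S_a$ followed by the point reflection through $-a$. A plain inversion in $S_a$ would also fix $S_a$ and preserve angles, yet it would not preserve $\mathbb{S}^{n-1}$. So "spheres to spheres plus angles" alone does not pin down $\iota_a(\mathbb{S}^{n-1})$. The paper's route, in exchange, explains conceptually why $M_a$ belongs to the M\"obius group of the ball.

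\textbf{Two minor points.} First, for surjectivity onto $\overline{\mathbb{B}}^n$ and onto $\mathbb{S}^{n-1}$, state explicitly that your identity holds on all of $\R^n\setminus\{a\}$. Then $|\psi_a(x)|-1$ has the sign of $1-|x|$, so $\psi_a\circ\iota$ maps $\overline{\mathbb{B}}^n\setminus\{0\}$ into $\overline{\mathbb{B}}^n\setminus\{a\}$. Second, drop the alternative connectedness argument. As written it is incomplete: one would also need, for example, that $M_a(\mathbb{B}^n)$ is open in $\mathbb{B}^n$.
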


\begin{proof}
	$M_a$ is a composition of translations, dilations and inversions. Hence, $M_a$ is a conformal map. 
	Let $\iota_a$ be the inversion with respect to the sphere $S_a$ of center $-a$ and radius $\sqrt{1-|a|^2}$. The map $\iota_a$ is given by the formula
	\begin{align*}
		\forall x\in\R^n \setminus \{-a\},\ \ \ \iota_a(x) = -a - (1-|a|^2)\frac{a+x}{|a+x|^2}.
	\end{align*}
	Then $\psi_a(x) = -\iota_a(-x)$. By construction, using that \( |a|^2+1-|a|^2=1\) we can see that  the intersection of the sphere $S_a$ with $\s^{n-1}$ is orthogonal along an equator of $S_a$. Since $\iota_a$ sends hyperspheres on hyperspheres, since it preserves angles and since the sphere \(S_a\) is unchanged by \(\iota_a\) we obtain that $\iota_a(\s^{n-1})=\s^{n-1}$. Thus, $\iota_a(\bB^n) = (\R^n\cup\{\infty\})\setminus \bB^n$. Hence, $\psi_a(\bB^n) = (\R^n\cup\{\infty\})\setminus \bB^n$ with $\psi_a(\s^{n-1}) = \s^{n-1}$. Thus, $M_a(\bB^n) = \bB^n$ with $M_a(\s^{n-1}) = \s^{n-1}$. Furthermore, $\psi_a$ sends $a$ to $\infty$. Thus, $M_a(a) = 0$.
\end{proof}

We can observe that \(M_a\) concentrates when \(a\in \mathbb{B}^n\) goes to the boundary \(\p \mathbb{B}^n=\mathbb{S}^{n-1}\). This means for example that  \(M_a\) converges to the constant map equal to \(a\) a.e.\ but \(|\dd \, M_a|\) tends to \(+\infty\). By using Lemma \ref{lem:conformality_Mobius} we can identify a rate of concentration.

\begin{lemma}\label{lm:unif_mobius}
	There exists $\Lambda>0$ such that 
	\begin{align}\label{eq:uniform_trace_mobius}
		\sup_{a\in\bB^n} \|M_a\|_{W^{1-\frac{1}{n},1}(\mathbb{S}^{n-1})} \leq \Lambda.
	\end{align}
	Furthermore, we have the following convergences:
	\begin{align}\label{eq:uniform_convergence_boundary_mobius}
		\sup_{a\in\mathbb{S}^{n-1}} \| M_{(1-r)a} -a \|_{L^1(\bB^n)} \xrightarrow[r\to 0]{}{0},
	\end{align}
	\begin{align}\label{eq:DL_Mobius}
	\int_{\mathbb{B}^n\cap B\left( a,r^{1/2} \right)} |\dd  M_{(1-r)a}|^n \ust{r\to 0}{=} n^{\frac{n}{2}}\, |\mathbb{B}^n|+
	\cO(r),
	\end{align}
	and the last estimate is uniform in \(a\in \mathbb{S}^{n-1}\).
\end{lemma}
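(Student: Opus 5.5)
The plan is to exploit two structural facts: the equivariance $M_{Ra}(Rx)=R\,M_a(x)$ for $R\in O_n(\R)$, and the explicit conformal factor of $M_b$. Equivariance follows directly from \eqref{eq:def_psia}, and it lets me reduce all uniformity statements in $a\in\mathbb{S}^{n-1}$ to the single point $a=e_n$. For the conformal factor I will use the standard identity, derivable from \eqref{eq:formule_Mobius} or from the composition of inversions in the proof of Lemma \ref{lem:conformality_Mobius}:
\[
\dd M_b(x)=\lambda_b(x)\,Q_b(x),\qquad Q_b(x)\in O_n(\R),\qquad \lambda_b(x)=\frac{1-|b|^2}{1-2\,b\cdot x+|b|^2|x|^2}.
\]
Writing the denominator as $|b|^2\,\bigl|x-b/|b|^2\bigr|^2$, I expect the bound $\lambda_b(x)\le C(1-|b|^2)/|x-b|^2$ on $\overline{\bB^n}$, uniformly for $|b|\ge 1/2$. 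Checking that denominator estimate on the closed ball, including near $b/|b|$, is a routine computation.

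For \eqref{eq:uniform_trace_mobius}, I would first bound $M_a$ in $W^{1,n}(\bB^n)$ uniformly in $a$. Since $M_a$ is a conformal bijection of $\bB^n$ of degree one, Lemma \ref{lowbound} holds with equality, so $E_n(M_a)=n^{\frac n2}|\bB^n|$ for every $a$; also $|M_a|\le 1$. The trace theorem on $\mathbb{S}^{n-1}$ then gives a uniform bound for the $W^{1-\frac1n,n}(\mathbb{S}^{n-1})$ norm of $M_a|_{\mathbb{S}^{n-1}}$. This is the norm actually needed to apply Lemma \ref{lm:uniform_nharm_extension}.

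As a cross-check I would also verify the bound via conformal invariance of the Gagliardo seminorm. For $s p=\dim\mathbb{S}^{n-1}=n-1$, a Möbius map $\varphi$ of the sphere satisfies $|\varphi(x)-\varphi(y)|^2=\mu(x)\mu(y)|x-y|^2$ with Jacobian $\mu^{n-1}$, so the seminorm of $M_a|_{\mathbb{S}^{n-1}}$ equals that of the identity. If the exponent $1$ in the statement is really intended, I expect this step to be the main obstacle. Hölder only gives a borderline non-integrable weight, so I would instead estimate the $W^{1-\frac1n,1}$ seminorm directly. I would split $\mathbb{S}^{n-1}$ into the cap $|x-a|<\sqrt r$ and its complement, and use the Lipschitz bound $\lambda_b\lesssim r/|x-a|^2$ on the complement together with boundedness on the cap.

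For \eqref{eq:uniform_convergence_boundary_mobius}, by equivariance it suffices to take $a=e_n$. For $x\neq e_n$ the formula \eqref{eq:formule_Mobius} with $b=(1-r)e_n$ gives $M_b(x)\to e_n$ as $r\to0$: the factor $1-|b|^2\sim 2r$ kills the second term in the numerator, and the denominator tends to $|x-e_n|^2$. Since $|M_b-e_n|\le 2$, dominated convergence gives the $L^1$ convergence.

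For \eqref{eq:DL_Mobius}, again take $a=e_n$. Since the total energy is $n^{\frac n2}|\bB^n|$, it suffices to bound the energy outside $B(a,r^{1/2})$. There $|x-b|\ge c\sqrt r$, so $\lambda_b\le Cr/|x-a|^2$ and
\[
\int_{\bB^n\setminus B(a,\sqrt r)}|\dd M_b|^n=n^{\frac n2}\int\lambda_b^n\le C r^n\int_{|x-a|>\sqrt r}|x-a|^{-2n}\dx\le C r^{n}r^{-\frac n2}=C r^{\frac n2}=\cO(r),
\]
because $n\ge 2$. Uniformity in $a$ again follows from rotation invariance.
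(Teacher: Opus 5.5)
Your proof is correct. For \eqref{eq:uniform_trace_mobius}, your main line is exactly the paper's argument: $E_n(M_a)=n^{\frac{n}{2}}|\bB^n|$, $|M_a|\le 1$ and the trace theorem. This gives the $W^{1-\frac1n,n}(\mathbb{S}^{n-1})$ bound, which is the one Lemma \ref{lm:uniform_nharm_extension} needs. There is one harmless slip: $M_0=-\id$, so $\deg M_a=(-1)^n$ and not $1$. Only $|\deg M_a|=1$, or simply conformality plus bijectivity, is used.

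For the other two items you take a different route.
\begin{itemize}
\item For \eqref{eq:uniform_convergence_boundary_mobius}, the paper works with the explicit formula for $\psi_{(1-r)a}$, splits $\bB^n$ at $|x-a|=r^{1/4}$, and extracts a rate $\cO(r^{1/2})$ by hand. You instead use $M_{Ra}\circ R=R\circ M_a$ to show the quantity does not depend on $a\in\mathbb{S}^{n-1}$ at all. Pointwise convergence (the denominator of \eqref{eq:formule_Mobius} tends to $|x-e_n|^2$) together with $|M_b-e_n|\le 2$ then closes by dominated convergence. This is shorter and less error-prone, and it loses only the rate, which is not needed. It also works verbatim in $L^1(\mathbb{S}^{n-1})$, which is what is actually fed into Lemma \ref{lm:uniform_nharm_extension} in the proof of Lemma \ref{lm:topology}.
\item For \eqref{eq:DL_Mobius}, the paper shows $M_{(1-r)a}(\bB^n\setminus B(a,\sqrt r))\subset B(a,C\sqrt r)$ and uses that the energy equals $n^{\frac n2}$ times the volume of the image. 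You integrate the conformal factor directly. Both give $\cO(r^{n/2})$. Your bound on $\lambda_b$ holds with constant $1$ for every $|b|<1$, by the identity $1-2b\cdot x+|b|^2|x|^2=|x-b|^2+(1-|x|^2)(1-|b|^2)$. Outside the cap, to get $\lambda_b\le Cr/|x-a|^2$ you should use $|x-b|\ge |x-a|-r\ge \frac12|x-a|$ rather than only $|x-b|\ge c\sqrt r$.
\end{itemize}

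If the exponent $1$ were meant literally, your cap-splitting sketch is missing an ingredient. Mere boundedness of $M_b$ on the cap cannot control the near-diagonal part, because $|x-y|^{-(n-1+s)}$ is not integrable near $y=x$ on $\mathbb{S}^{n-1}$. You also need the Lipschitz bound $\lambda_b\le (1+|b|)/(1-|b|)\le 2/r$ on $\mathbb{S}^{n-1}$, combined with the small measure of the cap. A simpler route: $M_b$ restricts to a conformal diffeomorphism of $\mathbb{S}^{n-1}$ with factor $\lambda_b$, so $\int_{\mathbb{S}^{n-1}}\lambda_b^{n-1}=|\mathbb{S}^{n-1}|$. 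Hölder then gives a uniform $W^{1,1}(\mathbb{S}^{n-1})$ bound, and $W^{1,1}(\mathbb{S}^{n-1})$ embeds into $W^{1-\frac1n,1}(\mathbb{S}^{n-1})$.
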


\begin{proof}
	By using the conformal invariance of the \(n\)-energy and the fact that each \(M_a\) is conformal we find that
	\begin{equation}
	\int_{\mathbb{B}^n}| \dd  M_a|^n=\int_{\mathbb{B}^n}|\dd( \id)|^n= n^{\frac{n}{2}} |\mathbb{B}^n|.
	\end{equation}
	Hence we have that \(\sup_{a\in \mathbb{B}^n} \|M_a\|_{W^{1,n}(\mathbb{B}^n)}\leq ((1+n^{\frac{n}{2}})|\mathbb{B}^n|)^{1/n}\). We obtain \eqref{eq:uniform_trace_mobius} by trace theorem.
	
	\medskip
	
	Let $a\in\s^{n-1}$. To prove \eqref{eq:uniform_convergence_boundary_mobius} we use the expression of \(\psi_{(1-r)a}\) in \eqref{eq:def_psia} to write that 
	\begin{align}
	\psi_{(1-r)a}(x)&=(1-r)\, a+(1-(1-r)^2)\, \frac{(1-r)\, a-x}{|(1-r)\, a-x|^2}. \label{eq:explicit_psi_ra}
	\end{align}
	If $|x-a|>r^{\frac{1}{4}}$, then we have, for $r<\frac{1}{10}$,
	\begin{align*}
		|(1-r)a - x|^2 \geq \left( r^{\frac{1}{4}} -r \right)^2 \geq \frac{r^{1/2}}{2}.
	\end{align*}
	Moreover, given any $x\in\bB^n$, we have 
	\begin{align*}
		& \left| a-r\, a+(2r-r^2) \frac{a-x-r\, a}{|a-x-r\, a|^2}\right|^2 \\[2mm]
		& = 1+r^2 + \frac{ (2r-r^2)^2 }{ |a-x-r\, a|^2 } - 2r + 2(2r-r^2) \frac{ a\cdot (a-x-r\, a) }{|a-x-r\, a|^2} \\[2mm]
		&\quad -2r\, (2r-r^2) \frac{ a\cdot (a-x-r\, a) }{ |a-x-r\, a|^2 }.
	\end{align*}
	If $r>0$ is small enough, since $|a-x|\leq r^{1/4}$, we have $|a-x-r\, a|\geq | r-|x-a| | \geq r^{1/4}(1-r^{3/4})$. Therefore, 
	\begin{align*}
		\left| \left| a-ra+(2r-r^2) \frac{a-x-r\, a}{|a-x-r\, a|^2}\right|^2 -1 \right| \leq& r^2  + 8\frac{ r^2 }{r^{1/2}} +2r + 8\frac{r}{r^{1/2}} + 8 \frac{r^2}{r^{1/2}}.
	\end{align*}
	Thus, we obtain
	\begin{align}\label{eq:denom}
		 \left| a-r\, a+(2r-r^2)\, \frac{a-x-r\, a}{|a-x-r\, a|^2}\right|^2 \underset{r\to 0}{=} 1+\cO(r^{1/2}), 
	\end{align}
	where \(\cO(r^{1/2})\) is independent of \(a\in \mathbb{S}^{n-1}\). 
	
	In the case where $x\in \bB^n\setminus B(a,r^{\frac{1}{4}})$, we obtain
	\begin{align*}
		\left| M_{(1-r)a}(x) -a \right| & = \left| \frac{a-ra+(2r-r^2) \frac{a-x-ra}{|a-x-ra|^2}}{\left| a-ra+(2r-r^2) \frac{a-x-ra}{|a-x-ra|^2}\right|^2}-a \right|\leq C\, r^{\frac{1}{2}}.
	\end{align*}
Therefore, it holds
\begin{align*}
\int_{\mathbb{B}^n}|M_{(1-r)a}-a| &\leq \int_{\mathbb{B}^n \cap B(a,r^{\frac14})} |M_{(1-r)a}-a|+C\, r^{n/4} \\[2mm]
& \leq \int_{\mathbb{B}^n \cap B(a,r^{\frac14})} \left| \frac{a-ra+(2r-r^2) \frac{a-x-ra}{|a-x-ra|^2}}{\left| a-ra+(2r-r^2) \frac{a-x-ra}{|a-x-ra|^2}\right|^2}-a \right| +C\, r^{1/2}.
\end{align*}
Using \eqref{eq:denom}, we find that 
\begin{align*}
	& \int_{\mathbb{B}^n}|M_{(1-r)a}-a| \\[2mm]
	&= \int_{\mathbb{B}^n \cap B(a,r^{\frac14})}\left| \left(a-ra+(2r-r^2)\frac{a-x-ra}{|a-x-ra|^2}\right) (1+\cO(r^{1/2}))-a\right| +\cO(r^{n/4}) \\[2mm]
&= \int_{\mathbb{B}^n \cap B(a,r^{\frac14})}\left|-ra+(2r-r^2)\frac{a-x-ra}{|a-x-ra|^2}\right| (1+\cO(r^{1/2}))+\cO(r^{1/2}) .
\end{align*}
We then use that \(|a-x-ra|^2\geq r^{1/2}-4r+r^{2}\) in the region \(\mathbb{B}^n \cap B(a,r^{\frac14})\) and hence we find that for any \(a\in \mathbb{S}^{n-1}\)
\[ \int_{\mathbb{B}^n}|M_{(1-r)a}-a| \leq Cr^{1/2}\]
for some \(C>0\) independent of \(a\in \mathbb{S}^{n-1}\) for \(r\) small enough. This concludes the proof of \eqref{eq:uniform_convergence_boundary_mobius}.

\medskip

To prove \eqref{eq:DL_Mobius}, we use that Möbius maps are conformal to obtain that 
\begin{align*}
\int_{\mathbb{B}^n \cap B\left(a,r^{\frac12}\right) } |\dd M_{(1-r)a}|^n= \left| M_{(1-r)a}\left(\mathbb{B}^n \cap B(a,r^{1/2}) \right) \right|.
\end{align*}
We must then determine the image of \(\mathbb{B}^n \cap B(a,r^{\frac12})\) by the M\"obius map \(M_{(1-r)a}\) for \(r>0\) small. We know that \(M_{(1-r)a}\) is a bijection of the unit ball \(\mathbb{B}^n\) and it sends \((1-r)a\) to 0.  The image of the piece of hypersphere \(\mathbb{B}^n \cap \p B(a,r^{1/2})\) by \(M_{(1-r)a}\) separates the disc into two regions. The image \(M_{(1-r)a}\left( \mathbb{B}^n \cap B(a,r^{1/2})\right) \) is the part containing zero and \(M_{(1-r)a}\left( \mathbb{B}^n \setminus B(a,r^{1/2}) \right)\) is the part containing \(a\). We deduce from the explicit definition of $\psi_{(1-r)a}$ in \eqref{eq:explicit_psi_ra} that there exists a constant $C>0$ independant of $a$ satisfying, for $x\in \partial B(a,\sqrt{r})$ 
\begin{align*}
	|\psi_{(1-r)a}(x) -a|\leq C\sqrt{r}.
\end{align*}
This implies that \(M_{(1-r)a} \left( \mathbb{B}^n \setminus B(a,r^{1/2}) \right) \) is contained in a ball \(B(a,Cr^{1/2})\) for some \(C>0\) independent of \(r\), hence \( \left|M_{(1-r)a} \left(\mathbb{B}^n \setminus B(a,r^{1/2}) \right) \right| \leq Cr^{n/2}\) and by complementarity
we obtain \eqref{eq:DL_Mobius}.
\end{proof}

\subsection{Modifying a diffeomorphism so that it is conformal in one point}

The main object of this section is the following Proposition.

\begin{proposition}\label{continuityscheme1_v2}
	For all \(\e>0\), there exists \(L>0\) small enough  such that if \(\Phi\colon \overline{\Omega}\rightarrow \mathbb{B}^n\) is a \(\C^1\)-diffeomorphism satisfying
	\begin{equation}\label{eq:proche_id}
	\| \Phi-\id \|_{\C^1 (\Omega)} \leq L, 
	\end{equation} 
	then for all \(a\in \overline{\Omega}\), there exists \(\Phi_{a}\colon \overline{\Omega}\rightarrow \mathbb{B}^n\) a \(\C^1\)-diffeomorphism such that 
	\begin{enumerate}
		\item $\Phi_{a}(a) = \Phi(a)$.
	\item If $a\in\pl\Omega$, then
	\begin{itemize}
	\item[i)] there exist \(\beta_a\in \R\) and $R_a \in O_n(\R)$ such that $\dd \Phi_{a}(a) = \beta_a R_a$,
	\item[ii)] \label{item:distortion2} 
		\begin{equation}\label{eq:distortiond}
		\| \Phi_{a} -\id \|_{\C^1(\Omega)}<\e.
		\end{equation}
	\end{itemize}
	\item \label{item:last} \(\Phi_{a'} \xrightarrow[a' \to a]{a.e.} \Phi_a, \quad \Phi_{a'}^{-1} \xrightarrow[a' \to a]{a.e.} \Phi_a^{-1}\) and \( \dd \Phi_{a'} \xrightarrow[a' \to a]{a.e.} \dd \Phi_a, \quad \dd \Phi_{a'}^{-1} \xrightarrow[a' \to a]{a.e.} \dd \Phi_a^{-1}\).
	\end{enumerate}
\end{proposition}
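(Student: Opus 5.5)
The idea is to correct the differential of \(\Phi\) at the single point \(a\) by composing with a diffeomorphism of \(\overline{\mathbb{B}^n}\) that is close to the identity and that fixes \(\mathbb{S}^{n-1}\) setwise. We write \(\Phi_a\coloneqq G_a\circ\Phi\), where \(G_a\colon\overline{\mathbb{B}^n}\to\overline{\mathbb{B}^n}\) is a \(\C^1\)-diffeomorphism with \(G_a(\Phi(a))=\Phi(a)\) and \(\|G_a-\id\|_{\C^1}\leq C\,\|\Phi-\id\|_{\C^1}\). Then \((1)\) holds automatically. Moreover \(\|\Phi_a-\id\|_{\C^1}\le \|G_a\circ\Phi-\Phi\|_{\C^1}+\|\Phi-\id\|_{\C^1}\le C'L\), which gives \((2)\,ii)\) once \(L\) is small compared with \(\e\). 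For \((2)\,i)\) we need \(\dd G_a(\Phi(a))\,\dd\Phi(a)=\beta_aR_a\), so the linear map \(\dd G_a(p)\), with \(p=\Phi(a)\), must be prescribed to be
\[
T_a\coloneqq \beta_a R_a\,\dd\Phi(a)^{-1}.
\]
We take \(\beta_a=1\) (or a normalization) and choose \(R_a\) from the polar decomposition of \(\dd\Phi(a)\), so that \(T_a\) is a symmetric positive matrix. Since \(|\dd\Phi(a)-\id|\le L\), we get \(|T_a-\id|\le CL\).

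For \(a\in\partial\Omega\) (so \(p\in\mathbb{S}^{n-1}\)) there is a compatibility constraint: \(\dd G_a(p)\) must preserve \(T_p\mathbb{S}^{n-1}\). Since \(\Phi\) maps \(\partial\Omega\) onto \(\mathbb{S}^{n-1}\), \(\dd\Phi(a)\) sends \(T_a\partial\Omega\) to \(T_p\mathbb{S}^{n-1}\). We therefore pick \(R_a\) mapping \(T_a\partial\Omega\) onto \(T_p\mathbb{S}^{n-1}\) and the inward normal to the inward normal. Concretely, take the polar decomposition of \(\dd\Phi(a)\) after adjusting by the rotation aligning the normals, which is an \(O(L)\) perturbation. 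Then \(T_a\) maps \(T_p\mathbb{S}^{n-1}\) to itself and is \(CL\)-close to \(\id\).

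The main work is constructing \(G_a\). Work in polar-type coordinates near \(p\). First build a local linear-type map \(x\mapsto p+T_a(x-p)\) and project it radially so that the sphere is preserved, setting \(\tilde G(x)=|x|\,\frac{p+T_a(x-p)}{|p+T_a(x-p)|}\)-like corrections; because \(T_a\) preserves the tangent space, this radial normalization keeps the differential at \(p\) equal to \(T_a\). Then glue it to the identity with a fixed cutoff \(\eta\) of radius \(\rho\):
\[
G_a=\id+\eta\,(\tilde G-\id).
\]
The \(\C^1\) norm of \(G_a-\id\) is bounded by \(C(\rho)\,|T_a-\id|\le C(\rho)L\). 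For \(L\) small, \(G_a\) is then a diffeomorphism of \(\overline{\mathbb{B}^n}\) preserving \(\mathbb{S}^{n-1}\), by the inverse function theorem together with a degree argument. For interior \(a\) no constraint at the boundary is needed, only \((1)\) and \((3)\). The same formula still works, since the radial normalization leaves the sphere invariant, with \(\rho\) fixed; alternatively one can interpolate to a map with \(T_a=\id\) deep inside.

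Finally, \((3)\) follows because every ingredient depends continuously on \(a\). The point \(p=\Phi(a)\) does, \(\dd\Phi(a)\) does, and so does the polar decomposition. The boundary adjustment uses the normal \(\nu(a)\), which is continuous on \(\partial\Omega\). Hence \(G_{a'}\to G_a\) in \(\C^1\), and therefore \(\Phi_{a'}\) and \(\dd\Phi_{a'}\), together with their inverses, converge even uniformly.

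The main obstacle is continuity across the transition from interior \(a\) to boundary \(a\), because the tangency constraint on \(R_a\) only makes sense on \(\partial\Omega\). I would handle it by defining \(R_a\) for all \(a\) through a continuous extension of the normal field \(\nu\) to a neighbourhood of \(\partial\Omega\), cut off smoothly in \(\dist(a,\partial\Omega)\). The boundary normalization then degenerates continuously to the plain polar decomposition in the interior.
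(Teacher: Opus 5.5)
Your target-side strategy \(\Phi_a=G_a\circ\Phi\) differs from the paper's. There, the correction is made on the domain, as \(\Phi\circ f_a^{-1}\circ\psi_a\circ f_a\), with \(\psi_a\) the time-one flow of a vector field tangent to \(\{x_n=0\}\), so boundary preservation is automatic. Your route is viable, and it has the merit that (1) holds for every \(a\) by construction. However, the explicit construction of \(G_a\), which you call the main work, fails as written in two places.

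\textbf{First gap: the gluing.} The map \(G_a=\id+\eta(\tilde G-\id)\) does not preserve \(\mathbb{S}^{n-1}\). For \(|x|=1\) you have \(|\tilde G(x)|=|x|=1\). So wherever \(0<\eta(x)<1\) and \(\tilde G(x)\neq x\), the point \(G_a(x)\) is a nontrivial convex combination of two distinct unit vectors, hence lies strictly inside the ball. Then \(\Phi_a(\p\Omega)\not\subset\mathbb{S}^{n-1}\), so \(\Phi_a\) is not a diffeomorphism onto \(\overline{\mathbb{B}^n}\) and \(\widetilde M_{r,a}\) is not in \(\I_1\); neither the inverse function theorem nor a degree argument repairs this. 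You should glue first and normalize afterwards, for instance
\[
G_a(x)=|x|\,\frac{x+\eta(x)(T_a-I)(x-p)}{\bigl|x+\eta(x)(T_a-I)(x-p)\bigr|},
\]
which maps every sphere \(\{|x|=s\}\) to itself. For interior \(a\), your claim that ``the same formula still works'' is also false when \(p\) is near \(0\): the map \(x\mapsto|x|L(x)/|L(x)|\) is not \(\C^1\) at the origin when \(L(0)\neq0\). You need your alternative of switching the correction off (\(T_a=I\)) once \(\dist(a,\p\Omega)\) exceeds a small threshold.

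\textbf{Second gap: the normal stretch.} It is not true that the radial normalization keeps \(\dd\tilde G(p)=T_a\) merely because \(T_a\) preserves \(T_p\mathbb{S}^{n-1}\). Since \(|\tilde G(x)|=|x|\), the radial stretch at \(p\) is forced to equal \(1\). Directly, \(\dd\tilde G(p)=T_a+(1-s)\,p\otimes p\) with \(s=p\cdot T_ap\), where \(T_a^Tp=sp\). Set \(\mu_a:=|\dd\Phi(a)^Tp|=\p_\nu|\Phi|(a)\). Then \(\dd\Phi(a)^Tp=\mu_a\nu(a)\) and \(s=\beta_a/\mu_a\), which gives
\[
\dd\Phi_a(a)=R_a\bigl[\beta_a(I-\nu\otimes\nu)+\mu_a\,\nu\otimes\nu\bigr].
\]
This is conformal if and only if \(\beta_a=\mu_a\). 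So with your choice \(\beta_a=1\), item (2)(i) fails unless \(\p_\nu|\Phi|(a)=1\). The ``normalization'' must be exactly \(\beta_a=\p_\nu|\Phi|(a)\), which is close to \(1\), so \(T_a\) stays \(CL\)-close to \(I\). This matches the paper, whose \(\beta_a\) is the normal–normal entry of \(\dd\tilde\Phi_a(0)\) and is left unchanged by its correction. Alternatively, you can insert a radial profile \(g(|x|)\) with \(g(1)=1\) and \(g'(1)=s\).

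With both repairs, and with \(\beta_a\), \(R_a\) depending continuously on \(a\), your \(\C^1\)-continuity argument for (3) and the estimate for (2)(ii) go through.
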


The idea to prove this Proposition is first to observe that the important point is to construct \(\Phi_a\) for \(a\) on \(\p \Omega\). Indeed, once it is done, we can extend it for \(a\) in a tubular neighbourhood of \(\p \Omega\) and then glue it with \(\Phi\) well inside the domain thanks to a cut-off function. For \(a\in \p \Omega\), we can use a polar decomposition to write that \(\dd \Phi(a)= R_a S_a\) with \(R_a\in O_n(\R)\) and \(S_a\in S_n^{++}(\R)\). The idea is to find a neighbourhood of \(a\in \p \Omega\), denoted by \(U_a\), and a diffeomorphism \(\psi_a:U_a \rightarrow U_a\) such that \(\dd (\Phi\circ \psi_a)=\beta_a R_a\) for some \(\beta_a\in \R^+\) well-chosen. It is actually simpler to consider the case where \(\p \Omega\) is flat and we will use a local straightening of the boundary to be in this situation. Then the diffeomorphism \(\psi_a\) is constructed as the flow of a well-chosen vector-field at time \(t=1\). Along the construction we have to ensure a sort of continuity in the parameter \(a\in \Omega\) to ensure that \ref{item:last} holds.

\begin{proof}

\underline{Step 1:} We first construct \(\Phi_{a}\) for \(a\in \p \Omega\).

\begin{claim}\label{claim:straightening}(Straightening of the boundary)

Let \( a\in \p \Omega\), there exist a number \(r_a>0\), a rotation \(O_a\in O_n(\R)\) and \(f_a:\overline{\Omega}\cap B(a,r_a)\rightarrow\R^n\) such that
\begin{enumerate}
\item \label{item_1}\(f_a(a)=0\),
\item \label{item_2}\(\dd f_a(a)=O_a\),
\item \label{item_3}\(f(\Omega\cap B(a,r_a))\subset \R^n_+=\{x\in \R^n; x_n>0\}\),
\item \label{item_4}\(f(\p \Omega\cap B(a,r_a))\subset \{x\in \R^n; x_n=0\}\),
\item \label{item_5}\(f\) is a \(\C^1\)-diffeomorphism onto its image. We denote by \(U_a:=f_a(\Omega\cap B(a,r_a))\) and \(\overline{U}_a:=f_a(\overline{\Omega}\cap B(a,r_a))\).
\item \label{item_6}The maps \(a \in \p \Omega \mapsto r_a\) and \(a\in \p \Omega \mapsto O_a\) are continuous and for all \(a\in \p \Omega\), for all \(\e>0\), there exists \(\delta_a>0\) such that if \( |a-a'|<\delta_a\) then 
\[\|f_a-f_{a'}\|_{L^\infty(B(a,r_a)\cap B(a',r_{a'}))}+\|\dd f_a -\dd f_{a'}\|_{L^\infty(B(a,r_a)\cap B(a',r_{a'}))}\|<\e.\]
\end{enumerate}
\end{claim}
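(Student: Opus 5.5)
The plan is to build $f_a$ from a single global object, so that continuity in $a$ comes for free: I will use a tubular-neighbourhood / graph description of $\p\Omega$ that varies continuously with the base point. Since $\p\Omega$ is a compact $\C^1$ hypersurface, the outer unit normal $\nu$ is continuous on $\p\Omega$. First I choose a continuous map $a\mapsto O_a\in O_n(\R)$ on $\p\Omega$ with $O_a\nu(a)=-e_n$, which sends $T_a\p\Omega$ onto $\{x_n=0\}$. Globally this is impossible in general (the Gauss map obstruction), so I will only ask for local continuity, i.e.\ continuity on each chart of a finite cover. That is enough for item \ref{item_6} as stated, since the $\delta_a$ there is local.

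A cleaner way to get $O_a$ near a fixed $a_0$ is to apply Gram--Schmidt to the frame $(v_1,\dots,v_{n-1},-\nu(a))$, where the $v_i$ are the projections onto $T_a\p\Omega$ of a fixed orthonormal basis of $T_{a_0}\p\Omega$. This is continuous in $a$ near $a_0$. Compactness then gives a finite cover, and on overlaps only local continuity is used.

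Next I write $\p\Omega$ as a graph. By the implicit function theorem and uniform $\C^1$ regularity, there is $\rho>0$, independent of $a$, such that for every $a\in\p\Omega$,
\[
O_a\big(\p\Omega - a\big)\cap B(0,\rho)=\{y_n=g_a(y')\}\cap B(0,\rho),
\]
with $g_a\in\C^1$, $g_a(0)=0$ and $\nabla g_a(0)=0$. Moreover, $O_a(\Omega-a)\cap B(0,\rho)$ lies on the side $y_n>g_a(y')$. The family $g_a$ depends continuously on $a$ in $\C^1$ on a fixed ball, because the graph functions are obtained from one fixed local chart by a rotation that varies continuously.

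Then I define
\[
f_a(x)\coloneqq \big(y',\,y_n-g_a(y')\big),\qquad y=O_a(x-a).
\]
Items \ref{item_1}--\ref{item_4} are immediate: $\dd f_a(a)=O_a$ because $\nabla g_a(0)=0$. For item \ref{item_5}, $f_a$ is a $\C^1$-diffeomorphism with explicit inverse $y\mapsto(y',y_n+g_a(y'))$. I take $r_a\equiv r$ constant with $r<\rho/2$, which is trivially continuous.

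Item \ref{item_6} follows from $\C^1$-continuity of $a\mapsto g_a$ together with continuity of $a\mapsto O_a$. On $B(a,r)\cap B(a',r)$, the difference $f_a-f_{a'}$ and its differential are controlled by $|a-a'|$, by $|O_a-O_{a'}|$, and by the uniform modulus of continuity of $\nabla g$.

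The main obstacle is making this $\C^1$ dependence rigorous. The graph function $g_a$ is defined implicitly after a rotation, and the rotation tilts the chart. I would therefore derive $g_a$ from a fixed defining function $\phi$ of $\Omega$ near $a_0$, with $\Omega=\{\phi<0\}$ locally and $\nabla\phi\neq0$, by solving
\[
\phi\big(a+O_a^{-1}(y',t)\big)=0
\]
for $t$, and applying the implicit function theorem with parameters $(a,y')$. This gives joint continuity of $g_a$ and $\nabla g_a$ in $(a,y')$, hence uniform continuity on compacts, which is exactly what item \ref{item_6} needs.
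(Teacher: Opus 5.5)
Your construction is correct, and it differs from the paper's in the choice of normal coordinate. That choice changes how invertibility and continuity in $a$ are obtained.

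\textbf{The paper's route.} The paper fixes a base point $a$, a local defining function $\rho_a$, and a rotation $O_{a'}$ that is continuous only for $a'$ near $a$. It sets
$f_{a'}(x)=\big((O_{a'}(x-a'))_1,\dots,(O_{a'}(x-a'))_{n-1},\,\rho_a(x)/|\nabla\rho_a(a')|\big)$.
Then $\dd f_{a'}(a')=O_{a'}$ is a direct computation. Invertibility comes from the inverse function theorem, so the radius must be shrunk; for $a'\neq a$ the paper appeals to $r_{a'}=r_a-|a-a'|$ and to openness of $\C^1$-diffeomorphisms. $\C^1$-continuity in $a'$ is immediate, because $\rho_a$ is frozen and only the rotation and a scalar normalisation move.

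\textbf{Your route.} You re-centre the graph at every base point and use the height $y_n-g_a(y')$ as last coordinate. This buys an explicit inverse on a whole cylinder, with no inverse function theorem. It also gives a uniform radius $r_a\equiv r$, so continuity of $a\mapsto r_a$ becomes trivial, a point the paper handles only loosely. The price is the uniform graph radius, which needs a compactness argument, and the parametric implicit function theorem for $(a,y')\mapsto(g_a(y'),\nabla g_a(y'))$. Your plan of solving $\phi(a+O_a^{-1}(y',t))=0$ in $t$ is the right way to do this; it is nondegenerate because $O_a^{-1}e_n=-\nu(a)$ is transversal to $\partial\Omega$.

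\textbf{A correction on item 6.} You are right that $a\mapsto O_a$ cannot in general be continuous on all of $\partial\Omega$, and the obstruction can be made precise. Items 2--4 force $O_a\nu(a)=-e_n$, so $O_a^{-1}e_1,\dots,O_a^{-1}e_{n-1}$ would be a continuous orthonormal frame of $T\partial\Omega$. For odd $n$, e.g.\ $n=3$ where $\partial\Omega\cong\mathbb{S}^2$, this contradicts the hairy ball theorem.

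However, your remark that chart-wise continuity is enough for item 6 as stated is not correct. Suppose $a\mapsto f_a$ is single-valued and satisfies the $\delta_a$-condition at every $a$. Evaluating at $x=a'$, $O_{a'}=\dd f_{a'}(a')$ is within $\varepsilon$ of $\dd f_a(a')$, and $\dd f_a(a')\to \dd f_a(a)=O_a$. That is exactly global continuity of $a\mapsto O_a$. So with a finite cover and a selection of charts, the condition fails at the seams, and item 6 as literally written cannot be proved, e.g.\ for $n=3$.

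What your argument establishes is the localized statement: every $a_0\in\partial\Omega$ has a neighbourhood $V$ on which a family $(f_a)_{a\in V}$ satisfying items 1--5 exists and depends continuously on $a$ in $\C^1$. The paper's proof establishes the same thing and no more, since its $f_{a'}$ for $a'$ near $a$ is built from the frozen $\rho_a$ and a rotation that is only continuous near $a$.
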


\begin{proof}(proof of Claim \ref{claim:straightening})
Let \(a\in \p \Omega\), since \(\Omega\) is a regular open set of class \(\C^1\), by definition, there exist \(r_a>0\) and \(\rho_a\in \C^1(B(a,r_a))\) such that
\begin{itemize}
\item \(\Omega \cap B(a,r_a)=\{x\in B(a,r_a); \rho_a(x)>0\}, \)
\item \(\p \Omega \cap B(a,r_a)=\{x\in B(a,r_a); \rho_a(x)=0\}\),
\item \(\nabla \rho_a(x)\neq 0\) for all \(x\in B(a,r_a)\).
\end{itemize}
The outward unit normal to \(\Omega\) at \(x \in \p \Omega\cap B(a,r_a) \) is then given by \(\nu(x)=-\nabla \rho_a(x)/|\nabla \rho_a(x)|\) and we can see that the map \(x\in \p \Omega \cap B(a,r_a) \mapsto \nu (x)\) is continuous. For all \(\tilde{a} \in \p \Omega \cap B(a,r_a)\), we can then find an affine rotation \(O_{\tilde{a}}\) such that
\begin{itemize}
\item \(O_{\tilde{a}} \nu(\tilde{a})=-e_n\), 
\item \(O_{\tilde{a}} \tilde{a}=\tilde{a}\),
\item \(\tilde{a} \mapsto O_{\tilde{a}} \in M_n(\R)\) is continuous.
\end{itemize}
We then set 
\begin{equation}
\left\{
\begin{array}{rcll}
f_a: & \overline{\Omega}\cap B(a,r_a) & \rightarrow & \R^n_+ \\
& x & \mapsto & \left( O_ax\cdot e_1, \dots, O_ax \cdot e_{n-1}, \frac{\rho_a(x)}{|\nabla \rho_a(a) |} \right)-\left(a_1,\dots,a_{n-1},0 \right).
\end{array}
\right.
\end{equation}
By definition \(f_a \in \C^1(\overline{\Omega}\cap B(a,r_a),\R^n)\) and items \ref{item_1}, \ref{item_3}, \ref{item_4} are satisfied. Now, for all \(x\in \overline{\Omega}\cap B(a,r_a)\) and all \(h\in \R^n\) we have
\begin{align*}
\dd f_a(x).h & = \left( O_a h\cdot e_1, \dots, O_ah \cdot e_{n-1}, \frac{\nabla \rho_a (x)\cdot h}{| \nabla \rho_a (a)|} \right).
\end{align*}
In particular, for \(x=a\), we find that 
\begin{align*}
\dd f_a(a) &= \left(O_a h\cdot e_1, \dots, O_ah \cdot e_{n-1}, O_a^T e_n \cdot h \right)= O_a h.
\end{align*}
Hence item \ref{item_2} is also satisfied and, by the local inversion theorem, up to reducing \(r_a>0\) we also have that item \ref{item_5} is satisfied. 
To prove item \ref{item_6}, given \(a'\in B(a,r_a)\) we set \(r_{a'}=r_a-|a-a'|>0\) and we define
\begin{equation}
\left\{
\begin{array}{rcll}
f_{a'}: & \overline{\Omega}\cap B(a',r_{a'}) & \rightarrow & \R^n_+ \\
& x & \mapsto & \left( O_{a'}x\cdot e_1, \dots, O_{a'}x \cdot e_{n-1}, \frac{\rho_a(x)}{|\nabla \rho_a(a') |} \right)-\left(a'_1,\dots,a'_{n-1},0) \right).
\end{array}
\right.
\end{equation}
By using the continuity of \( \tilde{a} \in \p \overline{\Omega} \cap B(a,r_a) \mapsto O_{\tilde{a}}\), we can see that 
\begin{equation}\label{eq:continuity_straightening}
\| f_a -f_{a'}\|_{\C^1(B(a',r_{a'}))} \xrightarrow[a' \to a]{} 0.
\end{equation}
Hence, by the openness of the set of \(\C^1\) diffeomorphism \cite[Theorem 1.6]{Hirsch_1976}, for \(a'\) close enough to \(a\), we have that \(f_a\) is also a \(\C^1\)-diffeomorphism from \(\overline{\Omega}\cap B(a',r_{a'})\) onto its image (which is contained in \(\R^n_+\)). Now item \ref{item_6} is verified is a consequence of \eqref{eq:continuity_straightening}.
\end{proof}

Now consider \(\tilde{\Phi}_a:=\Phi\circ f_a^{-1}: \overline{U}_a\rightarrow \overline{\mathbb{B}^n}\), where \(\overline{U}_a\) is defined in Claim \ref{claim:straightening}. Locally near \( \Phi_a\) we can find an orthonormal basis \( (u_1^a,\dots,u_{n-1}^a)\) of \(T_{\Phi_a(a)} \mathbb{S}^{n-1}\) such that \( a \mapsto (u_1^a, \dots, u_{n-1}^a)\) is continuous. In the basis \(\mathcal{B}:=(e_1,\dots,e_{n-1},e_n)\) and \(\mathcal{B}_a':=(u_1^a,\dots,u_{n-1}^a,\Phi(a))\), the matrix of \(\dd \tilde{\Phi}_a(0)\) has the form
\begin{equation}
\dd \tilde{\Phi}_a(0)=\begin{pmatrix}
A_a & B_a \\
0 &\beta_a
\end{pmatrix}
\end{equation}
with \(A_a\in GL_{n-1}(\R)\), \(B_a\in M_{n-1,1}(\R)\) and \(\beta_a \in \R\). We can use the polar decomposition to find \(R_a\in O_{n-1}(\R)\) and \(S_a\in S_{n-1}^{++}(\R)\) such that \( A_a=R_a S_a\). Furthermore, the dependence on \(a\in \p \Omega\) of all the objects \(R_a, S_a, \beta_a\) is continuous, by continuity of the polar decomposition. We can also see that, if \(\Phi\) is close to \(\id\), then \(A_a\) is close to a rotation, meaning that \(S_a\) is close to \(\id\), and \(\beta_a\) is close to \(1\) and \(B_a\) is close to \(0\). Hence we can assume \(\beta_a>0\).

\begin{claim}\label{claim:diffeo_local}(local diffeomorphism making \(\Phi\) conformal in \(a\) in the flat case)
There exists a diffeomorphism \(\psi_a: \overline{U}_a \rightarrow \overline{U}_a\) such that 
\begin{enumerate}
\item \label{item:diffeo2}\(\psi_a\) is a \(\C^1\)-diffeomorphism and \(\psi_a(\{x_n=0\}\cap \overline{U}_a)\subset \{x_n=0\}\),
\item \label{item:02}\(\psi_a(0)=0\),
\item \label{item:id2} \(\psi_a=\id\) in a neighbourhood of \(\p \overline{U}_a \setminus \R^n_+\),
\item \label{item:diff02}\begin{align*}
\dd \psi_a(0)&=\beta_a \dd \tilde{\Phi}_a(0)^{-1}\begin{pmatrix}
R_a &0 \\
0 &1
\end{pmatrix} =\beta_a\begin{pmatrix}
A_a & B_a \\ 
0 & 1
\end{pmatrix}^{-1}\begin{pmatrix}
R_a & 0 \\
0 &1\end{pmatrix}  =\beta_a\begin{pmatrix}
A^{-1}_aR_a & -A_a^{-1} B_a \\
0 & 1
\end{pmatrix}.
\end{align*}
\item \label{item:continuity_2} \( \| \psi_a-\psi_{a'}\|_{\C^1(U_a \cap U_{a'})}\) is small if \( |a-a'|\) is small enough and \( \| \psi_a -\id\|_{\C^1(U_a)}\) is small if \(\Phi\) is close to \(\id\).
\end{enumerate}
\end{claim}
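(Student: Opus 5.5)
The strategy is to write \(\psi_a\) as the time-one flow of a compactly supported vector field. Set \(M_a\coloneqq \beta_a\begin{pmatrix} A_a^{-1}R_a & -A_a^{-1}B_a \\ 0 & 1\end{pmatrix}\), which is the target differential. This matrix preserves the hyperplane \(\{x_n=0\}\) and maps \(e_n\) to a vector with \(n\)-th component \(\beta_a>0\), so it preserves \(\R^n_+\). When \(\Phi\) is close to \(\id\), we have \(S_a\) close to \(\id\), \(B_a\) close to \(0\) and \(\beta_a\) close to \(1\). Since \(A_a^{-1}R_a=S_a^{-1}R_a^TR_a=S_a^{-1}\), \(M_a\) is close to \(\id\). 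I would therefore take a logarithm: \(X_a\coloneqq \log M_a\), defined by the power series, which converges since \(\|M_a-\id\|\) is small. \(X_a\) depends continuously on \(a\), and \(\exp(X_a)=M_a\). Because \(M_a\) is block upper triangular with bottom-right entry \(\beta_a\), every power of \(M_a-\id\) has zero last row except for its last entry. Hence \(X_a\) has last row \((0,\dots,0,\log\beta_a)\), so the linear field \(x\mapsto X_a x\) is tangent to \(\{x_n=0\}\).

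Next, localize. Fix a cut-off \(\zeta\in\C^\infty_c(B(0,\rho))\) with \(\zeta\equiv1\) on \(B(0,\rho/2)\), where \(\rho>0\) is chosen uniformly for \(a'\) near \(a\) so that \(B(0,\rho)\cap\overline{\R^n_+}\subset\overline U_{a'}\). This uniformity uses the continuity of \(r_a\) and item \ref{item_6} of Claim \ref{claim:straightening}. Define \(Y_a(x)\coloneqq \zeta(x)X_ax\) and let \(\psi_a\) be its time-one flow.

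The items of the claim then follow in turn.
\begin{itemize}
\item \(Y_a(0)=0\), which gives item \ref{item:02}.
\item \(Y_a\) is tangent to \(\{x_n=0\}\), so the flow preserves the hyperplane and, by uniqueness of ODE solutions, the half-space. This gives item \ref{item:diffeo2}; \(\psi_a\) is a diffeomorphism with inverse the time \(-1\) flow.
\item \(\psi_a\) is the identity outside \(\operatorname{supp}\zeta\), hence near the curved part \(\p\overline U_a\setminus\R^n_+\). This gives item \ref{item:id2}.
\item Near \(0\) the field is linear, so \(\dd\psi_a(0)=\exp(X_a)=M_a\). This gives item \ref{item:diff02}.
\end{itemize}

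For item \ref{item:continuity_2}, use the standard Grönwall estimate for flows: the \(\C^1\) distance between time-one flows is controlled by \(\|Y_a-Y_{a'}\|_{\C^1}\le C\|X_a-X_{a'}\|\). Similarly, \(\|\psi_a-\id\|_{\C^1}\le C\|X_a\|\le C\|M_a-\id\|\), which is small when \(\Phi\) is close to \(\id\).

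The main obstacle is the uniformity. We need a single \(\rho\), and smallness of \(\|X_a\|\) relative to \(\rho\) and \(\|\zeta\|_{\C^1}\), to hold for all \(a\in\p\Omega\). This is needed so that \(\psi_a\) maps \(\overline U_a\) into itself; the flow must not push points off \(\overline U_a\), which holds once \(\|Y_a\|_{\C^0}\) is small compared to the distance from \(\operatorname{supp}\zeta\) to the curved part of the boundary. I would handle it with compactness of \(\p\Omega\), covering it by finitely many balls where item \ref{item_6} applies, and by taking \(L\) small enough that \(\sup_a\|M_a-\id\|\) is small.
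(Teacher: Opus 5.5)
Your construction is the paper's: \(\psi_a\) is the time-one flow of a cut-off linear field \(x\mapsto \zeta(x)Xx\) tangent to \(\{x_n=0\}\), with \(\exp(X)\) equal to the prescribed differential, and items (1)--(5) follow as you describe. You differ in how the generator is produced, and there your version is the sound one. The paper takes \(N_a\) with zero last row and \(\exp(N_a)=\begin{pmatrix} A_a^{-1}R_a & -A_a^{-1}B_a\\ 0 & 1\end{pmatrix}\), sets \(X_a(x)=\log(\beta_a)\chi_a(x)N_ax\), and uses \(\exp(\log(\beta_a)N_a)=\beta_a\exp(N_a)\). That identity already fails for \(N_a=0\); the correct one is \(\beta_a\exp(N_a)=\exp(\log(\beta_a)I_n+N_a)\). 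Taking the logarithm of the full target matrix, as you do, avoids this.

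What you should correct is which matrix you take the logarithm of. The chain of equalities in item (4) is not consistent, because \(\dd\tilde\Phi_a(0)\) has lower-right entry \(\beta_a\), not \(1\). Hence
\[
T_a\coloneqq\beta_a\,\dd\tilde\Phi_a(0)^{-1}\begin{pmatrix} R_a & 0\\ 0 & 1\end{pmatrix}=\begin{pmatrix}\beta_aA_a^{-1}R_a & -A_a^{-1}B_a\\ 0 & 1\end{pmatrix}\neq M_a .
\]
The paper uses \(T_a\) afterwards to get \(\dd\Phi_a(a)=\beta_a\tilde R_a\). With your \(M_a\) one finds \(\dd\tilde\Phi_a(0)M_a=\begin{pmatrix}\beta_aR_a & 0\\ 0&\beta_a^2\end{pmatrix}\), which is not conformal unless \(\beta_a=1\), so the resulting \(\Phi_a\) would not do its job.

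Replacing \(M_a\) by \(T_a\) costs nothing. \(T_a\) is still close to \(I_n\) when \(\Phi\) is close to \(\id\), since \(A_a^{-1}R_a=S_a^{-1}\). It is block upper triangular with last row \((0,\dots,0,1)\), so \(\log T_a\) has zero last row and the field is tangent to \(\{x_n=0\}\); it even preserves \(x_n\) near \(0\). The rest of your argument goes through verbatim.

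Finally, what you call the main obstacle is not one. \(Y_a\) vanishes off \(\mathrm{supp}\,\zeta\subset B(0,\rho)\) and is tangent to \(\{x_n=0\}\). ODE uniqueness (a trajectory cannot reach a zero of the field in finite time) then already forces the flow to preserve \(B(0,\rho)\cap\overline{\R^n_+}\subset\overline U_a\). Hence \(\psi_a(\overline U_a)=\overline U_a\) without any smallness condition. Smallness of \(\Phi-\id\) is only needed for the logarithm and for the second half of item (5).
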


\textit{Proof of Proposition \ref{continuityscheme1_v2} in the case \(a\in \p \Omega\):}  

Once \(\psi_a\) is obtained we can check that \(\Phi_a:= \Phi \circ (f_a^{-1}\circ \psi_a \circ f_a)\) satisfies all the desired properties in Proposition \ref{continuityscheme1_v2}. Indeed,
\begin{itemize}
\item  it holds \(\Phi_a(a)=\Phi( f_a^{-1}(\psi_a(0)))=\Phi(f_a^{-1}(0))=\Phi(a)\),
\item We have 
\begin{align*}
\dd \Phi_a(a)&= \dd \tilde{\Phi}_a(0) \circ \dd \psi_a(0)\circ \dd f_a(a) \\
&=\dd \tilde{\Phi}_a(0) \circ \dd \tilde{\Phi}_a(0)^{-1}\circ  \begin{pmatrix}
\beta_aR_a & 0 \\
0 & \beta_a
\end{pmatrix}\circ O_a \\
&=\beta_a\begin{pmatrix}
R_a & 0 \\
0 & 1
\end{pmatrix} O_a =\beta_a \tilde{R}_a.
\end{align*}
with \(\tilde{R}_a\in O_n(\R)\).

\item Furthermore, we will check from the construction that item \ref{item:last} in Proposition \ref{continuityscheme1_v2} holds and that \(\|\Phi_a-\id\|_{\C^1}\) is small if \(\|\Phi-\id\|_{\C^1(\Omega)}\) is small enough.
\end{itemize}
In order to prove Claim \ref{claim:diffeo_local} we construct a suitable vector field which generates a one-parameter family of diffeomorphism \((\psi_t^a)_t\) from \(\overline{U}_a\) to \(\overline{U}_a\) for \(t\in \R\) and we choose \(\psi_a:=\psi_1^a\). The desired vector-field is constructed in the following claim.
\begin{claim}\label{claim:vector-field}(Construction of a vector field generating the local diffeomorphism \(\psi_a\))

There exists a \(\C^1\) vector-field \(X_a\colon \overline{U}_a \rightarrow \R^n\) such that
\begin{enumerate}
\item \label{item:O} \(X_a(0)=0\), 
\item \label{item:perp}\(X_a(x)\cdot e_n=0\) for all \(x\in \overline{U}_a \cap \{x\in \R^n; x_n=0\}\),
\item \label{item:C1} \(X_a \in \C^1_c(U_a,\R^n)\),
\item \label{item:trueverif}\(\exp (\dd X_a(0))=\beta_a\dd \tilde{\Phi}_a(0)^{-1}\begin{pmatrix}
R_a & 0\\
0 & 1
\end{pmatrix}.\)
\item \label{item:continuity_3} \(\|X_a-X_{a'}\|_{\C^1 (U_a\cap U_{a'})}\) is small if \( |a-a'|\) is small enough and \( \|X_a\|_{\C^1 (U_a,\R^n)}\) is small if \(\Phi\) is close to \(\id\).
\end{enumerate}
\end{claim}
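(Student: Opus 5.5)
We construct \(X_a\) as a cut-off of a linear vector field: \(X_a(x)=\eta_a(x)\,G_a x\). Here \(G_a\in M_n(\R)\) is a matrix logarithm of the target matrix
\[
T_a:=\beta_a\dd\tilde{\Phi}_a(0)^{-1}\begin{pmatrix} R_a & 0\\ 0 & 1\end{pmatrix},
\]
and \(\eta_a\in\C^1_c(U_a)\) is a cut-off equal to \(1\) near \(0\). Then \(X_a(0)=0\) and \(\dd X_a(0)=G_a\), so item \ref{item:trueverif} reads \(\exp(G_a)=T_a\).

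\textbf{Choosing \(G_a\).} Define \(G_a:=\log T_a\) using the principal logarithm power series \(\log(I+Y)=\sum_{k\ge1}(-1)^{k+1}Y^k/k\). This converges, and satisfies \(\exp\log=\id\), as long as \(\|T_a-I\|<1\).

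Smallness of \(T_a-I\) follows from the discussion preceding Claim \ref{claim:diffeo_local}. If \(\|\Phi-\id\|_{\C^1}\le L\) is small, then \(A_a\) is close to a rotation, \(S_a\) is close to \(I\), \(B_a\) is close to \(0\) and \(\beta_a\) is close to \(1\). Hence \(T_a=\beta_a\begin{pmatrix}S_a^{-1} & -A_a^{-1}B_a\\ 0&1\end{pmatrix}\) is uniformly close to \(I\).

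\textbf{Checking item \ref{item:perp}.} The matrix above is block upper triangular with last row \((0,\dots,0,\beta_a)\). The set of such block upper triangular matrices is an algebra, so every power of \(T_a-I\) keeps the last row equal to \((0,\dots,0,\ast)\), and so does \(G_a\). Consequently \(G_ax\cdot e_n=(G_a)_{nn}x_n=0\) whenever \(x_n=0\), which gives item \ref{item:perp} for \(X_a=\eta_aG_ax\).

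\textbf{Continuity and smallness, item \ref{item:continuity_3}.} The map \(a\mapsto T_a\) is continuous because the polar decomposition, the frame \(\mathcal{B}'_a\) and \(\dd f_a\) all depend continuously on \(a\). Since \(\log\) is analytic near \(I\), \(a\mapsto G_a\) is continuous, and \(\|G_a\|\le C\|T_a-I\|\) is small when \(\Phi\) is close to \(\id\).

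\textbf{The cut-off, the main obstacle.} The delicate point is choosing \(\eta_a\) uniformly in \(a\). We need \(\|X_a\|_{\C^1}\le C(\|\eta_a\|_{\C^1}\,\mathrm{diam}\,U_a+1)\|G_a\|\) to be small, and we need \(X_a-X_{a'}\) to be controlled on \(U_a\cap U_{a'}\).

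To arrange this, take a fixed radius \(\rho\). By Claim \ref{claim:straightening}(\ref{item_6}), the radii \(r_a\) are continuous, so by compactness of \(\p\Omega\) they are bounded below, and \(\rho\) can be chosen so that \(B(0,2\rho)\cap\overline{\R^n_+}\subset U_a\) for all \(a\) near a given point. Set \(\eta_a(x)=\eta(|x|/\rho)\) with a fixed profile \(\eta\). Then \(\eta_a\) does not depend on \(a\), and \(\|\eta_a\|_{\C^1}\) is a fixed constant \(\sim1/\rho\) multiplying a factor \(|x|\le2\rho\). Hence \(\|X_a\|_{\C^1}\le C\|G_a\|\) with \(C\) independent of \(a\), and \(\|X_a-X_{a'}\|_{\C^1}\le C\|G_a-G_{a'}\|\to0\).

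Compact support in \(U_a\) gives item \ref{item:C1}. Since \(U_a\) contains points of \(\{x_n=0\}\), the support must avoid only the curved part of \(\p U_a\), which holds because \(\operatorname{supp}\eta_a\subset B(0,2\rho)\). Item \ref{item:perp} then ensures the flow preserves \(\{x_n=0\}\) and \(\overline{U}_a\), as needed in Claim \ref{claim:diffeo_local}.
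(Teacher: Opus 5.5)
Your proof is correct. It has the same skeleton as the paper's: $X_a$ is a cut-off times a linear field whose matrix is a logarithm of the target. The difference is in how you produce the generator.

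The paper builds $N_a=\begin{pmatrix}N_a^1&N_a^2\\0&0\end{pmatrix}$ blockwise. It takes $\exp(N_a^1)=S_a^{-1}$, solves for $N_a^2$ from the explicit formula for the exponential of such a block matrix, and then sets $X_a(x)=\log(\beta_a)\chi_a(x)N_ax$. You take the principal logarithm of the whole target $T_a$, and get the zero lower-left block for free because block upper triangular matrices form a closed subalgebra.

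Your route is shorter and gives item 4 exactly, including the scalar factor. The paper's final step $\exp(\log(\beta_a)N_a)=\beta_a\exp(N_a)$ is not valid as written: for $\beta_a=1$ it gives $X_a\equiv 0$. It should use $\log(\beta_a)I+N_a$, which works because $I$ commutes with $N_a$; your version avoids the issue entirely. Your $a$-independent cut-off $\eta(|x|/\rho)$, with $|x|\,|\nabla\eta_a|\le C$ on its support, also makes item 5 genuinely uniform in $a$. The paper only asserts this through continuity of $a\mapsto N_a$ and never addresses the cut-off.

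Two minor points.

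First, your explicit formula for $T_a$, taken from the display in Claim \ref{claim:diffeo_local}, is off. Inverting $\dd\tilde\Phi_a(0)=\begin{pmatrix}A_a&B_a\\0&\beta_a\end{pmatrix}$ gives $T_a=\begin{pmatrix}\beta_aS_a^{-1}&-A_a^{-1}B_a\\0&1\end{pmatrix}$, whose last row is $(0,\dots,0,1)$. This is harmless, since you only use that $T_a$ is near $I$ with zero lower-left block. In fact it shows $G_a$ has zero last row.

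Second, ``$\beta_a$ close to $1$'' depends on orientation. The straightening sends $\Omega$ into $\{x_n>0\}$ and the last target basis vector is $\Phi(a)$. Since $\dd\Phi(a)$ maps the inward normal to an inward vector, literally $\beta_a<0$, and $T_a$ is near $\mathrm{diag}(-I_{n-1},1)$. There the logarithm series does not converge, and for even $n$ no real logarithm exists at all. You need $-\Phi(a)$ as the last basis vector, which is what the paper's ``we can assume $\beta_a>0$'' implicitly does. With that convention your argument goes through.
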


\begin{proof}[Proof of Claim \ref{claim:vector-field}]
In order to obtain \(X_a\) we first construct a matrix \(N_a=\begin{pmatrix}
N_a^1 & N_a^2 \\
0 & 0
\end{pmatrix}\) which satisfies \(\exp (N_a)=\begin{pmatrix}
 A_a^{-1}R_a & -A_a^{-1}B_a \\
 0 & 1
\end{pmatrix}\). Since\footnote{This can be shown, by using that, for \(N\in M_n(\R)\), the function \(t\mapsto \exp (t N)\) is the unique solution to \(\frac{\dd }{\dd t}\exp(tN)= N \exp(tN)\) taking the value $I_n$ at $t=0$.}
\begin{equation}
\exp \begin{pmatrix}
N_a^1 & N_a^2 \\
0 & 0
\end{pmatrix}=\begin{pmatrix}
\exp (N_a^1) & \exp(N_a^1)\int_0^1 \exp (-s N_a^1) \dd s N_a^2 \\
0 & 1
\end{pmatrix},
\end{equation}
we can take \(N_a^1\) and \(N_a^2\) such that 
\begin{align}
\exp (N_a^1)&=A_a^{-1}R_a =S_a^{-1},  \label{cond1} \\
 N_a^2& = -\left( \int_0^1 \exp (-s N_a^1)\dd s \right)^{-1} \exp (-N_a^1)A_a^{-1}B_a. \label{cond2}
\end{align}
 
It is possible to find \(N_a^1\) such that \eqref{cond1} is satisfied because \(\exp\) is a local diffeomorphism from a neighbourhood of \(0\) to a neighbourhood of \(\id\) in \(GL_{n-1}(\R)\) and, if \(\Phi\) is close enough to \(\id\) then \(A_a^{-1}R_a=S_a^{-1}\) is close to the identity.  Hence \(N_a^1\) exists and is close to \(0\). This also shows that \(\int_0^1 \exp (-s N_a^1)\dd s\) is close to the identity and thus invertible, proving that it is possible to find \(N_a^2\) to satisfy \eqref{cond2}. Furthermore, \(N_a^2\) can also be make close to the zero matrix if \(\|\Phi-\id\|_{\C^1(\Omega)}\) is small enough since in that case \(B_a\) is close to zero.
Now we set
\begin{equation}
X_a(x) =\log (\beta_a) \, \chi_a(x) \, N_ax, \text{ for all } x\in \overline{U}_a,
\end{equation}
where \(\chi_a\in \C^\infty_c(\overline{U}_a)\) with \(\chi_a\equiv 1\) in a neighbourhood of \(0\). With this definition one can check that \(X_a\) has all the required properties in Claim \ref{claim:vector-field}. Indeed, Item \ref{item:O} is satisfied because \(N_a0=0\). Item \ref{item:perp} holds since \(N_ax\in \text{Vect}(e_1,\dots,e_{n-1})\) thanks to the shape of \(N_a\). Item \ref{item:C1} is guaranteed by the choice of \(\chi_a\).
Now 
\begin{align*}
\exp (\dd X_a(0))&= \exp ( \log (\beta_a)N_a)=\beta_a \exp (N_a) \\
& =\beta_a \begin{pmatrix}
A_a^{-1}R_a & -A_a^{-1}B_a \\
0 & 1 
\end{pmatrix} \\
& =\beta_a \dd \tilde{\Phi}_a (0)^{-1}\begin{pmatrix}
R_a & 0 \\
0 & 1
\end{pmatrix},
\end{align*}
so that Item \ref{item:trueverif} is true. The last point \ref{item:continuity_3} follows first from the continuity of \(a\in \p \Omega \mapsto N_a\), which itself follows from the continuity in \(a\in \p \Omega\) of \(A_a, R_a\) and \(B_a\), and then from the fact that if \(\Phi\) is close to the identity then \(N_a\) is close to zero.
\end{proof}
With the help of the previous vector-field we obtain the diffeomorphism in Claim \ref{claim:diffeo_local}.

\begin{proof}[Proof of Claim \ref{claim:diffeo_local}]

Let us consider the flow generated by \(X_a\), i.e., the solution of the ODE system
\begin{equation}\label{eq:Caucy}
\left\{
\begin{array}{rcll}
\frac{d}{dt} \psi_t^a (x) & =& X_a (\psi_t^a(x)), \\
\psi_0^a(x)& =& x,
\end{array}
\right.
\end{equation}
for all \(x\in \overline{U}_a\).

By using the Cauchy--Lipschitz theory, we see that \(\psi_t^a\) is defined for all \(t\in \R\) and for all \(t\in \R\) we have that \(\psi_t^a\) is a \(\C^1\)-diffeomorphism of \(\overline{U}_a\) with \(\psi_t^a(\{x_n=0\}\cap \overline{U}_a)\subset \{x_n=0\}\). Besides, for all \(t\in \R\), since \(X_a(0)=0\), the uniqueness in the Cauchy--Lipschitz theorem implies that \(\psi_t^a(0)=0\). Hence \(\psi_t^a\) satisfies Item \ref{item:diffeo2} and Item \ref{item:02} in Claim \ref{claim:diffeo_local}. By differentiating \eqref{eq:Caucy} and evaluating at \(x=0\) we see that 
\begin{equation}\label{eq:Cauchy_2}
\left\{
\begin{array}{rcll}
\frac{d}{dt} \dd \psi_t^a (0)& = &\dd X_a(0)\dd \psi_t^a(0), \\
\dd \psi_0(0)&=&\id.
\end{array}
\right.
\end{equation}
Hence we see that \(\dd \psi_t^a (0)=\exp (t \dd X_a(0))\). From Claim \ref{claim:vector-field}, if we choose \(\psi_a\coloneqq \psi_1^a\) we obtain that 
\(\dd \psi_a(0)=\exp (\dd X_a(0))=\beta_a \dd \tilde{\Phi}_a(0)^{-1} \begin{pmatrix}
R_a & 0 \\
0 & 1
\end{pmatrix}\). Thus Item \ref{item:diff02} is satisfied. Since \(X\in \C^1_c(U_a,\R^n)\) we obtain that Item \ref{item:id2} holds. 
The last point, Item \ref{item:continuity_2}, is a consequence of Item \ref{item:continuity_3} in Claim \ref{claim:vector-field}.
\end{proof}

Thus for every \(a\in \p \Omega\), for all \(\e>0\) we can construct \(\Phi_{a}\) satisfying the properties of Proposition \ref{continuityscheme1_v2}.

\underline{Step 2:} We construct \(\Phi_{a}\) when \(a\in \Omega\).

We first observe that for \(\eta>0\) small enough, the set \(\Omega_\eta\coloneqq \{x\in \overline{\Omega}; \dist(x,\p \Omega)<\eta\}\) defines a tubular neighbourhood of \(\p\Omega\) where there exists a continuous projection \(\Pi\colon  \Omega_\eta \rightarrow \p \Omega\). 
For all \(a\in \Omega_\eta\) we set 
\begin{equation}
\Phi_{a}=\Phi_{\Pi(a)}
\end{equation}
where \(\Phi_{\Pi(a)}\) is defined in the previous step since \(\Pi(a)\in \p \Omega\). 
Now, let \(\lambda:\R^+\rightarrow \R\) be a function such that \(\lambda\in \C^\infty(\R^+)\), such that \(\lambda(t)=1\) if \(t\leq \eta/2\) and \(\lambda(t)=0\) if \(t\geq \eta\). We set
\begin{equation}
\Phi_{a}:= \lambda (\dist(a,\p \Omega)) \Phi_{\Pi(a)}+(1-\lambda(\dist (a,\p \Omega)))\Phi
\end{equation}
for all \(a\in \Omega\). We observe that, if \(\Phi\) is close enough to the identity then \(\Phi_{a}\) is also close to the identity and hence is a \(\C^1\)-diffeomorphism by \cite[Theorem 1.6]{Hirsch_1976}. The map \(a\mapsto \Phi_{a}\) is continuous by construction and all the required properties for \(a\in \p \Omega\) have been verified in the previous steps.
\end{proof}

\subsection{Definition of ``almost M\" obius maps'' and energy estimates}

\begin{definition}\label{def:almost_Mobius_map}
For any $a\in \overline{\Omega}$ and $r\in(0,1)$, we define $\widetilde{M}_{r,a} \colon \overline{\Om} \to \overline{\bB}^n$, for $x\in \overline{\Omega}$ by
\begin{equation*}
	\widetilde{M}_{r,a}(x) \coloneqq M_{(1-r)\Phi(a)}(\Phi_a(x)),
\end{equation*}
where \(\Phi_a\) is defined in Proposition \ref{continuityscheme1_v2}.
\end{definition}

\begin{lemma}\label{lm:energy_almostMobius}
	For every \(r>0\), for every \(\alpha\geq 0\) the map \( a\in \overline{\Omega} \mapsto \widetilde{M}_{r,a} \in \I_{1,\alpha}\) is continuous.
	
	The $(n+\alpha)$-energy of $\tilde{M}_{r,a}$ can be estimated uniformly in $a\in\pl \Omega$
	\begin{equation}\label{eq:energy_boundary}
		 \lim_{r \to 0 } \, \lim_{\alpha \to 0} \int_{\Omega} |\dd \widetilde{M}_{r,a}|^{n+\alpha} = n^\frac{n}{2} \, |\bB^n|.
	\end{equation}
	Moreover, for all \(\e>0\) there exists \(L>0\) such that if \(\Phi:\overline{\Omega} \rightarrow \overline{\mathbb{B}^n}\) satisfies \(\| \Phi-\id \|_{\C^1(\Omega)}<L\) then
	\begin{equation}\label{eq:uniform_Mobius}
		\limsup_{\alpha\to 0}\ \sup_{a\in\Omega} \int_{\Omega} |\dd \widetilde{M}_{r,a}|^{n+\alpha} \leq   n^\frac{n}{2}\, |\bB^n| +\e  .
	\end{equation}
\end{lemma}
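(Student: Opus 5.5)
The argument splits into three parts: continuity of the path, the boundary energy limit \eqref{eq:energy_boundary}, and the uniform bound \eqref{eq:uniform_Mobius}. Throughout we use that for fixed $r>0$ and $a\in\overline{\Omega}$ the point $b\coloneqq(1-r)\Phi(a)$ satisfies $|b|\le 1-r$. Consequently $M_b$ is smooth on $\overline{\bB^n}$, with $\C^1$ norm bounded by a constant $C(r)$ independent of $a$.

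\textbf{Membership and continuity.} Since $\Phi_a$ is a $\C^1$-diffeomorphism onto $\overline{\bB^n}$ mapping $\p\Omega$ to $\mathbb{S}^{n-1}$, and $M_b$ maps $\mathbb{S}^{n-1}$ to itself, we get $|\widetilde M_{r,a}|=1$ on $\p\Omega$ and $\widetilde M_{r,a}\in W^{1,\infty}\subset W^{1,n+\alpha}$. The degree is $1$ because both factors are orientation-preserving diffeomorphisms; for $\Phi_a$ this holds since it is $\C^1$-close to $\id$.

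For continuity in $a$, note that $b$ depends continuously on $a$ and $M_b$ is smooth in $(b,x)$ on $\{|b|\le 1-r\}\times\overline{\bB^n}$. By Proposition \ref{continuityscheme1_v2}\,(3), $\Phi_{a'}\to\Phi_a$ and $\dd\Phi_{a'}\to\dd\Phi_a$ a.e. Hence $\dd\widetilde M_{r,a'}=\dd M_{b'}(\Phi_{a'})\,\dd\Phi_{a'}\to\dd\widetilde M_{r,a}$ a.e. These maps are uniformly bounded by $C(r)\sup_{a}\|\Phi_a\|_{\C^1}$, which is finite since $\|\Phi_a-\id\|_{\C^1}<\e$ in the construction. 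Dominated convergence then gives convergence in $W^{1,n+\alpha}$.

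\textbf{Energy estimates.} Because $|\dd\widetilde M_{r,a}|\le C(r)$ uniformly in $a$, dominated convergence and uniformity give, for each fixed $r$,
\[
\lim_{\alpha\to0}\sup_{a}\Bigl|\int_\Omega|\dd\widetilde M_{r,a}|^{n+\alpha}-\int_\Omega|\dd\widetilde M_{r,a}|^{n}\Bigr|=0 .
\]
It therefore suffices to study the $n$-energy. Changing variables $y=\Phi_a(x)$ yields
\[
\int_\Omega|\dd\widetilde M_{r,a}|^n=\int_{\bB^n}\bigl|\dd M_b(y)\,\dd\Phi_a(\Phi_a^{-1}y)\bigr|^n\,|\det\dd\Phi_a^{-1}(y)|\,\dd y .
\]
For \eqref{eq:uniform_Mobius} we use $\|\Phi_a-\id\|_{\C^1}\le\e'$ uniformly in $a$; I expect the construction in Step 2 of Proposition \ref{continuityscheme1_v2} gives this for $a\in\Omega$ as well, since it is a convex combination of maps close to $\id$. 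Then the integrand is at most $(1+C\e')|\dd M_b|^n$, and $\int_{\bB^n}|\dd M_b|^n=n^{n/2}|\bB^n|$ by conformal invariance, which gives the bound with $\e=C\e'n^{n/2}|\bB^n|$.

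For \eqref{eq:energy_boundary} with $a\in\p\Omega$, set $\beta=\beta_a$, $R=R_a$, so that $\dd\Phi_a(a)=\beta R$. We split $\bB^n$ into $B(\Phi(a),r^{1/2})\cap\bB^n$ and its complement. On the complement, by \eqref{eq:DL_Mobius} the mass of $|\dd M_b|^n$ is $\cO(r)$, so that part contributes $\cO(r)$. On the ball, continuity of $\dd\Phi_a$ at $a$ gives $\dd\Phi_a(\Phi_a^{-1}y)=\beta R+o(1)$ as $r\to0$. Since $M_b$ is conformal, $|\dd M_b\,\beta R|^n=\beta^n|\dd M_b|^n$, while $|\det\dd\Phi_a^{-1}|=\beta^{-n}+o(1)$. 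The integrand is therefore $(1+o(1))|\dd M_b|^n$, and \eqref{eq:DL_Mobius} yields the limit $n^{n/2}|\bB^n|$.

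The main obstacle is uniformity of the $o(1)$ in $a\in\p\Omega$. This requires equicontinuity of $\dd\Phi_a$ at $a$, i.e.\ a common modulus of continuity. I would extract it from the continuous dependence in Claims \ref{claim:straightening}--\ref{claim:diffeo_local} combined with compactness of $\p\Omega$. Should that fail, I would instead prove the limit pointwise for each $a$ and then use the continuity of $a\mapsto\widetilde M_{r,a}$ together with compactness of $\p\Omega$.
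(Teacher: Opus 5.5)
Your proof is correct. For continuity and for \eqref{eq:energy_boundary} it is essentially the paper's argument: change variables $y=\Phi_a(x)$, split $\bB^n$ into $B(\Phi(a),r^{1/2})\cap\bB^n$ and its complement, use $\dd\Phi_a(a)=\beta_aR_a$ on the first piece and \eqref{eq:DL_Mobius} on both. The paper proves continuity in the $y$-variable, which forces it to use $\Phi_a\circ\Phi_{a'}^{-1}\to\id$; working in $x$ as you do needs only item (3) of Proposition \ref{continuityscheme1_v2} as stated.

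The real difference is the passage $\alpha\to0$. You use $|(1-r)\Phi(a)|\le 1-r$ to get $|\dd\widetilde M_{r,a}|\le K(r)$ uniformly in $a$. Hence the $(n+\alpha)$-energies converge to the $n$-energies uniformly in $a$, and \eqref{eq:uniform_Mobius} reduces to the distortion bound at exponent $n$ plus $\int_{\bB^n}|\dd M_b|^n=n^{\frac n2}|\bB^n|$. The paper keeps the exponent $n+\alpha$ and proves $|AB|^{n+\alpha}/\det B\le(\max_{|z|=1}|Bz|)^{n+\alpha}(\min_{|z|=1}|Bz|)^{-n}|A|^{n+\alpha}$. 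It then controls $\sup_a\int_{\bB^n}|\dd M_{(1-r)\Phi(a)}|^{n+\alpha}$ by taking maximizers $a_\alpha$, extracting $a_\alpha\to a_0$ and using dominated convergence. Both rest on the same fact: for fixed $r>0$ the maps $M_b$ are uniformly smooth. Yours is shorter, and it makes the inner limit in \eqref{eq:energy_boundary} uniform in $a$, which the paper only has pointwise.

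On uniformity in $a\in\p\Omega$ you have found the actual crux, and you are more careful than the paper. Its step $\dd\Phi_a(\Phi_a^{-1}(y))=\dd\Phi_a(a)+\cO(r^{1/2})$ silently assumes a common, even Lipschitz, modulus of continuity for the $\dd\Phi_a$; with $\C^1$ maps one only gets $o(1)$.

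Your first route is the right one, but it needs more than item (3) of Proposition \ref{continuityscheme1_v2}, since a.e.\ convergence gives no modulus. You need $a\mapsto\Phi_a$ to be continuous into $\C^1(\overline{\Omega})$ on $\p\Omega$, which is what the continuity items of Claims \ref{claim:straightening}--\ref{claim:vector-field} are meant to give. Then $\{\dd\Phi_a\}_{a\in\p\Omega}$ is compact in $\C^0$ and hence uniformly equicontinuous. Only the upper bound needs this, since $E_n(\widetilde M_{r,a})\ge n^{\frac n2}|\bB^n|$ by Lemma \ref{lowbound}.

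Your fallback does not work. Pointwise convergence as $r\to0$ plus continuity in $a$ for each fixed $r$ gives no uniform convergence: there is no monotonicity for Dini, and no control of the modulus in $a$ uniform in $r$.

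Finally, there is a slip in your membership check. Since $M_0=-\id$, $M_b$ has degree $(-1)^n$ on $\mathbb{S}^{n-1}$ and is orientation-reversing for odd $n$. With Definition \ref{def:Mobius} as written, $\widetilde M_{r,a}$ therefore has degree $-1$ when $n$ is odd. This is inherited from the paper, whose proof never checks the degree. It is fixed by composing with a reflection, and no energy estimate is affected.
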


\begin{proof}
Let \(r>0\) and \(\alpha\geq 0\), to show the first point we show that, for any \(a\in \overline{\Omega}\),
\begin{equation}\label{eq:continuity_en_a}
\lim_{ a' \to a} \int_{\Omega} | \dd \widetilde{M}_{r,a}- \dd \widetilde{M}_{r,a'}|^{n+\alpha} =0.
\end{equation}
The change of variable \(y=\Phi_a(x)\) shows that 
\begin{align*}
&\int_{\Omega} | \dd \widetilde{M}_{r,a}- \dd \widetilde{M}_{r,a'}|^{n+\alpha} \\
&= \int_{\mathbb{B}^n} \Bigl|\dd M_{(1-r)\Phi(a)}(y) \dd \Phi_a\left(\Phi_a^{-1}(y))\right)\\
& \quad \quad -\dd M_{(1-r)\Phi(a')}\left(\Phi_{a'}(\Phi_a^{-1}(y))\right) \dd \Phi_{a'}\left(\Phi_a^{-1}(y) \right) \Bigr|^{n+\alpha} |\det \dd \Phi_a^{-1}(y)| \dd y .
\end{align*}
By using the dominated convergence theorem along with the fact that
\begin{equation}\label{eq:consequence_constr}
\Phi_a\circ \Phi_{a'}^{-1} \xrightarrow[a.e.]{a' \to a} \id, \quad \dd \Phi_{a'}\left(\Phi_a^{-1}(y) \right) \xrightarrow[a.e.]{a' \to a} \dd \Phi_a(\Phi_a^{-1}(y)),
\end{equation}
and the explicit expression of the M\"obius map we arrive at \eqref{eq:continuity_en_a}. Note that \eqref{eq:consequence_constr} is a consequence of item \ref{item:last} in Proposition \ref{continuityscheme1_v2}.

	For the second point, by dominated convergence theorem, we have that \(\lim_{\alpha \to 0} \int_{\O}|\dd  v|^{n+\alpha}=\int_{\O}|\dd v|^n\) for any fixed map \(v\in \mathcal{I}\cap \C^\infty(\Omega,\R^n)\). 
	We start with \eqref{eq:energy_boundary}. We consider the following decomposition
	\begin{align*}
		E_n(\widetilde{M}_{r,a}) &= E_n \left( M_{(1-r)\Phi(a)}\circ \Phi_a, \Phi_a^{-1}\big(B(\Phi(a),r^\frac{1}{2}) \big) \right)\\[2mm]
		& \qquad +E_n\left( M_{(1-r)\Phi(a)}\circ \Phi_a ,\Phi_a^{-1}\big( \mathbb{B}^n\setminus  B(a,r^{\frac{1}{2}}) \big) \right).
	\end{align*}
	We use a change of variable \(y=\Phi_a (x)\) to estimate that
	\begin{align*}
		& E_n\left( M_{(1-r)\Phi(a)}\circ \Phi_a, \Phi_a^{-1}\big( B(\Phi(a),r^\frac{1}{2})\big) \right) \\[2mm]
		&= \int_{\mathbb{B}^n \cap B\left(\Phi(a),r^{1/2}\right)} \left| \dd M_{(1-r)\Phi(a)}(y)\circ \dd \Phi_a(\Phi_a^{-1}(y)) \right|^n  \  \frac{1}{\det \big( \dd\Phi_a(\Phi_a^{-1}(y)) \big)} \dd y \\[2mm]
		& =\int_{\mathbb{B}^n \cap B\left( \Phi(a),r^{1/2} \right)} \left| \dd M_{(1-r)\Phi(a)}(y)\left(\dd\Phi_a(a)+\cO(r^{\frac12})\right) \right|^n \ \frac{1}{\det \big( \dd\Phi_a(a)+\cO(r^{\frac12}) \big) } \dd y \\[2mm]
		&=\int_{\mathbb{B}^n \cap B\left( \Phi(a),r^{1/2} \right)} \left| \dd M_{(1-r)\Phi(a)}(y)\circ \dd\Phi_a(a)) \right|^n \ \frac{1}{\det \dd\Phi_a(a)} \dd y +\cO(r^{\frac12}) \\[2mm]
		& =\int_{\mathbb{B}^n \cap B\left(\Phi(a),r^{1/2} \right)} \left| \dd M_{(1-r)\Phi(a)}(y) \right|^n \dd y +\cO(r^{\frac12}) \\[2mm]
		&= n^{\frac{n}{2}}\, |\mathbb{B}^n|+\cO(r^{\frac12}),
	\end{align*}
	where we have used that \(\dd \Phi_a(a)=\beta_a O_a\) with \(O_a \in O_n(\R)\) and where we have also used \eqref{eq:DL_Mobius}. We proceed to the same change of variable on the complement, using that \(| \dd \Phi_a(\Phi_a^{-1}(y))|^n/ |\det \dd \Phi_a(\Phi_a^{-1}(y))|$ is uniformly bounded by \eqref{eq:distortiond}, we find
	\begin{equation*}
		E_n\left( M_{(1-r)\Phi(a)}\circ \Phi_a ,\Phi_a^{-1}\big( \mathbb{B}^n\setminus  B(a,r^{\frac{1}{2}}) \big) \right)  \leq C\, \int_{\mathbb{B}^n \setminus  B\left(\Phi(a),r^{1/2} \right)} \left| \dd M_{(1-r)\Phi(a)}(y) \right|^n \dd y .
	\end{equation*}
	Using again \eqref{eq:DL_Mobius}, we obtain that 
	\begin{equation*}
		\lim_{r\to 0}E_n\left( M_{(1-r)\Phi(a)}\circ \Phi_a ,\Phi_a^{-1}\big( \mathbb{B}^n\setminus  B(a,r^{\frac{1}{2}}) \big) \right) =0,
	\end{equation*}
	which completes the proof of \eqref{eq:energy_boundary}.
	
	We now prove \eqref{eq:uniform_Mobius}. For $r>0$ fixed, and any $\alpha\in[0,1]$ and $a\in \overline{\Omega}$, the same change of variable previously used shows that 
	\begin{align*}
		\int_{\O} \left| \dd\widetilde{M}_{r,a} \right|^{n+\alpha} &  = \int_{\bB^n } \left| \dd (M_{(1-r)\Phi(a)})(y)\circ \dd (\Phi_a)(\Phi_a^{-1}(y)) \right|^{n+\alpha} \frac{\dy}{\det \dd(\Phi_a)(\Phi_a^{-1}(y))}.
		\end{align*} 	
\begin{claim}
We have
\begin{multline}\label{eq:sing_value_3}
		 \frac{\left| \dd (M_{(1-r)\Phi(a)})(y)\circ \dd (\Phi_a)(\Phi_a^{-1}(y)) \right|^{n+\alpha}}{|\det \dd(\Phi_a)(\Phi_a^{-1}(y))|} \\
		 \leq  \frac{\max_{|z|=1} \left||\dd \Phi_a \left(\Phi_a^{-1}(y)\right).z|^{n+\alpha} \right|}{\min_{|z|=1} \left|\dd \Phi_a\left(\Phi_a^{-1}(y).z|^{n+\alpha} \right).z \right|^n } \left|\dd (M_{(1-r)\Phi(a)})(y) \right|^{n+\alpha}
	\end{multline}	
\end{claim}

\begin{proof}
It suffices to show that for two matrices \(A\in M_n(\R)\) and \(B\in GL_n(\R)\) we have 
\begin{equation}\label{eq:sing_value_1}
 \frac{| AB|^{n+\alpha}}{\det B} \leq \frac{\max_{|z|=1} |Bz|^{n+\alpha}}{\min_{|z|=1} |Bz|^n} |A|^{n+\alpha}.
\end{equation} 
  To this end, we observe that, by definition,
\begin{align*}
|AB|^2 &=\tr \left( (AB)^T AB \right) = \tr \left( B^TA^T A B\right) =\tr \left( B B^T A^T A\right).
\end{align*}
Now, \(BB^T\) is a positive definite symmetric matrix, from the spectral theorem we can find \(P\in O_n(\R)\) and \(D=\rm{diag} (\mu_1^2,\dots,\mu_n^2)\) a diagonal matrix with positive entries such that \(0<\mu_1\leq \dots \leq \mu_n\) and \( BB^T=PDP^T\). Note that the \(\mu_i\) are the singular values of \(B\). Now
\begin{align*}
|AB|^2 &=\tr \left( PDP^T A^TA \right) =\tr \left(D (AP)^T (AP) \right) \leq \mu_n^2 \tr \left( (AP)^T (AP)\right) =\mu_n^2 \tr (A^T A)
\end{align*}
where we used that \(P^TP=\id\).  In the same way \(|\det B|=\sqrt{\det (BB^T)}=\sqrt{\det D}\geq \mu_1^n.\) 
Hence 
\begin{equation}\label{eq:sing_value_2}
\frac{| AB|^{n+\alpha}}{\det B} \leq \frac{\mu_n^{n+\alpha} |A|^{n+\alpha}}{\mu_1^n}.
\end{equation}
But we also have that \(\mu_n =\max_{|z|=1} |Bz|\) and \(\mu_1=\min_{|z|=1} |Bz|\). Indeed, by using a polar decomposition \(B=OS\) with \(O\in O_n(\R)\) and \(S\in \S_n^{++}(\R)\) we see that \( |Bz|=|Sz|\) and \( BB^T=O S  S^T O^T\). Hence, the singular values of \(B\) are the same as the ones of \(S\). But since \(S\) is a real definite positive symmetric matrix, its singular values are just the squares of its eigenvalues. Thus
\(\max_{|z|=1} |Bz|= \mu_n\) and \(\min_{|z|=1} |Bz|=\mu_1\).
Now we can see that \eqref{eq:sing_value_1} follows from the previous equalities together with \eqref{eq:sing_value_2}.
\end{proof}

Now, coming back to \eqref{eq:sing_value_3}, we see that 
\begin{align}\label{eq:domination_1}
\int_{\O} \left| \dd\widetilde{M}_{r,a} \right|^{n+\alpha} &\leq \int_{\mathbb{B}^n}\frac{\max_{|z|=1} \left||\dd \Phi_a \left(\Phi_a^{-1}(y)\right).z|^{n+\alpha} \right|}{\min_{|z|=1} \left|\dd \Phi_a\left(\Phi_a^{-1}(y).z|^{n+\alpha} \right).z \right|^n } \left|\dd (M_{(1-r)\Phi(a)})(y) \right|^{n+\alpha}.
\end{align}
If \(\Phi\) is close enough to \(\id\) then \(\Phi_a\) is close enough to the identity from Proposition \ref{continuityscheme1_v2} and thus
\begin{equation}\label{eq:petitesse}
\frac{\max_{|z|=1} \left||\dd \Phi_a \left(\Phi_a^{-1}(y)\right).z|^{n+\alpha} \right|}{\min_{|z|=1} \left|\dd \Phi_a\left(\Phi_a^{-1}(y).z|^{n+\alpha} \right).z \right|^n } \leq 1 +\frac{\e}{n^{\frac{n}{2}}|\mathbb{B}^n|}. 
\end{equation}

For each \(\alpha>0\) there exists \(a_\alpha \in \overline{\Omega}\) such that 

\[ \sup_{a\in \Omega} \int_{\mathbb{B}^n} | \dd M_{(1-r)\Phi(a)}|^{n+\alpha} =\int_{\mathbb{B}^n} | \dd M_{(1-r)\Phi(a_{\alpha})}|^{n+\alpha}.\]
Up to extraction, we can assume that \(a_\alpha \xrightarrow[\alpha \to 0]{} a_0\in \overline{\Omega}\). Since \(r>0\), the dominated convergence theorem shows that 
\begin{equation}\label{eq:convergence}
 \int_{\mathbb{B}^n} | \dd M_{(1-r)\Phi(a_{\alpha})}|^{n+\alpha} \xrightarrow[\alpha \to 0]{} \int_{\mathbb{B}^n}| \dd M_{(1-r)\Phi(a_0)}|^n=n^{\frac{n}{2}}|\mathbb{B}^n|.
 \end{equation}
The proof of \eqref{eq:uniform_Mobius} is concluded by gathering \eqref{eq:domination_1}, \eqref{eq:petitesse} and \eqref{eq:convergence}.
\end{proof}

\section{Mountain pass approach for a perturbed problem} \label{sec:MountainPass}

The goal of this section is to prove existence of critical points for the \((n+\alpha)\)-energy in the space of maps with values into \(\mathbb{S}^{n-1}\) on \(\p \Omega\) and with degree one on \(\p \Omega\). In order to produce critical points for the \((n+\alpha)\)-energy we need to verify that the space on which it is defined is a Banach manifold and that this functional is \(\C^1\). We also need to check that it satisfies the Palais--Smale condition. This will be the content of the the first subsection of this section. In the second subsection we prove that we can devise a min-max scheme for which the \((n+\alpha)\)-energy has the mountain pass geometry. We can then apply the mountain-pass theorem of Ambrosetti--Rabinowitz to conclude. More precisely we want to apply the following theorem, see e.g. \cite[Theorem 4.3 and Corollary 4.3]{Mawhin-Willem}:
\begin{theorem}\label{th:Mountain Pass}
Let $K_0 \subset K$ two compact metric spaces. Let $X$ be a Banach space or a $\mathcal{C}^{1,1}$ Finsler manifold\footnote{See \cite[Section 3.7]{Struwe_2008}} and $J \in \mathcal{C}^1(X,\R)$. Let $\chi \in \mathcal{C}^0(K_0,X)$ be a fixed map. We define
\begin{equation}
c\coloneqq\inf\{\underset{K}{\max}\ J \circ F :  F \in \mathcal{P} \}, \ \text{ where } \mathcal{P}\coloneqq\{F \in \mathcal{C}^0(K,X): F= \chi \ \text{on } K_0 \}.
\end{equation}
Assume that
\begin{equation}\label{Mountain pass geometry}
c>c_1\coloneqq\underset{a \in K_0}{\max}\ J \left( \chi(a) \right).
\end{equation}
 Then there exists a sequence $(x_k) \subset X$ such that
\begin{equation}\label{Palais-Smale}
J(x_k) \rightarrow c \ \text{ and } \dd J(x_k) \rightarrow 0 \ \text{ as } k \rightarrow +\infty.
\end{equation}
\end{theorem}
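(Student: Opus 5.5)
This is the classical mountain pass theorem in the form of Ambrosetti--Rabinowitz, and the plan is to prove it by contradiction via a deformation lemma. Suppose no such sequence exists. Then there are \(\e_0>0\) and \(\delta>0\) such that \(\|\dd J(x)\|\geq \delta\) for every \(x\in X\) with \(|J(x)-c|\leq 2\e_0\). We may shrink \(\e_0\) so that \(c-2\e_0>c_1\); this is possible by \eqref{Mountain pass geometry}. Note that \(c<+\infty\), since \(\mathcal{P}\) is assumed nonempty implicitly, and \(c\ge c_1\) is finite.

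\textbf{Pseudo-gradient field.} On the open set \(\tilde X=\{x: \dd J(x)\neq 0\}\) we construct a locally Lipschitz pseudo-gradient vector field \(V\). It should satisfy \(\|V(x)\|\leq 2\|\dd J(x)\|\) and \(\langle \dd J(x),V(x)\rangle\geq \|\dd J(x)\|^2\). In a Banach space this is Palais' construction: choose local vectors \(w_x\) almost realizing the dual norm, then glue them with a locally Lipschitz partition of unity, which exists on metric spaces. On a \(\C^{1,1}\) Finsler manifold one works in charts, as in \cite[Section 3.7]{Struwe_2008}. Since \(J\) is only \(\C^1\), this construction, rather than the gradient itself, is what gives a well-posed flow, and it is the main technical obstacle. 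For the manifold case we take it from Struwe/Palais.

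\textbf{Deformation.} Let \(N=\{|J-c|\le 2\e_0\}\), \(N'=\{|J-c|\le \e_0\}\), and take a locally Lipschitz cutoff \(g\in[0,1]\) equal to \(1\) on \(N'\) and \(0\) outside \(N\). For instance, use \(g(x)=\dist(x,X\setminus N)/(\dist(x,X\setminus N)+\dist(x,N'))\), or a function of \(J\) composed with a Lipschitz bump. Consider the flow \(\eta(t,x)\) of \(W=-g\,V/\|V\|^2\) (set \(W=0\) outside \(N\)). Then \(\|W\|\le 1/\delta\), so the flow is global. Moreover \(\frac{d}{dt}J(\eta)=-g\langle \dd J,V\rangle/\|V\|^2\le -g/4\), so \(J\) decreases along the flow. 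Points with \(J\le c_1<c-2\e_0\) are fixed, so \(\eta(t,\cdot)=\id\) on \(\chi(K_0)\). If \(J(x)\le c+\e_0\), then after time \(T=8\e_0\) we have \(J(\eta(T,x))\le c-\e_0\): either the trajectory leaves \(N'\) downward, or it stays in \(N'\) and loses at least \(T/4=2\e_0\).

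\textbf{Conclusion.} Choose \(F\in\mathcal P\) with \(\max_K J\circ F\le c+\e_0\) and set \(\tilde F=\eta(T,F(\cdot))\). Then \(\tilde F\) is continuous, and \(\tilde F=\chi\) on \(K_0\) because \(J\circ\chi\le c_1\). Hence \(\tilde F\in\mathcal P\), yet \(\max_K J\circ\tilde F\le c-\e_0<c\), which contradicts the definition of \(c\). Therefore there exists \((x_k)\) with \(J(x_k)\to c\) and \(\dd J(x_k)\to0\).

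Compactness of \(K\) is used only to make \(\max_K J\circ F\) meaningful and attained. The only delicate points are the pseudo-gradient existence and the global existence of the flow on a Finsler manifold, which we import from the standard references \cite{Mawhin-Willem,Struwe_2008}.
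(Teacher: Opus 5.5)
Your proof is correct. It is the classical Palais pseudo-gradient deformation proof, and the bookkeeping checks out: $\|W\|\le 1/\delta$ on $N$, $\frac{d}{dt}J(\eta)\le -g/4$, the flow fixes $\{J<c-2\e_0\}\supset\chi(K_0)$, and a trajectory starting in $\{J\le c+\e_0\}$ either drops below $c-\e_0$ or stays in $N'$ and loses $2\e_0$ by time $T=8\e_0$. You were also right that $\mathcal{P}\neq\emptyset$ is implicitly needed.

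The paper itself gives no proof; it quotes the result from \cite[Theorem 4.3 and Corollary 4.3]{Mawhin-Willem}. There, in a Banach space, it is obtained from Ekeland's variational principle rather than a deformation lemma. One applies Ekeland to $F\mapsto\max_K J\circ F$ on the complete metric space $\mathcal{P}$ with the uniform distance, and derives the contradiction from a perturbation of the Ekeland almost-minimizer built only over the compact set $K$. That route needs neither a pseudo-gradient field on all of $X$ nor a global flow, and it gives a localized conclusion. For every $F\in\mathcal{P}$ with $\max_K J\circ F\le c+\e$ there is $v\in X$ with $c-\e\le J(v)\le\max_K J\circ F$, $\dist(v,F(K))\le\e^{1/2}$ and $\|\dd J(v)\|\le\e^{1/2}$. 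Your deformation route is the one that carries over to the $\C^{1,1}$ Finsler manifold $\I_{1,\alpha}$ used later, which is why the paper points to \cite{Struwe_2008} for that case.

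One small adjustment is needed in the Finsler case. A Finsler norm $\|\cdot\|_x$ is in general only continuous in $x$, so $x\mapsto\|V(x)\|_x$ need not be locally Lipschitz, and $W=-gV/\|V\|^2$ need not generate a well-defined flow. Instead, take the normalized pseudo-gradient of \cite{Struwe_2008}, with $\|V\|<2\min\{1,\|\dd J\|\}$ and $\langle\dd J,V\rangle>\min\{1,\|\dd J\|\}\,\|\dd J\|$, and set $W=-gV$. This field has speed less than $2$ and decreases $J$ at rate at least $\min\{1,\delta\}\,\delta$ on $N'$. Your argument then goes through unchanged with $T=2\e_0/(\min\{1,\delta\}\,\delta)$. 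For $\I_{1,\alpha}$, whose Finsler norm is the restriction of the $W^{1,n+\alpha}$ norm, your original normalization is already fine.
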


If $J$ satisfies \eqref{Mountain pass geometry} for some $K_0,K$ we say that $J$ has a Mountain Pass geometry.   Under this condition, the theorem produces a Palais--Smale sequence, \textit{i.e.}, a sequence such that \eqref{Palais-Smale} holds. Assume furthermore that the functional $J$ satisfies the following Palais--Smale condition.
\begin{equation}\tag{PS}\label{PScondition}
\text{ each sequence } (x_k) \text{ satisfying }  \eqref{Palais-Smale} \text{ contains a convergent subsequence }.
\end{equation}
Then the sequence $(x_k)$ in \eqref{Palais-Smale} converges to $x$ which is a critical point of $J$, such that $J(x)=c$.

We want to apply Theorem \ref{th:Mountain Pass} with
\begin{align}
 J=E_{n+\alpha}, \quad X=\mathcal{I}_{1,\alpha}, \quad K_0=\p \Omega, \quad K=\overline{\Omega} \text{ and } \label{eq:choice_1}\\
 \chi(a)= M_{(1-r)\Phi(a)}\circ \Phi_a \label{eq:choice_2}
 \end{align}
for any \(a \) in \( \p \Omega\), and for \(r>0\) small enough where \(M_{(1-r)\Phi(a)}\circ \Phi_a\) appears in Definition \ref{def:almost_Mobius_map}.

\subsection{Differentiability structure and Palais-Smale condition for the perturbed problem}

We first explain why \(\I_{1,\alpha}\) is a smooth Finsler manifold and \(E_{n+\alpha}\) is \(\C^1\) on that space. First we can see that, for all $\alpha >0$, the space $\I_\a$ is a smooth reflexive Banach manifold and its tangent space $T_u \I_\a$ at a given point $u \in \I_\a$ is given by
\begin{equation}
T_u \I_\a =\{\varphi \in W^{1,n+\a}(\O,\R^n) : \varphi(x)\in T_{u(x)} \S^{n-1}, \ \forall
 x \in \p \O \} .
 \end{equation}
The proof of this fact is an adaptation of the proof of \cite[Proposition II.1]{Riviere_2017} or \cite[Theorem 1.4.1]{Moore_2017}, the crucial point being the embedding \( W^{1,n+\alpha} \hookrightarrow \C^{0,\beta}\) for some \(0<\beta<1\). The Finsler structure we consider is given by the restriction of the \(W^{1,n+\alpha}\) norm to each fiber of \( T \I_\alpha\). We can check that it is indeed a Finsler structure
Now we can use that \(\I_{1,\alpha}\) is open in \(\I_{\alpha}\). Indeed, if \(u\) is in \(\I_{1,\alpha}\) and if \(v\in \I_{\alpha}\) is such that \(\|u-v\|_{W^{1,n+\alpha}}\) is small enough, then \(\|u-v\|_{C^0}\) is also small and thus \(v\) is homotopic to \(u\) on \( \p \O\). As an open subset of a Banach manifold \(\I_{1,\alpha}\) is also a Banach manifold and it naturally inherits the Finsler structure originally defined on \(\I_{\alpha}\).

Now we can see that $E_{n+\alpha}$ is $\mathcal{C}^1$ in $W^{1,n+\a}(\O,\R^n)$, for $\a \geq 0$ and its differential is defined for any $u,v\in W^{1,n+\a}(\O,\R^n)$ by
\begin{equation}\label{eq:differential}
\dd E_{n+\alpha}(u). v = (n+\alpha)\int_\O |\dd u|^{n+\a-2}\dd u:\dd v.
\end{equation}
Hence $E_{n+\alpha}$ is \(\C^1\) on \(\I_{\alpha}\) and on \(\I_{1,\alpha}\). Furthermore its differential, as a map from \( \I_\alpha\) to \(\R\) is given by \eqref{eq:differential} from every \(u\) in \( \I_\alpha\) and every \(v\) in \(T_u \I_\alpha\).

We now show that \(E_{n+\alpha}\) satisfies the Palais--Smale condition in \(\I_{1,\alpha}\). In order to do that, we follow closely \cite{Struwe_1984}, see also \cite{Fraser_2000}.

We define a projection
\begin{eqnarray}\label{projection}
\pi_\a(\cdot,\cdot) \colon \I_\a\times W^{1,n+\a}(\O,\R^n) &\rightarrow &T\I_\a \nonumber \\
(u,v) &\mapsto &v- \eta_\a\left( [u\langle u,v\rangle ]_{|\p\O} \right),
\end{eqnarray}
\noindent where $\eta_\a \colon W^{1-\frac{1}{n+\a},n+\a}(\O,\R^n) \rightarrow W^{1,n+\a}(\O,\R^n)$
is a continuous linear extension operator. Note that if \(u\in \I_\alpha\) and \(v\in W^{1,n+\alpha}(\Omega,\R^n)\) then  \(\pi_\alpha(u,v)\in T_u\I_{\alpha}\). For \(u\in \I_\alpha\) and \(v\in W^{1,n+\alpha}(\Omega,\R^n)\) we also define
\begin{equation}\label{def:definition_diff}
\dd_{\I_\a} E_{n+\alpha}(u).v= \dd E_{n+\alpha}(u).\pi_\alpha (u,v)
\end{equation}
We have the following lemma 
\begin{lemma}(Lemma 2.1 in \cite{Struwe_1984}) \label{outil}
Let $\a \geq 0$.
\begin{itemize}
\item[i)] If $u,v \in W^{1-\frac{1}{n+\a},n+\a}\cap L^\infty(\p \O,\R^n)$, then $u\cdot v \in W^{1-\frac{1}{n+\a},n+\a} \cap L^\infty(\p \O,\R^n)$ and
\[ \|u\cdot v\|_{W^{1-\frac{1}{n+\a},n+\a}} \leq \|u\|_{\infty}\|v\|_{W^{1-\frac{1}{n+\a},n+\a}} +\|v\|_{\infty}\|u\|_{W^{1-\frac{1}{n+\a},n+\a}}. \]
\item[ii)] If $u \in W^{1-\frac{1}{n+\a},n+\a}(\p \O,\R^n)$ and $\sigma \in \mathcal{C}^1(\R^n)$ then
\[\| \sigma \circ  u \|_{W^{1-\frac{1}{n+\a},n+\a}} \leq \|\sigma \circ u\|_{L^{n+\a}}+\|(d\sigma)\circ u\|_{\infty} \|u\|_{W^{1-\frac{1}{n+\a},n+\a}} . \]
\end{itemize}
\end{lemma}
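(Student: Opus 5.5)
Both estimates will come directly from the Gagliardo (Slobodeckij) description of the trace space. We will not use interpolation or extension. Write $p=n+\a$ and $s=1-\frac1p$, so that $sp=p-1$. Since $\p\O$ is a compact $(n-1)$-dimensional $\C^1$ manifold, we work with the norm
\[
\|f\|_{W^{s,p}(\p\O)} = \|f\|_{L^p(\p\O)} + [f]_{s,p}, \qquad [f]_{s,p}^p = \int_{\p\O}\int_{\p\O}\frac{|f(x)-f(y)|^p}{|x-y|^{\,n-1+sp}}\dd\mathcal H^{n-1}(x)\dd\mathcal H^{n-1}(y).
\]
This is the standard norm, equivalent to the one defined through charts. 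The key observation is that $[f]_{s,p}$ is the $L^p$ norm of the difference quotient $Df(x,y)=(f(x)-f(y))|x-y|^{-(n-1+sp)/p}$ with respect to the measure $\dd\mathcal H^{n-1}\otimes\dd\mathcal H^{n-1}$ on $\p\O\times\p\O$. Hence Minkowski's inequality and pointwise bounds on $Df$ transfer directly into bounds on the seminorm.

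For i), we first handle the $L^p$ part. By Cauchy--Schwarz in $\R^n$, $|u\cdot v|\le |u|\,|v|$, so $\|u\cdot v\|_{L^p}\le \|u\|_\infty\|v\|_{L^p}$. Clearly $\|u\cdot v\|_\infty\le\|u\|_\infty\|v\|_\infty$, which gives $u\cdot v\in L^\infty$. For the seminorm we use the splitting
\[
u(x)\cdot v(x)-u(y)\cdot v(y) = u(x)\cdot\bigl(v(x)-v(y)\bigr) + v(y)\cdot\bigl(u(x)-u(y)\bigr).
\]
Cauchy--Schwarz then gives the pointwise bound $|D(u\cdot v)(x,y)|\le \|u\|_\infty|Dv(x,y)|+\|v\|_\infty|Du(x,y)|$. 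Minkowski in $L^p(\p\O\times\p\O)$ yields $[u\cdot v]_{s,p}\le \|u\|_\infty[v]_{s,p}+\|v\|_\infty[u]_{s,p}$. Adding the $L^p$ part, which is bounded by $\|u\|_\infty\|v\|_{L^p}$, gives the claimed inequality, since each term on the right-hand side dominates the corresponding piece.

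For ii), the $L^p$ term $\|\sigma\circ u\|_{L^p}$ is kept as it stands. For the seminorm we use the mean value inequality:
\[
|\sigma(u(x))-\sigma(u(y))| \le \sup_{t\in[0,1]}|\dd\sigma\bigl(tu(x)+(1-t)u(y)\bigr)|\,|u(x)-u(y)|.
\]
Integrating against the kernel gives $[\sigma\circ u]_{s,p}\le \|\dd\sigma\|_{L^\infty(\mathrm{co}(u(\p\O)))}[u]_{s,p}$, where $\mathrm{co}$ denotes the convex hull. This is bounded by the desired quantity times $\|u\|_{W^{s,p}}$. Measurability of $\sigma\circ u$ is clear.

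The only genuine subtlety, and the step I expect to need care, is the meaning of the factor $\|(\dd\sigma)\circ u\|_\infty$ in ii). The mean value theorem naturally produces the supremum of $|\dd\sigma|$ over segments joining values of $u$, not over the values $u(x)$ themselves. I would read $\|(\dd\sigma)\circ u\|_\infty$ as the supremum of $|\dd\sigma|$ on the convex hull of the essential range of $u$, or simply as $\|\dd\sigma\|_{L^\infty(\R^n)}$, which is the standard interpretation in \cite{Struwe_1984}. This is harmless in our applications: there $\sigma$ is either a cutoff of the nearest-point projection onto $\S^{n-1}$, or $u$ takes values in a convex set on which $\dd\sigma$ is controlled. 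If a strictly pointwise version were needed, I would instead work with $C^1$ maps $\sigma$ that are globally Lipschitz, with constant equal to $\sup|\dd\sigma|$ near the range of $u$.
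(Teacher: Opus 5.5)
Your proof is correct. The paper gives no argument of its own beyond citing Struwe, and your direct computation with the intrinsic Gagliardo norm is the standard proof behind that lemma: the splitting plus Minkowski in $L^{n+\alpha}(\partial\Omega\times\partial\Omega)$ for i), the mean value inequality for ii), and a norm for which the constant $1$ is legitimate. Your caveat on ii) is not just cosmetic but necessary: read literally as $\operatorname{ess\,sup}_x|\mathrm{d}\sigma(u(x))|$, the estimate can fail, for instance with $u$ valued in an almost closed arc of a unit circle, $\sigma$ the angle along that arc (so $|\mathrm{d}\sigma|=1$ on the range), and a sharp transition in $u$, where arc length exceeding chord makes $[\sigma\circ u]-[u]$ larger than $\|u\|_{L^{n+\alpha}}$; so the convex-hull or global $\sup|\mathrm{d}\sigma|$ reading is the right one.
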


Thanks to the previous lemma we can show the next result which is the analogue of \cite[Lemma 3.1]{Struwe_1984}
\begin{lemma}\label{projection2}
Let $\alpha >0$ and let $(u_m)_m$ be a sequence in $\I_\a$ such that $u_m \rightharpoonup u$ in $W^{1,n+\a}(\Omega,\R^n)$ and $u_m \rightarrow u$ uniformly in $\O$ as $m\rightarrow +\infty$. Then,
\[ \| (u_m-u)-\pi_\a(u_m,u_m-u)\|_{W^{1,n+\a}} \xrightarrow[m\to +\infty]{}0.\]
\end{lemma}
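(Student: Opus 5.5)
By the definition of \(\pi_\alpha\) in \eqref{projection}, the difference in question is exactly
\[
(u_m-u)-\pi_\alpha(u_m,u_m-u)=\eta_\alpha\big([u_m\langle u_m,u_m-u\rangle]_{|\p\Omega}\big).
\]
Since \(\eta_\alpha\) is a continuous linear extension operator, it suffices to prove
\[
\|u_m\langle u_m,u_m-u\rangle\|_{W^{1-\frac{1}{n+\alpha},n+\alpha}(\p\Omega)}\xrightarrow[m\to\infty]{}0 .
\]

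The key algebraic step uses the constraint on the boundary. There \(|u_m|=|u|=1\), so
\[
\langle u_m,u_m-u\rangle=1-\langle u_m,u\rangle=\tfrac12|u_m-u|^2 .
\]
The quantity to control is therefore \(w_m:=\tfrac12 u_m|u_m-u|^2\) on \(\p\Omega\). This is quadratic in \(u_m-u\), and that is what will produce smallness even though \(u_m-u\) only converges weakly in the trace space.

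Next, apply Lemma \ref{outil} i) twice (all functions are bounded). This gives
\[
\|w_m\|_{W^{1-\frac{1}{n+\alpha},n+\alpha}}\le C\Big(\|u_m\|_\infty\,\|u_m-u\|_\infty\,\|u_m-u\|_{W^{1-\frac{1}{n+\alpha},n+\alpha}}+\|u_m-u\|_\infty^2\,\|u_m\|_{W^{1-\frac{1}{n+\alpha},n+\alpha}}\Big).
\]
Here \(\|u_m\|_\infty=1\) on \(\p\Omega\), and \(\|u_m-u\|_\infty\to0\) by the uniform convergence assumption. The trace norms \(\|u_m\|_{W^{1-\frac{1}{n+\alpha},n+\alpha}}\) and \(\|u_m-u\|_{W^{1-\frac{1}{n+\alpha},n+\alpha}}\) are bounded uniformly in \(m\), by the trace theorem and the boundedness of weakly convergent sequences in \(W^{1,n+\alpha}\). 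Hence every term tends to zero. Composing with the continuity of \(\eta_\alpha\) then gives the claim.

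The only delicate point is making sure that only one factor of \(u_m-u\) is ever measured in the fractional norm while the others carry the \(L^\infty\) smallness. The product estimate of Lemma \ref{outil} i) does exactly this, so I do not expect a real obstacle. Note that the uniform convergence hypothesis is essential: without it, weak convergence alone would not make \(w_m\) small in the trace norm.
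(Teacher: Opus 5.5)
Your argument is correct. Its outer structure is the same as the paper's: reduce to the boundary norm via continuity of $\eta_\alpha$, then combine Lemma~\ref{outil} with the uniform convergence and the boundedness of the trace norms. The difference is in where the smallness comes from.

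The paper writes $u_m\langle u_m,u_m-u\rangle=\sigma_{u_m}(u_m-u)$, with $\sigma_{u_m}(y)=u_m\langle u_m,y\rangle$ the projection onto the normal line at $u_m$. It then applies item ii) of Lemma~\ref{outil} and bounds the derivative factor by $\|\sigma_{u_m}(u_m-u)\|_{L^\infty}\le\|u_m-u\|_{L^\infty}$. Read literally, item ii) does not give this. First, $\sigma_{u_m}$ varies with $x$. Second, even ignoring that, for a linear map the factor $\|(d\sigma)\circ(u_m-u)\|_\infty$ is the operator norm of $\sigma_{u_m}$, which is $1$. So ii) alone only bounds the term by $\|u_m-u\|_{W^{1-\frac{1}{n+\alpha},n+\alpha}}$ with an $O(1)$ coefficient, and that need not tend to zero under weak convergence.

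Your identity $\langle u_m,u_m-u\rangle=\frac12|u_m-u|^2$ on $\partial\Omega$ exposes the quadratic structure that actually produces the smallness, which the paper's argument does not make visible. With it, the elementary product estimate i) puts a factor $\|u_m-u\|_\infty$ in front of every fractional norm. Your version is therefore the fully justified one. The price is that it uses the constraint $|u|=1$ on $\partial\Omega$ explicitly, so you should say where this comes from: the uniform convergence, or weak convergence plus continuity of the trace as in the paper.

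Two minor points:
\begin{enumerate}
\item Item i) is stated for dot products, but the scalar-times-vector version you use follows componentwise by the same proof.
\item Your closing remark is inaccurate. For $\alpha>0$, the compact embedding $W^{1,n+\alpha}(\Omega)\hookrightarrow\mathcal{C}^0(\overline{\Omega})$ shows that weak convergence already forces uniform convergence of the whole sequence to $u$. So the uniform-convergence hypothesis is redundant rather than essential; what matters is the $L^\infty$ smallness it encodes.
\end{enumerate}
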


\begin{proof}
Let $(u_m)_m$ be as in the statement of the lemma. Since $u_m$ converges weakly to $u$ in $W^{1,n+\a}(\O,\R^n)$, we can see, from the continuity of the trace operator, that  \(|u|=1\) a.e.\ and hence $u \in \I_\a$. We now apply item ii) of Lemma \ref{outil} with \(\sigma=\sigma_{u_m} =P_\perp^{\mathbb{S}^{n-1}}(u_m)\) the projection onto the normal space to \(\mathbb{S}^{n-1}\) at \(u_m\) when \(u_m\) belongs to \(\mathbb{S}^{n-1}\), which is the case for \(x\in \p \Omega\). More precisely, for \(y\in \R^n\) and \(x\in \p \Omega\) we have
\begin{equation}
P_\perp^{\mathbb{S}^{n-1}}(u_m(x)) (y)=u_m(x) (u_m(x)\cdot y).
\end{equation}
We note that \(\sigma\) is linear so its differential is itself, besides, \(\sigma_{u_m}\) being an orthogonal projection, we have, \(|\sigma_{u_m}(y)| \leq |y|\). Then, we find

\begin{align}
\| (u_m-u)-\pi_\a(u_m,u_m-u)\|_{W^{1,n+\a}}^{n+\a} &= \| \eta_\a\left( [u_m ( u_m\cdot u_m-u)]_{|\p \O} \right)\|_{W^{1,n+\a}}^{n+\a} \nonumber \\[2mm]
 & \leq  C \|u_m ( u_m\cdot u_m-u) \|_{W^{1-\frac{1}{n+\a},n+\a}}^{n+\a} \nonumber \\[2mm]
 &= C\|\sigma_{u_m} (u_m-u)\|_{W^{1-\frac{1}{n+\a},n+\a}}^{n+\a} \nonumber \\[2mm] 
 & \leq C \Bigl(\| \sigma_{u_m}(u_m-u)\|_{L^{n+\a}(\p \Omega)}^{n+\a} \nonumber \\[2mm]
 & \quad +\| \sigma_{u_m}(u_m-u)\|_{L^{\infty}}^{n+\alpha}\| u_m-u\|_{W^{1-\frac{1}{n+\a}}}^{n+\alpha}\Bigr) \nonumber \\[2mm] 
 &\leq C\|u_m-u\|_{L^\infty}^{n+\alpha} \left(|\p \Omega|+\|u\|_{W^{1-\frac{1}{n+\alpha}}}^{n+\alpha} +\|u\|_{W^{1-\frac{1}{n+\alpha}}}^{n+\alpha}\right) \nonumber \\[2mm]
 & \xrightarrow[m\to +\infty]{}  0. \nonumber
\end{align}
\end{proof}
In the last line we have used that \(u_m\) converges uniformly to \(u\) in \(\Omega\) and that the \(W^{1-\frac{1}{n+\alpha},n+\alpha}(\p \Omega, \R^n)\) norms of \(u_m\) are uniformly bounded.  We will also need the following inequality, see e.g.\ \cite[Chapter 12]{Lindqvist_2019}.
\begin{lemma}\label{inequality}
There exists $C>0$ depending only on $n$ such that for any $\a\in[0,1]$ and $u,v\in \R^n$, it holds
\[ \left( |v|^{n+\a-2}v-|u|^{n+\a-2}u\right)\cdot (v-u) \geq C|v-u|^{n+\a}. \]
\end{lemma}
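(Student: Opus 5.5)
We want to prove that there is $C=C(n)>0$ such that for all $\alpha\in[0,1]$ and $u,v\in\R^n$,
\[
\left(|v|^{p-2}v-|u|^{p-2}u\right)\cdot(v-u)\ \geq\ C\,|v-u|^{p}, \qquad p\coloneqq n+\alpha\geq 3 .
\]

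The plan is the standard integral representation along the segment, followed by a lower bound of the resulting weight. Set $w_t=u+t(v-u)$ for $t\in[0,1]$ and $A(z)=|z|^{p-2}z$. Since $p\geq 3>2$, $A$ is $\C^1$ on $\R^n$. We have
\[
\dd A(z)h=|z|^{p-2}h+(p-2)|z|^{p-4}(z\cdot h)z .
\]
Hence $\dd A(z)h\cdot h\geq |z|^{p-2}|h|^2$, because the second term is $(p-2)|z|^{p-4}(z\cdot h)^2\geq 0$. By the fundamental theorem of calculus, with $h=v-u$,
\[
\left(A(v)-A(u)\right)\cdot h=\int_0^1 \dd A(w_t)h\cdot h\,\dd t\ \geq\ |h|^2\int_0^1|u+th|^{p-2}\,\dd t .
\]
If the segment passes through $0$, the integrand vanishes at one point only, so there is no issue with the $\C^1$ regularity.

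The remaining step, and the only one with any content, is the elementary bound
\[
\int_0^1|u+th|^{p-2}\,\dd t\ \geq\ c\,|h|^{p-2}
\]
with $c$ independent of $u$ and of $p\in[n,n+1]$. I will get it geometrically. Because $t\mapsto|u+th|$ is convex, the set where $|u+th|<|h|/4$ is an interval of length at most $1/2$. Indeed, two points in it have $|(t-s)h|<|h|/2$, so they are less than $1/2$ apart. So on a subset of $[0,1]$ of measure at least $1/2$ we have $|u+th|\geq|h|/4$. This gives
\[
\int_0^1|u+th|^{p-2}\,\dd t\ \geq\ \tfrac12\,4^{-(p-2)}|h|^{p-2}\ \geq\ \tfrac12\,4^{-(n-1)}|h|^{p-2},
\]
using $p-2\leq n-1$.

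Combining the two bounds yields the inequality with $C=2^{-1}4^{-(n-1)}$, which depends only on $n$. The case $h=0$ is trivial.
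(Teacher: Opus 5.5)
Your proof is correct. The paper does not prove this lemma at all; it only cites Lindqvist's notes. So there is no in-paper argument to match, and your integral-along-the-segment argument is a complete, self-contained proof.

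Each step checks out:
\begin{itemize}
\item $A(z)=|z|^{p-2}z$ is $\C^1$ for $p>2$, with $\dd A(0)=0$.
\item The extra term $(p-2)|z|^{p-4}(z\cdot h)^2$ is nonnegative.
\item The set $\{t\in[0,1] : |u+th|<|h|/4\}$ has diameter less than $1/2$ by the triangle inequality alone; convexity is not needed for the measure bound.
\end{itemize}
This gives $C=\tfrac12 4^{1-n}$, uniformly in $\alpha\in[0,1]$.

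The more common textbook route for $p\ge 2$ is purely algebraic. One writes
\[
\left(|v|^{p-2}v-|u|^{p-2}u\right)\cdot(v-u)=\frac{|v|^{p-2}+|u|^{p-2}}{2}|v-u|^2+\frac{(|v|^{p-2}-|u|^{p-2})(|v|^2-|u|^2)}{2},
\]
drops the second term, which is nonnegative, and uses $|v-u|^{p-2}\le(|u|+|v|)^{p-2}\le 2^{p-3}(|u|^{p-2}+|v|^{p-2})$ for $p\ge 3$. This gives the better constant $2^{2-p}\ge 2^{1-n}$ without any calculus.

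Your approach costs a worse constant but is more flexible. It only uses the coercivity bound $\dd A(z)h\cdot h\ge|z|^{p-2}|h|^2$, so it applies to any $\C^1$ monotone field satisfying such a bound.

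Neither argument uses the dimension of the ambient space. This matters here, because the paper actually applies the lemma to $\dd u_m,\dd u\in M_n(\R)$ with the Frobenius product, not to vectors of $\R^n$.
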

We now prove that $E_{n+\alpha}$ verifies the Palais--Smale condition.
\begin{proposition}\label{pertPalais}
For all $\alpha>0$, the functional \(E_{n+\alpha}\) satisfies the Palais--Smale condition in $\I_\a$: if $(u_m)_{m\in\N}\subset \I_\a$ is such that $E_{n+\alpha}(u_m) \leq C$ and $\dd_S E_{n+\alpha}(u_m) \rightarrow 0$ in $(W^{1,n+\alpha}(\Omega,\R^n))^*$, where \(d_SE_{n+\alpha}\) is defined in Definition \ref{def:definition_diff}, then, up to a subsequence, $u_m \rightarrow u$ strongly in $W^{1,n+\a}(\O,\R^n)$.
\end{proposition}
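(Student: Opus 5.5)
The argument follows Struwe's scheme. Let $(u_m)\subset\I_\a$ be a Palais--Smale sequence as in the statement. Since $\Omega$ is bounded and $|u_m|=1$ on $\p\Omega$, the Poincaré inequality with boundary term (the $L^1$ norm of the trace is bounded by $|\p\O|$) together with $E_{n+\a}(u_m)\le C$ shows that $(u_m)$ is bounded in $W^{1,n+\a}(\O,\R^n)$. Passing to a subsequence, we may therefore assume $u_m\rightharpoonup u$ weakly in $W^{1,n+\a}$. Since $n+\a>n$, the compact Morrey embedding $W^{1,n+\a}\hookrightarrow \C^{0,\beta}(\overline\O)$ gives $u_m\to u$ uniformly on $\overline\O$. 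By continuity of the trace, $|u|=1$ on $\p\O$, so $u\in\I_\a$; if the $u_m$ lie in $\I_{1,\a}$, uniform convergence also preserves the degree.

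The key step is to test the differential with $\pi_\a(u_m,u_m-u)\in T_{u_m}\I_\a$. Its $W^{1,n+\a}$ norm is bounded uniformly, because $\eta_\a$ is continuous and Lemma \ref{outil} applies. Since $\dd_{\I_\a}E_{n+\a}(u_m)\to0$ in the dual, we get
\[
\dd E_{n+\a}(u_m).\pi_\a(u_m,u_m-u)\xrightarrow[m\to+\infty]{}0 .
\]
By Lemma \ref{projection2}, $\|(u_m-u)-\pi_\a(u_m,u_m-u)\|_{W^{1,n+\a}}\to0$. Moreover, $\dd E_{n+\a}(u_m)$ is bounded in $(W^{1,n+\a})^*$: from \eqref{eq:differential} and Hölder's inequality, its norm is at most $(n+\a)\|\dd u_m\|_{L^{n+\a}}^{n+\a-1}$. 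Combining these two facts yields $\dd E_{n+\a}(u_m).(u_m-u)\to0$, that is,
\[
\int_\O |\dd u_m|^{n+\a-2}\dd u_m:(\dd u_m-\dd u)\xrightarrow[m\to+\infty]{}0 .
\]

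On the other hand, $|\dd u|^{n+\a-2}\dd u\in L^{\frac{n+\a}{n+\a-1}}$ is fixed and $\dd u_m\rightharpoonup \dd u$ weakly in $L^{n+\a}$, so
\[
\int_\O |\dd u|^{n+\a-2}\dd u:(\dd u_m-\dd u)\to0 .
\]
Subtracting the two limits and applying Lemma \ref{inequality} pointwise, with the matrices viewed as vectors in $\R^{n^2}$ (the inequality holds in any Euclidean space), we obtain
\[
C\int_\O|\dd u_m-\dd u|^{n+\a}\le \int_\O\big(|\dd u_m|^{n+\a-2}\dd u_m-|\dd u|^{n+\a-2}\dd u\big):(\dd u_m-\dd u)\to0 .
\]
Together with the uniform convergence $u_m\to u$, this gives $u_m\to u$ strongly in $W^{1,n+\a}$.

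The main obstacle is justifying the passage from the projected test function to $u_m-u$. This rests entirely on Lemma \ref{projection2}, which in turn needs the uniform convergence available only because $\a>0$, and on the uniform dual bound for $\dd E_{n+\a}(u_m)$. I would check carefully that $\pi_\a(u_m,u_m-u)$ is an admissible tangent vector, and that the dual norm of $\dd_{\I_\a}E_{n+\a}(u_m)$, as defined in \eqref{def:definition_diff}, controls the pairing with this specific test function uniformly in $m$. The remaining steps are routine.
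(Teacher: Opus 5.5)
Your proposal is correct and follows essentially the same route as the paper. The shared ingredients are boundedness with weak and uniform convergence, the monotonicity inequality of Lemma \ref{inequality}, the splitting of $u_m-u$ into $\pi_\alpha(u_m,u_m-u)$ plus a remainder controlled by Lemma \ref{projection2}, and the uniform dual bound on $\dd E_{n+\alpha}(u_m)$. The only difference is the order of presentation: you first show $\dd E_{n+\alpha}(u_m).(u_m-u)\to 0$ and then subtract the weak-limit term, whereas the paper starts from the monotonicity inequality and absorbs that term into an $o(1)$.
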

\begin{proof}
We have that $(u_m)$ is uniformly bounded in $W^{1,n+\a}$. This comes from the fact that $\|\dd u_m\|_{n+\a}^{n+\a}  = E_{\kappa}^\a$ and $\|{u_m}_{|\p \O}\|_\infty \leq 1$  (these two facts and the Poincar\'e inequality imply that $\|u_m\|_{n+\a}$ is bounded). Thus, up to a subsequence, we can assume that $u_m \rightharpoonup u$ in $W^{1,n+\a}(\O,\R^n)$ and $u_m \rightarrow u$ uniformly in $\O$, for some $u \in W^{1,n+\a}(\O,\R^n)$. Using Lemma \ref{inequality} it comes
\begin{eqnarray}
& & C\int_\O |\dd u_m-\dd u|^{n+\a} \nonumber\\[1mm]
 &\leq & \int_\O  |\dd u_m|^{n+\a-2}du_m-|\dd u|^{n+\a-2}\dd u :\dd u_m-\dd u  \nonumber \\[1mm]
& \ust{m\to +\infty}{\leq } & \int_\O   |\dd u_m|^{n+\a-2}\dd u_m: \dd u_m-\dd u  +\smallO(1) \nonumber \\[1mm]
& \ust{m\to +\infty}{\leq } & \int_\O   |\dd u_m|^{n+\a-2}\dd u_m : \dd \left(\pi_\a(u_m,u_m-u) +u_m-u - \pi_\a(u_m,u_m-u)\right) +\smallO (1) \nonumber \\[2mm]
&\ust{m\to +\infty}{=} & \dd_S E_{n+\alpha}(u_m).u(_m-u) \\
& & \quad \quad + \dd E_{n+\alpha} (u_m).\left[(u_m-u)-\pi_\a(u_m,u_m-u) \right] +\smallO(1) \nonumber \\[2mm]
&\ust{m\to +\infty}{\leq } & C' \|(u_m-u)-\pi_\a(u_m,u_m-u)\|_{1,n+\a}+\smallO(1) \rightarrow 0, \nonumber
\end{eqnarray}
by Lemma \ref{projection2}.
\end{proof}

\subsection{Mountain pass geometry}

Now that we know that \(E_{n+\alpha}\) is \(\C^1\) and that \(\I_{1,\alpha}\) is a \(C^2\) Banach manifold endowed with a Finsler structure we prove that we the main assumption \eqref{Mountain pass geometry} is satisfied with our choices \eqref{eq:choice_1}-\eqref{eq:choice_2} in  Theorem \ref{th:Mountain Pass}. These choices lead to define 
\[
	\pra = \left\{F\in C^0(\Omega,\I_{1,\alpha})\colon F(x) = \widetilde{M}_{r,x} \text{ if } x\in \partial \Omega \right\},
\]
and for $\alpha=0$ we have
\begin{equation}\label{eq:Mr0}
	\pr = \left\{F\in C^0(\Omega,\I_{1})\colon F(x) = \widetilde{M}_{r,x} \text{ if } x\in \partial \Omega \right\}.
\end{equation}
With this notation, for \(\alpha\geq 0\) we set
\begin{equation}\label{eq:ourc}
 c(\alpha,r)\coloneqq \inf_{F\in \pra} \max_{a\in \oline\Omega} E_{n+\alpha} \left(F(a)\right),
\end{equation}
and
\begin{equation}\label{eq:ourc1}
 c_1(\alpha,r) \coloneqq \max_{a\in \partial\Omega}\int_{\Omega} E_{n+\alpha} \left( \widetilde{M}_{r,a}\right) .
\end{equation}
Lemma \ref{lm:energy_almostMobius} shows directly the following result.
\begin{lemma}\label{lm:c1lemma}
	For $c_1(\alpha,r)$ defined in \eqref{eq:ourc1} we have
	\begin{equation}\label{eq:c1asymptotics}
		 \lim_{r\to 0}\, \lim_{\alpha\to 0}\, c_1(\alpha,r) = n^{\frac n2}|\bB^n|.
	\end{equation}
\end{lemma}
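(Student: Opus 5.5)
The plan is to read Lemma \ref{lm:c1lemma} off Lemma \ref{lm:energy_almostMobius}. The one real issue is the order between the maximum over \(a\in\p\Omega\) and the limits in \(\alpha\) and \(r\). By Definition \eqref{eq:ourc1} (where the stray \(\int_\Omega\) should be read as redundant), \(c_1(\alpha,r)=\max_{a\in\p\Omega}E_{n+\alpha}(\widetilde M_{r,a})\). This maximum is attained because \(a\mapsto \widetilde M_{r,a}\in\I_{1,\alpha}\) is continuous and \(\p\Omega\) is compact, both of which are given by the first part of Lemma \ref{lm:energy_almostMobius}. The proof then consists of matching upper and lower bounds.

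For the upper bound, I will use \eqref{eq:uniform_Mobius}. Since \(\p\Omega\subset\overline\Omega\) and the sup there can be taken over \(\overline\Omega\) by continuity, we get
\[
\limsup_{\alpha\to0}c_1(\alpha,r)\le n^{\frac n2}|\bB^n|+\e
\]
for every \(r\), once \(\|\Phi-\id\|_{\C^1}<L(\e)\). On its own this only gives an \(\e\)-dependent bound. To get the sharp bound, I will instead use that \eqref{eq:energy_boundary} is stated to hold uniformly in \(a\in\p\Omega\).

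Concretely, fix \(r\). For \(\alpha\to0\), dominated convergence together with the uniform continuity of \(a\mapsto \dd\widetilde M_{r,a}\) (the integrands are bounded uniformly for fixed \(r>0\), since \(M_{(1-r)b}\) is smooth on \(\overline{\bB^n}\) uniformly in \(|b|\le1\)) gives
\[
\lim_{\alpha\to0}c_1(\alpha,r)=\max_{a\in\p\Omega}E_n(\widetilde M_{r,a}).
\]
Next, the proof of \eqref{eq:energy_boundary} shows
\[
E_n(\widetilde M_{r,a})\le n^{\frac n2}|\bB^n|+\cO(r^{1/2})+C\int_{\bB^n\setminus B(\Phi(a),r^{1/2})}|\dd M_{(1-r)\Phi(a)}|^n.
\]
The \(\cO(r^{1/2})\) term comes from the \(\C^1\) modulus of continuity of \(\dd\Phi_a\) near \(a\), uniform in \(a\) by the construction in Proposition \ref{continuityscheme1_v2}. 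The last term is \(\cO(r)\) uniformly by \eqref{eq:DL_Mobius}. Taking the max over \(a\) and letting \(r\to0\) gives \(\limsup\le n^{\frac n2}|\bB^n|\).

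For the lower bound, each \(\widetilde M_{r,a}\in\I_1\), so Lemma \ref{lowbound} gives \(E_n(\widetilde M_{r,a})\ge n^{\frac n2}|\bB^n|\). Hence \(\lim_{\alpha\to0}c_1(\alpha,r)\ge n^{\frac n2}|\bB^n|\) for every \(r\). Together with the upper bound, this yields \eqref{eq:c1asymptotics}.

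The only delicate point is the uniformity in \(a\) of the \(\cO(r^{1/2})\) error in the upper bound. It needs an equicontinuity of \(\dd\Phi_a\) at \(a\) over the compact set \(\p\Omega\), which I will extract from the continuity in \(a\) of the construction (item \ref{item:last} of Proposition \ref{continuityscheme1_v2} and the continuity of \(f_a\), \(X_a\)). If this proves awkward, the fallback is a compactness argument. Take \(a_r\) maximizing, extract \(a_r\to a_0\), and compare with \(\widetilde M_{r,a_0}\) using the continuity of \(a\mapsto\Phi_a\) in \(\C^1\).
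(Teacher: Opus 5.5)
Your proposal is correct and is essentially the paper's argument. The paper simply reads the lemma off the uniform-in-$a$ limit \eqref{eq:energy_boundary} of Lemma \ref{lm:energy_almostMobius}. Your two extra justifications, the interchange of the $\alpha$-limit with the maximum over $a\in\p\Omega$ (for which the bound $|\dd\widetilde M_{r,a}|\le C_r$, uniform in $a$, already suffices) and the equicontinuity of $\dd\Phi_a$ coming from the $\C^1$-continuity of $a\mapsto\Phi_a$, make explicit what the paper's ``directly'' leaves implicit; the lower bound via Lemma \ref{lowbound} is a harmless addition.
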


In order to estimate \(c(\alpha,r)\) we need the following result which is the main topological ingredient responsible for the presence of the mountain pass geometry in our situation. We define the \(n\)-harmonic extension operator \(T_n\colon W^{1-\frac{1}{n},n}(\p \Omega, \R^n) \rightarrow W^{1,n}(\Omega,\R^n)\) such that for all \(g\in W^{1-\frac{1}{n},n}(\p \Omega, \R^n)\) it holds
\begin{equation}\label{eq:def_nharmonic_extension}
	\left\{
	\begin{array}{rcll}
		\lap_{n} (T_{n}(g))& =& 0  \quad  &\mbox{in $\Omega$},\\
		T_{n}(g) &=& g \quad &\mbox{on $\partial \Omega$}.\\
	\end{array}
	\right.
\end{equation}

\begin{lemma}\label{lm:topology}
	There exists $r_0>0$ sufficiently small so that for all $r<r_0$ and for all $F\in \pr = \{G\in C^0(\overline \Omega, \I_1)\colon G=M_r \text{ on } \partial \Omega\}$ there exists a point $a_F\in \overline{\Omega}$ such that
	\[
	\frac{1}{|\Omega|} \int_{\Omega} T_n \left( \tr_{| \p \Omega} F(a_F)\right) =0.
	\] 
\end{lemma}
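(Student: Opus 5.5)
We argue by a degree (Brouwer-type) argument applied to the continuous map
\[
G_F\colon \overline{\Omega}\to\R^n,\qquad G_F(a)\coloneqq \frac{1}{|\Omega|}\int_\Omega T_n\bigl(\tr_{|\p\Omega}F(a)\bigr)\,\dx .
\]
The plan has three steps. First, check that $G_F$ is continuous. Second, show that on $\p\Omega$ it is uniformly close to the identity, $\sup_{a\in\p\Omega}|G_F(a)-a|<\dist$-type small, for $r<r_0$ with $r_0$ independent of $F$. Third, conclude by homotopy invariance of the Brouwer degree that $G_F$ vanishes somewhere. Since $\Omega$ is diffeomorphic to $\bB^n$, $\overline\Omega$ is a compact domain. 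We may assume (the paper puts $\Omega$ close to $\bB^n$) that $0\in\Omega$ and that $\dist(0,\p\Omega)\ge \rho>0$, with $\rho$ depending only on the class of domains.

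\textbf{Continuity.} $F\colon\overline\Omega\to\I_1$ is continuous in $W^{1,n}$, so $a\mapsto \tr F(a)$ is continuous into $W^{1-\frac1n,n}(\p\Omega)$, and in particular into $L^1(\p\Omega)$. On compact $\overline\Omega$ the traces are bounded in $W^{1-\frac1n,n}$ by some $\Lambda$. \Cref{lm:uniform_nharm_extension} then gives that $a\mapsto T_n(\tr F(a))$ is continuous into $L^1(\Omega)$, so $G_F$ is continuous.

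\textbf{Boundary behaviour.} For $a\in\p\Omega$ we have $F(a)=\widetilde M_{r,a}=M_{(1-r)\Phi(a)}\circ\Phi_a$. Its trace is a map into $\mathbb{S}^{n-1}$, and $T_n$ of the constant $a$ (note $|a|\approx1$; more precisely we compare with the constant $\Phi(a)\in\mathbb{S}^{n-1}$) is that constant. We apply \Cref{lm:uniform_nharm_extension} with $\gamma_1=\tr\widetilde M_{r,a}$ and $\gamma_2\equiv\Phi(a)$.
\begin{itemize}
\item The $W^{1-\frac1n,n}$ bound comes from \Cref{lm:unif_mobius} (the Möbius energy is fixed), combined with the uniform $\C^1$ control on $\Phi_a$ from \Cref{continuityscheme1_v2}.
\item The $L^1(\p\Omega)$ smallness follows from the concentration of $M_{(1-r)b}$ near $b=\Phi(a)\in\mathbb{S}^{n-1}$. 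Outside $B(b,r^{1/4})$ we have $|M_{(1-r)b}-b|\le Cr^{1/2}$, exactly as in the proof of \eqref{eq:uniform_convergence_boundary_mobius}. The part of the boundary inside that ball has measure $O(r^{(n-1)/4})$ and integrand bounded by $2$. Changing variables through $\Phi_a$ costs only a bounded Jacobian.
\end{itemize}
Hence $\sup_{a\in\p\Omega}|G_F(a)-\Phi(a)|\le\eps(r)\to0$, uniformly in $F$, since only boundary values enter. Since $\|\Phi-\id\|_{\C^0}<L$ is small, $\sup_{a\in\p\Omega}|G_F(a)-a|\le \eps(r)+L<\rho$ for $r<r_0$.

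\textbf{Topological conclusion.} Consider the homotopy $H_t(a)=tG_F(a)+(1-t)a$ on $\p\Omega$. Then $|H_t(a)-a|<\rho\le|a|$, so $H_t(a)\neq0$ for all $a\in\p\Omega$ and $t\in[0,1]$. Thus $\deg(G_F,\Omega,0)=\deg(\id,\Omega,0)=1$, and there is $a_F\in\Omega$ with $G_F(a_F)=0$.

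\textbf{Main obstacle.} The only delicate step is the uniform $L^1$ estimate on $\p\Omega$. It needs the $L^1$ concentration of Möbius traces on the sphere rather than in the ball, where \Cref{lm:unif_mobius} was stated. The sphere version, however, follows from the same pointwise bound used in that proof.
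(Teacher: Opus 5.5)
Your proof is correct and follows the paper's argument. You use the same averaged $n$-harmonic extension map, the same appeal to Lemma~\ref{lm:uniform_nharm_extension} to show it is uniformly close on $\p\Omega$ to a degree-one map, and a degree/contractibility argument to produce a zero. The differences are minor and in your favour: you homotope to the identity using $\|\Phi-\id\|_{\C^0}<L$, where the paper compares directly with $\Phi_{|\p\Omega}$. You also check continuity of $G_F$ explicitly, and you supply the $L^1(\mathbb{S}^{n-1})$ concentration estimate for the Möbius traces, which is the hypothesis Lemma~\ref{lm:uniform_nharm_extension} actually needs; the paper only cites the $L^1(\bB^n)$ estimate \eqref{eq:uniform_convergence_boundary_mobius}.
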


\begin{proof}
	Given $F \in \pr$, we define the following map:
	\begin{align*}
		G_r \colon \left| \begin{array}{c l l }
			\oline{\Omega} & \to & \oline{\bB^n}\\
			a & \mapsto & \frac{1}{|\Omega|} \int_\Omega T_n\left( \tr_{| \pl \Omega} F(a) \right).
		\end{array}
		\right.
	\end{align*}
	If $a\in\pl\Omega$, by definition of \(\pr\) we have 
	\begin{align*}
		G_r(a) = \frac{1}{|\Omega|} \int_\Omega T_n \left[ \tr_{| \pl\Omega} \left(\widetilde{M}_{(1-r)\Phi(a)}\circ \Phi_a \right) \right].
	\end{align*}
	Thus, $\tr_{| \pl\Omega} G_r$ is independent of the choice of $F\in \pr$. Combining \Cref{lm:unif_mobius} and \Cref{lm:uniform_nharm_extension}, we obtain
	\begin{align*}
		\left\| \tr_{| \pl\Omega} G_r - \tr_{| \pl\Omega} \Phi \right\|_{L^\infty(\Omega)} & \leq \frac{1}{|\Omega|}\, \left\| T_n \left[ \tr_{| \pl\Omega} \left(\widetilde{M}_{(1-r)\Phi(a)}\circ \Phi_a\right)\right]-\Phi(a) \right\|_{L^1(\Omega)} \\
		& \xrightarrow[r\to 0]{}{0}.
	\end{align*}
	Consequently, there exists $r_0>0$ small enough such that for any $r\in(0,r_0)$ the map \( G_r\) does not vanish on \(\p \Omega\) and 
	\begin{align*}
		\deg\left( \frac{G_r}{|G_r|},\pl\Omega \right) = \deg(\Phi,\pl\Omega) = 1.
	\end{align*}
	For such \(0<r\leq r_0\) it means that \( G_r\) has to vanish inside \(\Omega\). Indeed, otherwise \(\tr_{|\p \Omega}  G_r\) would be homotopic to a constant since \( \Omega\) is contractible. Hence there exists $a_F\in\Omega$ such that $G_r(a_F) = 0$.
\end{proof}

Using Lemma \ref{lm:topology}, we estimate from below $c(0,r)\eqqcolon c(r)$.
\begin{proposition}\label{pr:lowerc}
	Let
	\[
	c(r)\coloneqq \inf_{F\in \pr} \max_{a\in \overline{\Omega}} E_n( F(a))
	\]
	where the class $\pr$ is defined in \eqref{eq:Mr0}.	There exist $c_2 > n^{\frac n2}|\bB^n|$ and $r_0>0$ such that for all $r<r_0$ we have $c(r)\ge c_2$.
\end{proposition}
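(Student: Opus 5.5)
We argue by contradiction. If the proposition fails, there are $r_k\to 0$ and maps $F_k\in\mathcal{P}_{r_k}$ with $\max_{a\in\overline\Omega}E_n(F_k(a))\le n^{\frac n2}|\bB^n|+\frac1k$. By Lemma \ref{lm:topology}, for $k$ large we pick $a_k:=a_{F_k}$ and set $u_k:=F_k(a_k)\in\I_1$. Then
\[
\frac{1}{|\Omega|}\int_\Omega T_n(\tr_{|\p\Omega}u_k)=0
\quad\text{and}\quad
E_n(u_k)\to n^{\frac n2}|\bB^n|=m(1,\Omega).
\]
So $(u_k)$ is a minimizing sequence for $m(1,\Omega)$ that carries a ``centering'' constraint. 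The idea is that this constraint forbids concentration, so a limit would be a minimizer. Theorem \ref{th:main_1} rules this out, since $\Omega$ is not a round ball. In the round-ball case the statement is false (the Möbius maps give $c(r)=n^{\frac n2}|\bB^n|$), so we assume $\Omega$ is not a ball, as in the mountain pass scheme.

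First, the sequence is bounded. Since $|\tr u_k|=1$ and the energies are bounded, Poincaré's inequality gives a bound in $W^{1,n}$. Up to a subsequence, $u_k\rightharpoonup u$ in $W^{1,n}$, strongly in $L^q$, and $\tr u_k\to\tr u$ in $L^1(\p\Omega)$, with $|\tr u|=1$. To apply Proposition \ref{prop:Price_lemma} we need $\dd u_k\to\dd u$ a.e. If that is not available directly, we use the Brezis--Lieb-free variant: by Lemma \ref{le:jacobianprice} and weak lower semicontinuity one still gets
\[
\liminf E_n(u_k)\ge E_n(u)+n^{\frac n2}|\bB^n|\,|\deg(u)-1|.
\]
To be safe, we replace $u_k$ by an almost-minimizer, or we apply Ekeland's principle in $\I_1$ to get a Palais--Smale-type sequence and then use the bubbling compactness of \cite{MartinoMazowieckaRodiac2025}. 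With $E_n(u_k)\to n^{\frac n2}|\bB^n|$ and Lemma \ref{lowbound}, we get $E_n(u)\ge n^{\frac n2}|\bB^n||\deg u|$. The only possibilities are then:
\begin{itemize}
\item[(a)] $\deg u=1$ and $E_n(u)=n^{\frac n2}|\bB^n|$;
\item[(b)] $\deg u=0$ and $E_n(u)=0$, so $u$ is a constant $p\in\S^{n-1}$.
\end{itemize}
Other degrees cost at least $2n^{\frac n2}|\bB^n|$ in total.

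Case (a) makes $u$ a minimizer of $m(1,\Omega)$. Theorem \ref{th:main_1} forbids this because $\Omega$ is not a round ball.

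In case (b) the whole energy concentrates. We then show that $\tr u_k\to p$ in $L^1(\p\Omega)$. This follows from $u_k\to p$ weakly in $W^{1,n}$ together with compactness of the trace $W^{1,n}(\Omega)\to L^1(\p\Omega)$. The traces are also bounded in $W^{1-\frac1n,n}(\p\Omega)$ by the energy bound, via the trace theorem. Lemma \ref{lm:uniform_nharm_extension} therefore gives $T_n(\tr u_k)\to T_n(p)=p$ in $L^1(\Omega)$. The averages $\frac1{|\Omega|}\int_\Omega T_n(\tr u_k)$ then tend to $p$, which has modulus $1$, contradicting the fact that they vanish. This is exactly the role of the centering from Lemma \ref{lm:topology}.

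The main obstacle is the compactness step: getting the sharp lower bound of Proposition \ref{prop:Price_lemma} without knowing a.e.\ convergence of $\dd u_k$. My first attempt avoids a.e.\ convergence altogether. Write $v_k=u_k-u$. Since $\dd v_k\rightharpoonup 0$ in $L^n$, the uniform convexity of $|\cdot|^n$ gives
\[
\liminf\Big(E_n(u_k)-E_n(v_k)\Big)\ge E_n(u)
\]
along a subsequence. This inequality is a Brezis--Lieb-type statement, and we prove it using the strict convexity inequality of Lemma \ref{inequality} applied to $\dd u$ and $\dd v_k$. Combined with Lemma \ref{lowbound} for $v_k$ and Lemma \ref{le:jacobianprice}, it yields the needed bound. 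If this inequality fails, the fallback is to regularize $F_k(a_k)$ by $n$-harmonic replacement on interior balls, which lowers the energy and gives local $\C^1$ convergence and hence a.e.\ convergence of the gradients. The degree and the traces are kept, so the constraint $\int T_n(\tr u_k)=0$ is untouched.
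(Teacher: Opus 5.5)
Your architecture matches the paper's: contradiction, the centering point from Lemma \ref{lm:topology}, a sharp Price-type inequality, the reduction to $\deg u\in\{0,1\}$, Theorem \ref{th:main_1} in case (a), and the centering in case (b). Cases (a) and (b) are handled correctly. The gap is the compactness step. Your main argument there claims that $\dd v_k\rightharpoonup0$ in $L^n$ together with uniform convexity gives $\liminf_k\big(E_n(u_k)-E_n(v_k)\big)\ge E_n(u)$. This is false for $n>2$. Take $u=x_1e_1$ on a cube $Q$ and $v_k=\frac1k\phi(kx_1)e_1$, with $\phi$ periodic, $\phi'=-c$ on a fraction $\epsilon$ of the period and $\phi'=\epsilon c/(1-\epsilon)$ elsewhere. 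Then $E_n(u_k)-E_n(v_k)$ tends to $|Q|$ times the average of $|1+\phi'|^n-|\phi'|^n$. For fixed $c>1$ this average equals $1+\epsilon\big(nc-1+(c-1)^n-c^n\big)+O(\epsilon^2)$, which is $<1$ for $c$ large and $\epsilon$ small (for $n=3$, $c=10$, $\epsilon=10^{-2}$ it is about $-1.39$). Uniform convexity (Lemma \ref{inequality}) only gives $\liminf E_n(u_k)\ge E_n(u)+c_n\liminf E_n(v_k)$ with some $c_n<1$. A non-sharp constant is useless here, because ruling out a non-constant degree-$0$ limit needs exactly $n^{\frac n2}|\bB^n|\,|\deg u-1|$. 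For the same reason, Lemma \ref{le:jacobianprice} plus weak lower semicontinuity does not by itself give the sharp inequality. Turning the control of $\int_\Omega\Jac v_k$ into an energy bound requires precisely the Brezis--Lieb splitting that a.e.\ convergence of $\dd u_k$ provides.

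Your fallback is the right fix and is essentially the paper's proof. It is most simply done on all of $\Omega$ at once: replace $F_k(a_k)$ by $u_k:=T_n\big(\tr_{|\p\Omega}F_k(a_k)\big)$. Since $T_n(g)$ minimizes $E_n$ among maps with trace $g$ (there is no constraint inside $\Omega$), $u_k\in\I_1$ has the same trace and satisfies $n^{\frac n2}|\bB^n|\le E_n(u_k)\le E_n(F_k(a_k))$. Interior $\C^{1,\beta}$ estimates for $n$-harmonic maps give $\dd u_k\to\dd u$ a.e., so Proposition \ref{prop:Price_lemma} applies verbatim. The centering now reads $\int_\Omega u_k=0$ and passes to the limit by Rellich--Kondrachov, so case (b) does not even need Lemma \ref{lm:uniform_nharm_extension}, although your use of it is correct. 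The Ekeland/bubbling detour is unnecessary.

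Alternatively, if you want to keep $u_k=F_k(a_k)$, the sharp inequality does hold without a.e.\ convergence, but by a concentration argument rather than by convexity. Let $\mu,\nu$ be weak-$*$ limits on $\overline\Omega$ of $|\dd u_k|^n\dx$ and $\Jac(u_k)\dx$. Interior weak continuity of the Jacobian gives that $\nu$ restricted to $\Omega$ is $\Jac(u)\dx$, hence $\nu(\p\Omega)=|\bB^n|(1-\deg u)$. Then $\mu\ge n^{\frac n2}|\nu|$, $\mu\ge|\dd u|^n\dx$ on $\Omega$, and $|\p\Omega|=0$ together yield $\lim E_n(u_k)=\mu(\overline\Omega)\ge E_n(u)+n^{\frac n2}|\bB^n|\,|\deg u-1|$.
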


Before proceeding to the proof of this proposition we point out that the constant \(c_2\) depends on the domain \(\Omega\) and the proof crucially  uses that \(\Omega\) is not a round ball.
\begin{proof}
	By contradiction assume that for any $k\in\N^*$, there exists $r_k\in(0,\frac{1}{k})$  such that
	\[
	\inf \left\{ \max_{a\in\overline{\Omega}} E_{n}[F(a)]\colon F\in \mathcal{P}_{r_k,0}\right\} \le n^{\frac n2} |\bB^n| + \frac{1}{k}.
	\]
	By Lemma \ref{lm:topology}, there exist $F_k\in \mathcal{P}_{r_k,0}$ and $(a_k)_{k\in\N^*}\in\Omega$ such that
	\begin{enumerate}
		\item we have
		\begin{align}
			n^{\frac n2} |\bB^n| \leq E_n \big( T_n[\tr_{| \pl\Omega}F_k(a_k)] \big)\leq E_{n}[F_k(a_k)]\xrightarrow[k\to \infty]{} n^{\frac n2} |\bB^n|.
		\end{align}
		\item Each $T_n[\tr_{| \pl\Omega}F_k(a_k)]$ has zero average, i.e.,
		\begin{align}\label{eq:average}
			\frac{1}{|\Omega|} \int_{\Omega} T_n[\tr_{| \pl\Omega}F_k(a_k)] =0.
		\end{align}
	\end{enumerate}
	
	Let now $u_k \coloneqq T_n[\tr_{| \pl\Omega}F_k(a_k)]\subset \I_1$, it is a minimizing sequence for $E_n$ in \(\I_1\) and up to a subsequence
	\begin{align}
		u_k &\rightharpoonup u_\infty \quad \text { weakly in } W^{1,n}(\Omega,\bB^n), \label{eq:weakW1n}\\
		\dd u_k &\to \dd u_\infty \quad \text { a.e. } \label{eq:a.e. convergence}
	\end{align}
	The a.e.\ convergence of \(\dd u_k\) comes from the fact that we considered \((n+\alpha)\)-harmonic extensions and then elliptic estimates provide \(\C^1\) uniform bounds on the gradient in the interior of \(\Omega\), cf.\ e.g.\ \cite{Uhlenbeck_1977}. Thanks to \Cref{prop:Price_lemma}, we find
	\begin{align}\label{eq:contradiction_price}
		n^\frac{n}{2}\, |\bB^n| \geq \int_{\Omega} |\dd u_\infty|^n + n^\frac{n}{2}\, |\bB^n|\, |\deg(u_\infty,\pl\Omega)-1|  \geq n^\frac{n}{2}\, |\bB^n|\, |\deg(u_\infty,\pl\Omega)-1|.
	\end{align}
	We first deduce from \eqref{eq:contradiction_price} that  $\deg(u_\infty,\pl\Omega)\in\{0,1,2\}$. 
	
	If $\deg(u_{\infty},\pl\Omega)=2$, \eqref{eq:contradiction_price} implies that \(\int_{\Omega}|\dd u_\infty|^n=0\) and thus \(u_\infty\) is constant in \(\Omega\). But constant maps have degree zero and hence we obtain a contradiction.
	If $\deg(u_{\infty},\pl\Omega)=1$, then \eqref{eq:contradiction_price} reads
	\begin{align*}
		n^\frac{n}{2}\, |\bB^n| & \geq \int_{\Omega} |\dd u_\infty|^n.
	\end{align*}
	Hence, from Theorem \ref{th:Liouville} we deduce that  $u_{\infty}\colon \Omega\to \bB^n$ is a conformal map. By Liouville theorem, $\Omega$ must be a ball, which is excluded. 
	
	If $\deg(u_{\infty},\pl\Omega)=0$, we obtain again from \eqref{eq:contradiction_price} that $u_{\infty}$ is constant. By \eqref{eq:weakW1n}, we have that $\tr_{| \pl\Omega}u_k$ converges weakly to $\tr_{|\pl\Omega}u_{\infty}$ in $W^{1-\frac{1}{n},n}(\pl\Omega;\s^{n-1})$. Hence, we must have $|\tr_{|\pl\Omega}u_{\infty}|=1$ a.e.\ on $\pl\Omega$. Thus we find that \(u_\infty\) is a constant map and is in \(\mathbb{S}^{n-1}\). However, by using \eqref{eq:average} and \eqref{eq:weakW1n} along with the compact injection from \(W^{1,n}(\Omega,\R^n)\) into \(L^1(\Omega,\R^n)\) provided by Rellich--Kondrachov's Theorem, we also find that $u_{\infty}$ has average zero. This is again a  contradiction and this concludes the proof of Proposition \ref{pr:lowerc}.
\end{proof}

We now have  all the ingredients to show that we can apply Theorem \ref{th:Mountain Pass}.

\begin{proposition}\label{pr:validity_minmax}
	There exists $r_0>0$ such that for all $0<r<r_0$ there exists a sequence of positive numbers \( (\alpha_k)_k\) with \(\alpha_k\xrightarrow[]{k\to +\infty} 0\) such that
	\begin{equation}
		c_1(\alpha_k,r) < c(\alpha_k,r).
	\end{equation}
\end{proposition}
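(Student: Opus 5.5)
Fix $0<r<r_0$, with $r_0$ small enough that both Proposition \ref{pr:lowerc} and Lemma \ref{lm:c1lemma} can be used. The plan is to compare the two levels with the limit $n^{\frac n2}|\bB^n|$ from opposite sides. From above, Lemma \ref{lm:c1lemma} controls $c_1(\alpha,r)$. From below, Proposition \ref{pr:lowerc} gives $c(0,r)\ge c_2>n^{\frac n2}|\bB^n|$, and we need to transfer this to $c(\alpha,r)$ for small $\alpha$.

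\textbf{Bounding $c_1$.} Set $\delta\coloneqq c_2-n^{\frac n2}|\bB^n|>0$. By \eqref{eq:c1asymptotics}, after shrinking $r_0$ if necessary (note that $c_2$ does not depend on $r$), we have $\lim_{\alpha\to0}c_1(\alpha,r)\le n^{\frac n2}|\bB^n|+\delta/3$ for every $r<r_0$. To be precise, the inner limit exists along a sequence; we pass to a subsequence $\alpha_k\to0$ along which $c_1(\alpha_k,r)\le n^{\frac n2}|\bB^n|+\delta/2$.

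\textbf{Lower bound for $c(\alpha,r)$.} Take $F\in\pra$. Since $W^{1,n+\alpha}\hookrightarrow W^{1,n}$ continuously on the bounded domain $\Omega$, the map $F$ is also a continuous map $\overline\Omega\to\I_1$; its degree is preserved because the degree is defined through the same trace. It coincides with $\widetilde M_{r,\cdot}$ on $\p\Omega$, so $F\in\pr$. Hölder's inequality then gives
\[
E_n(F(a))\le |\Omega|^{\frac{\alpha}{n+\alpha}}E_{n+\alpha}(F(a))^{\frac{n}{n+\alpha}}.
\]
Taking the maximum over $a$ and then the infimum over $F$ yields
\[
c_2\le c(0,r)\le |\Omega|^{\frac{\alpha}{n+\alpha}}c(\alpha,r)^{\frac{n}{n+\alpha}}.
\]
As $\alpha\to0$ the right-hand side behaves like $c(\alpha,r)$ up to factors tending to $1$. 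This requires $c(\alpha,r)$ to stay bounded, which holds because $\widetilde M_{r,\cdot}$ itself, extended inside $\Omega$ as in Definition \ref{def:almost_Mobius_map}, belongs to $\pra$ and has bounded energy by \eqref{eq:uniform_Mobius}. It also requires $c(\alpha,r)\ge c_2$ to be bounded below away from zero. Consequently $\liminf_{\alpha\to0}c(\alpha,r)\ge c_2$, and for $k$ large $c(\alpha_k,r)\ge c_2-\delta/4>c_1(\alpha_k,r)$.

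\textbf{Main obstacle.} The delicate point is the order of limits in Lemma \ref{lm:c1lemma}: $r$ must be chosen small first, uniformly with respect to the fixed gap $\delta$ coming from Proposition \ref{pr:lowerc}, before sending $\alpha\to0$. This forces us to use $\limsup_{\alpha\to0}$ (or a subsequence) rather than an honest limit, which is exactly why the statement only asserts the inequality along a sequence $\alpha_k$. A secondary check is that $\pra\subset\pr$, that is, that continuity into $\I_{1,\alpha}$ implies continuity into $\I_1$; this follows directly from the embedding.
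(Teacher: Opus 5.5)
Your proposal is correct and follows essentially the paper's argument: Lemma \ref{lm:c1lemma} bounds $c_1(\alpha,r)$ from above, while Proposition \ref{pr:lowerc}, the inclusion $\pra\subset\pr$ and H\"older's inequality bound $c(\alpha,r)$ from below (the paper adds Young's inequality to get $c(r)\le \frac{n}{n+\alpha}c(\alpha,r)+\frac{\alpha}{n+\alpha}|\Omega|$). Note that inverting your H\"older bound directly gives $c(\alpha,r)\ge c_2^{1+\alpha/n}|\Omega|^{-\alpha/n}$, so the boundedness of $c(\alpha,r)$ you invoke via \eqref{eq:uniform_Mobius} (which needs the extra smallness of $\Phi-\id$) is unnecessary, and you in fact get a $\liminf$ bound, slightly stronger than the $\limsup$ in \eqref{eq:comparison_c}.
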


	\begin{proof}
		We first prove that for any $r>0$, it holds
		\begin{align}\label{eq:comparison_c}
			c(r) \leq \limsup_{\alpha\to 0}  c(\alpha,r).
		\end{align}
		Indeed,  by density of $W^{1,n+\alpha}$ in $W^{1,n}$, we have that
\begin{align*}
c(r) = \inf_{F\in \pr} \max_{a\in\bar{\Omega}} E_n \left(F(a) \right) = \inf_{F\in \pra} \max_{a\in\bar{\Omega}} E_n \left(F(a) \right).
\end{align*}
Using Hölder's and Young's inequalities, we obtain
\begin{align*}
c(r) & \leq \inf_{F\in \pra} \max_{a\in\bar{\Omega}} \brac{\int_{\Omega} |\dd F(a)|^{n+\alpha}}^{\frac{n}{n+\alpha}} |\Omega|^{1-\frac{n}{n+\alpha}}\\[2mm]
& \leq  \inf_{F\in \pra} \max_{a\in\bar{\Omega}} \int_{\Omega} |\dd F(a)|^{n+\alpha} \brac{1-\frac{\alpha}{n+\alpha}} +  \frac{\alpha}{n+\alpha}|\Omega|.
\end{align*}
We thus find that 
\begin{align}
	c(r) \leq c(\alpha,r)\brac{1-\frac{\alpha}{n+\alpha}} + \frac{\alpha}{n+\alpha}|\Omega|,
\end{align}
	and  inequality \eqref{eq:comparison_c} follows.
	
 Combining  \eqref{eq:comparison_c}, Proposition \ref{pr:lowerc} and Lemma \ref{lm:c1lemma}, we deduce that for any $r>0$ small enough, 
 \begin{equation}\label{eq:comparison_allc}
 	\lim_{r\to 0}\, \lim_{\alpha\to 0} \, c_1(\alpha,r) = n^{\frac{n}{2}}\, |\bB^n| < c_2 \leq c(r) \leq \limsup_{\alpha\to 0} c(\alpha,r).
 \end{equation}
Hence, there exists \(r_0>0\) such that, for all \(0<r\leq r_0\), there exists a sequence \( (\alpha_k)_k\subset \R^+\) with \( \alpha_k \xrightarrow[k \to +\infty]{} 0\) such that \( c_1(\alpha_k,r)<c(\alpha_k,r)\).
\end{proof}

Combining Proposition \ref{pr:validity_minmax}, Theorem \ref{th:Mountain Pass} and Proposition \ref{pertPalais}, we obtain the following conclusion.

\begin{theorem}\label{th:existence_nplusalpha}
There exists \(r_0>0\) such that for all \(0<r<r_0\)  we can find a sequence of positive numbers \(\alpha_k \to 0\) and \(r_k \to 0\) and \( (u_k)_{k\geq 1}\subset \I_{1,\alpha_k}\) such that 
\begin{equation}\label{eq:freealphakharmonic}
\int_{\Omega} |\dd u_k|^{n-2} \dd u_k : \dd \varphi =0
\end{equation}
for all \(\varphi \in W^{1,n+\alpha_k}(\Omega,\R^n)\) such that \(\tr_{| \p \Omega}\varphi(x) \in T_{u(x)}\mathbb{S}^{n-1}\) for every \(x\in \p \Omega\). Furthermore
\begin{equation}
E_{n+\alpha_k} (u_k) =c(\alpha_k,r),
\end{equation}
where \(c(\alpha_k,r)\) is defined in \eqref{eq:ourc}.
\end{theorem}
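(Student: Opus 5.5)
The argument is an assembly of the three ingredients already established, so I will simply chain them. Fix $r_0>0$ as given by Proposition \ref{pr:validity_minmax} (shrinking it if necessary so that Lemma \ref{lm:topology} and Proposition \ref{pr:lowerc} also apply) and fix $0<r<r_0$. Proposition \ref{pr:validity_minmax} then provides a sequence $\alpha_k\to 0$ with $c_1(\alpha_k,r)<c(\alpha_k,r)$.

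\emph{Step 1: mountain pass geometry.} For each $k$ I apply Theorem \ref{th:Mountain Pass} with $X=\I_{1,\alpha_k}$, $J=E_{n+\alpha_k}$, $K_0=\p\Omega$, $K=\overline{\Omega}$ and $\chi(a)=\widetilde M_{r,a}$. The hypotheses are checked as follows.
\begin{enumerate}
\item $X$ is a $\C^{1,1}$ Finsler manifold and $J\in\C^1(X)$, as explained at the beginning of this section; $\I_{1,\alpha_k}$ is an open subset of $\I_{\alpha_k}$.
\item $\chi$ is continuous into $\I_{1,\alpha_k}$ by the first part of Lemma \ref{lm:energy_almostMobius}.
\item The class $\mathcal P_{r,\alpha_k}$ is nonempty. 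I can take $F(a)=\widetilde M_{r,a}$ for all $a\in\overline\Omega$, which is continuous on all of $\overline\Omega$ by the same lemma, since Proposition \ref{continuityscheme1_v2} defines $\Phi_a$ for every $a\in\overline\Omega$.
\item The strict inequality \eqref{Mountain pass geometry} is exactly $c_1(\alpha_k,r)<c(\alpha_k,r)$.
\end{enumerate}
The theorem therefore produces a Palais--Smale sequence $(x_m)_m\subset \I_{1,\alpha_k}$ with $E_{n+\alpha_k}(x_m)\to c(\alpha_k,r)$ and $\dd_{\I_{\alpha_k}}E_{n+\alpha_k}(x_m)\to0$.

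\emph{Step 2: compactness.} Proposition \ref{pertPalais} gives a subsequence converging strongly in $W^{1,n+\alpha_k}$ to some $u_k$. Two points need care here.

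First, $u_k$ must lie in $\I_{1,\alpha_k}$ and not merely in $\I_{\alpha_k}$. Strong $W^{1,n+\alpha_k}$ convergence implies uniform convergence by Morrey's embedding, so the degree on $\p\Omega$ is preserved.

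Second, the Palais--Smale condition was stated with the differential $\dd_S$ measured in $(W^{1,n+\alpha})^*$ via the projection $\pi_\alpha$. I must check that the Finsler-norm smallness of the differential on $T_{x_m}\I_{\alpha_k}$ controls $\dd E(x_m).\pi_{\alpha}(x_m,\cdot)$. This follows because $\pi_\alpha(x_m,\cdot)$ is bounded uniformly in $m$: Lemma \ref{outil} applies since $\|x_m\|_\infty$ and the trace norms of $x_m$ are bounded.

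\emph{Step 3: passing to the limit.} By strong convergence and continuity of $E_{n+\alpha_k}$ and its differential, $E_{n+\alpha_k}(u_k)=c(\alpha_k,r)$ and $\dd E_{n+\alpha_k}(u_k).\varphi=0$ for all $\varphi\in T_{u_k}\I_{\alpha_k}$. For a given $\varphi\in T_{u_k}\I_{\alpha_k}$, write $\varphi=\pi_{\alpha_k}(u_k,\varphi)$ and use continuity of $(u,v)\mapsto\pi_\alpha(u,v)$. By \eqref{eq:differential} this is the weak free-boundary equation, with weight $|\dd u_k|^{n+\alpha_k-2}$; I read the exponent in \eqref{eq:freealphakharmonic} as the $(n+\alpha_k)$-version.

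The statement also mentions a sequence $r_k\to0$. I would obtain it by a diagonal choice: take $r_j\to 0$ and, for each $j$, pick $\alpha_{k_j}\le 1/j$ from Proposition \ref{pr:validity_minmax} applied with $r=r_j$. The main obstacle is Step 2, namely verifying that the abstract Palais--Smale sequence on the Finsler manifold is a Palais--Smale sequence in the sense used in Proposition \ref{pertPalais}. Everything else is bookkeeping.
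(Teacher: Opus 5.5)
Your proposal is correct and follows the paper's route. The paper obtains this theorem in one line by combining Proposition \ref{pr:validity_minmax}, Theorem \ref{th:Mountain Pass} and Proposition \ref{pertPalais}. Your extra checks are exactly the details the paper leaves implicit: nonemptiness of $\mathcal{P}_{r,\alpha_k}$, preservation of the degree under the strong $W^{1,n+\alpha_k}$ limit, comparison of the Finsler norm of the differential with $\dd_S E_{n+\alpha_k}$ via uniform bounds on $\pi_{\alpha_k}$, and reading the weight in \eqref{eq:freealphakharmonic} as $|\dd u_k|^{n+\alpha_k-2}$.

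The one thing to drop is the diagonal construction of $r_k$. The $r_k$ in the statement is vestigial: nothing depends on it, and $r$ stays fixed where the theorem is used later. Building $u_k$ along $r_j\to 0$ would give energy $c(\alpha_{k_j},r_j)$ instead of the claimed $c(\alpha_k,r)$.
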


\begin{proposition}\label{prop:prochegroundstate}
Let \(\e>0\), there exist \(L>0\) and \(r_0>0\) such that if \(\Phi:\overline{\Omega}\rightarrow \overline{\mathbb{B}^n}\) is a \(\C^1\)-diffeomorphism satisfying \( \|\Phi-\id\|_{\C^1(\Omega)}<L\) and if \( 0<r<r_0\) then for \(k\) large enough we have
\begin{equation}
c(\alpha_k,r)<n^{\frac{n}{2}}|\mathbb{B}^n|+\e
\end{equation}
where \((\alpha_k)_k\) is the sequence defined in Proposition \ref{pr:validity_minmax}.
\end{proposition}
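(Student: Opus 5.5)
The idea is to bound the min-max value from above by one explicit competitor. Take the path $F\colon \overline{\Omega}\to \I_{1,\alpha}$ given by $F(a)\coloneqq \widetilde{M}_{r,a}$ for every $a\in\overline{\Omega}$, where $\widetilde{M}_{r,a}=M_{(1-r)\Phi(a)}\circ\Phi_a$ is the almost Möbius map of Definition \ref{def:almost_Mobius_map}. Then the estimate \eqref{eq:uniform_Mobius} of Lemma \ref{lm:energy_almostMobius} should give the result directly.

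First, check that $F$ belongs to $\pra$. By Lemma \ref{lm:energy_almostMobius}, the map $a\mapsto \widetilde{M}_{r,a}$ is continuous from $\overline{\Omega}$ into $\I_{1,\alpha}$ for every $\alpha\ge 0$. By definition it coincides with the prescribed boundary datum on $\p\Omega$. Each $\widetilde{M}_{r,a}$ lies in $\I_{1,\alpha}$: it is smooth up to the boundary because $(1-r)\Phi(a)$ stays in a compact subset of $\mathbb{B}^n$, so $M_{(1-r)\Phi(a)}$ is smooth on $\overline{\mathbb{B}^n}$. It maps $\p\Omega$ onto $\mathbb{S}^{n-1}$ because $\Phi_a(\p\Omega)=\mathbb{S}^{n-1}$ and $M_b(\mathbb{S}^{n-1})=\mathbb{S}^{n-1}$ by Lemma \ref{lem:conformality_Mobius}. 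Its degree is $1$, since both $\Phi_a$ (close to $\id$) and $M_b$ (homotopic to $\id$ through $M_{tb}$) preserve orientation. Consequently, for every $\alpha\ge 0$,
\[
c(\alpha,r)\le \max_{a\in\overline{\Omega}} E_{n+\alpha}(\widetilde{M}_{r,a}) = \sup_{a\in\Omega}\int_\Omega |\dd \widetilde{M}_{r,a}|^{n+\alpha},
\]
where the last equality uses continuity in $a$.

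Second, given $\e>0$, choose $L$ from \eqref{eq:uniform_Mobius} with $\e/2$ in place of $\e$. This gives $\limsup_{\alpha\to0}\sup_{a}\int_\Omega|\dd\widetilde{M}_{r,a}|^{n+\alpha}\le n^{\frac n2}|\mathbb{B}^n|+\e/2$ for every fixed $r\in(0,1)$. Since $\alpha_k\to 0$, for $k$ large we obtain $c(\alpha_k,r)<n^{\frac n2}|\mathbb{B}^n|+\e$. The choice of $r_0$ only serves to make Proposition \ref{pr:validity_minmax} apply, so that the sequence $(\alpha_k)$ is defined; take $r_0$ as given there.

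The main obstacle is making sure that the constant in \eqref{eq:uniform_Mobius} is uniform. Two points need care. The distortion bound \eqref{eq:petitesse} must hold for all $a\in\overline{\Omega}$, including interior points where $\Phi_a$ is the cut-off interpolation from Step 2 of Proposition \ref{continuityscheme1_v2}. This should follow because that interpolation is also $\C^1$-close to $\id$ when $L$ is small, and the $\max/\min$ ratio of singular values tends to $1$ uniformly as $\alpha\to0$. The second point is the compactness argument for $\sup_a\int|\dd M_{(1-r)\Phi(a)}|^{n+\alpha}$. It works for fixed $r>0$ because $|(1-r)\Phi(a)|\le 1-r$, so the Möbius maps have uniformly bounded gradients, dominated convergence applies, and the $n$-energy limit equals $n^{\frac n2}|\mathbb{B}^n|$ by conformal invariance.
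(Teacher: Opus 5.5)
Your proposal is correct and follows the paper's own proof: use the almost-Möbius path $a\mapsto \widetilde{M}_{r,a}$ as a competitor in $\mathcal{P}_{r,\alpha_k}$ and invoke \eqref{eq:uniform_Mobius}, where your choice of $\e/2$ supplies the strict inequality that the paper leaves implicit. One small correction to your side remark: the ratio in \eqref{eq:petitesse} is close to $1$ because $L$ is small (uniformly for $\alpha\in[0,1]$), not because $\alpha\to 0$.
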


\begin{proof}
By definition 
\[
 c(\alpha_k,r)=\inf_{F\in \mathcal{P}_{r,\alpha_k}} \max_{a\in \overline{\Omega}}E_{n+\alpha_k}\left(F(a) \right).\]
 In particular, since \(a\in \overline{\Omega} \mapsto \widetilde{M}_{r,a}=M_{(1-r)\Phi(a)}\circ \Phi_a\) belongs to \(\mathcal{P}_{r,\alpha_k}\) for any \(k\in \mathbb{N}\), we find that
 \begin{align*}
 c(\alpha_k,r) &\leq  \max_{a\in \overline{\Omega}}E_{n+\alpha_k}\left(M_{(1-r)\Phi(a)}\circ \Phi_a \right).
 \end{align*}
 Now the conclusion follow thanks to \eqref{eq:uniform_Mobius} in Lemma \ref{lm:energy_almostMobius}.
\end{proof}

\section{Energy gap for the Möbius transformations}\label{sec:gapMobius}

The goal of this section is to prove the energy gap \Cref{th:Energy_gap_Mobius} for Möbius maps. To do so, we proceed by contradiction and obtain a family of degree 1 free boundary $n$-harmonic maps $(u_{\eps})_{\eps>0}$ with energy converging to $n^{\frac{n}{2}}\, |\bB^n| = E_n(\id)$. We will first prove in \Cref{lm:renormalized_FBnharm} that if we normalize the maps $u_{\eps}$ by the condition $u_{\eps}(0)=0$ (this is a balancing condition), then $u_{\eps}$ converges to a rotation in the $C^{\infty}(\bB^n)$ topology, which we can assume to be the identity map. In particular, there is no bubbles appearing. Then we prove in \Cref{lm:Poincare} an optimal version of the trace inequality $\|f\|_{L^2(\s^{n-1})}\aleq \|\dd f\|_{L^2(\bB^n)}$ for functions $f\in W^{1,2}(\bB^n,\R^n)$ such that $\int_{\s^{n-1}} f=0$. To prove \Cref{th:Energy_gap_Mobius}, we first slightly change the balancing condition in order to keep the convergence of $u_{\eps}$ to the identity, but now imposing instead $\int_{\s^{n-1}}u_{\eps}=0$, see \Cref{cl:Balancing}. Then we use the fact that the identity map has no critical points to show that $E_n$ is "strictly coercive" in a neighbourhood of the identity. As a consequence, we show that if $\|u_{\eps}-\id\|_{L^2(\s^{n-1})}>0$, then $\frac{ u_{\eps}-\id }{ \|u_{\eps}-\id\|_{L^2(\s^{n-1})} }$ has to converge to a linear map given by the multiplication by a antisymmetric matrix. However, this is the first term in an asymptotic expansion that can be cancelled by choosing an appropriate rotation, see the condition \eqref{eq:minimality}. This constitutes our contradiction. These are the ideas of our proof to the following result.

\begin{theorem}\label{th:Energy_gap_Mobius}
	There exists $\eps_M>0$ depending only on $n$ such that the following holds. Let $u \in W^{1,n}(\bB^n ,\bB^n)$ be a solution to the following system:
	\begin{align}\label{eq:system_FBnharm_deg1}
		\left\{
		\begin{array}{l l}
			\lap_n u = 0 & \text{ in }\bB^n,\\[2mm]
			|u| = 1 & \text{ on }\s^{n-1},\\[2mm]
			|\dd u|^{n-2}\p_\nu u \wedge u = 0 & \text{ on }\s^{n-1}, \\[2mm] 
			\deg(u,\s^{n-1}) = 1.
		\end{array}
		\right.
	\end{align}
	Assume that 
	\begin{align*}
		\int_{\bB^n}|\dd u|^n \leq n^{\frac{n}{2}} |\bB^n| + \eps_M.
	\end{align*}
	Then $u$ is a Möbius map or more precisely, there exists \(a\in \mathbb{B}^n\) and \(R\in O_n(\R)\) such that \( u=R\circ M_a\), where \(M_a\) is defined in Definition \ref{def:Mobius}.
\end{theorem}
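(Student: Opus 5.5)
We argue by contradiction, following the outline given at the start of this section. Suppose there is a sequence $u_\eps$ of solutions of \eqref{eq:system_FBnharm_deg1} with $E_n(u_\eps)\to n^{\frac n2}|\bB^n|$ and no $u_\eps$ of the form $R\circ M_a$. The problem is invariant under $u\mapsto R\circ u\circ M_b^{-1}$, because $M_b$ is a conformal diffeomorphism of $\overline{\bB^n}$ preserving $\s^{n-1}$ and $E_n$ is conformally invariant. We may therefore renormalize.

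\textbf{Step 1: compactness.} Precompose with a Möbius map so that $u_\eps(0)=0$; this is possible since $\deg=1$ forces $u_\eps$ to be onto $\overline{\bB^n}$. The energy is bounded, so we apply the bubbling theorem \cite[Theorem 1.3]{MartinoMazowieckaRodiac2025} together with \Cref{prop:Price_lemma}. Each nonconstant bubble has energy at least $n^{\frac n2}|\bB^n|$ by Lemma \ref{lowbound} (a degree-zero bubble has energy at least $\eps_c$ by Lemma \ref{le:3.1inMMR}). Hence either the weak limit is constant and a single degree-one bubble carries all the energy, or there is no bubble.

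The balancing condition $u_\eps(0)=0$ rules out the first alternative, since the limit would be the constant $0\notin\s^{n-1}$ on the boundary. Hence $u_\eps\to u_0$ strongly in $W^{1,n}$, with $\deg u_0=1$ and $E_n(u_0)=n^{\frac n2}|\bB^n|$. By Theorem \ref{th:main_1} and Liouville, $u_0=R\circ M_a$, and the normalization gives $u_0\in O_n(\R)$; after rotating, $u_0=\id$. The $\C^{1,\gamma}$ regularity up to the boundary \cite{MazowieckaRodiacSchikorra,MartinoMazowieckaRodiac2025}, plus nondegeneracy of $\dd u_\eps$ near $\id$, upgrades this to $\C^{k}(\overline{\bB^n})$ convergence. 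Indeed, the $n$-Laplacian is uniformly elliptic near $\id$ since $|\dd \id|\neq 0$.

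\textbf{Step 2: renormalize again.} Using a degree argument on $b\mapsto \int_{\s^{n-1}} u_\eps\circ M_b$, choose $b_\eps\to 0$ such that $v_\eps=R_\eps\circ u_\eps\circ M_{b_\eps}^{-1}$ satisfies $\int_{\s^{n-1}}v_\eps=0$. Also choose $R_\eps$ to minimize $\|R\circ u_\eps\circ M_{b_\eps}^{-1}-\id\|_{L^2(\s^{n-1})}$. This minimality condition forces the antisymmetric part of $\int_{\s^{n-1}}(v_\eps-\id)\otimes x$ to vanish, and $v_\eps\to\id$ smoothly.

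\textbf{Step 3: linearization.} Set $w_\eps=(v_\eps-\id)/\|v_\eps-\id\|_{L^2(\s^{n-1})}$; we are assuming $v_\eps\neq\id$. Subtracting the equations for $v_\eps$ and $\id$ and dividing, $w_\eps$ converges (by uniform ellipticity and Schauder estimates) to a solution $w$ of the linearized free boundary problem at $\id$. It satisfies $\mathrm{div}(\dd w+(n-2)\tr(\dd w)\,\id/n)=0$ up to the exact coefficients, $x\cdot w=0$ on $\s^{n-1}$, and a linearized Neumann condition on the tangential part. Moreover $\|w\|_{L^2(\s^{n-1})}=1$ and $\int_{\s^{n-1}}w=0$.

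The key claim, and the main obstacle, is that the kernel of this linearized operator consists exactly of the infinitesimal conformal automorphisms of $\bB^n$. These are $x\mapsto Ax$ with $A$ antisymmetric, together with the fields $x\mapsto b-(b\cdot x)x$-type fields generated by $M_b$. I would attack this via the second variation. One shows $\delta^2E_n(\id)[w]\geq 0$ with equality exactly on this kernel, using the optimal trace Poincaré inequality of \Cref{lm:Poincare} for mean-zero boundary data. Alternatively, one decomposes into spherical harmonics and checks that only degree-one modes survive.

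The Möbius directions are excluded by $\int_{\s^{n-1}}w=0$, which is their leading-order effect. So $w=Ax$ with $A$ antisymmetric and $\int_{\s^{n-1}}|Ax|^2=1$. But the minimality condition on $R_\eps$ passes to the limit and says the antisymmetric part of $\int_{\s^{n-1}} w\otimes x$ vanishes, that is, $A=0$. This is a contradiction, so $v_\eps=\id$ and $u_\eps$ is Möbius for $\eps$ small.
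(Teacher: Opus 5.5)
Your proposal is correct and follows the same architecture as the paper's proof. The shared steps are: Möbius renormalization to $u_\eps(0)=0$, bubbling to get smooth convergence to a rotation, rebalancing so that $\int_{\s^{n-1}}u_\eps=0$, an $L^2(\s^{n-1})$-optimal rotation, and the sharp trace (Steklov) inequality of \Cref{lm:Poincare}, which forces a mean-zero first-order deviation to be $x\mapsto Ax$ with $A\in so(n)$.

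The execution differs in two places.

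First, you pass to a limiting linear free-boundary problem and classify its kernel through the second variation. The paper never forms the limit problem. It tests the equation of $u_\eps$ against the tangential field $\pi_{u_\eps}(u_\eps-\id)$, which gives the quantitative identity \eqref{eq:conc1}; in your language this is $\delta^2E_n(\id)[u_\eps-\id]=o(\|u_\eps-\id\|^2)$. It then applies \Cref{lm:Poincare} directly to the mean-zero map $u_\eps-\id$. So the Möbius Jacobi fields are never needed, and you do not need them either, since your limit $w$ already has zero boundary mean.

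That identity also supplies a bound your sketch leaves implicit. Because you normalize by $\|v_\eps-\id\|_{L^2(\s^{n-1})}$, you need $\|v_\eps-\id\|_{W^{1,2}(\bB^n)}\lesssim\|v_\eps-\id\|_{L^2(\s^{n-1})}$ to extract a limit with $\|w\|_{L^2(\s^{n-1})}=1$. Schauder estimates alone only control $C^{2,\gamma}$ norms by $C^0$ norms plus boundary data. You can close this in any of three ways:
\begin{itemize}
\item use \eqref{eq:conc1};
\item test the difference of the equations for $v_\eps$ and $\id$ with $v_\eps-\id$: since $|\dd v_\eps|^{n-2}\p_\nu v_\eps$ is parallel to $v_\eps$ on $\s^{n-1}$, the boundary term is $O(\|v_\eps-\id\|_{L^2(\s^{n-1})}^2)$, while the interior term is at least $c\|\dd(v_\eps-\id)\|_{L^2}^2$ by monotonicity of $P\mapsto|P|^{n-2}P$ near $\dd\id$;
\item use a compactness--uniqueness argument for the Dirichlet problem of the strongly elliptic operator $\Delta+\frac{n-2}{n}\nabla\divv$.
\end{itemize}

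Second, your endgame is cleaner than the paper's. The first-order optimality of $R_\eps$ says that $\int_{\s^{n-1}}v_\eps\otimes x$ is symmetric. This passes to the limit and gives $A=0$ at once. The paper instead constructs the competitor $e^{-\|u_\eps-\id\|_{L^2(\s^{n-1})}M}u_\eps$ and has to match its expansion against the $W^{1,2}$-normalized one.
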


We proceed by contradiction. First we prove the following result.

\begin{lemma}\label{lm:renormalized_FBnharm}
	Consider a sequence $(u_\eps)_{\eps>0}$ of solutions to 
	\begin{align}\label{eq:normalization}
		\begin{cases} 
			\lap_n u_\eps = 0 & \text{ in }\bB^n,\\[2mm]
			|u_\eps| = 1 & \text{ on }\s^{n-1},\\[2mm]
			|\dd u_\eps|^{n-2}\p_\nu u_\eps \wedge u_\eps = 0 & \text{ on }\s^{n-1},\\[2mm]
			\deg(u_\eps,\s^{n-1}) = 1, &\\[2mm]
			u_\eps(0) =0.
			\end{cases} 
	\end{align}
	Assume that 
	\begin{align}\label{eq:low_energy}
		\int_{\mathbb{B}^n} |\dd u_\eps|^n \xrightarrow[\eps\to 0]{} n^{\frac{n}{2}} |\bB^n|.
	\end{align}
	Then, up to a subsequence, each $u_\eps$ is in $\C^\infty(\overline{\mathbb{B}^n},\overline{\mathbb{B}^n})$ and $(u_\eps)_{\eps>0}$ converges to a rotation in $\C^\infty\left(\overline{\bB^n},\R^n\right)$. 
\end{lemma}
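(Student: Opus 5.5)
We will argue by compactness with bubbling analysis, followed by rigidity of the limit and upgrading of the convergence.

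\emph{Step 1 (weak limit).} By \eqref{eq:low_energy} the sequence $(u_\eps)$ is bounded in $W^{1,n}(\bB^n,\R^n)$. Each $u_\eps$ solves the free boundary $n$-harmonic system, so it is $\C^{1,\gamma}(\overline{\bB^n})$ by \cite[Proposition 3.1]{MazowieckaRodiacSchikorra} and \cite[Proposition 2.4]{MartinoMazowieckaRodiac2025}. Up to a subsequence we apply the bubbling theorem \cite[Theorem 1.3]{MartinoMazowieckaRodiac2025}. We get a weak limit $u$, which is again a free boundary $n$-harmonic map, together with finitely many bubbles. These are free boundary $n$-harmonic maps from $\bB^n$, obtained after conformal rescaling at boundary concentration points, or interior $n$-harmonic spheres. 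Energy identity holds up to necks, and in any case the energy of each nonconstant bubble is at least $\eps_c$ by \Cref{le:3.1inMMR}. The degrees add up: $1=\deg u+\sum_i \deg(\text{bubbles})$. By \Cref{lowbound}, any nonzero-degree piece costs at least $n^{\frac n2}|\bB^n|$.

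\emph{Step 2 (no bubbles).} This is the main obstacle. Suppose a bubble forms, necessarily at some boundary point $p$, since interior concentration would need an $n$-harmonic sphere into $\R^n$ with boundary constraint, and I expect these to be excluded or also to be handled via the energy bound. The total energy is then at most $n^{\frac n2}|\bB^n|$. The degree count forces either a single bubble of degree $1$ with $u$ of degree $0$, or $u$ of degree $1$ and all bubbles of degree $0$. In the second case, $u$ already uses the full energy $n^{\frac n2}|\bB^n|$, so the bubbles have zero energy, which contradicts the $\eps_c$ gap. In the first case, $u$ has degree $0$ and energy at most $o(1)$ beyond the bubble energy. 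Using the gap from \Cref{le:3.1inMMR}, $u$ must be a constant $q\in\s^{n-1}$. Since the convergence is strong away from $p$ and in $C^0$ locally, we get $u_\eps\to q$ uniformly on compact subsets of $\overline{\bB^n}\setminus\{p\}$, in particular at $0$. This contradicts $u_\eps(0)=0$, because $|q|=1$. This is exactly where the balancing condition is used. Hence there is no concentration and $u_\eps\to u$ strongly in $W^{1,n}$, with $\deg u=1$ and $E_n(u)=n^{\frac n2}|\bB^n|$.

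\emph{Step 3 (identification and smoothness).} By \Cref{lowbound} the equality case holds, so $u$ is conformal. Theorem \ref{th:Liouville}, as in the proof of Theorem \ref{th:main_1}, then gives $u=R\circ M_a$, and $u(0)=\lim u_\eps(0)=0$ forces $a=0$, so $u=R$ is a rotation. For the smooth convergence, note that $|\dd u|^2=n$ is bounded away from zero. Strong $W^{1,n}$ convergence together with the uniform $\C^{1,\gamma}(\overline{\bB^n})$ estimates (now valid because there is no energy concentration, via $\eps$-regularity up to the boundary) gives $u_\eps\to R$ in $\C^1(\overline{\bB^n})$. Hence $|\dd u_\eps|\geq c>0$ for small $\eps$, and the $n$-Laplace system becomes uniformly elliptic with smooth nonlinearity. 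Likewise, the free boundary condition $\p_\nu u_\eps\wedge u_\eps=0$, $|u_\eps|=1$ is a nonlinear oblique/Neumann-type condition satisfying the complementing condition. Bootstrapping with Schauder estimates for elliptic systems with such boundary conditions (differentiating tangentially and using the equation for normal derivatives) yields uniform $\C^{k,\gamma}$ bounds for every $k$. Consequently each $u_\eps$ is smooth and $u_\eps\to R$ in $\C^\infty(\overline{\bB^n})$.
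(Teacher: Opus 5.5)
Your proposal is correct and follows essentially the same route as the paper's proof. Both use the bubbling decomposition of \cite[Theorem 1.3]{MartinoMazowieckaRodiac2025}, and then the energy/degree count to leave exactly one degree-one piece with all others constant. The balancing condition $u_\eps(0)=0$, via interior $\C^1$ convergence, rules out a constant weak limit and hence any bubble. Liouville's theorem then identifies a rotation, and the nondegeneracy $|\dd u_\eps|\geq c>0$ lets you bootstrap to $\C^\infty$ convergence.

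The differences are cosmetic. The paper merges your two cases into a single count. Your hesitation about interior bubbles is unnecessary, since the concentration points in the cited theorem all lie on $\s^{n-1}$.
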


\begin{proof}
	
	By \cite[Theorem 1.3]{MartinoMazowieckaRodiac2025}, there exist \(K\in \mathbb{N}\), points \(a_1,\dots,a_K\in \p \Omega\) and maps \(\omega_0,\omega_1,\dots,\omega_K\colon\overline{\mathbb{B}^n}\rightarrow \overline{\mathbb{B}^n}\) which are critical points of \(E_n\) in \(\I\) such that, up to a subsequence, $u_\eps$ converges in $C^1_{\text{loc}}(\overline{\bB^n}\setminus \{a_1,\dots,a_k\},\R^n)$ and weakly in $W^{1,n}(\bB^n,\R^n)$ to $\omega_0$ and such that
	\begin{align}
		& \int_{\bB^n} |\dd u_\eps|^n \xrightarrow[\eps\to 0]{} n^{\frac{n}{2}} |\bB^n| = \int_{\bB^n} \left( |\dd \omega_0|^n + \sum_{k=1}^K |\dd \omega_k|^n \right), \label{eq:quant_energy}\\
		& 1 = \deg(u_\eps) = \deg(\omega_0) + \sum_{k=1}^K \deg(\omega_k). \label{eq:quant_degree}
	\end{align}
	Let $d_k \coloneqq \deg(\omega_k)$. From \eqref{eq:quant_degree}, it holds
	\begin{align*}
		1 \geq |\deg(\omega_0)| + \sum_{k=1}^K |\deg(\omega_k)|.
	\end{align*}
	Hence, there exists $k_0\in\{0,\ldots,K\}$ such that $d_{k_0} = 1$ and $d_k = 0$ for all $k\neq k_0$. From \eqref{eq:quant_degree} and Lemma \ref{lowbound}, we also have
	\begin{align*}
		0 \geq n^{\frac{n}{2}} |\bB^n| - \int_{\bB^n} |\dd \omega_{k_0}|^n \geq \sum_{k\neq k_0} \int_{\bB^n} |\dd \omega_k|^n \geq 0.
	\end{align*}
	Hence, each $\omega_k$, for $k\neq k_0$, is constant. However, since $u_\eps(0) = 0$ and $(u_\eps)_{\eps>0}$ converges strongly in the $\C^1$-topology on $B(0,\frac{1}{2})$, we have $\omega_0(0) = 0$. Hence, $\omega_0$ is not constant. Thus, $k_0 = 0$ and there is no other bubble. Furthermore, \eqref{eq:quant_energy} reduces to
	\begin{align*}
		\int_{\bB^n} |\dd \omega_0|^n = n^{\frac{n}{2}} |\bB^n|.
	\end{align*}
	From Lemma \ref{lowbound}, we deduce that   $\omega_0$ is conformal. Thus, from Theorem \ref{th:Liouville}, $\omega_0$ is a Möbius map. Since $\omega_0(0) = 0$, $\omega_0$ is a rotation, cf.\ e.g.\ \cite[Theorem 3.4.1]{Beardon_1983}. Since there is no bubble, $(u_\eps)_{\eps>0}$ converges to $\omega_0$ in $\C^1\left( \overline{\bB^n},\R^n \right)$. In particular, $|\dd u_\eps|\geq\frac{1}{2}$ everywhere in $\bB^n$ for $\eps$ small enough. Hence, from elliptic regularity theory, $u_\eps$ is in $\C^\infty(\overline{\mathbb{B}^n},\overline{\mathbb{B}^n})$ with uniform bounds and the convergence holds in $\C^\infty$, not only in $\C^1$.
\end{proof}

In order to prove \Cref{th:Energy_gap_Mobius}, we will need to have sharp constants in the trace inequality $W^{1,2}(\bB^n)\hookrightarrow L^2(\s^{n-1})$. In order to study this inequality we use spherical harmonics.
Let \(\Delta_{\mathbb{S}^{n-1}}\) be the Laplace-Beltrami operator on the sphere \(\mathbb{S}^{n-1}\). For each $k\in \N$, there exists \(L(k)\in \mathbb{N}\) and $(Y_{k,\ell})_{1\leq \ell\leq L(k)}$ functions satisfying
	\begin{align*}
		\begin{cases}
			-\lap_{\s^{n-1}} Y_{k,\ell} = k(k+n-2)\, Y_{k,\ell}, \\[2mm]
			\|Y_{k,\ell}\|_{L^2(\s^{n-1})} = 1.
		\end{cases}
	\end{align*}
	Furthermore, the system \((Y_{k,\ell})_{k\in \mathbb{N}, 1 \leq \ell\leq L(k)}\) is an orthonormal Hilbert basis of \(L^2(\mathbb{S}^{n-1})\).

\begin{lemma}\label{lm:Poincare}
	 Let $u=(u^1,\dots,u^n) \in \C^{\infty}(\overline{\bB^n};\R^n)$. For each \(1\leq i\leq n\), we decompose \(u^i\) using spherical coordinates \( (r,\theta)\in \R^+\times \mathbb{S}^{n-1}\) and spherical harmonics:
	\begin{align*}
		u^i(r,\theta) = f_0(r)+\sum_{k=1}^{\infty} \sum_{\ell=1}^{L(k)} f_{k,\ell}^i(r)\, Y_{k,\ell}(\theta).
	\end{align*}
	Then, the  Dirichlet energy of $u$ is given by
	\begin{align*}
		\int_{\bB^n} |\dd u|^2&  =|\mathbb{S}^{n-1}|\int_0^1 |f_0'(r)|^2r^{n-1} \, \dd r \\
		& \quad +  \sum_{i=1}^n \sum_{k=1}^{\infty} \sum_{\ell=1}^{L(k)} \int_0^1\left[ (\p_r f_{k,\ell}^i)^2 + \frac{k(k+n-2)}{r^2}\, (f_{k,\ell}^i)^2 \right]\, r^{n-1}\, \dd r.
	\end{align*}
	If we assume furthermore that \(\int_{\mathbb{S}^{n-1}} u =0\) then \(f_0(1)=0\) and
	\begin{align*}
	\int_{0}^1 |f_0(r)|^2 r^{n-1} \dd r \leq C(n) \int_0^1 |f_0'(r)|^2 r^{n-1} \dd r,
	\end{align*}
	\begin{align*}
		\int_{\s^{n-1}} |u|^2 \leq \sum_{i=1}^n \sum_{k=1}^{\infty} \left( \frac{1}{k} \sum_{\ell=1}^{L(k)}  \int_0^1\left[ (\p_r f_{k,\ell}^i)^2 + \frac{k(k+n-2)}{r^2}\, (f_{k,\ell}^i)^2 \right]\, r^{n-1}\, \dd r\right).
	\end{align*}
\end{lemma}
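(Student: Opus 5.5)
We will expand each component in spherical harmonics and use orthogonality. Writing the gradient in polar coordinates, $|\dd u^i|^2 = (\p_r u^i)^2 + r^{-2}|\nabla_{\s^{n-1}} u^i|^2$. Integrating over $\s^{n-1}$ for fixed $r$, the radial term gives $\sum_{k,\ell} (f^i_{k,\ell})'(r)^2$ (plus the $k=0$ term) by Parseval. For the tangential term, integration by parts on the sphere gives $\int_{\s^{n-1}} \nabla Y_{k,\ell}\cdot\nabla Y_{k',\ell'} = k(k+n-2)\,\delta_{kk'}\delta_{\ell\ell'}$. Multiplying by $r^{n-1}$ and integrating in $r$ yields the energy identity. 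We keep the normalisation of the $k=0$ mode as in the statement: there $Y_{0,1}$ is constant and $\int_{\s^{n-1}}|Y_{0,1}|^2$ produces the factor $|\s^{n-1}|$. Smoothness of $u$ justifies the termwise manipulations, since the series converge in $C^1$ on compact subsets of $(0,1]$.

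For the mean-zero statements, note that $\int_{\s^{n-1}} u(1,\theta)\dd\theta$ is exactly the $k=0$ coefficient at $r=1$ (up to the constant $|\s^{n-1}|$), since all $Y_{k,\ell}$ with $k\ge1$ are orthogonal to constants. Hence $f_0(1)=0$. The weighted Poincaré inequality then follows from $f_0(r) = -\int_r^1 f_0'(s)\dd s$ and Cauchy--Schwarz:
\[
|f_0(r)|^2 \le \int_r^1 |f_0'|^2 s^{n-1}\dd s \int_r^1 s^{1-n}\dd s .
\]
The factor $\int_r^1 s^{1-n}\dd s \lesssim r^{2-n}$ blows up at $0$, but multiplying by $r^{n-1}$ gives $r\cdot(\cdots)$, which is integrable. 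This gives the constant $C(n)$. Alternatively, a one-dimensional Hardy-type argument would also work.

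For the boundary trace inequality, note that the $k=0$ mode vanishes at $r=1$, so
\[
\int_{\s^{n-1}}|u|^2=\sum_i\sum_{k\ge1}\sum_\ell f^i_{k,\ell}(1)^2 .
\]
We need the one-dimensional bound
\[
f(1)^2 \le \frac1k\int_0^1\Big[f'^2+\frac{k(k+n-2)}{r^2}f^2\Big]r^{n-1}\dd r
\]
for each $f=f^i_{k,\ell}$ with $k\ge1$. Here smoothness of $u$ forces $f(r)=O(r^k)$ near $0$ (in fact $O(r)$ suffices), so boundary terms at $0$ vanish. Write
\[
f(1)^2=\int_0^1 \frac{\dd}{\dd r}\big(r^{m}f^2\big)\dd r=\int_0^1\big(2r^m ff' + m r^{m-1}f^2\big)\dd r
\]
with $m$ chosen as $m=n-2+k$ (or just $n-2$). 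Then Young's inequality $2ff'\le \frac1k f'^2 r + k f^2/r$ gives
\[
f(1)^2\le \int_0^1\Big[\frac1k f'^2 r^{m+1} + (k+m) r^{m-1} f^2\Big]\dd r .
\]
Choosing $m=n-2$ produces $\frac1k f'^2 r^{n-1}+(k+n-2)r^{n-3}f^2 = \frac1k\big[f'^2+\frac{k(k+n-2)}{r^2}f^2\big]r^{n-1}$, which is exactly the desired bound. Summing over $i,k,\ell$ concludes. The only delicate point is checking the behaviour at $r=0$, i.e.\ that $r^{n-2}f^2\to0$. This holds since $u$ is smooth and $n\ge3$, even for bounded $f$.
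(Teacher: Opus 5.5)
Your proof is correct. The energy identity and $f_0(1)=0$ follow the paper's argument: Parseval at fixed $r$, $\int_{\mathbb{S}^{n-1}}|\nabla Y_{k,\ell}|^2=k(k+n-2)$, and orthogonality of the $Y_{k,\ell}$, $k\ge1$, to constants.

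The paper states the weighted Poincaré inequality for $f_0$ but never proves it. Your Cauchy--Schwarz argument fills this in, with the explicit constant $C(n)=\frac{1}{2(n-2)}$ for $n\ge 3$.

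The genuine difference is in the one-dimensional bound $f(1)^2\le \frac1k E_{n,k}(f)$, where you take a different route from the paper.
\begin{itemize}
\item The paper argues variationally. It minimizes $E_{n,k}$ under $f(1)=1$, writes the Euler--Lagrange ODE, discards the singular solution and computes $E_{n,k}(r^k)=k$. Along the way it lists the fundamental solutions as $r^{\pm k}$; they are actually $r^k$ and $r^{-(k+n-2)}$, and the latter never has finite energy, so its case split $k\gtrless\frac{n-2}{2}$ is unnecessary.
\item You write $f(1)^2=\int_0^1 (r^{n-2}f^2)'\,dr$ and apply Young's inequality $2ff'\le \frac{r}{k}f'^2+\frac{k}{r}f^2$. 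This produces exactly the integrand $\frac1k\bigl[f'^2+\frac{k(k+n-2)}{r^2}f^2\bigr]r^{n-1}$.
\end{itemize}
Your calibration-type argument is shorter and self-contained. It needs no existence of a minimizer in a weighted space and no ODE classification. It also needs only boundedness of $f$ near $0$ (because $n\ge3$), not $f(0)=0$. Sharpness remains visible: equality in Young forces $rf'=kf$, i.e.\ $f=cr^k$, the same extremal the paper finds. The paper's route does exhibit that extremal directly, as the radial profile of the harmonic extension $r^kY_{k,\ell}$, which makes the origin of the constant $1/k$ transparent.

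One small cleanup: drop the hedge ``$m=n-2+k$''. With that exponent, the same Young step gives the coefficient $(2k+n-2)r^{k}$ in front of $r^{n-3}f^2$, and this exceeds $k+n-2$ near $r=1$. Only $m=n-2$ works.
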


\begin{proof}
	The Dirichlet energy of $u$ satisfies
	\begin{align*}
		\int_{\bB^n} |\dd u|^2 & = \sum_{i=1}^n \int_{\bB^n} |\dd u^i|^2 = \sum_{i=1}^n \int_0^1 \int_{\s^{n-1}} \left[ (\p_r u^i)^2 + \left| \frac{\g_{\theta} u^i}{r}\right|^2 \right]\, r^{n-1}\, \dd \theta\, \dd r.
	\end{align*}
	Since the $Y_{k,\ell}$ are orthonormal in $W^{1,2}(\s^{n-1})$, we obtain
	\begin{multline*} 
		\int_{\bB^n} |\dd u|^2  = \sum_{i=1}^n \Big( |\mathbb{S}^{n-1}|\int_0^1 |(f_0^i)'(r)|^2r^{n-1} \dd r  \\
		+ \sum_{k=1}^{\infty} \sum_{\ell=1}^{L(k)} \int_0^1  \left[ (\p_r f^i_{k,\ell})^2 + (f^i_{k,\ell})^2\, \int_{\s^{n-1}}  \left| \frac{\g_{\theta} Y_{k,\ell}}{r}\right|^2\, d\theta \right]\, r^{n-1}\, \dd r \Big).
	\end{multline*}
	
	The Dirichlet energy of $Y_{k,\ell}$ is given by 
	\begin{align*}
		\int_{\s^{n-1}}  \left| \frac{\g_{\theta} Y_{k,\ell}}{r}\right|^2\, \dd\theta & = -\frac{1}{r^2} \int_{\s^{n-1}} (\lap_{\s^{n-1}} Y_{k,\ell})\, Y_{k,\ell}\, \dd \theta  \\[3mm]
		& = \frac{ k(k+n-2) }{r^2}  \int_{\s^{n-1}} |Y_{k,\ell}|^2 \dd \theta \\[3mm]
		& = \frac{ k(k+n-2) }{r^2} .
	\end{align*}
	We obtain 
	\begin{align*} 
		\int_{\bB^n} |\dd u|^2 & = \sum_{i=1}^n \Big( |\mathbb{S}^{n-1}|\int_0^1 |(f_0^i)'(r)|^2r^{n-1} \dd r\\
		& \quad + \sum_{k=1}^{\infty} \sum_{\ell=1}^{L(k)} \int_0^1  \left[ (\p_r f^i_{k,\ell})^2 + \frac{ k(k+n-2) }{r^2} \, (f^i_{k,\ell})^2\, \dd \theta \right]\, r^{n-1}\, \dd r\Big).
	\end{align*}
	Now, since \(\int_{\mathbb{S}^{n-1}} Y_{k,\ell}(\theta) \dd \theta =0\), we find that \(0=\int_{\mathbb{S}^{n-1}} u= |\mathbb{S}^{n-1}| f_0(1)\) and hence \( f_0(1)=0\). 
	Thus we obtain that 
	\begin{align*}
		\int_{\s^{n-1}} |u|^2 = \sum_{i=1}^n \sum_{k=1}^{\infty} \sum_{\ell=1}^{L(k)} f^i_{k,\ell}(1)^2.
	\end{align*}
	
	Notice that since \(u\) is smooth in \(\mathbb{B}^n\) and since \( (Y_{k,\ell})_{k,L(k)}\) is an orthonormal family in \(L^2(\mathbb{S}^{n-1})\), we can write that, for \( r\in \R^+_*\)  and \(k\in \mathbb{N}^*\) and \(1\leq \ell\leq L(k)\) we have
	\begin{align*}
	f_{k,\ell}(r)=\int_{\mathbb{S}^{n-1}}u(r,\theta)Y_{k,\ell}(\theta) \dd \theta.
	\end{align*}
	Passing to the limit \(r\to 0\) thanks to the dominated convergence theorem we realize that 
	\begin{align*}
	f_{k,\ell}(0)= u(0) \int_{\mathbb{S}^{n-1}}Y_{k,\ell}(\theta) \dd \theta =0.
	\end{align*}
	Given a function $f\colon [0,1]\to \R$, we define
	\begin{align*}
		E_{n,k}(f) \coloneqq \int_0^1  \left[ (\p_r f)^2 + \frac{ k(k+n-2) }{r^2} \, f^2 \right]\, r^{n-1}\, \dd r.
	\end{align*}
	We want to show that for functions $f\colon [0,1]\to \R$ such that $f(0)=0$, it holds
	\begin{align*}
		f(1)^2 \leq \frac{1}{k} E_{n,k}(f).
	\end{align*}
	By homogeneity, it suffices to prove it for functions satisfying $f(1)=1$. In dimension 1, we have $W^{1,2}(0,1)\hookrightarrow C^{0,\frac{1}{2}}[0,1]$. Hence, the following variational problem has a solution for all $n\geq 2$ and $k\geq 1$:
	\begin{align}\label{eq:var_problem}
		\inf \big\{ E_{n,k}(f) \mid f\colon [0,1]\to \R,\ f(1)=1 \big\}.
	\end{align}
	The Euler--Lagrange equation associated to the minimization problem \eqref{eq:var_problem} is given by
	\begin{align*}
		0 & = -\frac{\dd}{\dd r}\left(r^{n-1}\, \frac{\dd f}{\dd r}\right) + k(k+n-2)\, r^{n-3}\, f(r) \\[3mm]
		& = -r^{n-1}\, f''(r) -(n-1)\, r^{n-2}\, f'(r) + k(k+n-2)\, r^{n-3}\, f(r).
	\end{align*}
	Dividing by $-r^{n-1}$, we obtain a solution to 
	\begin{align}\label{eq:EL}
		\begin{cases} 
			\displaystyle f''(r) +\frac{n-1}{r}\, f'(r) - \frac{k(k+n-2)}{r^2}\, f(r) =0. \\[2mm]
			f(1) =1.
		\end{cases} 
	\end{align}
	The solutions of the above problems are of the form $\left( r\mapsto (1-\lambda)r^k + \lambda r^{-k}\right)$ for some $\lambda\in\R$. However, the function $r^{-k}$ does not have finite energy for $k\geq \frac{n-2}{2}$. Hence, the function $(r\mapsto r^k)$ is the unique solution to \eqref{eq:var_problem} for $k\geq \frac{n-2}{2}$. For this function, it holds
	\begin{align*}
		E_{n,k}(r^k) & = \int_0^1  \left[ k^2\, r^{2k-2} +  k(k+n-2) \, r^{2k-2}\, \dd \theta \right]\, r^{n-1}\, \dd r \\[2mm]
		& = k(2k+n-2) \int_0^1 r^{2k+n-3}\, \dd r \\[2mm]
		& = \frac{k(2k+n-2)}{2k+n-2} =k.
	\end{align*}
	If now $k<\frac{n-2}{2}$, it holds
	\begin{align*}
		E_{n,k}(r^{-k}) & = \int_0^1  \left[ k^2\, r^{-2k-2} +  k(k+n-2) \, r^{-2k-2}\, \dd \theta \right]\, r^{n-1}\, \dd r \\[2mm]
		& = k(2k+n-2) \int_0^1 r^{-2k+n-3}\, \dd r \\[2mm]
		& = \frac{k(2k+n-2)}{-2k+n-2} >k.
	\end{align*}
	We obtain for any $\lambda\in\R$, and $0\leq k<\frac{n-2}{2}$,
	\begin{align*}
		& \inf \left\{ E_{n,k}(f) \mid f\colon [0,1]\to \R,\ f(1)=1 \right\} \\[2mm]
		& \geq E_{n,k}\left( (1-\lambda)r^k + \lambda r^{-k} \right) \\[2mm]
		& \geq (1-\lambda)^2\, E_{n,k}(r^k) + \lambda^2\, E_{n,k}(r^{-k}) \\[2mm]
		&\qquad  + 2\lambda(1-\lambda)\int_0^1 \left[ (k\, r^{k-1})(-k\, r^{-k-1})+\frac{k(k+n-2)}{r^2}\, r^k\, r^{-k} \right]\, r^{n-1}\, \dd r \\[2mm]
		& \geq (1-\lambda)^2\, k + \lambda^2\, k + 2\lambda(1-\lambda) \int_0^1 k(n-2)\, r^{n-3}\, \dd r \\[2mm]
		& \geq (1-\lambda)^2\, k + \lambda^2\, k + 2\lambda(1-\lambda)\, k \\[2mm]
		& \geq k\left(1-\lambda+\lambda\right)^2 =k.
	\end{align*}
\end{proof}

We now prove \Cref{th:Energy_gap_Mobius}.

\begin{proof}
	By contradiction, assume that there exists a sequence $(u_\eps)_{\eps>0}$ solutions to \eqref{eq:system_FBnharm_deg1} such that \( (u_\e)_{\e>0}\) is not a M\"obius transformation of \(\mathbb{B}^n\) in the sense of Definition \ref{def:Mobius} and such that
	\begin{align*}
		\int_{\bB^n} |\dd u_\eps|^n \xrightarrow[\eps\to 0]{} n^{\frac{n}{2}} |\bB^n|.
	\end{align*}
	Since each $u_\eps$ has degree 1, there exists $x_\eps\in\mathbb{B}^n$ such that $u_\eps(x_\eps) = 0$. Let $M_\eps$ be a Möbius transform satisfying $M_\eps(0) = x_\eps$, then \(\tilde{u}_\e=u_\e \circ M_\e\) is also a solution to \eqref{eq:system_FBnharm_deg1} and it satisfies that \( \tilde{u}_\e(0)=0\). From Lemma \ref{lm:renormalized_FBnharm}, up to a subsequence, $(\tilde{u}_\eps)_{\eps>0}$ converges in \(\C^\infty(\overline{\mathbb{B}}^n,\R^n)\) to a rotation denoted by \(R_0\).
	
	\begin{claim}\label{cl:Balancing}
	For \(\e>0\) small enough, there exists another M\"obius transform \(\hat{M}_\e\) such that \(\hat{u}_\e:=\tilde{u}_\e \circ \hat{M}_\e\) satisfies \eqref{eq:system_FBnharm_deg1}, \(\int_{\mathbb{S}^{n-1}} \hat{u}_\e =0\) and \(\hat{u}_\e \xrightarrow[\e \to 0]{} R_0\in O_n(\R)\) in \(\C^\infty(\overline{\mathbb{B}}^n,\R^n)\).
	\end{claim}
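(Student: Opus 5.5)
The plan is to find the balancing Möbius map by a degree argument applied to the ``boundary barycenter'' map on the parameter ball. For \(b\in\mathbb{B}^n\) set
\[
G_\e(b)\coloneqq \frac{1}{|\s^{n-1}|}\int_{\s^{n-1}} \tilde{u}_\e\circ M_b .
\]
Since \(M_b\) is a conformal diffeomorphism of \(\overline{\mathbb{B}^n}\) preserving \(\s^{n-1}\) and the orientation, and since the \(n\)-energy and the free boundary system \eqref{eq:system_FBnharm_deg1} are conformally invariant, every \(\tilde{u}_\e\circ M_b\) again solves \eqref{eq:system_FBnharm_deg1}. It therefore suffices to find \(b_\e\) with \(G_\e(b_\e)=0\) and \(b_\e\to 0\), and then to put \(\hat{M}_\e\coloneqq M_{b_\e}\).

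The first step compares \(G_\e\) with the model map
\[
G_0(b)\coloneqq \frac{1}{|\s^{n-1}|}\int_{\s^{n-1}}R_0 M_b = R_0\, g(b), \qquad g(b)\coloneqq \frac{1}{|\s^{n-1}|}\int_{\s^{n-1}} M_b .
\]
By rotational equivariance, \(g(b)=\varphi(|b|)\,(-b/|b|)\) for a scalar function \(\varphi\); the sign is fixed by \(M_b(b)=0\). I expect \(\varphi>0\) for \(b\neq 0\). Moreover \(g\) is smooth near \(0\) with \(g(0)=0\), and \(\dd g(0)=-c_n\,\id\) with \(c_n>0\). This derivative can be computed from the explicit formula \eqref{eq:formule_Mobius}: \(M_b(x)=x-b+(x\cdot b)x+\cO(|b|^2)\) on \(\s^{n-1}\), whose average is \(-b+\frac1n b\). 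Hence \(c_n=1-\frac1n\).

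By the \(\C^\infty\) convergence \(\tilde{u}_\e\to R_0\) from Lemma \ref{lm:renormalized_FBnharm}, the maps \(G_\e\) converge to \(G_0\) in \(\C^1(\overline{B(0,\rho)})\) for every fixed \(\rho<1\). Indeed the \(M_b\) with \(|b|\le\rho\) are uniformly smooth on \(\s^{n-1}\), so we may differentiate under the integral sign.

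The conclusion then follows from the inverse function theorem, or equivalently from a degree argument on a small sphere \(\p B(0,\rho)\). Since \(\dd G_0(0)=-c_nR_0\) is invertible, for \(\e\) small \(G_\e\) has a zero \(b_\e\) with \(|b_\e|\lesssim \|\tilde{u}_\e-R_0\|_{\C^1}\to 0\). To spell out the degree version: on \(\p B(0,\rho)\), \(|G_0|\ge c\rho\), so \(G_\e\) is homotopic to \(G_0\) without zeros there. It follows that \(\deg(G_\e/|G_\e|,\p B_\rho)=\deg(-R_0 b/|b|)=\pm1\neq 0\), which produces a zero of \(G_\e\) inside \(B_\rho\); taking \(\rho\to0\) slowly gives \(b_\e\to0\). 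Finally \(M_{b_\e}\to \id\) in \(\C^\infty(\overline{\mathbb{B}^n})\), so \(\hat{u}_\e=\tilde{u}_\e\circ M_{b_\e}\to R_0\) in \(\C^\infty\).

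The main obstacle is the nondegeneracy \(\dd g(0)\neq0\), that is, the first-order expansion of \(M_b\) restricted to \(\s^{n-1}\). I would obtain it from \eqref{eq:formule_Mobius}: with \(|x|=1\), both the numerator and the denominator expand linearly in \(b\), and the sign of \(c_n\) then gives a nonzero local degree.
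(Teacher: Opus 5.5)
Your route is the paper's. There too one sets $F_\e(a)=\int_{\mathbb{S}^{n-1}}\tilde u_\e\circ M_a$ and shows that $F_\e$ converges, locally in $\C^2$, to the model map built from the limit rotation. The paper then checks that $\dd F_0(0)$ is a nonzero multiple of the identity and finds the zero $a_\e\to 0$ by a contraction argument, with a Taylor remainder bounded uniformly in $\e$. Your inverse-function or degree argument closes the proof equally well. The degree version needs only $\C^0$ convergence near a small sphere, while the contraction also gives uniqueness of the balanced parameter.

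However, the computation you single out as the main obstacle is wrong as written, and so is the orientation claim. From \eqref{eq:formule_Mobius} one has $M_0=-\id$ and, for $|x|=1$,
\[
M_b(x)=-x+2b-2(b\cdot x)\,x+\cO(|b|^2).
\]
Hence $g(b)=\frac{2(n-1)}{n}\,b+\cO(|b|^2)$ and $\dd g(0)=\frac{2(n-1)}{n}\id$, which is the value the paper obtains, not $-(1-\frac1n)\id$. Your expansion $x-b+(x\cdot b)x$ has the wrong overall sign and loses a factor $2$ in the linear term. Since you only use the invertibility of $\dd g(0)$, this does not break the argument.

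What does matter for the statement is orientation. Since $b\mapsto M_b$ is continuous and $M_0=-\id$, every $M_b$ has the orientation of $-\id$. So for odd $n$ it reverses orientation and $\tilde u_\e\circ M_b$ has degree $-1$. Moreover $M_{b_\e}\to-\id$, so $\tilde u_\e\circ M_{b_\e}\to -R_0$ rather than $R_0$.

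The fix is to run the same argument with $T_b\coloneqq -M_b$ and take $\hat M_\e\coloneqq T_{b_\e}$. This $T_b$ is still a M\"obius transform of $\mathbb{B}^n$ in the sense of Definition \ref{def:Mobius}, and $T_0=\id$. On $\mathbb{S}^{n-1}$ one has $T_b(x)=x-2b+2(b\cdot x)x+\cO(|b|^2)$, so your sign picture $g(b)\parallel -b$ is correct for $T_b$. This choice gives degree $1$ and $\hat u_\e\to R_0$. The paper's own proof glosses over the same sign issue.
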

	
	\begin{proof}
	For \(\e>0\) and \(a\in \mathbb{B}^n\) we define
	\begin{align*}
	F_\e(a)=\int_{\mathbb{S}^{n-1}} \tilde{u}_\e \circ M_a(x) \dd \mathcal{H}^{n-1}(x),
	\end{align*}
	where \(M_a\) is a M\"obius transform defined in Definition \ref{def:Mobius}. Note that since for each \(x\in \mathbb{B}^n\) the map \(a \in \mathbb{B}^n \mapsto M_a (x)\) is \(\C^\infty\) we can see that \(F_a\) is also \(\C^\infty\). Since \(\tilde{u}_\e \xrightarrow[\e \to 0]{} \id\) in \(\C^\infty(\overline{\mathbb{B}^n}, \R^n)\) we can show that 
	\begin{align*}
	F_\e \xrightarrow[\e \to 0]{} a \mapsto \int_{\mathbb{S}^{n-1}}M_a(x) \dd \mathcal{H}^{n-1}(x)\coloneqq F_0(a)  \text{ in } \C^2_{\text{loc}}(\mathbb{B}^n,\R^n).
	\end{align*}
	The map \(F_0:\mathbb{B}^n \rightarrow \mathbb{B}^n\) satisfies that \(F_0(0)=\int_{\mathbb{S}^{n-1}} x \dd \mathcal{H}^{n-1}(x)=0\).
	Furthermore, we can compute that for \(v\in \R^n\)
	\begin{align*}
	\dd F_0(0).v & =\int_{\mathbb{S}^{n-1}} \frac{d}{dt} \Bigl\vert_{t=0} M_{tv}(x) \dd \mathcal{H}^{n-1}(x) \\
	& =\int_{\mathbb{S}^{n-1}} \frac{d}{dt} \Bigl\vert_{t=0} \frac{|tv-x|^2tv+(1-|tv|^2)(tv-x)}{|tv|^2|tv-x|^2+(1-|tv|^2)^2+2 tv\cdot (tv-x)(1-|tv|^2)} \dd \mathcal{H}^{n-1}(x) \\
	&= \int_{\mathbb{S}^{n-1}} \Big( |x|^2v+v\\
	& \quad -x\frac{d}{dt}\Bigl\vert_{t=0} \Big[ |tv|^2|tv-x|^2+(1-|tv|^2)^2 +2tv\cdot (tv-x) (1-|tv|^2) \Bigr] \Big)\, \dd \mathcal{H}^{n-1}(x) \\
	&= 2|\mathbb{S}^{n-1}| v-\int_{\mathbb{S}^{n-1}} 2(v\cdot x) x \, \dd \mathcal{H}^{n-1}(x).
	\end{align*}
	But, for \(1\leq i \leq n\), we have
	\begin{align*}
	\sum_{j=1}^n v_j \int_{\mathbb{S}^{n-1}} x_ix_j \dd \mathcal{H}^{n-1}(x) =v_i\int_{\mathbb{S}^{n-1}} |x_i|^2 \, \dd \mathcal{H}^{n-1}(x).
	\end{align*}
	Since \[\sum_{i=1}^n \int_{\mathbb{S}^{n-1}} |x_i|^2 \dd \mathcal{H}^{n-1}(x) =\int_{\mathbb{S}^{n-1}} 1 \, \dd \mathcal{H}^{n-1}(x)=|\mathbb{S}^{n-1}|\]
	we find that for \(1\leq i \leq n\)
	\begin{align*}
	\int_{\mathbb{S}^{n-1}}|x_i|^2 \, \dd \mathcal{H}^{n-1}(x)=\frac{|\mathbb{S}^{n-1}|}{n}.
	\end{align*}
	We thus obtain that
	\begin{align*}
	\dd F_0(0).v=2|\mathbb{S}^{n-1}|-2\frac{|\mathbb{S}^{n-1}|}{n}v =\frac{2(n-1)}{n}|\mathbb{S}^{n-1}|.
	\end{align*}
	Hence \(\dd F_0(0)=c_n \id\) for some \(c_n \in \R\) and \(\dd F_0(0)\) is invertible. We thus find that for \(r>0\) small enough \(\dd F(x)\) is invertible if \(|x|<r\). By \(\C^1_{\text{loc}}(\mathbb{B}^n)\)-convergence of \(F_\e\) to \(F_0\), we obtain that for some \(r>0\) small enough  and for \(\e_0>0\) small enough, it holds that for all \(0<\e<\e_0\), the map \(\dd F_\e(x)\) is invertible if \(|x|<r\) and
	\begin{align*}
	\sup_{0<\e<\e_0} \| \left[ \dd F_\e(x)\right]^{-1}\|_{L^\infty(B(0,r))}.
	\end{align*}
	Since \(F_\e\) is \(\C^2\) in \(\mathbb{B}^n\), for \(a\in \mathbb{B}^n\) close enough to \(0\), from Taylor--Young's theorem we can write that
	\begin{align*}
	F_\e(a) =F_\e(0)+\dd F_\e(0).a+\mathcal{R}_\e(a),
	\end{align*}
	with 
	\begin{equation}\label{eq:cst_ind_eps}
	|\mathcal{R}_\e(a)| \leq C |a|^2 \text{ and } C \text{ is independent of } \e
	\end{equation}
	 for \(\e>0\) small enough. Indeed, since \(F_0\) is \(\C^2(\mathbb{B}^n)\), we have that \( \dd^2 F_0\) is uniformly bounded in \( B(0,1/2)\). From the \(\C^2_{\text{loc}}(\mathbb{B}^n)\)-convergence of \(F_\e\) towards \(F_0\) we find that \(\dd^2 F_\e\) is also uniformly bounded in \( B(0,1/2)\) for \( \e>0\) small enough.
	We look for \(a\in \mathbb{B}^n\) such that \( F_\e(a)=0\) and then we are lead to solve 
	\begin{align}\label{eq:sol_zero_average}
	a =\left[ \dd F_\e(0)\right]^{-1} \left(-F_\e(0)-\mathcal{R}_\e(a) \right).
	\end{align}
	By using \eqref{eq:cst_ind_eps}, we can apply a fixed point argument to find that there exists a unique solution \(a_\e\in B(0,r)\) to \eqref{eq:sol_zero_average}, if \(r>0\) is small enough (independently of \(\e>0\)).
	Furthermore, up to a subsequence, we can assume that \(a_\e \to a\in \overline{B(0,r)}\), from the uniform convergence of \(F_\e\) to \(F_0\) we find that \( F_0(a)=0\) but then, choosing \(r>0\) small enough, by the local inverse theorem, we necessarily find that \(a=0\). This shows that, for the whole sequence \(a_\e \to 0\) and thus \(\tilde{u}_\e \circ M_{a_\e} \xrightarrow[\e \to 0]{} R_0\) in \(\C^\infty(\mathbb{B}^n,\R^n)\).
	\end{proof}
	 We now consider the following map
	\begin{align*}
		\forall R\in SO(n),\qquad E(R) \coloneqq \int_{\s^{n-1}} |R\, \hat{u}_{\eps} -R_0|^2=\int_{\s^{n-1}} |R_0^{-1}R\, \hat{u}_{\eps} -\id|^2 .
	\end{align*}
	 $E\colon SO(n)\to [0,+\infty)$ is a continuous map on a compact set. Hence it has a minimum $R_{\eps}\in SO(n)$. Up to changing $u_{\eps}$ in $R_0^{-1}R_{\eps}\, \hat{u}_{\eps}$, we can assume that \(u_\e\) is a solution to \eqref{eq:system_FBnharm_deg1} such that 
	\begin{itemize}
	\item[1)] \( \int_{\mathbb{S}^{n-1}}u_\e=0\),
	\item[2)] \( u_\e \xrightarrow[]{\e \to 0} \id\) in \(\C^\infty(\mathbb{B}^n,\R^n)\) and
	\item[3)] \begin{align}\label{eq:minimality}
		\forall R\in SO(n),\qquad 0<\int_{\mathbb{S}^{n-1}} |u_\e-\id|^2 \leq \int_{\mathbb{S}^{n-1}} |R u_\e-\id|^2.
	\end{align}
	\end{itemize}

	Let $\eta \colon L^1(\mathbb{S}^{n-1},\R^n) \rightarrow L^1(\mathbb{B}^n,\R^n)\) be a continuous extension operator such that \(\eta\) is also continuous from \(L^p(\mathbb{S}^{n-1},\R^n)\) to \(L^p(\mathbb{B}^n,\R^n)\) and from \(W^{1-\frac{1}{p},p}(\mathbb{S}^{n-1},\R^n)\) to \(W^{1,p} (\bB^n,\R^n)$ for every \( 1\leq p<+\infty\). Such an extension operator can be given by an extension by convolution in the neighbourhood of \(\mathbb{S}^{n-1}\) and then using a cut-off function, see e.g.\ \cite[Lemma 15.47]{Brezis_Mironescu_2021}.  Given a map $u\in W^{1,n}(\bB^n,\bB^n)$ such that $u \colon \s^{n-1}\to \s^{n-1}$, given $v\in \C^\infty(\overline{\bB^n}, \overline{\bB^n})$ we define
	\begin{equation}\label{eq:def_proj}
		\pi_u(v)  \coloneqq  v-\eta\left( \left[ \scal{u}{v} u\right]_{|\s^{n-1}} \right), \quad \pi_u^\perp(v) \coloneqq \eta\left( \left[ \scal{u}{v} u\right]_{|\s^{n-1}} \right).
	\end{equation}

With this notation, $v = \pi_u(v) + \pi_u^\perp(v)$ and, since \( (u_\e)_{\e>0}\) is a solution to \eqref{eq:system_FBnharm_deg1}, we have
	\begin{align*}
		0 =&\ \dd E_n(u_\eps). \pi_{u_\eps}(u_\eps-\id) \\
		= &\ \int_{\bB^n} |\dd u_{\eps}|^{n-2} \dd u_{\eps} : \dd \pi_{u_{\eps}}(u_{\eps}-\id)\\
		\ust{\eps\to 0}{=} & \int_{\bB^n} |\dd (\id)|^{n-2} \dd (\id) : \dd \pi_{u_{\eps}}(u_{\eps}-\id)\\
		& +\int_{\bB^n} \Big((n-2) |\dd (\id)|^{n-4} \left[\dd (\id):\dd(u_{\eps} - \id )\right] \dd u_\e \\
		& \quad \quad + |\dd (\id)|^{n-2} \dd(u_{\eps} - \id)\Big): \dd \pi_{u_{\eps}}(u_{\eps}-\id)\\
		& + \smallO \left(\int_{\bB^n} |\dd(u_{\eps}-\id)|\, |\dd \pi_{u_{\eps}}(u_{\eps}-\id)| \right).\\
	\end{align*}
	We observe that 
\begin{align*}
\int_{\mathbb{B}^n}&= 2\int_{\mathbb{B}^n}| \dd (u_\e-\id)|^2 +2 \int_{\mathbb{B}^n} | \dd \left[\eta \left( u_\e\cdot (u_\e-\id) u_\e \right)_{| \mathbb{S}^{n-1}} \right] |^2 \\
& \leq 2\int_{\mathbb{B}^n}| \dd (u_\e-\id)|^2 + C\| u_\e\cdot (u_\e-\id) u_\e\|_{W^{1-\frac12,2}(\mathbb{S}^{n-1}}^2 \\
& \leq 2\int_{\mathbb{B}^n}| \dd (u_\e-\id)|^2 + C\| u_\e\cdot (u_\e-\id) u_\e\|_{W^{1,2}(\mathbb{B}^{n}}^2 \\
& \leq C \| u_\e-\id\|_{W^{1,2}(\mathbb{B}^n)}.
\end{align*}	
	Thus, we obtain
	\begin{align*}
		0\ust{\eps\to 0}{=}&\ \dd E_n(\id).  \pi_{u_\eps}(u_\eps-\id)  \\
		& + \int_{\bB^n}  (n-2)|\dd (\id)|^{n-4} \left[ \dd (\id):\dd(u_\eps-\id)\right] \left[\dd (\id):\dd \pi_{u_\eps}(u_\eps - \id)\right] \\
		& + \int_{\bB^n} |\dd (\id)|^{n-2} \left[\dd(u_\eps - \id):\dd \pi_{u_\eps}(u_\eps - \id)\right] + \smallO \left( \|u_\eps - \id \|_{W^{1,2}(\mathbb{B}^n)}^2 \right)\\
		\ust{\eps\to 0}{=} & \underbrace{\dd E_n(\id). \pi_{u_\eps}(u_\eps-\id)  }_{\eqqcolon I} \\
		& + (n-2)n^{\frac{n-4}{2}} \underbrace{\int_{\bB^n}  \divv(u_\eps-\id) \divv\left( \pi_{u_\eps}(u_\eps - \id) \right)}_{\eqqcolon II} \\
		& + n^{\frac{n-2}{2}} \underbrace{\int_{\bB^n} \left[ \dd(u_\eps - \id):\dd \pi_{u_\eps}(u_\eps - \id)\right]}_{\eqqcolon III} + \smallO \left( \|u_\eps - \id \|_{W^{1,2}(\bB^n)}^2 \right).
	\end{align*}
	
	\textbf{Asymptotic expansion of $I$.} The explicit expression of $I$ is the following:
	\begin{align*}
		I  = \int_{\bB^n} |\dd(\id)|^{n-2} (\dd \id):  \dd \left[ \pi_{u_\eps}(u_\eps-\id)  \right]  = n^{\frac{n-2}{2}} \int_{\bB^n} \divv\left( \pi_{u_\eps}(u_\eps-\id) \right).
	\end{align*}
	We integrate by parts and obtain
	\begin{align*} 
		I &= n^{\frac{n-2}{2}} \int_{\s^{n-1}} \id\cdot \left( \pi_{u_\eps}(u_\eps-\id) \right) \\
		&= n^{\frac{n-2}{2}} \int_{\s^{n-1}} \id\cdot \Big( u_{\eps}-\id -[u_{\eps}\cdot(u_{\eps}-\id)]u_{\eps} \Big)\\
		& = n^{\frac{n-2}{2}} \int_{\s^{n-1}} \Big( \id\cdot u_{\eps} -1 - (\id\cdot u_{\eps})(1-u_{\eps}\cdot\id)\Big)\\
		&= n^{\frac{n-2}{2}} \int_{\s^{n-1}}  \Big( (\id\cdot u_\eps)^2 -1  \Big) .
	\end{align*}
	The following equality holds on $\s^{n-1}$
	\begin{align*}
		 1-\frac{|u_{\eps}-\id|^2}{2}=u_{\eps}\cdot \id.
	\end{align*}
	Thus, we obtain
	\begin{align*}
		(\id\cdot u_\eps)^2 -1 = -|u_{\eps}-\id|^2 + \frac{|u_{\eps}-\id|^4}{4},
	\end{align*}
	and
	\begin{align*}
		I \ust{\eps\to 0}{=} -\left( n^{\frac{n-2}{2}} +\smallO(1) \right) \int_{\s^{n-1}} |u_{\eps}-\id|^2.
	\end{align*}
	
	\textbf{Asymptotic expansion of $II$.} We express the term $II$ as follows:
	\begin{align*}
		II = \int_{\bB^n} |\divv(u_{\eps}-\id)|^2 - \divv(u_{\eps}-\id)\, \divv(\pi_{u_{\eps}}^{\perp}(u_{\eps}-\id)).
	\end{align*}
	We integrate by parts the second term:
	\begin{align*}
		II & = \int_{\bB^n} |\divv(u_{\eps}-\id)|^2 +\int_{\bB^n} \dd\left[\divv(u_{\eps}-\id)\right]\cdot \pi_{u_{\eps}}^{\perp}(u_{\eps}-\id) \\
		& \qquad + \int_{\s^{n-1}} \divv(u_{\eps}-\id)\, [(u_{\eps}-\id)\cdot u_{\eps}]\, (u_{\eps}\cdot\id).
	\end{align*}
	
We notice that, using the definition of \(\pi_{u_\e}^\perp\) in  \eqref{eq:def_proj} and the properties of the extension operator \(\eta\),

\begin{align*}
\left| \int_{\bB^n} \dd\left[\divv(u_{\eps}-\id)\right]\cdot \pi_{u_{\eps}}^{\perp}(u_{\eps}-\id) \right| & \leq \| \dd^2 (u_\e-\id)\|_{L^\infty(\mathbb{B}^n)} \int_{\mathbb{B}^n} |\pi_{u_\e}^\perp (u_\e-\id)| \\
& \leq C\| \dd^2 (u_\e-\id)\|_{L^\infty(\mathbb{B}^n)}\|u_\e\cdot (u_\e-\id)\|_{L^1(\mathbb{S}^{n-1})} \\
& \leq  C\| \dd^2 (u_\e-\id)\|_{L^\infty(\mathbb{B}^n)}\| u_\e-\id\|_{L^2(\mathbb{S}^{n-1})}^2, 
\end{align*}
where we have used that the following equality  on $\s^{n-1}$:
	\begin{align*}
		(u_{\eps}-\id)\cdot u_{\eps} = 1-u_{\eps}\cdot \id = \frac{|u_{\eps}-\id|^2}{2}.
	\end{align*}
	Using again this equality in the third term of \(II\) we obtain
	\begin{align*}
		II &\ust{\eps\to 0}{=} \int_{\bB^n} |\divv(u_\eps - \id)|^2 + \cO \left( \|u_\eps-\id\|_{L^2(\s^{n-1})}^2 \|u_\eps-\id\|_{\C^2(\mathbb{B}^n)}\right) \\
		& \ust{\eps\to 0}{=} \int_{\bB^n} |\divv(u_\eps - \id)|^2 + \smallO \left( \|u_\eps-\id\|_{L^2(\s^{n-1})}^2 \right).
	\end{align*}
	
	\textbf{Asymptotic expansion of $III$.} Using the same arguments as for $II$, we obtain
	\begin{align*}
		III & = -\int_{\mathbb{B}^n}\Delta (u_\e-\id) \cdot \pi_{u_\e}^{\perp} (u_\e-\id) +\int_{\mathbb{S}^{n-1}} \p_\nu (u_\e-\id) \cdot \pi_{u_\e}^\perp (u_\e-\id) \\
		&\ust{\eps\to 0}{=} \int_{\bB^n} |\dd(u_\eps - \id)|^2 + \smallO \left( \|u_\eps-\id\|_{L^2(\s^{n-1})}^2\right).
	\end{align*}
	
	\textbf{Conclusion.}
	We have found that
	\begin{align}\label{eq:conc1}
		\begin{aligned} 
			\int_{\bB^n} \left(  |\dd(u_\eps - \id)|^2 + \frac{n-2}{n}|\divv(u_\eps - \id)|^2 \right) \ust{\eps\to 0}{=} (1+\smallO(1))\int_{\s^{n-1}} |u_{\eps}-\id|^2 .
		\end{aligned} 
	\end{align}
	We now decompose $u_{\eps}-\id$ as in \Cref{lm:Poincare}:
	\begin{align}\label{eq:decompo_ueps}
		u_{\eps}(r,\theta)-\id = \sum_{k=1}^{\infty} \sum_{\ell=1}^{L(k)} f_{k,\ell,\eps}(r)\, Y_{k,\ell}(\theta).
	\end{align}
	By using \eqref{eq:conc1} and applying \Cref{lm:Poincare}, we obtain 
	\begin{multline}\label{eq:identificcation}
	|\mathbb{S}^{n-1}| \int_0^1 |f_0'(r)|^2 r^{n-1} \dd r  \\
	 \quad +\sum_{k\geq 1, 1\leq \ell \leq L(k)} \int_0^1 \left[ (\p_r f_{k,\ell})^2 +\frac{k(k+n-2)}{r^2}f_{k,\ell}^2\right] r^{n-1} \dd r \\
	\quad \quad +\frac{n-2}{n}\int_{\mathbb{B}^n}|\divv (u_\e-\id)|^2 \\
	 \leq \sum_{k\geq 1, 1\leq \ell \leq L(k)} \int_0^1 \frac{1}{k}\left[ (\p_r f_{k,\ell})^2 +\frac{k(k+n-2)}{r^2}f_{k,\ell}^2\right] r^{n-1}\,  \dd r \\ 
	 \quad  +\smallO \left(\|u_\e-\id\|_{L^2(\mathbb{S}^{n-1}}^2\right)
	\end{multline}
	Hence, using the Poincaré inequality in \Cref{lm:Poincare} we find that 
	\begin{align}
		\left\| f_0 + \sum_{k=2}^{\infty} \sum_{\ell=1}^{L(k)} f_{k,\ell,\eps}^i(r)\, Y_{k,\ell}(\theta) \right\|_{W^{1,2}(\bB^n)} & \ust{\eps\to 0}{=}  \smallO\left( \|u_\eps - \id \|_{L^2(\s^{n-1})} \right) \label{eq:strong_High_frequency}\\
		 & \ust{\eps\to 0}{=}  \smallO\left( \|u_\eps - \id \|_{W^{1,2}(\mathbb{B}^n)} \right)
	\end{align}
	where we have used the trace inequality. We also infer from \eqref{eq:identificcation} that 
	\begin{equation}\label{eq:div_negligeable}
	\int_{\mathbb{B}^n}|\divv(u_\eps - \id)|^2  \ust{\eps\to 0}{=} \smallO\left( \|u_\eps - \id \|_{W^{1,2}(\mathbb{B}^n)} \right).
	\end{equation}
	For $k=1$, we have $L(1) = n$ and $Y_{1,\ell} = x^{\ell}$ for $\ell\in\{1,\ldots,n\}$. Thus, we obtain functions $f_{\ell}\colon [0,1]\to \R^n$ such that the following asymptotic expansion holds\footnote{Thanks to the embedding $W^{1,2}(\mathbb{B}^n)\hookrightarrow W^{1-\frac{1}{2},2}(\s^{n-1})$, the embedding $W^{1,2}(\bB^n)\subset L^2(\s^{n-1})$ is compact.}: 
	\begin{align}\label{eq:limit1}
		\begin{aligned} 
			& \frac{u_{\eps} - \id}{\|u_{\eps}-\id\|_{W^{1,2}(\mathbb{B}^n)}} \xrightarrow[\eps\to 0]{} \sum_{\ell=1}^n f_{\ell}(|x|)\, \frac{x^{\ell}}{|x|} \\
			& \qquad \text{ strongly in } W^{1,2}(\mathbb{B}^n) \text{ and strongly in }L^2(\s^{n-1}).
		\end{aligned}
	\end{align}
	Passing to the limit in \eqref{eq:div_negligeable} we find that 
	\begin{align}\label{eq:Int}
		\divv\left( \sum_{\ell=1}^n f_{\ell}(|x|)\, \frac{x^{\ell}}{|x|}  \right) =0 \qquad \text{in }\bB^n.
	\end{align}
Using the divergence theorem we find that for \(\mathcal{H}^{n-1}\)-a.e.\ \(x\in \mathbb{S}^{n-1}\) we have
	\begin{align}\label{eq:BC}
		\sum_{1\leq i,\ell\leq n} f_{\ell}^i(1)\, x^{\ell}\, x_i = 0.
	\end{align}
	We now define the matrix $M^i_j(r) =r^{-1} f^i_j(r)$. From \eqref{eq:BC} and decomposing $M(1)$ into symmetric and antisymetric part $M(1) = M_S + M_{AS}$, we obtain $M_S=0$ since for $x\in \s^{n-1}$, we have
	\begin{align*}
		0 = M(1)x\cdot x = \begin{cases}
			M_S x\cdot x + M_{AS}x\cdot x, \\[2mm]
			x\cdot M_S x - x\cdot M_{AS} x.
		\end{cases} 
	\end{align*}
	Thus, we obtain $x\cdot M_S x=0$ for all $x\in\s^{n-1}$, so that $M_S=0$ and $M(1)\in so(n)$. We now pass to the limit in the weak formulation of the system $\lap_n u_{\eps}=0$. For any $\vp\in \C^{\infty}_c(\bB^n)$, since \(x\mapsto x\) is \(n\)-harmonic, we have
	\begin{align*}
		0 & = \int_{\bB^n} \dd \vp: \left[|\dd u_{\eps}|^{n-2}\, \dd u_{\eps} - |\dd \id |^{n-2} \dd \id \right]  \\
		&= \int_{\mathbb{B}^n} \dd \varphi : \left[  | \dd (\id +(u_\e-\id) |^{n-2} \dd (\id +(u_\e-\id) ) - |\dd \id |^{n-2} \dd \id \right] \\
		&= \int_{\mathbb{B}^n }\dd \varphi : \Big[  (n-2)|\dd \id |^{n-4} \left( \dd \id:\dd (u_\e-\id)\right) \dd \id  \\
		& \quad +|\dd \id|^{n-2} \dd (u_\e-\id) \Big] +\smallO (\|u_\e-\id\|_{W^{1,2}(\mathbb{B}^n)}).
	\end{align*}
	 Dividing by $\|u_{\eps}-\id\|_{W^{1,2}(\bB^n)}>0$ and letting \(\e \to 0\), we obtain
	\begin{align*}
		0 & = \int_{\bB^n} \dd \vp: \left[ n^{\frac{n-2}{2}} \dd(M(r)x) + (n-2)\, n^{\frac{n-4}{2}}  \dd x : \dd (M(r)x)  \dd x \right] \ \dd x \\[2mm]
		& = \int_{\bB^n} \dd \vp : \left[  n^{\frac{n-2}{2}} \dd(M(r)x) + (n-2)\, n^{\frac{n-4}{2}} : \divv (M(r)x)\, \dd x \right] \dd x.
	\end{align*}
	By \eqref{eq:Int}, the second term vanishes. Hence, the map $\left( x\mapsto M(|x|) x \right) \in W^{1,2}(\bB^n)$ verifies the system 
	\begin{align*}
		\begin{cases}
			\lap (M(r) x)=0 & \text{ in }\bB^n, \\[2mm]
			M(r)x = M(1)x & \text{ on }\s^{n-1}.
		\end{cases}
	\end{align*}
	Since the map $x\mapsto M(1)x$ is also harmonic (this is a linear map), we deduce from the uniqueness of the solution of the above system that the constant matrix $M\coloneqq M(1)$ satisfies
	\begin{align*}
		\begin{cases} 
			\forall x\in \bB^n,\qquad M(|x|) x = Mx, \\[2mm]
			M\in so(n).
		\end{cases} 
	\end{align*}
	We now define the map 
	\begin{align*}
		\forall x\in \bB^n,\qquad v_{\eps}(x) \coloneqq e^{-\|u_{\eps}-\id\|_{L^2(\s^{n-1})}\, M } u_{\eps}(x).
	\end{align*}
	Since $M\in so(n)$, the exponential term is a rotation converging to the identity as $\eps\to 0$. Hence, $v_{\eps}$ is also a sequences of free-boundary $n$-harmonic maps verifying \eqref{eq:normalization} and \eqref{eq:low_energy}. By construction of $v_{\eps}$, we have the following expansion in $W^{1,2}(\mathbb{B}^n)$
	\begin{align*}
		v_{\eps} \ust{\eps\to 0}{=} &\ \left(I_n - \|u_{\eps}-\id\|_{L^2(\s^{n-1})}\, M + \cO \left(\|u_{\eps}-\id\|_{L^2(\s^{n-1})}^2\right)\right)  \\[2mm]
		& \qquad \times \left( \id +\|u_{\eps}-\id\|_{W^{1,2}(\mathbb{B}^n)}\, M\, \id + \smallO\left(\|u_{\eps}-\id\|_{W^{1,2}(\mathbb{B}^n)}\right) \right) \\[3mm]
		\ust{\eps\to 0}{=} &\ \id + \smallO\left(\|u_{\eps}-\id\|_{L^2(\s^{n-1})}\right).
	\end{align*}
	Therefore, we have for $\eps>0$ small enough 
	\begin{align*}
		\int_{\s^{n-1}} |v_{\eps}-\id|^2 < \int_{\s^{n-1}} |u_{\eps}-\id|^2.
	\end{align*}
	This contradicts \eqref{eq:minimality}. In other words, we could not assume $\|u_{\eps}-\id\|_{L^2(\s^{n-1})}>0$ in \eqref{eq:limit1}, i.e., $u_{\eps}=\id$ for $\eps>0$ small enough.
\end{proof}

\section{Existence of a free-boundary \(n\)-harmonic map of degree 1: Proof of Theorems \ref{th:main_2}}\label{sec:ProofMain}

In this last section, we prove our main result Theorem \ref{th:main_2}. The idea is to show that, up to a subsequence, the sequence \( (u_k)_{k\geq 1}\) obtained in Theorem\ref{th:existence_nplusalpha} converges to a limit map \(u\) which belongs to \(\I_1\) and which is a critical point of \(E_n\) in \(\I\) in the sense of Definition \ref{def:critical}. The convergence and the possible apparition of bubbles for a sequence of \((n+\alpha_k)\)-harmonic maps with free-boundary in \(\mathbb{S}^{n-1}\) is the object of \cite{MartinoMazowieckaRodiac2025}. 
Thanks to the bubbling theorem \cite[Theorem 1.3]{MartinoMazowieckaRodiac2025} we only need to show that no bubbles of non-zero degree appear in the limiting process. This is obtained by a contradiction argument relying on the following estimate on the energetic level of our \( (n+\alpha_k)\)-critical points.

\begin{proposition}\label{pr:calpharsmallerthansomething}
	
Let $\eps_0=\eps_0(n) = \min\{\eps_M, \eps_c\}$, where $\eps_M$ is from \Cref{th:Energy_gap_Mobius} and $\eps_c$ is from \Cref{le:3.1inMMR} with $\delta=1000\, \|\id\|_{\C^1(\bB^n)}$. There exists \(L>0\) and \(r_0>0\) such that if $\O$ is a \(\C^1\) bounded set of $\R^n$ such that there exists  $\Phi\colon \overline{\O}\to \overline{\bB^n}$ with \(\|\Phi-\id\|_{\C^1}<L\) and if \(0<r<r_0\) then the following holds.
\begin{itemize}
\item[1)] There exist a sequence \( (\alpha_k)_k \subset \R^+\) with \(\alpha_k\to 0\) and a sequence \( (u_k)_k\subset \I_{1,\alpha_k}\) satisfying \eqref{eq:freealphakharmonic}. 

\item[2)]  \begin{equation}\label{eq:calphaestimate}
  E_{n+\alpha_k}(u_k) =c(\alpha_k,r) \le n^\frac{n}{2}|\bB^n| + \eps_0.
 \end{equation}
\end{itemize}
\end{proposition}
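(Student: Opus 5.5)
This proposition is essentially a bookkeeping combination of Theorem \ref{th:existence_nplusalpha}, Proposition \ref{pr:validity_minmax} and Proposition \ref{prop:prochegroundstate}. The plan is to fix $\eps_0=\min\{\eps_M,\eps_c\}$ first. Since $\eps_c$ is taken from Lemma \ref{le:3.1inMMR} with a fixed distortion bound, it depends only on $n$, and so does $\eps_M$. Then I apply Proposition \ref{prop:prochegroundstate} with $\e=\eps_0$. This produces $L_1>0$ and $r_1>0$ such that, whenever $\|\Phi-\id\|_{\C^1}<L_1$ and $0<r<r_1$, we have $c(\alpha_k,r)<n^{\frac n2}|\bB^n|+\eps_0$ for $k$ large along the sequence $(\alpha_k)$ of Proposition \ref{pr:validity_minmax}.

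Next I take $L\le L_1$ small enough that Proposition \ref{continuityscheme1_v2} applies. Then the almost Möbius maps $\widetilde M_{r,a}$ of Definition \ref{def:almost_Mobius_map} are well defined and Lemma \ref{lm:energy_almostMobius} holds. I take $r_0\le r_1$ no larger than the radius given by Proposition \ref{pr:validity_minmax}, which also contains the one from Lemma \ref{lm:topology}. For $0<r<r_0$, Proposition \ref{pr:validity_minmax} gives $\alpha_k\to0$ with $c_1(\alpha_k,r)<c(\alpha_k,r)$, which is the mountain pass geometry. Theorem \ref{th:Mountain Pass} on the $\C^1$ Finsler manifold $\I_{1,\alpha_k}$ then yields a Palais--Smale sequence at level $c(\alpha_k,r)$. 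Proposition \ref{pertPalais} turns it into a critical point $u_k\in\I_{1,\alpha_k}$ with $E_{n+\alpha_k}(u_k)=c(\alpha_k,r)$; this is exactly Theorem \ref{th:existence_nplusalpha} and gives item 1). Passing to a tail of the sequence $(\alpha_k)$ so that the bound from Proposition \ref{prop:prochegroundstate} holds for every $k$ gives \eqref{eq:calphaestimate}.

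The main subtlety is a consistency issue. Proposition \ref{pr:lowerc}, and hence Proposition \ref{pr:validity_minmax}, relies on $\Omega$ \emph{not} being a round ball: its constant $c_2$ depends on $\Omega$, and so may the threshold $r_0$. So I expect to split into two cases. If $\Omega$ is a round ball, the statement is obtained directly from Theorem \ref{th:main_1}, since a conformal map is then a minimizer, and this case need not go through the min--max. Otherwise $r_0$ is allowed to depend on $\Omega$. The point to check carefully is that $L$ depends only on $n$ and $\eps_0$, hence only on $n$, while only $r_0$ depends on the domain. This is consistent with the order of quantifiers in the statement, because $L$ comes solely from Lemma \ref{lm:energy_almostMobius} and Proposition \ref{prop:prochegroundstate}.
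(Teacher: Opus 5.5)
Your main argument is the paper's. There the proposition is stated as a direct consequence of Theorem~\ref{th:existence_nplusalpha} and Proposition~\ref{prop:prochegroundstate}. The first packages Proposition~\ref{pr:validity_minmax}, the mountain pass theorem and Proposition~\ref{pertPalais}; the second is applied with $\e=\eps_0$, discarding finitely many $k$. Your additional case analysis, however, contains two inaccuracies.

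\textbf{The round-ball branch does not prove the statement.} Theorem~\ref{th:main_1} gives a minimizer of $E_n$ in $\I_1$. It does not give maps $u_k\in\I_{1,\alpha_k}$ that are critical for $E_{n+\alpha_k}$ with $E_{n+\alpha_k}(u_k)=c(\alpha_k,r)$. Moreover, for a ball the strict inequality $c_1(\alpha_k,r)<c(\alpha_k,r)$ is no longer available, since Proposition~\ref{pr:lowerc} uses that $\Omega$ is not a ball. So nothing produces a critical point at the min--max level in that case. What Theorem~\ref{th:main_1} does give is the conclusion of Theorem~\ref{th:main_2} for balls. The correct fix is therefore to exclude balls from this proposition, as the paper tacitly does, rather than to treat them as a case of it.

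\textbf{The quantifier claim is wrong.} Letting $r_0$ depend on $\Omega$ is not ``consistent with the order of quantifiers'': the statement as written chooses $r_0$ before $\Omega$, so you prove a weaker statement. It is, however, exactly what the paper's own argument yields, because the $r_0$ of Theorem~\ref{th:existence_nplusalpha} comes from $c_2=c_2(\Omega)$. It is also all that Section~\ref{sec:ProofMain} uses: there $\Omega$ is fixed before $r$ is chosen, and only $L$ (through $\eps_0$) must be uniform, which you correctly identify.
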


We recall that $c(\alpha,r)$ is defined in \eqref{eq:ourc} and we point out that Proposition \ref{pr:calpharsmallerthansomething} is a direct consequence of Theorem \ref{th:existence_nplusalpha} and Proposition \ref{prop:prochegroundstate}. With the estimate \eqref{eq:calphaestimate}, and with the gaps results Lemma \ref{le:3.1inMMR} and Theorem \ref{th:Energy_gap_Mobius} we are able to conclude. However, in order to do that we need to improve the result of of \cite[Theorem 1.3]{MartinoMazowieckaRodiac2025} into a true energy identity and not only a weak energy identity with coefficients \(\lambda_i^*\) in front of the energies of the bubbles being possibly \(>1\). We achieve that in our specific situation by using the Pohozaev identity as in \cite[Lemma 4.1]{Li-Zhu2019}. Since the Pohozaev identity for \( (n+\alpha_k)\)-harmonic maps is not direct because these maps are not in \(\C^2(\overline{\Omega},\bB^n)\), we first discuss the validity of this identity. Then we conclude the proof of Theorem \ref{th:main_2}.

\subsection{Pohozaev identity}
In this section we prove the  Pohozahev identity for free-boundary \(p\)-harmonic maps for \(p\geq 2\). A similar identity, valid for \(p\)-harmonic maps with values into a manifold was proved in \cite[Lemma 2.5]{DuzaarFuchs}.

\begin{proposition}[Pohozaev identity]\label{prop:Pohozaev}
	Let $(\Sigma^n,g)$ be an $n$-dimensional manifold with boundary.	Let \(d\geq 1\) and  $u\colon (\Sigma^n,g)\to \bB^d$ be a free-boundary $p$-harmonic map of class $\C^1$ with $p\geq 2$, in particular \(u(\p \Sigma)\subset \mathbb{S}^{d-1}\) . Consider a boundary point $x_0\in\pl\Sigma$ and boundary coordinates near $x_0$ on $B(0,r)^+$. Then,
	\begin{equation*}
		\int_{\partial B(0,r)^+\cap \R^n_+} |\dd u|^{p-2}\brac{|\partial_\nu u|^2 - \frac 1p |\dd u|^2}\, \dd\mathrm{vol}_g = \frac{p-n}{p\, r}\int_{B(0,r)^+} |\dd u|^p\, \mathrm{dvol}_g.
	\end{equation*}
\end{proposition}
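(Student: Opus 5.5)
The plan is the classical Pohozaev argument: test the equation against the radial (conformal-type) vector field $X=x\cdot\nabla u=r\,\p_r u$ and integrate over the half ball $B(0,r)^+$. I work in the boundary coordinates, where $\p\Sigma\cap B(0,r)$ corresponds to the flat part $\{x_n=0\}\cap \overline{B(0,r)}$. I first treat the Euclidean metric; the metric $g$ only adds lower-order terms, and I take the statement to be in a setting where the coordinates are (approximately) Euclidean.

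The first step is to justify using $\varphi=x\cdot\nabla u$ as a test function. Since $u$ is only $\C^1$, I would either regularize via difference quotients, or use that $|\dd u|^{p-2}\dd u\in W^{1,2}_{\loc}$ for $p$-harmonic maps (known $W^{2,2}$-type regularity of the stress field). Alternatively, and more cleanly, I would use domain variations: $u_t=u\circ(\id+t\,\eta X_0)$ with $X_0(x)=x$, which is tangent to $\{x_n=0\}$ on the flat boundary. Then $u_t(\p\Sigma)\subset\mathbb{S}^{d-1}$, so $u_t$ is admissible and $\frac{d}{dt}\big|_{t=0}E_p(u_t)$ vanishes by the free boundary criticality of Definition~\ref{def:critical}. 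This route needs only $\C^1$ regularity and sidesteps second derivatives; I expect this justification to be the main obstacle. The key points are checking that the vector field preserves $\{x_n=0\}$, and handling the cutoff $\eta$ approximating $\chi_{B(0,r)}$.

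The second step is the computation. The first variation of $\int|\dd u|^p$ along a vector field $Y$ gives the stress tensor identity
\[
\int \Big(|\dd u|^p\,\divv Y - p|\dd u|^{p-2}\langle \p_i u,\p_j u\rangle \p_i Y^j\Big)=0 .
\]
With $Y=\eta(|x|)x$ and $\eta\to\chi_{[0,r]}$, the bulk terms give $(n-p)\int_{B(0,r)^+}|\dd u|^p$. The derivative of the cutoff produces, on $\p B(0,r)\cap\R^n_+$, the term
\[
-r\int \Big(|\dd u|^p-p|\dd u|^{p-2}|\p_\nu u|^2\Big).
\]
The flat boundary contributes nothing because $Y\cdot e_n=0$ there. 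Dividing by $pr$ yields the claimed identity, $p^{-1}$ factor included.

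The third step handles a general metric $g$: the same domain variation argument goes through with $\divv_g$ and $\mathrm{dvol}_g$. I would note that the identity as stated is exact for Euclidean (or conformally flat, when $p=n$) coordinates; otherwise error terms involving $\p g$ appear, which I would record as $O(r)\int|\dd u|^p$.
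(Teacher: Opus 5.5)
The stress-tensor computation in your second step (testing $T=|\dd u|^p\,\mathrm{Id}-p|\dd u|^{p-2}\,\dd u^T\dd u$ against $Y=\eta(|x|)x$) is the paper's. However, the justification you choose for $\int T:\nabla Y=0$ does not work. Definition~\ref{def:critical} (with $p$ in place of $n$) controls only \emph{outer} variations $u+t\varphi$, with $\varphi\in W^{1,p}$ having tangential trace. It does not say that $E_p$ is stationary along every admissible curve; that holds for a minimizer, not for a general critical point. To use it along $u_t=u\circ(\mathrm{id}+tY)$ you need the chain rule $\frac{d}{dt}E_p(u_t)\big|_{t=0}=p\int|\dd u|^{p-2}\dd u:\dd(\dd u\cdot Y)$. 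That requires $t\mapsto u_t$ to be differentiable in $W^{1,p}$ with velocity $\dd u\cdot Y\in W^{1,p}$. For $u\in\C^1$ this velocity is merely continuous, so the route needs exactly the second derivatives you say it sidesteps. At low regularity, inner and outer stationarity genuinely differ (compare weakly harmonic versus stationary maps). The admissibility of $u_t$ and the tangency of $Y$ to $\{x_n=0\}$, which you flag as the key points, are not where the difficulty lies.

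The missing idea is to treat the interior and the boundary separately, as the paper does.

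\emph{Interior.} There is no constraint in the interior, and $\Delta_p u=0$ is the Euler--Lagrange equation of the convex functional $E_p$. Hence $u$ minimizes $E_p$ with respect to its own boundary values on interior subdomains. Compactly supported domain deformations are then genuine competitors, which gives $\divv T=0$ in $\mathring\Sigma$ with no extra regularity.

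\emph{Boundary.} Since $u\in\C^1(\overline\Sigma)$, $T$ is continuous up to $\partial\Sigma$. For $Y$ tangent to $\partial\Sigma$ you can therefore integrate by parts and are left with $\int_{\partial\Sigma}T(\nu,Y)=-p\int_{\partial\Sigma}|\dd u|^{p-2}\,\partial_\nu u\cdot\dd u(Y)$. This vanishes pointwise: the free-boundary condition gives $|\dd u|^{p-2}\partial_\nu u\parallel u$, while $|u|=1$ on $\partial\Sigma$ and $Y$ tangent give $\dd u(Y)\cdot u=0$.

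So your remark that the flat boundary contributes nothing ``because $Y\cdot e_n=0$'' hides the only place where the free-boundary equation enters. The condition $Y\cdot e_n=0$ kills the $|\dd u|^p\,Y\cdot\nu$ part of $T(\nu,Y)$, but not the $\partial_\nu u\cdot\partial_Y u$ part. Your fallback options (difference quotients, $|\dd u|^{p-2}\dd u\in W^{1,2}_{\loc}$) are interior statements and do not reach the free boundary either. Your caveat about non-Euclidean $g$ is fair; the paper's computation with $\xi=x$ is also effectively Euclidean.
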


\begin{proof}
First, since in the interior of \(\Sigma\) there is no constraint on the image of \(u\), by convexity of the energy \(E_p\) we see that a \(p\)-harmonic map is a local minimizer of the energy \(E_p\). Hence it is stationary for inner variations. More precisely 
\begin{equation*}
\frac{\dd}{\dd t} \Big\vert_{t=0} \int_{\Sigma} \left| \dd (u\circ\vp_t^{-1})_x \right|^p \dx  =0,
\end{equation*}
for any  smooth family of diffeomorphism \(\vp_t\colon \overline{\Sigma}\rightarrow \overline{\Sigma}\) such that \(\vp_t(x)=x\) on \(\p \Sigma\) and such that 
\(\frac{d}{dt} \Big\vert_{t=0} \varphi_t(x)=\xi(x)\) for any \(x\in \Sigma \), with \(\xi\in \C^\infty_c(\mathring{\Sigma},\mathring{\Sigma})\). This implies that 
\begin{equation}\label{eq:divergence_free_int}
\textrm{div} \left( |\dd u|^p\delta_{ij} -p|\dd u|^{p-2} \p_iu \cdot \p_j u \right) =0 \text{ in } \mathring{\Sigma} \quad \text{ for every } 1\leq i,j\leq n.
\end{equation}
Now we want to prove that, for every \(1\leq i,j\leq n\),
\begin{equation}\label{eq:stationarity_cond_full}
\int_{\Sigma} \left( |\dd u|^p\delta_{ij} -p| \dd u|^{p-2} \p_iu \cdot \p_j u \right) : \dd \xi =0
\end{equation}
for any \(\xi \in W^{1,p}(\Sigma,\R^n)\) such that \(\xi(x) \in T_x \p \Sigma\) for \(\mathcal{H}^{n-1}\)-a.e.\ \(x\in \p \Sigma\). 
By using that \(u\in \C^1(\overline{\Sigma},\mathbb{B}^d)\), combining \eqref{eq:divergence_free_int} and the divergence theorem shows that \eqref{eq:stationarity_cond_full} is equivalent to 
\begin{equation}\label{eq:boundary_1}
\int_{\p \Sigma} \left( |\dd u|^p\id -p| \dd u|^{p-2} \p_iu \cdot \p_j u \right)\xi\cdot \nu =0,
\end{equation}
for all \(\xi \in W^{1-\frac{1}{p},p}(\p \Sigma,\R^d)\) such that \(\xi(x) \in T_x \p \Sigma\) for \(\mathcal{H}^{n-1}\)-a.e.\ \(x\in \p \Sigma \). Since in that case \(\xi\cdot \nu=0\) on \(\p \Sigma\), we have that \eqref{eq:boundary_1} is itself equivalent to 
\begin{equation}\label{eq:boundary_2}
\int_{\p \Sigma} | \dd u|^{p-2} (\p_iu \cdot \p_j u) \xi\cdot \nu = \int_{\p \Sigma } | \dd u|^{p-2} \p_\nu u \cdot \p_{\xi} u= 0,
\end{equation}
where we have denoted \(\p_\xi u(x)=\dd u (x). \, \xi(x)\) for \(x\in \p \Sigma\). We note that since \(u(\p \Sigma)\subset \mathbb{S}^{d-1}\) we have that \(\p_{\xi} u (x) \in T_{u(x)} \mathbb{S}^{d-1}\) for \(\mathcal{H}^{n-1}\)-a.e.\ \(x\in \p \Sigma\). Now, the free boundary condition expresses that 
\begin{equation}
\int_{\Sigma} |\dd u|^{p-2} \dd u : \dd \varphi =0
\end{equation}
for all \(\varphi \in W^{1,p}(\Sigma,\R^n)\) such that \(\varphi(x)\in T_{u(x)}\mathbb{S}^{d-1}\) for a.e.\ \(x\in \p \Sigma\). An integration by parts shows that it implies that 
\begin{equation}
\int_{\p \Sigma} | \dd u|^{p-2} \p_\nu u\cdot \varphi=0
\end{equation}
for such \(\varphi\). It suffices to observe that the choice of \(\varphi=\dd u.\, \xi\) is admissible and implies that \eqref{eq:boundary_2} holds and thus \eqref{eq:stationarity_cond_full} also.

To conclude the proof of Proposition \ref{prop:Pohozaev}, since near \(x_0\in \p \Sigma\) we work in coordinates such that \(\p \Sigma\) is flat and equal to \( \{x_n=0\}\cap B(0,r)\) we can choose \(\xi(x)=(x_1,\dots,x_{n-1},x_n)\) for all \(x\in B(0,r)^+\). On one side we find that 
\begin{align*}
\int_{B(0,r)^+} \left( |\dd u|^p\delta_{ij}-p | \dd u|^{p-2} \p_i u \cdot \p_j u \right) : \dd \xi  = \int_{B(0,r)^+} (n-p)\int_{\Sigma}| \dd u|^p.
\end{align*}
And on the other side, the divergence theorem and \eqref{eq:boundary_2} show that 
\begin{align*}
\int_{B(0,r)^+} \left( |\dd u|^p\delta_{ij} -p| \dd u|^{p-2} \p_i u \cdot \p_j u \right) : \dd \xi  = r\int_{\p B(0,r)^+\cap \R^n_+} (|\dd u|^p-p|\dd u|^{p-2} |\p_r u|^2 ).
\end{align*}
\end{proof}

\subsection{Proof of Theorem \ref{th:main_2}}

We are now ready to conclude the proof of our main Theorem \ref{th:main_2}.

\begin{proof}[Proof of \Cref{th:main_2}]
Consider $\O$ as in \Cref{pr:calpharsmallerthansomething}.

Proposition \ref{pr:calpharsmallerthansomething} provides a sequence  $(u_{k})_{k}$ of free boundary $(n+\alpha_k)$-harmonic maps from $\Omega\to \bB^n$ with degree 1 with $E_{n+\alpha_k}(u_{k}) = c(\alpha_k,r)$.

 By \cite[Theorem 1.3]{MartinoMazowieckaRodiac2025}, there exist free boundary $n$-harmonic maps $u_0\colon \Omega\to \bB^n$ and $\omega_1,\ldots,\omega_K\colon \bB^n\to \bB^n$ and numbers $\lambda_1^*,\ldots,\lambda_K^*\in[1,+\infty)$ such that
\begin{align}\label{eq:energy_identity}
	\lim_{k\to +\infty} c(\alpha_k,r) = \lim_{k \to +\infty}\int_{\Omega}|\dd u_{\alpha_k}|^{n+\alpha_k} = \int_{\Omega}|\dd u_0|^n +\sum_{j=1}^K \lambda_j^* \int_{\bB^n} |\dd \omega_j|^n .
\end{align}
Moreover, by \Cref{pr:validity_minmax} and \Cref{pr:calpharsmallerthansomething} we have, for \(k\) large enough
\[
	c_1(\alpha_k,r)\leq \frac{1}{2}\left( n^{\frac{n}{2}}\, |\bB^n| + c_2 \right) <c_2  \leq c(\alpha_k,r) \le n^{\frac n2} |\bB^n| + \eps_0.
\]
Letting \(k\to +\infty\) we obtain by \Cref{lm:c1lemma}, \Cref{pr:calpharsmallerthansomething},  and \eqref{eq:energy_identity} that
\begin{equation}\label{eq:energyinequalities}
 	n^{\frac{n}{2}}\, |\bB^n|  < c_2 \leq  \lim_{k \to +\infty }\int_{\Omega}|\dd u_{k}|^{n+\alpha_k} = \int_{\Omega}|\dd u_0|^n +\sum_{j=1}^K \lambda_k^* \int_{\bB^n} |\dd \omega_j|^n \le n^{\frac n2} |\bB^n| + \eps_0.
\end{equation}
By \cite[Proposition 1.4]{MartinoMazowieckaRodiac2025}, we also have
\begin{equation}\label{eq:quant_degrees}
	1 = \deg(u_0,\pl\Omega) + \sum_{j=1}^K \deg(\omega_j,\s^{n-1}).
\end{equation}
Thanks to \eqref{lowerbound2}, we deduce from \eqref{eq:energy_identity} and \Cref{pr:calpharsmallerthansomething} that 
\begin{equation}\label{eq:upper_degrees}
	n^{\frac{n}{2}}\, |\bB^n|\left( \left| \deg(u_0,\pl\Omega) \right| + \sum_{j=1}^K \left| \deg(\omega_j,\s^{n-1}) \right| \right) \leq n^{\frac{n}{2}}\, |\bB^n| + \eps_0.
\end{equation} 
Combining \eqref{eq:quant_degrees} and \eqref{eq:upper_degrees}, we deduce that all the degrees of $u$ and the $\omega_j$ are either $0$ or $1$.

If $\deg(u_0,\pl\Omega)=1$, then Theorem \ref{th:main_2} is proved.

If $\deg(u_0,\pl\Omega)=0$, then one of the bubbles has degree $1$, we assume that it is \(\omega_{j_0}\) for some fixed \(j_0\in \{1,\dots,K\}\). Thanks to the inequality $\lambda_{j_0}^*\geq 1$ and the choice of $\eps_0$, we deduce from \eqref{eq:energyinequalities} that 
\[
	\int_{\bB^n} |\dd \omega_{j_0}|^n \leq n^{\frac{n}{2}}\, |\bB^n| + \eps_0 \leq n^{\frac{n}{2}}\, |\bB^n| + \eps_M.
\]
By Theorem \ref{th:Energy_gap_Mobius}, we obtain that $\omega_{j_0}$ is a Möbius map. Hence, we have $E_n(\omega_{j_0})=n^{n/2}\, |\bB^n|$ and \eqref{eq:energyinequalities} implies that 
\[
	n^{\frac{n}{2}}\, |\bB^n| + E_n(u_0) + \sum_{\ell\neq j_0} E_n(\omega_{\ell}) \leq n^{\frac{n}{2}}\, |\bB^n| + \eps_c. 
\]
This implies that 
\[
	E_n(u_0) + \sum_{\ell\neq j_0} E_n(\omega_{\ell}) \leq \eps_c.
\]
Thanks to \Cref{le:3.1inMMR} (see also Lemma 3.1 \cite{MartinoMazowieckaRodiac2025}), we obtain $K=j_0=1$ and $u_0=cst$.

The number $\lambda^*_1$ is given by $\lambda^*_1 = \liminf_{k \to +\infty}\lambda_{1,\alpha_k}^{-\alpha_k}$, with $\lambda_{1,\alpha_k}$ the speed of concentration of the bubble $\omega_1$, cf.\ \cite[Claim 5, p. 22]{MartinoMazowieckaRodiac2025}. By choosing an appropriate chart sending the boundary of $\O$ into $\R^{n-1}\times \{0\}$, we can assume that the point of concentration for $\omega_1$ is $0$. Then, by using the Pohozaev identity in Proposition \ref{prop:Pohozaev}, for any given $\delta>0$ and \(k\) large enough,
\begin{align*}
	 & \int_{\lambda_{1,\alpha_k}}^{\delta} \left( \frac{\alpha_k}{(n+\alpha_k)\, r} \int_{B(0,r)^+} |\dd u_{k}|^{n+\alpha_k}\right)\, \dd r \\
	 & = \int_{\B(0,\delta)^+\setminus B(0,\lambda_{1,\alpha_k})^+} |\dd u_{\alpha_k}|^{n+\alpha_k-2}\left( |\pl_r u_{\alpha_k}|^2 - \frac{|\dd u_{\alpha_k}|^2}{n+\alpha_k} \right).
\end{align*}
By no-neck energy property (see Theorem 1.3 in \cite{MartinoMazowieckaRodiac2025}), it holds
\begin{align}\label{eq:noneck}
	\limsup_{\delta\to 0}\ \limsup_{k \to +\infty} \int_{\B(0,\delta)^+\setminus B(0,\lambda_{1,\alpha_k})^+} |\dd u_k|^{n+\alpha_k-2}\left( |\pl_r u_{k}|^2 - \frac{|\dd u_{k}|^2}{n+\alpha_k} \right)= 0.
\end{align}
On the other hand, we have
\begin{align*}
	 &\int_{\lambda_{1,\alpha_k}}^{\delta} \left( \frac{\alpha_k}{(n+\alpha_k)\, r} \int_{B(0,r)^+} |\dd u_{\alpha_k}|^{n+\alpha_k}\right)\, \dd r \\
	 & \geq \int_{\lambda_{1,\alpha_k}}^{\delta} \left( \frac{\alpha_k}{(n+\alpha_k)\, r} \int_{B(0,\lambda_{1,\alpha_k})^+} |\dd u_{\alpha_k}|^{n+\alpha_k}\right)\, \dd r \\[2mm]
	& \geq \int_{\lambda_{1,\alpha_k}}^{\delta} \left( \frac{\alpha_k}{(n+\alpha_k)\, r} \, \frac{1}{\lambda_{1,\alpha_k}^{\alpha_k}}\int_{B(0,1)^+} |\dd u_k(\lambda_{1,k}\cdot)|^{n+\alpha_k}\right)\, \dd r \\[2mm]
	& \geq \frac12 \int_{\lambda_{1,\alpha_k}}^{\delta} \left( \frac{\alpha_k}{(n+\alpha_k)\, r} \, \int_{B(0,1)^+} |\dd u_k(\lambda_{1,k}\cdot)|^{n+\alpha_k}\right)\, \dd r \\[2mm]
	& \geq \frac12 \left(\int_{B(0,1)^+}|\dd u_k(\lambda_{1,k}\cdot)|^{n+\alpha_k}\right)\, \frac{\alpha_k}{n+\alpha_k}\, \log\left(\frac{\delta}{\lambda_{1,\alpha_k}}\right).
\end{align*}
In the second last inequality we have used that \(\lambda_{1,k} \leq M\) for some \(M>0\) (cf.\ \cite[Claim 5, p. 22]{MartinoMazowieckaRodiac2025}) and hence \(\lambda_{1,k}^{-\alpha_k}\geq 1/2\) for \(k\) large enough. Up to a subsequence, we can assume that 
\begin{equation}
\lim_{k \to +\infty}\int_{B(0,1)^+}|\dd u_k(\lambda_{1,k}\cdot)|^{n+\alpha_k}=\int_{B(0,1)^+}| \dd \omega_1|^n=n^{\frac{n}{2}}|\mathbb{B}^n|.
\end{equation}
Combing back to \eqref{eq:noneck}, we obtain
\begin{align*}
	0 & = \limsup_{\delta\to 0}\ \limsup_{k \to +\infty} \frac{\alpha_k}{n+\alpha_k}\, \log\left(\frac{\delta}{\lambda_{1,k}}\right) \\[2mm]
	& =\limsup_{\delta\to 0}\ \limsup_{k \to +\infty}  \left[ \log \left( \delta^{\alpha_k/(n+\alpha_k)}\right)-\frac{\alpha_k}{n+\alpha_k}\log\lambda_{1,k} \right] \\[2mm]
	& = \frac{1}{n}\ \limsup_{k \to +\infty}\ \log\left(\frac{1}{\lambda_{1,k}^{\alpha_k}}\right) \\[2mm]
	& = \frac{1}{n}\, \log\left(\lambda_1^*\right).
\end{align*}
Consequently, we have $\lambda_1^*=1$. We obtain a contradiction by combining \eqref{eq:energyinequalities} and the equality $E_n(\omega_1) = n^{n/2}\, |\bB^n|$. Hence, we must have $\deg(u_0,\pl\Omega)=1$.
\end{proof}

\bibliographystyle{abbrv}%
\bibliography{bib}%

\end{document}